\newcommand{\res}{\mathrm{res}}
\newcommand{\tr}{\mathrm{tr}}
\newcolumntype{C}{>{$}c<{$}} 
\newcommand{\Mackey}[7]{
\xymatrix
{
#1 \ar@/_/[d]_(.5){#7}\\
#2 \ar@/_/[u]_(.5){#6} \ar@/_/[d]_(.5){#5} \\
#3 \ar@/_/[u]_(.5){#4} 
}
}
\newcommand{\superimpose}[2]{%
  {\ooalign{$#1\@firstoftwo#2$\cr\hfil$#1\@secondoftwo#2$\hfil\cr}}}
\newcommand{\mackeycirc}{\mdwhtcircle}
\newcommand{\mackeybullet}{\mdblkcircle}
\newcommand{\mackeyblacktriangle}{\blacktriangle}
\newcommand{\mackeyblacktriangledown}{\blacktriangledown}
\newcommand{\mackeybullethat}{\widehat{\mdblkcircle}}
\newcommand{\mackeybulletoverline}{\overline{\mdblkcircle}}
\newcommand{\mackeybox}{\mdlgwhtsquare}
\newcommand{\mackeyboxoverline}{\overline{\mdlgwhtsquare}}
\newcommand{\mackeyboxhat}{\widehat{\mdlgwhtsquare}}
\newcommand{\mackeyboxhatoverline}{\widehat{\overline{\mdlgwhtsquare}}}
\newcommand{\mackeyunderlinedbullethat}{\underline{\widehat{\mdblkcircle}}}
\newcommand{\mackeyunderlinedbullettilde}{\underline{\widetilde{\mdblkcircle}}}
\newcommand{\mackeyunderlinedodothat}{\underline{\widehat{{\odot}}}}
\newcommand{\mackeyunderlinedboxhatoverline}{\underline{\widehat{\overline{\mdlgwhtsquare}}}}
\newcommand{\mackeyunderlinedboxhat}{\underline{\widehat{{\mdlgwhtsquare}}}}
\newcommand{\mackeyunderlinedboxdothat}{\widehat{\underline{\boxdot}}}
\newcommand{\mackeyunderlinedboxdottilde}{\underline{\widetilde{\boxdot}}}
\newcommand{\mackeyunderlinedboxleftslashhat}{\underline{\widehat{\squarellblack}}}
\newcommand{\mackeyunderlinedboxleftslashtilde}{\underline{\widetilde{\squarellblack}}}
\newcommand{\mackeyboxleftslashoverline}{\overline{\squarellblack}}
\newcommand{\mackeyunderlinedboxblackboxhat}{\underline{\widehat{\blackinwhitesquare}}}
\newcommand{\mackeyunderlinedboxboxhat}{\underline{\widehat{\boxbox}}}
\newcommand{\mackeyboxleftslash}{\squarellblack}
\newcommand{\mackeyboxleftslashhat}{\widehat{\squarellblack}}
\newcommand{\mackeyunderlinedhalftopboxhat}{\underline{\widehat{\squaretopblack}}}
\newcommand{\mackeyhalftopbox}{\squaretopblack}
\newcommand{\mackeyunderlinedblacktrianglehat}{\underline{\widehat{\blacktriangle}}}
\newcommand{\mackeyunderlinedboxboxtilde}{\underline{\widetilde{\boxbox}}}
\newcommand{\mackeyunderlinedcirccirchat}{\underline{\widehat{\circledwhitebullet}}}
\newcommand{\mackeyunderlinedhalfbottomboxhat}{\underline{\widehat{\squarebotblack}}}
\newcommand{\mackeyeinffourethy}{\underline{  \widecheck{\squarellblack} } }
\newcommand{\mackeyeinften}{\widehat{\dot{\mdlgwhtsquare}}}
\newcommand{\mackeyeinftenethy}{\underline{\widehat{\dot{\mdlgwhtsquare}}}}
\newcommand{\mackeyeinfeighteenethy}{\underline{\widetilde{\dot{\mdlgwhtsquare}}}}
\newcommand{\mackeyeinftwentyethy}{\underline{\widehat{\mathring{\squarellblack}}}}
\newcommand{\mackeyeinftwoethy}{\underline{  \widecheck{\boxcircle} } }
\newcommand{\mackeyunderlinedboxdotcheck}{\underline{  \widecheck{\boxdot} } }
\newcommand{\mackeyeinfsix}{\blackhourglass}
\newcommand{\mackeyeinfsixethy}{\underline{\widehat{\overline{\blackhourglass}}}}
\newcommand{\mackeyeinftwentytwoethy}{\underline{\widehat{\overline{\boxcircle}}}}
\newcommand{\mackeyeinftwentyeightethy}{\underline{\widetilde{\dot{\squarellblack}}}}
\newcommand{\mackeysigmaminusoneethy}{\underline{\widetilde{\dot{\boxdot}}}}
\newcommand{\mackeysigmaminusnineethy}{\underline{\widetilde{\dot{\boxbox}}}}
\newcommand{\mackeyunderlinedblackboxwhitecirchatoverline}{\underline{\widehat{\overline{\inversebullet}}}}
\newcommand{\mackeyeinffoursigmaethy}{\breve{\overline{\squarellblack} }}
\newcommand{\mackeyunderlinedhalftopboxcheck}{\underline{\widecheck{\squaretopblack}}}
\newcommand{\mackeypiccirc}{{\color{gray}\mackeycirc}}
\newcommand{\mackeypicalgfixpoints}{\boxtimes}
\newcommand{\mackeypictriangledown}{{\color{gray}\mackeyblacktriangledown}}
\newcommand{\mackeypictriangle}{{\color{gray}\mackeyblacktriangle}}
\newcommand{\mackeypicbullet}{{\color{gray}\mackeybullet}}
\newcommand{\mackeypicbulletoverline}{{\color{gray}\mackeybulletoverline}}
\newcommand{\mackeypicbullethat}{{\color{gray}\mackeybullethat}}
\newcommand{\mackeypiceinften}{{\color{gray}\mackeyeinften}}
\newcommand{\mackeypicboxhatoverline}{{\color{gray}\mackeyboxhatoverline}}
\newcommand{\MUR}{MU_{\R}}
\newcommand{\MUC}{MU^{(\!(C_4)\!)}}
\newcommand{\wmu}{A}
\newcommand{\wmum}{\wmu_-}
\newcommand{\wmuP}{\wmu(+)}
\newcommand{\wmuM}{\wmu(-)}
\newcommand{\mymu}{\mu}
\newcounter{sarrow}
  \definecolor{dark-red}{rgb}{0.6,0.15,0.15}
   \definecolor{dark-blue}{rgb}{0.15,0.15,0.6}
   \definecolor{medium-blue}{rgb}{0,0,0.5}
\newtheorem{thm}{Theorem}[section]
\newtheorem{cor}{Corollary}[section]
\newtheorem{prop}{Proposition}[section]
\newtheorem{lem}{Lemma}[section]
\theoremstyle{definition}
\newtheorem{defn}{Definition}[section]
\newtheorem{rem}{Remark}[section]
\newtheorem{notation}{Notation}[section]
\let\c@lem=\c@thm
\let\c@cor=\c@thm
\let\c@prop=\c@thm
\let\c@lem=\c@thm
\let\c@defn=\c@thm
\let\c@exmps=\c@thm
\let\c@rem=\c@thm
\let\c@warn=\c@thm
\let\c@claim=\c@thm
\let\c@quest=\c@thm
\let\c@notation=\c@thm
\let\c@note=\c@thm
\numberwithin{equation}{section}
\numberwithin{figure}{section}
\numberwithin{table}{section}
\DeclareMathOperator{\Pic}{Pic}
\newcommand{\mPic}{\underline{\Pic}}
\newcommand{\PicRdashG}[2]{\Pic_{#2}(#1)}
\newcommand{\PichG}[2]{\Pic(#1^{h#2})}
\newcommand{\PicRG}[2]{\PichG{#1}{#2}}
\newcommand{\PicalgRG}[2]{\Pic_{#2}(#1) }
\newcommand{\m}[1]{\underline{#1}}
\newcommand{\Z}{\mathbb{Z}}
\newcommand{\R}{\mathbb{R}}
\newcommand{\F}{\mathbb{F}}
\newcommand{\G}{\mathbb{G}}
\newcommand{\W}{\mathbb{W}}
\newcommand{\subpic}{\varspadesuit}
\DeclareRobustCommand\spi{\underaccent{\bar}{\pi}}
\DeclareSymbolFontAlphabet{\scr}{rsfs}
\newcommand{\smsh}{\wedge}
\newcommand{\xra}{\xrightarrow}
\def\quickop#1{\expandafter\newcommand\csname #1\endcsname{\operatorname{#1}}}
\DeclareMathOperator{\Gal}{Gal}
\DeclareMathOperator{\Mod}{Mod}
\DeclareMathOperator{\Ind}{Ind}
\DeclareMathOperator{\Sym}{Sym}
\DeclareMathOperator{\RO}{RO}
\newcommand{\pic}{\mathrm{pic}}
\newcommand{\ZZ}{\mathbb{Z}}
\newcommand{\dfrak}{\bar{\mathfrak{d}}_1}
\DeclareMathOperator{\Sp}{Sp}
\newcommand{\JEG}{J_{\scriptscriptstyle E}^{\scriptscriptstyle G}}
\newcommand{\JRG}{J_{\scriptscriptstyle R}^{\scriptscriptstyle G}}
\newcommand{\JEC}[1]{J_{\scriptscriptstyle E}^{\scriptscriptstyle C_{#1}}}
\newcommand{\fk}{\mathbb{k}}
\author[Beaudry]{Agn\`es Beaudry}
\address{Department of Mathematics, University of Colorado Boulder, Campus Box 395, Boulder, CO, 80309-0395}
\author[Bobkova]{Irina Bobkova}
\address{Department of Mathematics, Texas A\&M University, Mailstop 3368, College Station, TX 77843}
\author[Hill]{Michael Hill}
\address{Department of Mathematics, University of California, Los Angeles,
Box 951555, Los Angeles, CA, 90095-1555}
\author[Stojanoska]{Vesna Stojanoska}
\address{Department of Mathematics, University of Illinois, Urbana-Champaign, 273 Altgeld Hall
1409 W. Green Street, 
Urbana, IL 61801}
\subjclass[2010]{55M05, 55P42, 20J06, 55Q91, 55Q51, 55P60}
\title{Invertible $K(2)$-Local $E$-Modules in $C_4$-Spectra}
\date{\today}
\begin{document}

\begin{abstract}
We compute the Picard group of the category of $K(2)$-local module spectra over the ring spectrum $E^{hC_4}$, where $E$ is a height 2 Morava $E$-theory and $C_4$ is a subgroup of the associated Morava stabilizer group. This group can be identified with the Picard group of $K(2)$-local $E$-modules in genuine $C_4$-spectra. We show that in addition to a cyclic subgroup of order 32 generated by $ E\wedge S^1$ the Picard group contains a subgroup of order 2 generated by $E\wedge S^{7+\sigma}$, where $\sigma$ is the sign representation of the group $C_4$. In the process, we completely compute the $RO(C_4)$-graded Mackey functor homotopy fixed point spectral sequence for the $C_4$-spectrum $E$.
\end{abstract}
\maketitle
\setcounter{tocdepth}{1}
\tableofcontents

% !TEX root = master.tex

\section{Introduction}

Starting with the computations by Hopkins and Mahowald of the homotopy groups of the spectrum of topological modular forms, there has been a proliferation of computations related to various invariants associated to the homotopy fixed points for the action of finite subgroups of the Morava stabilizer group $\G=\G_n$ on Morava $E$-theory $E=E_n$. 
In theory, one would like to understand $E^{h\G}$ itself as this is a model for the $K(n)$-local sphere. 
At heights $n> 2$, there has been little progress in this direction, and the case $n=p=2$ remains the focus of current research.

If one restricts to a finite subgroup $G$ of $\G$, questions about $E^{hG}$ and its module category become more tractable using techniques introduced by Hopkins and Miller. Among other things, the methods of Hopkins--Miller allow one to understand $E_*$ as a $G$-module.
These ideas first appeared in print in Nave \cite{nave2}. They were generalized by Hill, Hopkins and Ravenel in unpublished work and also in \cite{HHR}.
The whole program has recently been enhanced by Hahn and Shi \cite{hahnshi}. 

Roughly, one fixes a real orientation
\[\MUR \to E\]
where $\MUR$ is the real bordism spectrum. This map is required to be $C_2$-equivariant, where $C_2$ has the usual action on $\MUR$ coming from complex conjugation, while on $E$ it acts through the central $\{ \pm 1\} \subseteq \G$.
Then for any subgroup $G$ of $\G$ containing $C_2=\{\pm 1\}$, one can use the norm to construct a map
\[N_{C_2}^GMU_{\R} \to E,\]
which provides information on the structure of $E$ as a $G$-spectrum.

An important application of these ideas is the computation of the homotopy groups $\pi_*E^{hG}$, which in turn, provides the information needed to study the Picard group of the category of $K(n)$-local $E^{hG}$-module spectra, $\PichG{E}{G}$. Recall that, for a symmetric monoidal category, the Picard group consists of isomorphism classes of invertible objects with respect to the symmetric monoidal product, whenever this forms a set. The problem of computing $\PichG{E}{G}$ appears in various forms throughout the literature. 
Classical examples are the folklore results of Hopkins which state that $\Pic(KU) \cong \Z/2$ and $\Pic(KO) \cong \Z/8$ (see also Gepner--Lawson \cite{GepnerLawson}) and the fact that $\Pic(E_n)\cong \Z/2$ for all $n$, a result of Baker--Richter \cite{BakerRichter}. The problem has been extensively revisited by Heard--Mathew--Stojanoska \cite{MathewStojanoska,HeMaSt}. For example, they compute $\PichG{E_n}{G}$ for the finite subgroups $G\subseteq \G$ at chromatic heights $n=p-1$ for $p$ odd. Most recently, Heard--Li--Shi \cite{heardlishi} have also computed $\Pic(E_n^{hC_2})$ at all heights $n$ when $p=2$.

The Picard group of a ring spectrum always contains a cyclic subgroup generated by the suspension $\Sigma E^{hG}$. In all of the examples of $\PichG{E}{G}$ studied so far, it was found that $\PichG{E}{G}$ is exactly this cyclic group. For instance, if $n=p-1$ and $G$ is a maximal finite subgroup of $\G$ containing the $p$-torsion, $\PichG{E_n}{G}$ is cyclic of order $2n^2p^2$, which is the periodicity of $E_n^{hG}$ \cite{HeMaSt}.
When $p=2$, then $\PichG{E_n}{C_2}$ is cyclic of order $2^{n+2}$ \cite{heardlishi}, which is again the periodicity of $E_n^{hC_2}$.

In this paper, we compute an example of $\PichG{E}{G}$ which is not a cyclic group. The example is the following. We work at the prime $p=2$  and chromatic height $n=2$. 
Any formal group law $\Gamma$ of height $2$ has an automorphism $\gamma$ of order $4$ over the algebraic closure of $\F_2$ and the subgroup $C_4$ this automorphism generates is unique up to conjugation in the associated Morava stabilizer group. 
The spectrum $E^{hC_4}$ has already received much attention in the literature. It is closely related to 
the spectrum $\mathrm{TMF}_0(5)$ of topological modular forms with $\Gamma_0(5)$ level structure.  The latter was studied extensively by Behrens--Ormsby in \cite{BO}. 
Further, $E$ as a $C_4$-module spectrum is closely related to the spectrum $K_{[2]}$ studied in Hill--Hopkins--Ravenel \cite{HHRC4}. It also play a key role in Bobkova--Goerss \cite{BobkovaGoerss} and in Henn \cite{henn_centr}. In fact, the computation of the homotopy fixed point spectral sequence for the spectrum $E^{hC_4}$ can be mined from these references and its homotopy groups are now well understood.
The spectrum $E^{hC_4}$ is $32$-periodic, so $\PichG{E}{C_4}$ necessarily contains a cyclic group of that order. However, in this case, it turns out that the Picard group also contains elements which are not suspensions of $E^{hC_4}$.

 After replacing $E$ by an equivalent cofree $G$-spectrum, the fact that $E^{hG} \to E$ is a faithful $K(n)$-local Galois extension of $G$-spectra implies that the homotopy category of $K(n)$-local $E^{hG}$-module spectra is equivalent to the homotopy category of $K(n)$-local $E$-modules in genuine \(G\)-spectra. The latter is the category of genuine $G$-spectra with a compatible $E$-module structure, whose Picard group we denote by $\PicRdashG{E}{G}$. This allows us to identify the Picard groups  
\[
\PichG{E}{G}\cong \PicRdashG{E}{G}.
\]
This translates our problem into that of computing the Picard group of the $C_4$-equivariant ring spectrum \(E\). 

In general, if \(R\) is a \(G\)-equivariant commutative ring spectrum, then the groups 
\[
\PicRdashG{R}{H}=\PicRdashG{i_{H}^{\ast}R}{H},
\]
as $H$ runs through the subgroups of $G$, assemble into a Mackey functor which we will denote by \(\mPic(R)\). The restriction maps come from the ordinary restriction functors in genuine \(G\)-spectra. Since these are strong symmetric monoidal functors, they induce homomorphisms on \(\Pic\). The transfer maps are given by the norm maps in the category of \(R\)-modules. These are also strong symmetric monoidal functors, so they induce homomorphisms on \(\Pic\). The Mackey compatibility is inherited from the corresponding statements in the homotopy category of \(R\)-modules. 

Our main result is then the following theorem.
\begin{thm}\label{thm:fullpicmackey}
Let $\Gamma$ be a formal group law of height $2$ over $\F_2$ and let $\fk$ be the algebraic closure of $\F_2$. Let $E=E(\fk, \Gamma)$ be the associated Morava $E$-theory and $\G=\G(\fk, \Gamma)$ the associated Morava stabilizer group. Let $C_4\subseteq \G$ be a cyclic subgroup of order $4$, which necessarily contains $C_2 =\{\pm 1\}$.
Then there are isomorphisms
\begin{align*}
\PicRdashG{E}{C_4}\cong \Z/32\{ E \smsh S^1\} \oplus \Z/2\{ E \smsh S^{7+\sigma}\}
\end{align*}
and 
\begin{align*}
\PicRdashG{E}{C_2}\cong \Z/16 \{E \smsh S^1\}.
\end{align*}
As a Mackey functor, this assembles into
\[\Mackey{  \mPic(E)(C_4/C_4) \cong \Z/32 \oplus \Z/2}{  \mPic(E)(C_4/C_2) \cong \Z/16}{  \mPic(E) (C_4/\{e\}) \cong \Z/2 .}{0}{1}{\left[ \begin{matrix} 26 \\ 1 \end{matrix}\right]}{\left[ \begin{matrix}1 & 8 \end{matrix}\right]}\]
\end{thm}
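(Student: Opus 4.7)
The plan is to apply the descent (Picard) spectral sequence of Mathew--Stojanoska to the faithful $K(2)$-local $C_4$-Galois extension $E^{hC_4} \to E$:
$$E_2^{s,t} = H^s(C_4;\, \pi_t\, \pic(E)) \implies \pi_{t-s}\, \pic(E^{hC_4}),$$
whose abutment at $t - s = 0$ is $\Pic(E^{hC_4}) \cong \PicRdashG{E}{C_4}$. The $E_2$-input uses $\pi_0 \pic(E) = \Pic(E) = \Z/2$ (Baker--Richter), $\pi_1 \pic(E) = \pi_0(E)^\times$, and $\pi_t \pic(E) = \pi_{t-1}(E)$ for $t \geq 2$. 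Above the Mathew--Stojanoska comparison line, the Picard spectral sequence agrees up to a shift with the HFPSS for $E$, so all its differentials and permanent cycles are dictated by the $\RO(C_4)$-graded HFPSS computed in the preceding sections.

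Since $\pi_{\mathrm{odd}}(E) = 0$, the only contributions to the $0$-stem come from the filtration-$0$ piece $\Pic(E)^{C_4} = \Z/2$, the filtration-$1$ piece $H^1(C_4; \pi_0(E)^\times)$ (which accounts for most of the $32$-fold periodicity), and the terms $H^{2k+1}(C_4; \pi_{2k}(E))$ for $k \geq 1$. A horizontal vanishing line in the $\RO(C_4)$-graded HFPSS, arising from the nilpotence of the Euler classes $a_\sigma$ and $a_\lambda$ after the main differentials, truncates the potential contributions to finitely many classes whose survival is read off directly.

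The next step is to realize the abstract contributions as concrete Picard elements. The cyclic summand of order $32$ is generated by $[E \wedge S^1]$; its order is exactly $32$ because $\pi_\ast E^{hC_4}$ has periodicity exactly $32$ (from the HFPSS computation and from \cite{BobkovaGoerss, henn_centr}). For the exotic summand, we analyze $[E \wedge S^{7+\sigma}]$, automatically invertible as a representation-sphere smash. Its order and its nontriviality modulo $\langle [E \wedge S^1]\rangle$ are deduced by computing the full map $\RO(C_4) \to \PicRdashG{E}{C_4}$ via the algebraic Picard group of the $\RO(C_4)$-graded Mackey functor $\underline{\pi}_\star(E)$, combined with HFPSS detection of specific permanent cycles: the composite $\Z\cdot(7+\sigma) \to \RO(C_4) \to \PicRdashG{E}{C_4}$ factors through $\Z/2$, while its image does not lie in the cyclic subgroup generated by $[E\wedge S^1]$.

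The main obstacle, and final step, is the Mackey-functor assembly. Once $\PicRdashG{E}{C_4}$ is pinned down to have order $64$ with an element of order $2$ outside $\langle [E \wedge S^1]\rangle$, the isomorphism type $\Z/32 \oplus \Z/2$ is forced. The restriction matrix $\begin{bmatrix} 1 & 8 \end{bmatrix}$ is obtained by restricting to $C_2$, where the $C_2$-real orientation $MU_\R \to E$ renders $\sigma|_{C_2}$ orientable, so $[E \wedge S^{7+\sigma}]|_{C_2} = 8\,[E \wedge S^1] \in \PicRdashG{E}{C_2} \cong \Z/16$; this also independently verifies the bottom row of the Mackey functor. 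The transfer matrix $\begin{bmatrix} 26 \\ 1 \end{bmatrix}$ is computed from the multiplicative norm $N_{C_2}^{C_4}$ applied to the generator of $\PicRdashG{E}{C_2}$ and tracked through the algebraic Picard Mackey functor of $\underline{\pi}_\star(E)$ via Frobenius-reciprocity-type identities.
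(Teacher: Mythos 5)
Your outline follows the same architecture as the paper's proof: a lower bound from the image of the representation-sphere map $RO(C_4)\to\PicRdashG{E}{C_4}$, an upper bound from the Picard descent spectral sequence, and Mackey assembly via restriction of representations and the norm (your restriction value $8$ is correct, though the reason is simply that $\sigma$ restricts to the \emph{trivial} representation of $C_2$, so $7+\sigma\mapsto 8$; no orientability input is needed, and the transfer $\left[\begin{smallmatrix}26\\1\end{smallmatrix}\right]$ comes from $\Ind_{C_2}^{C_4}(1)=1+\sigma=26\cdot 1+(7+\sigma)$). The genuine gap is your claim that, above the comparison line, the Picard spectral sequence differentials are ``dictated by'' the $RO(C_4)$-graded HFPSS so that survival of the finitely many $0$-stem classes can be ``read off directly.'' That is precisely where the comparison fails. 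The $0$-stem classes in filtrations $3,5,7$ sit on the unstable boundary $s=t=r$, where by \fullref{thm:diffpic} the differential is $d_{\subpic,r}(x)=(d_{+,r}(x)+x^2)^{\subpic}$; the $x^2$-term makes these maps Frobenius-semilinear over $\fk$ (resp.\ $\fk[\![\mu]\!]$), and it is this twist, not the additive HFPSS differential, that produces kernels isomorphic to $\F_2$. Worse, for the filtration-$7$ class at the $C_4/C_4$ level the relevant differential is a $d_{11}$ (with target $\nu^2\varpi^8\Delta_1^{-7}$), which is longer than $t=7$, so neither the stable comparison nor the unstable formula applies; the paper needs a new compatibility statement between transfers and the connecting maps of truncated $\mathrm{gl}_1$-spectral sequences (\fullref{prop:GLtransfers}) to push the unstable $d_{\subpic,7}$ at $C_4/C_2$ through the transfer and obtain the required $d_{\subpic,11}$ at $C_4/C_4$ --- this is the ``crucial step'' where the Mackey structure of the Picard spectral sequence is used. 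Without these inputs the contributions $H^{2k+1}(C_4,E_{2k})$ to the $0$-stem are $\fk$- or $\fk[\![\mu]\!]$-modules, hence infinite, and no finite upper bound on $\PicRdashG{E}{C_4}$ --- let alone the exact order $64$ --- follows from your sketch.

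Two further points. Your filtration-$1$ input $H^1(C_4,E_0^\times)$ is treated as known, but its computation (it is $\Z/4$, \fullref{prop:H14}, proved by Bockstein spectral sequences on the filtration of the units) is an essential part of the upper bound; note also it has order $4$, so it does not by itself ``account for most of the $32$-fold periodicity'' --- the order $64$ arises from assembling the surviving pieces in filtrations $0,1,3,5,7$. On the lower-bound side, the two facts you assert --- that $2(7+\sigma)\equiv 0$ and that $E\smsh S^{\sigma-1}$ is not an integer suspension of $E$ --- are exactly the content of the orientability statements (the permanent cycles $u_{2\sigma}u_\lambda^4$, $u_{2\sigma}^2$, $u_\lambda^8$) and of the comparison of $\spi_{1-\sigma+\ast}E$ with $\spi_\ast E$ from the full $RO(C_4)$-graded Mackey HFPSS; your proposal names these but supplies no argument. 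In short, you have reproduced the paper's strategy in outline while leaving open its two hardest verifications: the semilinear/unstable differential analysis (including the transfer-propagated $d_{11}$) and the algebraic unit computation.
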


The result (and its proof) for $E^{hC_2}$ is a special case of  \cite{heardlishi}. We include the computations here since they are necessary for our analysis of the Mackey functor $\mPic(E)$. Together with \cite[Proposition 3.10]{HeMaSt}, \Cref{thm:fullpicmackey} has the following immediate consequence:
\begin{cor}
Let $C_6\subseteq \G$ be the subgroup generated by $-1$ and a third root of unity.
There is an isomorphism
$\PichG{E}{C_6}\cong \Z/48$.
\end{cor}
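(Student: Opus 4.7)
The plan is to apply \cite[Proposition 3.10]{HeMaSt} to transfer the computation $\PichG{E}{C_2} \cong \Z/16$ from \fullref{thm:fullpicmackey} to $\PichG{E}{C_6}$. Structurally, $C_2 \trianglelefteq C_6$ with quotient $C_3$, and $E^{hC_6} \simeq (E^{hC_2})^{hC_3}$ is a faithful $K(2)$-local Galois extension whose Galois group has order coprime to the residue characteristic $p=2$. Under such ``tame'' hypotheses, Prop.~3.10 of HeMaSt produces a descent-type identification of $\Pic$ involving the $C_3$-invariants of $\Pic(E^{hC_2})$ together with a low-degree cohomological contribution from the units of $\pi_0$.

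I would then proceed in two steps. First, identify $\Pic(E^{hC_2})^{C_3}$: since $\PichG{E}{C_2}\cong\Z/16$ is cyclic and $|\Aut(\Z/16)|=8$ is coprime to $3$, any $C_3$-action on it is trivial, so the invariants are all of $\Z/16$. Second, compute $H^1(C_3;\pi_0(E^{hC_2})^\times)$: the $2$-complete ring $\pi_0(E^{hC_2})$ contains the Witt vectors over the algebraically closed residue field $\fk = \bar{\F}_2$, whose unit group carries a faithful $C_3$-action via the embedding $C_3 \hookrightarrow \G$, contributing a $\Z/3$ summand in $H^1$. All higher descent terms $H^{>1}(C_3;-)$ vanish since $|C_3|=3$ acts on $2$-complete input.

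These two inputs combine, via the short exact sequence provided by Prop.~3.10, into an extension
\[
0 \to \Z/3 \to \PichG{E}{C_6} \to \Z/16 \to 0.
\]
Since $\gcd(16,3)=1$, the extension is canonically split and $\PichG{E}{C_6} \cong \Z/16 \oplus \Z/3 \cong \Z/48$, which is the desired cyclic group of order $48$.

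The main obstacle lies in tracking the $C_3$-action on $\pi_0(E^{hC_2})^\times$ with sufficient care to identify the $\Z/3$ in $H^1$ and in verifying that no higher descent differentials interfere; however, since the corollary is advertised as an \emph{immediate} consequence of \fullref{thm:fullpicmackey}, both of these inputs should already be packaged into the hypotheses and conclusion of Prop.~3.10, leaving only the assembly above.
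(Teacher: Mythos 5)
Your overall strategy is the paper's own: the corollary is obtained by feeding $\PichG{E}{C_2}\cong\Z/16$ from \fullref{thm:fullpicmackey} into the coprime-order descent statement of \cite[Proposition 3.10]{HeMaSt} for the $C_3$-Galois extension $E^{hC_6}\to E^{hC_2}$, and your assembly (invariants $\Z/16$, a $\Z/3$ from $H^1$ of the units, split extension since $\gcd(16,3)=1$) is exactly the intended one. The step about the invariants is fine: any $C_3$-action on $\Z/16$ is trivial, and in any case the suspension generator is visibly fixed.

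The genuine problem is in how you justify the one real computation, $H^1\bigl(C_3;(\pi_0E^{hC_2})^\times\bigr)\cong\Z/3$. The $C_3\subseteq\G$ generated by a third root of unity lies in the small stabilizer group $\Aut_{\fk}(\Gamma)$, so it acts $\W$-linearly on Lubin--Tate theory; its action on $\W\subset\pi_0E^{hC_2}$, hence on the Teichm\"uller units $\fk^\times\subset\W^\times$, is \emph{trivial}, not faithful as you assert. Indeed it is precisely this triviality that produces the $\Z/3$: since $\pi_0E^{hC_2}\cong\W[\![\mu_0]\!]$ (all in filtration $0$; see \fullref{sec:cohomology} and \fullref{table:finalfortrivial}), one has a $C_3$-equivariant splitting $(\W[\![\mu_0]\!])^\times\cong\fk^\times\times(1+\mathfrak{m})$ with $\mathfrak{m}=(2,\mu_0)$; the pro-$2$ group $1+\mathfrak{m}$ has vanishing higher $C_3$-cohomology, and $H^1(C_3,\fk^\times)=\Hom(C_3,\mu_3(\fk))\cong\Z/3$ for the trivial action. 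Note this argument gives $H^1$ \emph{equal} to $\Z/3$, which you need for the upper bound $48$; asserting only ``a $\Z/3$ summand'' is not enough. Relatedly, your claim that all higher descent terms die because ``$3$ acts on $2$-complete input'' covers $H^s(C_3,\pi_{s-1}E^{hC_2})$ for $s\ge 2$, but not the unit-group terms: $(\pi_0E^{hC_2})^\times$ is not $2$-complete (it contains $\fk^\times$). The possible obstruction to surjectivity onto $\Pic(E^{hC_2})^{C_3}$ sits in $H^2\bigl(C_3,(\pi_0E^{hC_2})^\times\bigr)$, which vanishes because $\fk^\times$ is $3$-divisible and $1+\mathfrak{m}$ is pro-$2$ (alternatively, surjectivity is clear because $\Sigma E^{hC_6}$ restricts to the generator $\Sigma E^{hC_2}$ of the invariants). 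With these corrections your argument is complete and agrees with the paper's.
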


Our approach is to study the map $J_R^{G} \colon RO(G) \to \PicRdashG{R}{G}$, for $R$ an \(E_{\infty}\)-ring $G$-spectrum, given by
\[
J_{R}^{G}(V)=R\wedge S^{V}.
\]
When \(R\) is a \(G\)-equivariant commutative ring spectrum, then these homomorphisms assemble into a map of Mackey functors
\[
\m{J}_{R}\colon\m{\RO}\to\mPic(R),
\]
where here \(\m{\RO}\) is the representation ring Mackey functor. We determine the image of \(\m{J}_{E}\), which gives us lower bounds for the Picard groups.

To prove that these are also upper bounds, we use the Picard homotopy fixed point spectral sequence. In fact, we need the structure of this spectral sequence as a spectral sequence of Mackey functors for a crucial step in the argument. On the way, as an input to the $E_2$-term of this spectral sequence, we also compute the \emph{algebraic Picard groups} $\PicalgRG{E_0}{C_4}$ and $\PicalgRG{E_0}{C_2}$. These also naturally assemble into a coefficient system, where the value at \(G/H\) is the Picard group of the category of \(i_{H}^{\ast}E_{0}\)-modules in \(H\)-modules. This coefficient system can be computed via the isomorphism
\[
\mPic(E_{0})(G/H) \cong H^1(H;i_{H}^{\ast}E_0^{\times}).
\]
The restriction maps are determined by naturality for group cohomology, while the transfer maps are given on representing modules by tensor induction in the category of \(E_{0}\)-modules.

We find that
$\PicalgRG{E_0}{G}$ are cyclic groups of order $4$ when $G=C_4$ and $2$ when $G=C_2$. Note that in both cases, $2|\PicalgRG{E_0}{G}|$ is the periodicity of the cohomology $H^*(G, E_t)$ in $t$, which in turn is the size of $\PicalgRG{E_*}{G}$.

A key input to our computations is the knowledge of the homotopy fixed point spectral sequence of Mackey functors computing $\spi_{\star}E$. The ingredients for such a computation appear in various places in the literature. In particular, many of the pieces necessary to do this computation appear in \cite{HHRC4}. In \Cref{sec:cohomology}, we describe this computation. Many results and much notation from \Cref{sec:cohomology} are used in proofs throughout the paper, but we have attempted to keep the narrative as free of this dependence as possible. Note in passing that these computations together with our result on the Picard group give a complete description of the $RO(C_4)$-graded Mackey functor $\spi_{\star}E$ (\Cref{rem:allcompsintwo}).

\subsection*{Organization} 
This is a brief outline of the paper. In \Cref{sec:Z2}, we discuss the map $J_R^G$. In \Cref{sec:Etheorybig} compute its image in the cases of interest. This gives a lower bound on the order of $\PichG{E}{C_4}$ and, in particular, proves that this group is not cyclic. In \Cref{sec:computations}, we review the computations of the homotopy fixed point spectral sequences needed for the rest of the paper.
In \Cref{sec:alg}, we compute the Mackey functor $\underline{H}^1(C_4, E_0^{\times})$. In \Cref{sec:pic}, we discuss some equivariant properties of the Picard spectral sequence and then use this spectral sequence to give the upper bounds on $\PichG{E}{C_4}$ and $\PichG{E}{C_2}$. % \Cref{sec:tabandfigs} contains tables and figures.

\subsection*{Acknowledgements}
We would like to thank Mike Hopkins, Lennart Meier, Doug Ravenel, XiaoLin Danny Shi and Mingcong Zeng for useful conversations. We would like to thank the anonymous referee for helpful comments. This material is based upon work supported by the National Science Foundation under grants No.~DMS--1725563, DMS--1638352, DMS--1811189, DMS--2005627 and DMS--1812122.
The authors would like to thank the Isaac Newton Institute for Mathematical Sciences for support and hospitality during the program Homotopy Harnessing Higher Structures when a large part of writing this paper was undertaken. This work was supported by EPSRC Grant Number EP/R014604/1.

% !TEX root = master.tex
\section{Preliminaries}\label{sec:prelim}
\subsection{Equivariant homotopy theory}\label{sec:eqhomotopytheory}
We review some notation and results from equivariant homotopy theory that will be used throughout the paper. For a finite group $G$, we will be working in the category of genuine $G$-spectra as in \cite{HHR}.

As usual, $RO(G)$ denotes the ring of real orthogonal virtual representations of the group $G$. For  a genuine $G$-spectrum $X$, its equivariant homotopy groups assemble into an $RO(G)$-graded Mackey functor ${\spi}_{\bigstar}X$ given by
\[
{\spi}_{V}(X)(G/H) = \pi_{V}^H(X) =[S^V, X]^H.
\]
Here $V \in RO(G)$ and $[S^V, X]^H$ denotes the genuine $H$-equivariant homotopy classes of maps. We simply write 
\[\pi_V X = [S^V, X]^e \]
in the case of the trivial group $H=e$. The conjugation action of $G$ on homotopy classes of maps $[S^V, X]$ induces an action of $G$ on $\pi_V X$. For $d\in \Z$, we may also use the notation $\pi_d i_e^*X = [S^d, i_e^*X]$ when we want to stress the fact that we are considering the homotopy groups of the underlying spectrum $i_e^*X$.

Spectra like the Morava $E$-theory spectra arise naturally not as genuine \(G\)-spectra but rather as \(G\)-objects in the category of spectra. We have a homotopically meaningful way to lift these \(G\)-objects in spectra to genuine \(G\)-spectra, however.  The \emph{cofree localization}
\[
R\mapsto F(EG_{+},R)
\]
takes equivariant maps which are underlying weak equivalences to genuine equivariant equivalences. As such, we can view it as a functor 
\[
F(EG_{+},-)\colon \Sp^{BG}\to \Sp^{G}
\]
from \(G\)-objects in spectra to genuine \(G\)-spectra \cite[\S 2.2.1]{hill_meier}. Moreover, this functor is lax symmetric monoidal, and hence takes \(E_{\infty}\)-ring spectra on which \(G\) acts via \(E_{\infty}\)-ring maps to \(E_{\infty}\)-ring objects in genuine \(G\)-spectra. In fact, if \(R\) is an \(E_{\infty}\)-ring spectrum in \(\Sp^{BG}\), then the cofree spectrum \(F(EG_{+},R)\) inherits an action of a \(G\)-\(E_{\infty}\)-operad, and hence we have all norms \cite[Theorem 2.4]{hill_meier}. Since \(G\)-\(E_{\infty}\)-ring spectra are equivalent to commutative ring objects in genuine \(G\)-spectra, we can therefore view any of these as equivariant commutative ring spectra.

\begin{rem} \label{rem:promotingE}
If $\Gamma$ is a formal group law of height $n$ over a perfect field $k$ of characteristic $p$ such that $G \subseteq \Aut_{k}(\Gamma)$ and \(E(k, \Gamma)\) is the associated Morava $E$-theory spectrum, then by the Goerss--Hopkins--Miller theorem, $E(k, \Gamma)$ is an $E_{\infty}$-ring and the action of $G$ is by $E_{\infty}$-ring maps. So this allows us to view \(E(k, \Gamma)\) as a commutative ring object in genuine \(G\)-equivariant spectra.
\end{rem}

\begin{notation}
If \(X\) is a spectrum with \(G\)-action, then let
\(X^h=F(EG_+,X)\).
For a subgroup $K$ of $G$, the homotopy fixed point spectrum $X^{hK}$ is just the \(K\)-fixed points of \(X^h\). 
\end{notation}

\subsection{The Mackey functor homotopy fixed point spectral sequences}\label{sec:hfpssmackey}

Next, we recall the setup for working with a full Mackey functor of homotopy fixed point spectral sequences (Mackey HFPSS). We work in the same context as in \cite{hill_meier}.

For any $G$-module $M$, we let $\underline{H}^*(G, M)$ be the Mackey functor determined by
\[
\underline{H}^*(G, M)(G/K) = {H}^*(K, i_K^\ast M) 
\]
with the standard group cohomology restrictions and transfers.

As in the construction of the classical homotopy fixed point spectral sequence, the filtration on $F(EG_+,X)$ induced from the skeletal filtration of $EG \simeq \varinjlim EG^{(\bullet)}$ gives rise to
an $RO(G)$-graded 
spectral sequence of Mackey functors \cite[Proposition 2.8]{hill_meier} with
\[
 \underline{E}_2^{s,V} =\underline{H}^s(G,\pi_V X)    \Longrightarrow {\spi}_{V-s}X^{h}.\]
Here, $s \in \Z_{\geq 0}$ and $V \in RO(G)$. The differentials 
\[d_r = d_{r}^{s,V} \colon \underline{E}_r^{s,V} \to \underline{E}_r^{s+r,V+r-1}\] 
commute with the restrictions and transfers.
In particular, if we restrict $V$ to the trivial representations, evaluating the Mackey HFPSS at $G/H$ recovers the standard homotopy fixed point spectral sequence computing $\pi_*(X^{hH})$. 
If $X$ is a ring spectrum, then the spectral sequence is multiplicative. 

\begin{rem}\label{rem:slicess}
The homotopy fixed point spectral sequence can also be constructed using the filtration of $F(EG_+, X)$ induced by the slice filtration $X \simeq \varprojlim P^{\bullet}X$. Ullman shows that there is a natural map
\[ 
P^{\bullet}X\to F(EG_+, P^{\bullet}X),
\]
and this induces a map of spectral sequences from the slice spectral sequence of $X$ 
\[ 
E_2^{s,V} \cong \spi_{V-s} P_{d}^{d} X  \Longrightarrow {\spi}_{V-s} X 
\]
(where $d=\dim(V)$) to the homotopy fixed point spectral sequence of $X$. If $X$ is a ring spectrum, this is a map of multiplicative spectral sequences \cite[Theorem I.9.1]{ullman}.
\end{rem}

\subsection{Real Representations of $C_2$ and $C_4$}\label{sec:realreps}

We recall the structure of the representation rings $RO(G)$, when $G$ is $C_4$ or $C_2$, establishing some notation for the rest of the paper.
Let $\gamma=\gamma_4$ be a generator of $C_4$ and $\gamma_2$ a generator of $C_2$. These are the real representations of interest:
\begin{itemize}
\item The trivial representation $1$;
\item The sign representation $\sigma$ of $C_4 $ (on which $\gamma$ acts as $-1$);
\item The sign representation $\sigma_2$ of $C_2 $ (on which $\gamma_2$ acts as $-1$);
\item The two-dimensional irreducible $C_4$-representation $\lambda$; this is $\R^2$ on which $\gamma$ acts by rotation by $\pi/2$;
\item The regular representation of $C_2$, $\rho_2=1+\sigma_2$; and
\item The regular representation of $C_4$, $\rho = \rho_4=1+\sigma + \lambda$.
\end{itemize}
Then we have 
\begin{align*}
RO(C_2) &\cong \Z[\sigma_2]/(\sigma_2^2-1), \text{ and}\\
RO(C_4) &\cong \Z[\sigma, \lambda]/(\sigma^2-1, \sigma\lambda -\lambda, \lambda^2 -2-2\sigma).
\end{align*}

We also use the following standard notation throughout. See for example Definition 3.4 of \cite{HHRC4}.
\begin{notation}\label{notn:aV}
For $V \in RO(G)$, the inclusion $S^0 =\{0,\infty\} \to S^V $ is an equivariant map denoted by $a_V \in \pi_{-V}^{G}S^0$. If $R$ is a ring, the image of $a_V$ under the unit in $\pi_{-V}^{G}R$ is also denoted by $a_V$.
\end{notation}

% !TEX root = master.tex

\section{The homomorphism $\JRG$}\label{sec:Z2}

Let \(\mathcal O\) be an \(E_{\infty}\)-operad, and suppose that $R$ is an \(\mathcal O\)-algebra in genuine equivariant spectra. 
There is a good symmetric monoidal category of \(R\)-modules in genuine \(G\)-spectra \cite{BHGSym}, and base-change along the unit map
\[
S^{0}\to R
\] 
gives us a group homomorphism
\[
\JRG \colon RO(G) \to \Pic_{G}(R),
\]
where \(\Pic_{G}(R)\) is the Picard group of the category of \(R\)-modules in genuine \(G\)-spectra. Explicitly, this homomorphism is given by
\[
\JRG(V) = R\smsh S^{V},
\]
and since the target depends only on the equivariant equivalence class of the \(V\)-sphere, this factors through the \(JO\)-equivalence classes of representations (i.e., equivalences of associated spherical bundles). This gives us the name.

To apply this for spectra like Morava $E$-theory, we must promote this naive equivariant commutative ring spectrum to a genuine equivariant ring spectrum.
The formal procedure to do this was described in \Cref{sec:eqhomotopytheory} and has nice properties for \(\Pic\).

For $R$ a ring spectrum, let $\Mod(R)$ be the homotopy category of $R$-modules in spectra and let $\Mod_G(R)$ be the homotopy category of $R$-modules in the category of $G$-spectra. The proof of the following result is the discussion immediately following the statement of Theorem 6.4 in \cite{hill_meier}.

\begin{prop}\label{prop:equivalencemodules}
Let $R^{hG} \to R$ be a faithful $G$-Galois extension, where $R$ is a cofree $G$ ring spectrum. Then $\Mod(R^{hG})$ and $\Mod_G(R)$ are equivalent categories. In particular, $\Pic(R^{hG}) \cong \Pic_G(R)$.
\end{prop}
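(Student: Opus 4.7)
The plan is to exhibit a strong symmetric monoidal equivalence of categories $\Mod(R^{hG}) \simeq \Mod_G(R)$; the Picard isomorphism then follows automatically since any symmetric monoidal equivalence induces a bijection on isomorphism classes of $\otimes$-invertible objects. So the real work is producing the underlying equivalence.

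The comparison functor is base change along the unit of the Galois extension,
\[
\Phi \colon \Mod(R^{hG}) \longrightarrow \Mod_G(R), \qquad M \longmapsto R \wedge_{R^{hG}} M,
\]
where the $G$-action on $R \wedge_{R^{hG}} M$ comes from the given $G$-action on $R$ by $R^{hG}$-algebra maps. Because $R$ is cofree, the target genuine $G$-equivariant $R$-module structure is recovered functorially from this Borel-equivariant data (this is the statement underlying \fullref{rem:promotingE} applied fiberwise in $R$-modules). The right adjoint is the $G$-fixed points functor $N \mapsto N^G$, which for cofree $N$ agrees with $N^{hG}$. Since extension of scalars along a map of commutative ring spectra is always strong symmetric monoidal, $\Phi$ is automatically so, and I need only check that its unit and counit are equivalences.

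For this, I would invoke the standard Galois descent package. Faithfulness of the $G$-Galois extension guarantees that $\Phi$ is conservative, and the Galois condition
\[
R \wedge_{R^{hG}} R \;\simeq\; F(G_+, R)
\]
verifies the counit on the free generator $N = R$; by conservativity and a thick-subcategory argument, this propagates to every $N \in \Mod_G(R)$. Dually, for the unit I would use the Amitsur cosimplicial resolution of $R^{hG}$ by tensor powers of $R$ together with cofreeness, so that $M \to (R \wedge_{R^{hG}} M)^{hG}$ becomes the totalization of the cobar complex and is an equivalence by faithfully flat (in the $K(n)$-local sense) descent. In the setting at hand, this entire package is precisely what is proved in Theorem 6.4 of \cite{hill_meier}, so in practice the proof would consist of unpacking and citing that theorem.

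The main obstacle is the bookkeeping between Borel-equivariant and genuine-equivariant $R$-module structures: one needs that the cofreeness of $R$ makes the categorical $G$-fixed points on $R$-modules a well-behaved right adjoint to base change, and that the resulting adjunction lands in the \emph{genuine} equivariant module category. Once this is granted, the descent argument is formal; without it, the statement is false in general. Establishing this compatibility is the content of the cited Hill--Meier theorem, and it is the place where cofreeness (rather than just a naive $G$-action) is essential.
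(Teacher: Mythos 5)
Your proposal is correct and is essentially the paper's argument: the paper proves this proposition simply by appealing to the discussion immediately following Theorem 6.4 of \cite{hill_meier}, which is exactly the Galois-descent package (base change along $R^{hG}\to R$, cofreeness identifying fixed points with homotopy fixed points, and faithfulness) that you outline before citing the same theorem. The extra unpacking of the descent argument you give is consistent with that reference, so there is no substantive difference in approach.
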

\begin{cor}\label{cor:MN}
If $M$ and $N$ are $R$-modules for $R$ as in \Cref{prop:equivalencemodules}, then $M^{hG} \simeq N^{hG}$ if and only if $M \simeq N$ as $R$-modules in $G$-spectra. In particular, $M^{hG}$ and $N^{hG}$ represent the same elements in $\Pic(R^{hG})$ if and only if there is a $G$-equivariant equivalence of $R$-modules $M \simeq N$.
\end{cor}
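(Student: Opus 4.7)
The statement is essentially an immediate corollary of \fullref{prop:equivalencemodules}, so the proof will be short and formal; the main task is to identify the functor realizing the equivalence and then invoke its formal properties.

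The plan is to appeal directly to the equivalence of categories $\Mod(R^{hG}) \simeq \Mod_G(R)$ from \fullref{prop:equivalencemodules}. First I would recall (from the discussion around \cite[Theorem 6.4]{hill_meier} cited in that proposition) that this equivalence is realized by the homotopy fixed point functor $(-)^{hG}\colon \Mod_G(R) \to \Mod(R^{hG})$, with inverse given by base change along $R^{hG} \to R$. This identification of the functor is the only nontrivial input, and everything else is formal.

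Once the functor is identified, the first claim follows from the fact that any equivalence of categories both preserves and reflects isomorphisms: $M^{hG} \simeq N^{hG}$ as $R^{hG}$-modules if and only if $M \simeq N$ as $R$-modules in $G$-spectra. For the ``in particular'' assertion about the Picard group, I would note that two objects of a symmetric monoidal category represent the same class in $\Pic$ precisely when they are isomorphic. Since the equivalence is symmetric monoidal (this is what makes the induced map $\Pic(R^{hG}) \to \Pic_G(R)$ in \fullref{prop:equivalencemodules} an isomorphism), the equivalence restricts to an equivalence of Picard groupoids, and the conclusion follows.

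There is no real obstacle here; the only subtlety worth a sentence is being explicit that the equivalence of \fullref{prop:equivalencemodules} is implemented by $(-)^{hG}$ (rather than merely asserting abstract existence of \emph{some} equivalence), since the statement of the corollary names this specific functor on the $\Mod(R^{hG})$ side.
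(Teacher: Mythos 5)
Your proposal is correct and matches the paper's (implicit) argument: the paper states this corollary without further proof, treating it as an immediate consequence of \fullref{prop:equivalencemodules}, where the equivalence $\Mod(R^{hG})\simeq\Mod_G(R)$ is realized by $(-)^{hG}$ with inverse given by base change, so that the equivalence preserves and reflects isomorphisms and restricts to Picard groupoids. Your added care in identifying the functor realizing the equivalence is exactly the right point to make explicit.
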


We will use the following notation. If $X$ is a spectrum and $u\colon X \to M$ is a map where $M$ is an $R$-module spectrum, we let $u^R$ be the composite
\[ 
R \smsh X \xra{R \smsh u}  R \smsh M \xra{ \mu_M}  M
\]
where $\mu_M \colon  R \smsh M \to M$ is the module structure map. In particular, we can apply this construction for any map $X \to R$ using the $R$-module structure on $R$ given by multiplication $\mu \colon R \smsh R \to R$.

The group $\Pic_{G}(R)$ contains a cyclic subgroup generated by $\Sigma R$, or equivalently the image $\JRG(\Z\{1\})$ of $\Z\{1\} \subseteq RO(G)$, where $1$ denotes the trivial one-dimensional representation. We let $d\in \Pic_{G}(R)$ denote the element  $\Sigma^d R$.

\begin{defn}\label{defn:Rorientable}
Let $V$ be a $G$ representation of dimension $d$. An \emph{$R$-orientation $u_V$ for $V$} is a $G$-equivariant map 
\[
u_V \colon S^d \to  R\smsh S^V
\]
such that $u_V^R \colon R\smsh S^d \to R\smsh S^V$ is a $G$-equivalence. 

A representation $V$ is $R$-orientable if there exists an $R$-orientation for $V$.
\end{defn}

\begin{rem}\label{rem:uVR}
Given a $G$-equivalence $u_V^R \colon R\smsh S^d \to R\smsh S^V$ which is also a map of $R$-modules, we can precompose $u_V^R$ with $1\smsh S^d \colon S^0 \smsh S^d \to R\smsh S^d$ where $1$ is the unit of $R$ to obtain an orientation $u_V$.
 \end{rem}
 
By construction, weak equivalences between cofree spectra are detected on the underlying, non-equivariant homotopy. This gives us a way to detect \(R\)-orientability.

\begin{prop}
If $R$ is cofree and there is an element $u_V \in \pi_d^{G}( R \smsh S^V)$ such that $u_V^R$ induces an underlying equivalence, then $V$ is $R$-orientable.
\end{prop}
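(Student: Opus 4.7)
The plan is to show that $u_V^R$ is a genuine $G$-equivalence by reducing to the detection principle just recalled: a map between cofree $G$-spectra is a $G$-equivalence if and only if it is a non-equivariant equivalence. By hypothesis, $u_V^R$ is already an underlying equivalence, so the only thing to verify is that both its source and target are cofree; granting this, the conclusion is immediate from \fullref{defn:Rorientable} together with \fullref{rem:uVR}.

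Since $R$ is cofree we have $R \simeq F(EG_+, R)$. The representation sphere $S^V$ is invertible in the genuine $G$-equivariant stable category, and in particular strongly dualizable, so the natural map
\[
F(EG_+, R) \smsh S^V \longrightarrow F(EG_+, R \smsh S^V)
\]
is a $G$-equivalence. Combining these identifications gives $R \smsh S^V \simeq F(EG_+, R \smsh S^V)$, so that $R \smsh S^V$ is cofree; the same reasoning (specialized to the trivial representation of dimension $d$) shows $R \smsh S^d$ is cofree. Therefore $u_V^R$ is a map between cofree $G$-spectra that is an underlying equivalence, hence a genuine $G$-equivalence. Thus $u_V$ satisfies the condition in \fullref{defn:Rorientable} and $V$ is $R$-orientable.

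The argument is essentially a formal consequence of two ingredients: the cofree detection principle, and the interaction of the cofree localization with smashing by dualizable objects. The only real point to justify is the closure of cofree spectra under smashing with $S^V$, and this rests on $S^V$ being strongly dualizable, which is automatic from its invertibility in the genuine equivariant stable category. Accordingly, I do not anticipate any substantive obstacle; the proposition is a clean packaging of these two facts.
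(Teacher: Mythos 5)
Your argument is correct and is exactly the one the paper has in mind: the proposition is stated right after the observation that equivalences between cofree spectra are detected on underlying homotopy, and the only point to supply is that $R\smsh S^d$ and $R\smsh S^V$ remain cofree, which you justify (as one should) by dualizability of representation spheres giving $F(EG_+,R)\smsh S^V\simeq F(EG_+,R\smsh S^V)$. Nothing further is needed.
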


\begin{prop}\label{lem:JGRorientable}
Let $V$ be a $G$ representation of dimension $d$, and assume that $R$ is cofree. 
Then the representation $V$ is $R$-orientable if and only if  $\JRG(V)=d$ in $\Pic(R)$.
\end{prop}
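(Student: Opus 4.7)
The plan is to prove both directions directly from the definitions, leveraging the fact that the orientation map $u_V^R$ is, by construction, a map of $R$-modules.

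For the forward direction, I assume we have an $R$-orientation $u_V \colon S^d \to R\wedge S^V$, meaning $u_V^R \colon R\wedge S^d \to R\wedge S^V$ is a $G$-equivalence. The key observation is that $u_V^R$ factors through the multiplication $\mu_{R\wedge S^V}$, so it is automatically a map of $R$-modules. Hence $u_V^R$ is an $R$-module equivalence in $G$-spectra, which gives $R\wedge S^V \simeq R\wedge S^d$ in $\Pic_G(R)$. Since $R\wedge S^d$ represents the class $d$, this means $\JRG(V) = d$.

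For the backward direction, I suppose $\JRG(V) = d$ in $\Pic_G(R)$, so there exists a $G$-equivariant $R$-module equivalence $f \colon R\wedge S^d \to R\wedge S^V$. I define
\[
u_V \colon S^d \cong S^0\wedge S^d \xrightarrow{\eta\wedge \mathrm{id}} R\wedge S^d \xrightarrow{f} R\wedge S^V,
\]
where $\eta \colon S^0\to R$ is the unit. The next step is to verify that $u_V^R = f$. This follows from a short diagram chase: expanding $u_V^R = \mu_{R\wedge S^V}\circ(R\wedge u_V)$ and using $R$-linearity of $f$ (to slide $f$ past $\mu$) together with the unit axiom $\mu_{R\wedge S^d}\circ(R\wedge \eta\wedge \mathrm{id}) = \mathrm{id}_{R\wedge S^d}$, one finds that $u_V^R = f$, which is a $G$-equivalence. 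Hence $u_V$ is an orientation for $V$.

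The verification is essentially formal, so there is no real obstacle; the main care needed is to track the module structures correctly and to confirm that the $R$-module equivalence class in $\Pic_G(R)$ is representable by an actual $R$-module map $f$, which is guaranteed since $\Pic_G(R)$ is the Picard group of the homotopy category of $R$-modules in $G$-spectra. The hypothesis that $R$ is cofree plays the role of ensuring that our categorical setup is the one identified in \fullref{prop:equivalencemodules}, and in particular is consistent with detecting equivalences of $R$-modules on underlying spectra as in the preceding proposition.
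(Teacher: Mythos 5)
Your argument is correct and is essentially the argument the paper intends: the forward direction is the formal observation that $u_V^R$ is the free--forgetful adjoint of $u_V$ and hence an $R$-module map, so a $G$-equivalence $u_V^R$ gives $R\smsh S^V\simeq R\smsh S^d$ in $\Pic_G(R)$, while your backward direction (precompose an $R$-module equivalence $f$ with the unit and check $u_V^R=f$) is exactly the construction recorded in \fullref{rem:uVR}. The paper states the proposition without a separate proof, so your write-up just makes this implicit reasoning explicit, including the minor point that a class equality in $\Pic_G(R)$ is realized by an actual $R$-module equivalence in the homotopy category.
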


Therefore, to compute the image of the map $\JRG$, it is useful to have a criterion for recognizing $R$-orientable representations. 

Now, let $V$ be a $d$-dimensional real representation of $G$. Recall that $V$ is \emph{orientable} in the classical sense if $G \to O(d) \cong \Aut(V)$ factors through $SO(d)$. Since $SO(d)$ is path connected, for any orientable representation $V$ of dimension $d$, the action of $g\in G$ is homotopic to the identity on $S^V$. It follows that there is an equivariant isomorphism of \(\pi_{\ast}i_e^*R\)-modules
\begin{equation}\label{eq:isohomotopy}
\pi_* (i_e^*R \smsh S^d ) \cong \pi_*i_e^*(R \smsh S^V) .
\end{equation}
Since the source is a free \(\pi_{\ast}i_e^*R\)-module on a fixed class in dimension \(d\) (corresponding to the element \(1\) in \(\pi_{0}i_e^*R\)), the isomorphism is equivalent to an element 
\[
u_{V}\in \big(\pi_{d}i_e^*(R\smsh S^{V})\big)^{G}.
\]
However, this is not the same as having an equivariant map
\[
S^{d}\to R\smsh S^{V},
\]
since the homotopy groups we are computing are just the underlying homotopy classes of maps. 
Even if $R$ is cofree, this does not imply that $V$ is $R$-orientable. 
\begin{defn}\label{defn:uVnonequivariant}
Let $V$ be a (classically) orientable representation of dimension $d$. A \emph{pseudo $R$-orientation} is a map of underlying spectra
\[
u_V \in\big( \pi_{d}i_e^*(R \smsh S^V) \big)^{G}
\]
such that $u_V^{i_e^*R} \colon i_e^* R \smsh S^d \to i_e^*(R \smsh S^V)$ induces an isomorphism
\[
\pi_*(i_e^*R \smsh S^d) \to \pi_* i_e^*(R \smsh S^V)
\]
of $G$-$\pi_*i_e^*R$-modules.  
\end{defn}

\begin{rem}\label{rem:pseudocofree}
In fact, if $R$ is cofree, a pseudo $R$-orientation 
\[
u_V \colon S^{d}\to R\smsh S^{V}.
\] 
is an $R$-orientation if and only if $u_V$ underlies a $G$-equivariant map of $G$-spectra.
\end{rem}

\begin{rem}\label{rem:units}
By construction, the pseudo $R$-orientations are units in the ring $(\pi_{\star}i_e^*R)^G$.
\end{rem}

\begin{prop}
Let $V$ be a classically orientable $G$ representation of dimension $d$, and let $R$ be cofree.
Then $V$ is $R$-orientable if and only if there exists a pseudo $R$-orientation 
\[
u_V \in \big( \pi_{d}i_e^*(R\smsh S^{V})\big)^{G} \cong H^0(G, \pi_{d}(R\smsh S^{V}))
\] 
which is a permanent cycle in the homotopy fixed point spectral sequence
\[ 
H^s(G, \pi_{t}(R\smsh S^V)) \Longrightarrow \pi_{t-s}(R\smsh S^V)^{hG}.
\]
\end{prop}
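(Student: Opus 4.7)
The plan is to establish the equivalence via the intermediate statement that a pseudo $R$-orientation is a permanent cycle if and only if it admits an equivariant lift, which by \fullref{rem:pseudocofree} is the same as being an $R$-orientation. First I would observe that since $R$ is cofree and $S^V$ is a (finite) dualizable $G$-spectrum, the smash product $R\smsh S^V$ is again cofree, so the HFPSS in question converges strongly to the genuine equivariant homotopy groups $\pi^G_{\bigstar}(R\smsh S^V)=[S^{\bigstar},R\smsh S^V]^G$. Moreover, under this convergence the edge homomorphism in position $(0,d)$,
\[
\pi_d^G(R\smsh S^V)\twoheadrightarrow E_\infty^{0,d}\hookrightarrow E_2^{0,d}=\bigl(\pi_d i_e^*(R\smsh S^V)\bigr)^G,
\]
is identified with the forgetful map taking an equivariant map $S^d\to R\smsh S^V$ to its underlying homotopy class. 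Pinning down this identification is the one piece of standard input that the rest of the argument will rest on.

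For the forward direction, I would start with an equivariant $R$-orientation $u_V\colon S^d\to R\smsh S^V$. Its underlying class lies in $(\pi_d i_e^*(R\smsh S^V))^G$ and is by construction in the image of the edge homomorphism, hence lives in $E_\infty^{0,d}$; in other words it is a permanent cycle. That this underlying class is also a pseudo $R$-orientation is immediate: a $G$-equivalence $u_V^R$ is in particular an underlying equivalence, so that the induced map $\pi_*(i_e^*R\smsh S^d)\to\pi_* i_e^*(R\smsh S^V)$ is an isomorphism of $G$-$\pi_*i_e^*R$-modules as required by \fullref{defn:uVnonequivariant}.

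For the converse, suppose $u_V\in(\pi_d i_e^*(R\smsh S^V))^G$ is a pseudo $R$-orientation that is a permanent cycle. By the convergence of the HFPSS for the cofree spectrum $R\smsh S^V$ and the identification above, there exists an equivariant map $\tilde u_V\colon S^d\to R\smsh S^V$ whose underlying homotopy class equals $u_V$. Thus $u_V$ itself underlies a $G$-equivariant map of $G$-spectra, and \fullref{rem:pseudocofree} upgrades the pseudo $R$-orientation $u_V$ to an honest $R$-orientation, so $V$ is $R$-orientable as required.

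The main obstacle is really just the bookkeeping for the edge homomorphism, together with verifying that $R\smsh S^V$ is cofree so that the HFPSS actually computes $\pi_*^G(R\smsh S^V)$ and its edge map in filtration zero is the forgetful map. Both points are routine, and crucially the entire argument takes place in filtration zero, so no analysis of higher differentials is needed beyond invoking the permanent-cycle hypothesis.
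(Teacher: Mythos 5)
Your proof is correct and takes essentially the same route as the paper's: the substantive converse direction uses cofreeness of $R\smsh S^V$ (via finiteness of $S^V$) to lift the permanent cycle to an equivariant map $S^d\to R\smsh S^V$, base-changes to an $R$-module map, and detects it as an equivalence on underlying homotopy, which is exactly the paper's argument, with your appeal to \fullref{rem:pseudocofree} packaging the base-change step the paper writes out explicitly. The only (harmless) variation is in the easy direction, where the paper transports the permanent cycle $1$ along the isomorphism of spectral sequences induced by $u_V^R$, while you instead note that the underlying class lies in the image of the filtration-zero edge homomorphism.
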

\begin{proof}
If $V$ is $R$-orientable, then the map 
$u_V^R \colon R \smsh S^d \to R \smsh S^V$ 
induces an isomorphism of spectral sequences which maps $ 1 \in H^0(G, \pi_0R) \cong H^0(G, \pi_d(R\smsh S^d))$ to $ u_V \in H^0(G, \pi_d (R\smsh S^V))$.  Since $1$ is a permanent cycle, so is $u_V$. 

Conversely, assume that there is a class ${u}_V \in \left(\pi_d i_e^*( R \smsh S^V)\right)^G $ as in \Cref{defn:uVnonequivariant} which is a permanent cycle. This means that it represents a class 
\[
\widetilde{u}_{V}\in \pi_{d} (R\smsh S^{V})^{hG}\cong [S^{d},R\smsh S^{V}]^{G},
\]
since \(R\) is cofree and \(S^{V}\) is a finite \(G\)-CW complex. In other words, \(\widetilde{u}_{V}\) is an equivariant map
\[
S^{d}\to R\smsh S^{V},
\]
and by base change, we get an equivariant map
\[
 \widetilde{u}_{V}^R  \colon R\wedge S^{d}\to R\smsh S^{V}.
\]
By assumption, this map induces the same map as 
 \({u}_{V}^{i_e^*R}\) on $\pi_*i_e^*(-)$, and so is an underlying equivalence. Since \(R\) is cofree, the map \(\widetilde{u}_{V}^R\) is then an equivariant equivalence, and thus \(\widetilde{u}_{V}\) is an orientation.
\end{proof}

We close this section by connecting the orientability for representations of subgroups to the orientability of the induced representations. This uses in an essential way the full equivariant commutative ring structure on spectra like \(E\), rather than the results so far which have just used an \(E_{\infty}\)-ring structure.

\begin{prop}[{\cite[Theorem 10]{BHGSym}}]
If \(R\) is an equivariant commutative ring spectrum, then the symmetric monoidal category of \(R\)-modules sits in a symmetric monoidal coefficient system: for any map of orbits \(G/H\to G/K\), we have a natural map
\[
\res_{H}^{K}\colon \Mod_{K}(i_{K}^{\ast}R)\to \Mod_{H}(i_{H}^{\ast} R).
\]

There are homotopically meaningful norm maps
\[
{}^{R}N_{H}^{G}\colon \Mod_H(i_{H}^{\ast}R) \to \Mod_G(R)
\]
given by
\[
{}^{R}N_{H}^{G}(M)= R\smsh_{N_{H}^{G}i_{H}^{\ast}R} N_{H}^{G}(M),
\]
where \(N_{H}^{G}(M)\) is the ordinary norm from \(H\)-spectra to \(G\)-spectra, and where \(R\) is an \(N_{H}^{G}i_{H}^{\ast}R\)-module via the counit of the norm forget adjunction, and these satisfy the double coset formula, up to isomorphism.
\end{prop}
Here, ``homotopically meaningful'' means that there is a model structure for which these maps are left Quillen functors. This is essentially the hard part of \cite{BHGSym}.

\begin{cor}
The coefficient system
\[
G/H\mapsto \Pic_H(i_{H}^{\ast}R)
\]
extends to a Mackey functor, where the transfer maps in \(\Pic\) are given by the norm maps.
\end{cor}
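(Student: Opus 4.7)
The plan is to reduce the claim to the strong symmetric monoidal structure of the relative norm \({}^R N_H^G\) together with the Mackey structure already present at the level of module categories. Since the proposition just above identifies \({}^R N_H^G\) as the transfer in the coefficient system of \(R\)-module \(\infty\)-categories, my job is essentially twofold: first, show that the norm actually lands in \(\Pic\) and induces a group homomorphism there; second, verify that, together with the restrictions \(i_{K}^{H,\ast}\), these maps satisfy the Mackey compatibility (double coset formula and composition).

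First, I would verify that \({}^R N_H^G\) is strong symmetric monoidal. The ordinary equivariant norm \(N_{H}^{G}\colon \Sp^{H}\to\Sp^{G}\) is strong symmetric monoidal, and \(R\smsh_{N_{H}^{G}i_{H}^{\ast}R}(-)\) is strong symmetric monoidal as base change along a map of commutative ring objects. Composing, \({}^R N_H^G\) is strong symmetric monoidal, so it carries invertible \(i_{H}^{\ast}R\)-modules to invertible \(R\)-modules and carries tensor products to tensor products. This gives the required group homomorphism
\[
\mathrm{tr}_{H}^{G}\colon \Pic_{H}(i_{H}^{\ast}R)\to \Pic_{G}(R),\qquad [M]\mapsto [{}^{R}N_{H}^{G}(M)].
\]

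Next, I would verify the Mackey axioms. The key input is that the same formalism already produces a Mackey functor structure on the coefficient system \(G/H\mapsto \pi_{0}\Mod_{H}(i_{H}^{\ast}R)^{\simeq}\) (connected components of the core of the module category), because the double coset formula
\[
i_{K}^{G,\ast}\circ {}^{R}N_{H}^{G}\simeq \bigvee_{K\backslash G/H} {}^{R}N_{K\cap gHg^{-1}}^{K}\circ (c_{g})_{\ast}\circ i_{K\cap gHg^{-1}}^{H,\ast}
\]
holds for the underlying norms in genuine \(G\)-spectra, and base change along the commutative ring map commutes with both restriction and norm. Passing to \(\Pic\) (which is a quotient of the monoid of equivalence classes of objects) and remembering that \(\Pic\) is a group so coproducts/wedges in the module category become tensor products of invertibles, the double coset formula descends to the required identity in \(\Pic\). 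Compatibility of transfers with composition along nested subgroups \(K\subseteq H\subseteq G\) follows from the analogous composition identity \({}^{R}N_{H}^{G}\circ {}^{i_{H}^{\ast}R}N_{K}^{H}\simeq {}^{R}N_{K}^{G}\), which is again inherited from the corresponding property of the ordinary equivariant norm.

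The main obstacle is bookkeeping rather than any substantive difficulty: one must check that the base change \(R\smsh_{N_{H}^{G}i_{H}^{\ast}R}(-)\) interacts correctly with the underlying double coset decomposition of the equivariant norm, i.e., that forming \(R\)-modules does not disturb the Mackey relations. This is a formal consequence of the fact that \(R\) is a genuine \(G\)-equivariant commutative ring, so in particular an algebra over the indexed symmetric monoidal structure encoding the norms; no new homotopical input is required beyond the proposition already cited. Consequently, the assignment \(G/H\mapsto \Pic_{H}(i_{H}^{\ast}R)\) with restrictions \(i_{K}^{H,\ast}\) and transfers \({}^{R}N_{K}^{H}\) assembles into a Mackey functor \(\mPic(R)\), as claimed.
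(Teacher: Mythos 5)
Your overall strategy is the same as the paper's: restriction and the relative norm \({}^{R}N_{H}^{G}\) are strong symmetric monoidal, hence induce homomorphisms on \(\Pic\), and the Mackey compatibility is inherited from the symmetric monoidal Mackey functor structure on \(G/H\mapsto \Mod_{H}(i_{H}^{\ast}R)\) coming from \cite{BHGSym}. However, there is a concrete error in the step where you spell out the double coset formula. The formula you write,
\[
i_{K}^{G,\ast}\circ {}^{R}N_{H}^{G}\simeq \bigvee_{K\backslash G/H} {}^{R}N_{K\cap gHg^{-1}}^{K}\circ (c_{g})_{\ast}\circ i_{K\cap gHg^{-1}}^{H,\ast},
\]
is the \emph{additive} double coset formula (the one satisfied by induction/transfer), not the one satisfied by the norm. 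The norm is a multiplicative transfer, and its double coset formula is an indexed \emph{smash/tensor} decomposition: the restriction of \({}^{R}N_{H}^{G}(M)\) to \(K\) is the tensor product over \(i_{K}^{\ast}R\) of the terms \({}^{i_K^\ast R}N_{K\cap gHg^{-1}}^{K}\bigl((c_{g})_{\ast}\, i_{K\cap gHg^{-1}}^{H,\ast}M\bigr)\), indexed by \(K\backslash G/H\). Your wedge version cannot be correct: if it held, the restriction of the (invertible) module \({}^{R}N_{H}^{G}(M)\) would be a coproduct of more than one nonzero module whenever there are several double cosets, and such a coproduct is never invertible. The attempted repair, that ``\(\Pic\) is a group so coproducts/wedges in the module category become tensor products of invertibles,'' is not a valid mechanism — passing to \(\Pic\) does not convert wedges into smash products; it is precisely because the norm's double coset formula is already a smash decomposition that applying \(\Pic\) (which sends \(\smsh_{i_K^\ast R}\) to addition in the Picard group) yields the additive relation \(\res^{G}_{K}\circ\tr^{G}_{H}=\sum \tr^{K}_{K\cap gHg^{-1}}\circ c_{g}\circ\res^{H}_{g^{-1}Kg\cap H}\) required of a Mackey functor.

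With the formula corrected, the rest of your argument goes through and is essentially the paper's proof: the strong symmetric monoidality of \(N_{H}^{G}\) and of base change along \(N_{H}^{G}i_{H}^{\ast}R\to R\) (which uses that \(R\) is a genuine equivariant commutative ring, as you note), together with the composition identity \({}^{R}N_{H}^{G}\circ {}^{i_{H}^{\ast}R}N_{K}^{H}\simeq {}^{R}N_{K}^{G}\) and the multiplicative double coset formula, are exactly the content of the statement that \(G/H\mapsto\Mod_{H}(i_{H}^{\ast}R)\) with its norms is a symmetric monoidal Mackey functor, which is what the paper invokes.
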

\begin{proof}
The functor \(\Pic\) is natural for symmetric monoidal functors, and therefore the symmetric monoidal functors \(i_{H}^{\ast}\) and \({}^{R}N_{H}^{G}\) induce maps on \(\Pic\). We need only check that the Mackey double-coset formula is satisfied on \(\Pic\). However, this is exactly the condition that the symmetric monoidal coefficient system 
\[
G/H\mapsto \Mod_{H}(i_{H}^{\ast}R)
\]
with its norm maps forms a symmetric monoidal Mackey functor \cite[Theorem 5.10]{BHGSym}.
\end{proof}

\begin{prop}
The homomorphisms \(J_{R}^{H}\) as \(H\) varies over the subgroups of $G$ give a map of Mackey functors
\[
\underline{RO}\to\underline{\Pic}(R).
\] 
\end{prop}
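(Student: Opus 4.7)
The plan is to verify that the assignment $H \mapsto J_R^H$ is compatible with restrictions and with transfers, since the preceding corollary already establishes that $\underline{\Pic}(R)$ is a Mackey functor (and $\underline{\RO}$ is classical). Once both compatibilities are checked, the Mackey double-coset identities are inherited automatically from those of the target.

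First I would handle compatibility with restrictions. For $K \leq H \leq G$ and $V \in RO(H)$, the restriction in $\underline{\RO}$ sends $V$ to $i_K^H V$, while the restriction in $\underline{\Pic}(R)$ sends an $i_H^\ast R$-module to its underlying $i_K^\ast R$-module via the strong symmetric monoidal functor $i_K^H$. Since $i_K^H (S^V) = S^{i_K^H V}$ and $i_K^H$ preserves smash products, both routes yield $i_K^\ast R \smsh S^{i_K^H V}$, so the restriction square commutes on the nose.

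The main content is compatibility with transfers. For $K \leq H \leq G$ and $V \in RO(K)$, the transfer in $\underline{\RO}$ is induction $\Ind_K^H$, and the transfer in $\underline{\Pic}(R)$ is the norm ${}^{R}N_K^H$ from the preceding proposition. The key input is the standard Hill--Hopkins--Ravenel identification $N_K^H(S^V) \simeq S^{\Ind_K^H V}$, together with strong symmetric monoidality of $N_K^H$ on spectra. These give
\[
N_K^H\bigl(i_K^\ast R \smsh S^V\bigr) \simeq N_K^H(i_K^\ast R) \smsh S^{\Ind_K^H V}.
\]
Base-changing along the counit $N_K^H i_K^\ast R \to R$ of the norm--forget adjunction, which appears in the definition ${}^{R}N_K^H(M) = R \smsh_{N_K^H i_K^\ast R} N_K^H(M)$, produces
\[
{}^{R}N_K^H\bigl(i_K^\ast R \smsh S^V\bigr) \simeq R \smsh_{N_K^H i_K^\ast R} \bigl(N_K^H(i_K^\ast R) \smsh S^{\Ind_K^H V}\bigr) \simeq R \smsh S^{\Ind_K^H V},
\]
which is exactly $J_R^H(\Ind_K^H V)$.

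The verification is essentially formal and I do not expect any genuine obstacle; the only non-tautological input is the behavior of the Hill--Hopkins--Ravenel norm on representation spheres, which is a foundational property. The double-coset identities need not be checked separately, because having shown that restriction and transfer squares commute, the required Mackey relations on the source land in the Mackey functor $\underline{\Pic}(R)$ via maps induced by symmetric monoidal functors, where they hold by the corollary just above.
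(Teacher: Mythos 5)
Your proposal is correct and follows essentially the same route as the paper: restrictions commute for the obvious monoidality reasons, and the transfer compatibility reduces to the computation ${}^{R}N_{K}^{H}(i_{K}^{\ast}R\smsh S^{V})\cong R\smsh S^{\Ind_{K}^{H}V}$ via the norm of a representation sphere and cancellation along the counit, exactly as in the paper's proof.
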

\begin{proof}
The maps \(J_{R}^{H}\) visibly commute with the restriction maps. We need only check that they also commute with the transfer maps. The transfers in \(\underline{RO}\) are given by induction, while in \(\underline{\Pic}(R)\), we have the norms. The result then follows from the computation
\[
{}^{R}N_{H}^{G}(i_{H}^{\ast}R\smsh S^{W})=R\smsh_{N_{H}^{G}i_{H}^{\ast}R}\big(N_{H}^{G}(i_{H}^{\ast}R\smsh S^{W})\big)\cong
R\smsh S^{\Ind_{H}^{G}W},
\]
where the last isomorphism is the usual cancellation formula and the computation of the norm of a representation sphere.
\end{proof}

\begin{cor}\label{cor:induction}
Let \(R\) be an equivariant commutative ring \(G\)-spectrum. Suppose that $H$ is a subgroup of $G$ and $W$ is a virtual $H$-representation of dimension $0$. Let $V = \Ind_H^G(W)$. If $W$ is $i_{H}^{\ast}R$-orientable, then $V$ is $R$-orientable.
\end{cor}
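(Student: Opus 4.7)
The plan is to derive this corollary immediately from the preceding proposition, which established that $\m{J}_R\colon \m{\RO}\to\mPic(R)$ is a map of Mackey functors. Almost all the work has already been done; what remains is to unpack what the two sides of the Mackey-compatibility square say when applied to a dimension-zero class.

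First I will translate the hypothesis into the language of the map $J_R^H$. Since $W$ has virtual dimension $0$ and $W$ is $i_H^{\ast}R$-orientable, \fullref{lem:JGRorientable} (equivalently, the definition of $R$-orientability) gives that
\[
J_R^H(W)=0 \in \Pic_H(i_H^{\ast}R),
\]
i.e., $J_R^H(W)$ is the identity element of the Picard group. Next, I will recall what the transfers are on each side. On $\m{\RO}$, the transfer $\tr_H^G$ is ordinary induction of representations, so $V=\Ind_H^G W=\tr_H^G(W)$. On $\mPic(R)$, the transfer is the norm ${}^R N_H^G$, which is induced by a strong symmetric monoidal functor and therefore is a group homomorphism, in particular sending the identity element to the identity element.

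Putting these together and using that $\m{J}_R$ commutes with transfers, I compute
\[
J_R^G(V)=J_R^G\bigl(\tr_H^G(W)\bigr)={}^R N_H^G\bigl(J_R^H(W)\bigr)={}^R N_H^G(0)=0
\]
in $\Pic_G(R)$. Since $\dim V=[G:H]\cdot\dim W=0$, the equality $J_R^G(V)=0=\dim V$ is precisely the statement that $V$ is $R$-orientable, again via \fullref{lem:JGRorientable}.

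The only genuine content beyond bookkeeping is the input that the transfer in $\mPic(R)$ is realized by the honest norm ${}^R N_H^G$ of commutative ring $G$-spectra (and not just by some naive construction), which is exactly what the preceding proposition, invoking \cite{BHGSym}, packages for us; this is the reason the hypothesis demands a genuine equivariant commutative ring structure on $R$, not merely an $E_\infty$-structure. So the main conceptual point — and the only place where something could go wrong — is ensuring we are entitled to use norms of modules, and once that is in hand the corollary is formal.
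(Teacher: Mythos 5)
Your proposal is correct and is essentially the paper's own argument: the corollary is stated as an immediate consequence of the preceding proposition that $\m{J}_{R}$ is a map of Mackey functors, whose proof rests on exactly the computation ${}^{R}N_{H}^{G}(i_{H}^{\ast}R\smsh S^{W})\cong R\smsh S^{\Ind_{H}^{G}W}$ that you invoke. The only cosmetic point is that \fullref{lem:JGRorientable} is stated for cofree $R$, but passing from the triviality of $\JRG(V)$ in $\Pic_G(R)$ back to an orientation only needs \fullref{rem:uVR} (precompose the $R$-module equivalence with the unit), so your conclusion stands as written.
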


% !TEX root = master.tex

\section{The $C_4$-spectrum $E$ and the non-cyclicity of its Picard group}\label{sec:Etheorybig}

The goal of this section is to compute the image of the map $\JEG$ for the groups $G=C_2$ and $C_4$ acting on the Morava $E$-theory spectrum.

\subsection{A convenient choice of $E$-theory}\label{sec:mikescomment}
We begin by fixing a choice of $E$-theory that will be convenient for computations. The methods of this section are due to Hill--Hopkins--Ravenel \cite{HHR, HHRC4} and were the motivation for the work of Hahn--Shi \cite{hahnshi}. In fact, an analogous construction of genuine equivariant Morava $E$-theory at $p=2$ can be done at all heights and will soon appear in work of Shi. It is also used at higher heights in computations of Hill--Shi--Wang--Xu \cite{c4e4}.

We write $MU_{\R}$ to denote the $2$-localization of the real bordism spectrum. From \cite[\S 5.4.2]{HHR} there are classes $\bar{r}_i^{C_2} \in \pi_{i\rho_2}^{C_2}MU_{\R}$ whose underlying homotopy classes, which we denote by $r_i^{C_2} \in \pi_{2i}i_e^*MU_{\R}$, give a set of polynomial generators 
 \[ \pi_{*}i_e^*MU_\R \cong \Z_{(2)}[r_1,r_2,\ldots].\]
Let $N_2^4 :=N_{C_2}^{C_4}$ be the norm functor from the category of  $C_2$-spectra to the category of $C_4$-spectra. The spectrum $MU^{(\!(C_4)\!)}$ is defined to be the $C_4$-spectrum
\[MU^{(\!(C_4)\!)}:=N_2^4MU_{\R} ={MU_{\R}  \overset{\curvearrowrightminus}{\wedge} MU_\R}  \]
with action of $\gamma$ given by a cyclic permutation of the factors twisted by the conjugation action on $MU_\R$ analogous to the action of the form $\gamma(x,y) =(\bar{y},x) $ in algebra. 
Again, in \cite[Cor.5.49]{HHR}, it is shown that there are classes
\begin{align}\label{eq:r1is}
\bar{r}_{i,0}&=\bar{r}_i^{C_4},   & \bar{r}_{i,1}&=\gamma\bar{r}_i^{C_4}, 
\end{align}
in $\pi_{i\rho_2}^{C_2}i^*_{C_2}MU^{(\!(C_4)\!)}$ with the property that for $r_{i, \epsilon}\in \pi_{2i}i_e^*MU^{(\!(C_4)\!)}$ corresponding to $\bar{r}_{i,\epsilon}$ we have 
\[\pi_*i_e^*MU^{(\!(C_4)\!)}  \cong \Z_{(2)}[r_{1,0}, r_{1,1}, r_{2,0}, r_{2,1}, \ldots].  \]
Using the commutativity of $C_4$, 
\[\pi_{i\rho_2}^{C_2}i^*_{C_2}MU^{(\!(C_4)\!)}=[S^{i\rho_2}, i^*_{C_2}MU^{(\!(C_4)\!)}]^{C_2}\]
inherits an action of $C_4$ from the action on $ i^*_{C_2}MU^{(\!(C_4)\!)}$: In terms of morphisms, this is the action by post-composition rather than the conjugation action used to define equivariance. This action has the following effect on the classes $\bar{r}_{i,0}$:
\[
\gamma (\bar{r}_{i,0}) =\bar{ r}_{i,1}, \qquad  \gamma(\bar{r}_{i,1})=(-1)^i \bar{r}_{i,0}.
\]
Similarly, the action of $\gamma$ on $\pi_*i_e^*MU^{(\!(C_4)\!)}$ is given by 
\[
\gamma (r_{i,0}) = r_{i,1}, \qquad  \gamma(r_{i,1})=(-1)^i r_{i,0}.
\]
The spectrum $MU^{(\!(C_4)\!)}$ has two real orientations coming from the right and left unit of $MU_{\R} \wedge MU_{\R}$, both of which are equivariant maps. We fix the orientation coming from the left unit. We also denote by $\bar{r}_i^{C_2}$ the image of $\bar{r}_i^{C_2}\in \pi_{i\rho_2}^{C_2}MU_{\R}$ under the left unit.

 In \cite{HHRC4}, the authors define a $C_4$-spectrum $k_{[2]}$, obtained from $MU^{(\!(C_4)\!)}$ by \emph{equivariantly killing} the generators $\bar{r}_{i,\epsilon}$ for $i\geq 2$ and $\epsilon=0,1$. That is
\[ k_{[2]}:=MU^{(\!( C_4 )\!)} \smsh_A S^0, \]
where $A$ is the associative ring spectrum
\[A=S^0[C_4 \cdot \bar{r}_2, C_4 \cdot \bar{r}_3, C_4 \cdot \bar{r}_4, \ldots];\]
see also \cite[Section 2.4]{HHR}.

In particular,
\[\pi_*i_e^*k_{[2]} \cong \Z_{(2)}[r_{1,0}, r_{1,1}].\]
See, for example, \cite[Theorem 12.2]{HHRC4}. It inherits a real orientation from $\MUC$ and, in \cite[\S 7]{HHRC4}, the authors compute the image of $\bar{r}_i^{C_2}$ for $i=1,2,3$ in $\pi_{\star}^{C_2}i_{C_2}^*k_{[2]}$:
\begin{align}\label{eq:defining-r-s}
\bar{r}_1^{C_2}&= \bar{r}_{1,0}+\bar{r}_{1,1}, \nonumber \\
  \bar{r}_2^{C_2}&= 3\bar{r}_{1,0}\bar{r}_{1,1}+\bar{r}_{1,1}^2, \\
   \bar{r}_3^{C_2}&=\bar{r}_{1,1}(5\bar{r}_{1,0}^2 + 5\bar{r}_{1,0}\bar{r}_{1,1}+\bar{r}_{1,1}^2). \nonumber 
\end{align}

The spectrum $K_{[2]}$ is obtained from $k_{[2]}$ by inverting an element $D \in \pi_{4\rho_4}^{C_4}k_{[2]}$ whose restriction in
$\pi_{8\rho_2}^{C_2} i_{C_2}^*k_{[2]}$ is 
$\bar{r}_{1,0}\bar{r}_{1,1}\bar{r}_{3}^{C_2}\gamma\bar{r}_{3}^{C_2}$. 
See \cite[(7.4)]{HHRC4}. Since inverting $D$ inverts its restrictions, it also inverts the classes $\bar{r}_{1,\epsilon}$ in $\pi_{\rho_2}^{C_2}i_{C_2}^*k_{[2]}$. 
So, the homotopy classes 
\begin{align}\label{eq:mus}
 \mu_0 &= \frac{\bar{r}_{1,0}-\bar{r}_{1,1}}{\bar{r}_{1,0}} = \frac{\bar{r}_1^{C_2}-2\bar{r}_{1,1}} {\bar{r}_{1,0}}, &
\mu_1   = \frac{\bar{r}_{1,0}+\bar{r}_{1,1} }{\bar{r}_{1,1}}  =\gamma(\mu_0) =\frac{\bar{r}_1^{C_2}}{\bar{r}_{1,1}}  . \end{align}
are defined in $ \pi_{0}^{C_2}i_{C_2}^*K_{[2]}$. We let
\begin{equation}\label{eq:muafterrev}\mu = \tr_{2}^{4}(\mu_0) =  \tr_{2}^{4}(\mu_1) \in  \pi_{0}^{C_4}K_{[2]} ,\end{equation}
noting that $\res_{2}^{4}(\mu) = \mu_0+\mu_1$. We also denote by $\mu_0$ and $\mu_1$ their restrictions to $\pi_0i_e^*K_{[2]}$ and, similarly, by $\mu$ its restrictions to  $ \pi_{0}^{C_2}i_{C_2}^*K_{[2]}$ and $\pi_0i_e^*K_{[2]}$. Note that in $\pi_*i_e^*K_{[2]}$ we have
\[r_{1,1} =r_{1,0}(1-\mu_0) \]
and that there is an isomorphism 
\begin{align*} 
 \left(\pi_*i_e^*K_{[2]}\right)^{\wedge}_{(2,r_1^{C_2})} &\cong  \left(\Z_{(2)}[r_{1,0}, r_{1,1}][(\res_1^4D)^{\pm 1}]\right)_{(2, \mu_0)}^{\wedge} \\
& \cong \left( \Z_{(2)}[r_{1,0}^{\pm 1}, \mu_0][(1-\mu_0)^{-1}, (11-7\mu_0+\mu_0^2)^{-1},(1 - 5\mu_0 + 5\mu_0^2)^{-1} ] \right)_{(2, \mu_0)}^{\wedge} \\
&  \cong \Z_{2}[\![\mu_0]\!][r_{1,0}^{\pm 1}] ,
\end{align*}
where the second isomorphism is obtained by substituting the above expression for $r_{1,1}$ in $\res_1^4D$.

The spectrum $K_{[2]}$ also inherits a real orientation and it follows from \eqref{eq:defining-r-s} that the formal group law $F_{[2]}$ of $K_{[2]}$ reduces to a formal group law $\Gamma_{[2]}$ of height $2$ defined over 
\[\pi_*i_e^*K_{[2]}/(2, {r}_1^{C_2}) \cong \F_2[{r}_{1,0}^{\pm 1}] , \]
whose $[2]$-series satisfies
\[[2]_{\Gamma_{[2]}}(x) ={r}_{1,0}^3x^4 + \ldots \] 
In fact, the $[2]$-series of $F_{[2]}$ satisfies
\begin{align}\label{eq:coefffgl}
[2]_{F_{[2]}}(x) &\equiv {r}_1^{C_2}x^2 
\equiv \mu_0 r_{1,0} x^2\mod (2, x^3), \\ \label{eq:coefffgl2}
[2]_{F_{[2]}}(x) &\equiv    {r}_3^{C_2} x^4 
\equiv 
r_{1,0}^3x^4 \mod (2, \mu_0, x^5).
   \end{align}

Let $\bar{F}_{[2]}(x,y) = r_{1,0}F_{[2]}(r_{1,0}^{-1}x, r_{1,0}^{-1}y)$ over $\Z_{2}[\![ \mu_0 ]\!]$ and $\bar{\Gamma}_{[2]} =r_{1,0}\Gamma_{[2]}(r_{1,0}^{-1}x, r_{1,0}^{-1}y)$ over $\F_2$. 
Let $\fk$ be the algebraic closure of $\mathbb{F}_2$
and
      \begin{equation}\label{eq:Wdefn}
   \W :=W(\fk)
   \end{equation} 
   be the ring of Witt vectors over $\fk$, so that $\fk=\W/2$. We consider $\bar{\Gamma}_{[2]} $ as a formal group law defined over $\fk$ and $\bar{F}_{[2]} $ as a formal group law defined over $ \W[\![\mu_0]\!]$. Then $(\W[\![\mu_0]\!], \bar{F}_{[2]})$ is a universal deformation of $(\fk, \bar{\Gamma}_{[2]})$. 
   This follows from \eqref{eq:coefffgl}. Indeed, any formal group law $F$ over a power series ring $\W[\![t]\!]$ such that $F$ reduces to $\bar{\Gamma}_2$ modulo $(2,t)$ and such that 
   \begin{align*}
   [p]_F(x)  &\equiv t x^2 \mod  (2, x^3) 
      \end{align*}
      is a universal deformation of $\bar{\Gamma}_2$.

  We let $E_{[2]}$ be the associated $E$-theory spectrum with distinguished unit $r_{1,0} \in \pi_2E_{[2]}$ and let $\varphi^u  \colon MU \to E_{[2]}$ be a complex orientation chosen so that $\pi_*\varphi^u $ classifies $F_{[2]}$. By Hahn--Shi \cite{hahnshi}, $E_{[2]}$ is Real oriented, and 
   $\varphi^u $ has a unique equivariant refinement $\varphi \colon MU_{\R} \to E_{[2]}$. That is, $\varphi$ is a $C_2$-equivariant map and $\pi_*i_e^*\varphi = \pi_*\varphi^u$.
   
Let  $C_4 \subseteq \Aut_k(\bar{\Gamma}_{[2]})$ be the subgroup induced by the action of $\gamma$ on the formal group law of $MU^{(\!(C_4)\!)}$. This is a cyclic group of order four since $\gamma^2$ is the non-trivial automorphism $[-1]_{\bar{\Gamma}_{[2]}}(x)$. By \cite[Theorem 1.3]{hahnshi}, norming up gives a $C_4$-equivariant map 
\[MU^{(\!( C_4 )\!)}  \to  E_{[2]}.\] 
We will call this the Hahn-Shi orientation.

By construction, we have a diagram of $C_4$-modules
\[ \xymatrix{\pi_*i^*_e MU^{(\!( C_4 )\!)} \ar[d] \ar[r] &\pi_*i_e^*E_{[2]} \\
\pi_*i_e^*K_{[2]} \ar[ur] & }\]
and hence a map from the $E_2$-term of the homotopy fixed point spectral sequence (HFPSS) of $K_{[2]} $ to that of $E_{[2]} $.  This gives a direct way to discuss classes in $\text{HFPSS}(E_{[2]})$ in terms of classes in $\text{HFPSS}(K_{[2]})$.

From the maps $MU^{(\!( C_4 )\!)}  \to  E_{[2]}$ and $MU^{(\!( C_4 )\!)} \to  K_{[2]}$, 
we get a zig-zag of spectral sequences, where the dotted arrow is only known to be a map of $E_2$-pages, not of spectral sequences:
\begin{equation}\label{eq:bigdiagram}\xymatrix{ & \text{SliceSS}(MU^{(\!( C_4 )\!)}) \ar[dl] \ar[d] \ar[r] &\text{SliceSS}(E_{[2]}) \ar[r] &\text{HFPSS}(E_{[2]}) \\
  \text{SliceSS}(k_{[2]}) \ar[r] \ar[d] & \text{SliceSS}(K_{[2]}) \ar[d]  &  &  \\
  \text{HFPSS}(k_{[2]})  \ar[r] & \text{HFPSS}(K_{[2]}) \ar@{..>}[rruu]_-{\text{ \ \  \ \ \ \  map of $E_2$-pages}}& &  }\end{equation}
The key differentials in $\text{SliceSS}(K_{[2]})$ (and $\text{SliceSS}(k_{[2]})$) we use below in our proofs all lift to differentials in $ \text{SliceSS}(MU^{(\!( C_4 )\!)}) $, which we can then push to differentials in $\text{SliceSS}(E_{[2]}) $ and $\text{HFPSS}(E_{[2]}) $. This kind of lifting of differentials is also the strategy employed in \cite{c4e4}. Further, it follows from \cite[Theorem 9.4]{ullman} that the slice spectral sequence and homotopy fixed point spectral sequence for $E_{[2]}$ are isomorphic in, for example, the range $s +5 \leq t-s$, and similarly for the slice and homotopy fixed point spectral sequences of $K_{[2]}$.

With these observations in hand, we can use the computations of \cite{HHRC4} for the slice spectral sequences of $k_{[2]}$ and $K_{[2]}$ to compute the homotopy fixed point spectral sequence of $E_{[2]}$.

\begin{rem}\label{rem:periodicityD14}
In \Cref{sec:computations}, we will show that the $C_4$ equivariant spectrum $E_{[2]}$ is $32$ periodic, with periodicity generator an element of $\pi_{32}^{C_4} E_{[2]}$ detected by a permanent cycle we will denote by
$\Delta_1^4$. The restriction of the periodicity generator in $\pi_{32} i_e^* E_{[2]}$ is detected by $(r_{1,0}^2r_{1,1}^2)^4$. See \Cref{tab:elementsE}. One can obtain a better range from Ullman's results, but in practice, the periodicity of the homotopy fixed point spectral sequence for $E_{[2]}$ implies that one only needs to look in a range where $t-s$ is large enough with respect to $s$ to make the comparison. 
\end{rem}

\begin{notation}
We use the convention 
\[E := E_{[2]} = E(\fk, \bar{\Gamma}_{[2]})\]
and write
\[E_* = \pi_* i_e^*E.\]
We implicitly replace $E$ with $E^h = F({EC_4}_+, E)$ so that everywhere, we are working with a cofree spectrum.
\end{notation}

\subsection{The homomorphisms $\JEC{4}$ and $\JEC{2}$}\label{sec:JE}

The structure of $RO(C_2)$ and $RO(C_4)$ is reviewed in \Cref{sec:realreps}. The ring $RO(C_4)$ is of rank $3$ as a $\Z$-module, generated by the trivial representation $1$, the sign representation $\sigma$, and the $2$-dimensional irreducible representation $\lambda$. The representation $\lambda$ is (classically) orientable as it is modeled by a rotation by $\pi/2$ on $\R^2$. The representation $\sigma$ is not an orientable representation of $C_4$, but $2\sigma$ is orientable. Similarly, $RO(C_2)$ is of rank $2$ over $\Z$ generated by $1$ and $\sigma_2$. Again, the sign representation $\sigma_2$ is not orientable, but $2 \sigma_2$ is.

At this point, we relate our definition of pseudo $E$-orientation to the elements $u_V$ that appear in the slice spectral sequence computations of \cite{HHRC4}. These classes are, for example, defined in \cite[Definition 3.12]{HHR}. If $V\in RO(G)$ of dimension $d$ is classically orientable when restricted to a subgroup $G' \subseteq G$, then one can fix a choice of generator
\[u_V \in \pi_{d-V}^{G'} H\underline{\Z} \cong H_d^{G'}(S^V, \underline{\Z}) \xrightarrow{\cong} H_d(S^V, \Z)\cong \Z.\]
These classes satisfy the relation $u_V u_{W} = u_{V+W}$.

When $G=C_4$, this gives classes
\begin{align*}
u_{\lambda}&\in \spi_{2-\lambda} H\underline{\Z}(C_4/C_4), & u_{2\sigma}& \in\spi_{2-2\sigma} H\underline{\Z}(C_4/C_4), & u_{\sigma}& \in\spi_{1-\sigma} H\underline{\Z}(C_4/C_2)
\end{align*}
together with their restrictions and products.
Similarly, when $G=C_2$, we get classes
\begin{align*}
u_{2\sigma_2}&\in \spi_{2-2\sigma_2} H\underline{\Z}(C_2/C_2), & u_{\sigma_2} &\in\spi_{1-\sigma_2} H\underline{\Z}(C_2/e).
\end{align*}

Let $V$ be classically orientable for the subgroup $G' \subseteq C_4$.
Since the slice $P_0^0MU^{(\!(C_4)\!)} $ is  $H\underline{\Z}$, the map (as in \Cref{rem:slicess}) from the slice spectral sequence of $MU^{(\!(C_4)\!)}$ to the homotopy fixed point spectral sequence of $E$ gives a commutative diagram
\[\xymatrix{ \pi_{0}i_e^* H\underline{\Z}  \ar[d]_-{\cong}^{\iota^*} & \pi_{0}^{G'} H\underline{\Z} \ar[r]^-{\subseteq} \ar[l]^-{=} &  {H}^0(G', \pi_{0}E)  \ar[d]^-{\cong}_{\iota^*}  \\
\pi_{d-V} i_e^* H\underline{\Z}  & \pi_{d-V}^{G'} H\underline{\Z} \ar[r]  \ar[l]^-{=}  &  {H}^0(G', \pi_{d-V}E)
  } \]
  where the vertical isomorphisms are induced by precomposition with a chosen underlying equivalence $\iota \colon S^{d-V} \to S^0$. The top inclusion is the natural ring map. It follows that
the class $u_V \in \pi_{d-V}^{G'} H\underline{\Z}$ maps to a pseudo-orientation in ${H}^0(G', \pi_{d-V}E)$ which we denote by the same name.

\begin{rem}\label{rem:uvsforE}
By the discussion above, there are pseudo-orientations
\begin{align*}
u_{\lambda}&\in  \underline{E}^{0,2-\lambda }_2(C_4/C_4), & u_{2\sigma}& \in \underline{E}^{0,2-2\sigma}_2 (C_4/C_4), & u_{\sigma}& \in \underline{E}^{0,1-\sigma}_2 (C_4/C_2).
\end{align*}
As was noted in \Cref{rem:units}, the classes $u_V$ are all invertible in $\underline{E}^{0,\star} =\underline{H}^0(G, \pi_{\star}E)$.
\end{rem}

Let  $\bar{r}_{1,0} \in \pi_{\rho_2}^{C_2}i_{C_2}^*E$ be the image of the same-named class of $\pi_{\rho_2}^{C_2}i_{C_2}^*\MUC$ induced by the Hahn--Shi Real orientation $MU_\R \to E$. By our construction of \(E\), this class is a unit and hence induces an equivalence 
\[
i_{C_2}^{\ast} E \smsh S^{\rho_2}  \simeq i_{C_2}^\ast E.
\] 
In particular, this means that $\JEC2(\rho_2)=0$. By \Cref{cor:induction}, we have $\JEC4(\rho_4)=0$.
\begin{prop}
In $\Pic_{C_4}(E)$, $\JEC4(\rho_4)=0$, where $\rho_4= 1+\sigma+\lambda$ is the regular representation of $C_4$.
\end{prop}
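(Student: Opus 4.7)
The plan is to use the discussion immediately preceding the proposition, which already contains the essential argument, and simply assemble the two ingredients. First, I would establish that $\JEC{2}(\rho_2)=0$ in $\Pic_{C_2}(i_{C_2}^{\ast}E)$. By our choice of $E$ in \fullref{sec:mikescomment}, the class $\bar{r}_{1,0}\in \pi_{\rho_2}^{C_2}i_{C_2}^{\ast}E$ (pulled back from $\MUC$ via the Hahn--Shi Real refinement) is a unit: its underlying homotopy class $r_{1,0}$ was singled out as the distinguished unit in $\pi_{2}E$, and the element $D$ was inverted on the way from $k_{[2]}$ to $K_{[2]}$ precisely so that $\bar{r}_{1,0}$ (and $\bar{r}_{1,1}$) become equivariant units. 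Multiplication by $\bar{r}_{1,0}$ therefore gives a $C_2$-equivariant map
\[
i_{C_2}^{\ast}E\smsh S^{\rho_2}\to i_{C_2}^{\ast}E
\]
of $i_{C_2}^{\ast}E$-modules which is an underlying equivalence; since $E$ is cofree this is a $C_2$-equivariant equivalence, so $S^{\rho_2}$ is trivial in $\Pic_{C_2}(i_{C_2}^{\ast}E)$.

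Next, I would apply \fullref{cor:induction}: if $W$ is an $i_{C_2}^{\ast}E$-orientable $C_2$-representation, then $\Ind_{C_2}^{C_4}W$ is $E$-orientable. The only algebraic fact needed is that induction carries regular representations to regular representations, i.e.\ $\Ind_{C_2}^{C_4}\rho_2\cong \rho_4$, which is immediate from $\mathbb{R}[C_4]\otimes_{\mathbb{R}[C_2]}\mathbb{R}[C_2]\cong \mathbb{R}[C_4]$. Combining, $\JEC{4}(\rho_4)=0$ in $\Pic_{C_4}(E)$.

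There is essentially no hard step: the only point requiring any care is being precise about \emph{why} $\bar{r}_{1,0}$ is an equivariant (and not merely underlying) unit, which is where the passage from $k_{[2]}$ to $K_{[2]}$ via inverting $D$ — together with the formula $\res^{4}_{1}D=\bar{r}_{1,0}\bar{r}_{1,1}\bar{r}_{3}^{C_2}\gamma\bar{r}_{3}^{C_2}$ — is essential. Once this is cited, the proof reduces to a one-line application of \fullref{cor:induction} after observing that induction of the regular representation is the regular representation.
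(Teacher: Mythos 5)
Your proposal is correct and follows essentially the same route as the paper: the paper also deduces $\JEC{2}(\rho_2)=0$ from the fact that $\bar{r}_{1,0}\in\pi_{\rho_2}^{C_2}i_{C_2}^{\ast}E$ is a unit by the construction of $E$ (via $K_{[2]}$, where inverting $D$ inverts $\bar{r}_{1,\epsilon}$), giving the equivalence $i_{C_2}^{\ast}E\wedge S^{\rho_2}\simeq i_{C_2}^{\ast}E$, and then invokes \fullref{cor:induction} with $\Ind_{C_2}^{C_4}\rho_2\cong\rho_4$ to conclude $\JEC{4}(\rho_4)=0$. Your added care about why the underlying unit promotes to an equivariant equivalence (cofreeness) is consistent with, and slightly more explicit than, the paper's one-line justification.
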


Before turning to the more technical computations of the next sections, we state two results which are proved in \Cref{sec:computations}. We use them here to compute the image of $\JEC{4}$ and $\JEC{2}$.
The first result we state follows from the computations of \cite{HHRC4} and is proved in \Cref{prop:pc} below:
\begin{prop}\label{prop:orientableus}
The classes $u_{\lambda}^8, u_{2\sigma}^2$ and $u_{2\sigma}u_{\lambda}^4$ are permanent cycles in the 
 homotopy fixed point spectral sequence for $E_2^{hC_4}$. Therefore, the representations $8\lambda$, $4\sigma$ and $2\sigma+4\lambda$ are $E$-orientable for $C_4$.
In particular, the images under $\JEC4$ of
 \begin{itemize}
 \item $16-8\lambda$,
 \item $4-4\sigma$, and 
 \item $10-2\sigma-4\lambda$ 
 \end{itemize}
are zero in $\Pic_{C_4}(E)$. 

The class $u_{8\sigma_2}$ is a permanent cycle  in the homotopy fixed point spectral sequence for $E_2^{hC_2}$. So, the image under $\JEC2$ of
 \begin{itemize}
 \item $8-8\sigma_2$
 \end{itemize}
is zero in $\Pic_{C_2}(E)$. 
   \end{prop}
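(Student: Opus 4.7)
The plan is to verify each claim by reducing the survival of a given $u$-class to a differential check in the Mackey homotopy fixed point spectral sequence for $E$, and then deduce the Picard consequences directly from \fullref{lem:JGRorientable}. Concretely, the construction in \fullref{sec:mikescomment} yields a map $K_{[2]}\to E$, which by \fullref{rem:slicess} induces a map of multiplicative spectral sequences from the slice spectral sequence of $K_{[2]}$ to the Mackey HFPSS of $E$. Since each $u_V$ originates from the $0$-slice $P_0^0 MU^{(\!(C_4)\!)}\simeq H\underline{\mathbb{Z}}$, its image in $\underline{E}^{0,d-V}$ is the pseudo-orientation of \fullref{rem:uvsforE}, and every candidate differential on $u_V^k$ in the HFPSS for $E$ is the image of a differential in the $K_{[2]}$- (or $MU_{\R}$-) slice spectral sequence.

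The core step is then to invoke the Hill--Hopkins--Ravenel ``differentials on powers of $u_\lambda$'' and ``differentials on powers of $u_{2\sigma}$'' patterns, adapted to $K_{[2]}$ via the explicit form of the $2$-series of $\bar F_{[2]}$ and $\bar\Gamma_{[2]}$ recalled in \fullref{sec:mikescomment}. Following the HHR template, $d_r(u_\lambda^{2^k})$ and $d_r(u_{2\sigma}^{2^k})$ are products of $a_\lambda$- and $a_\sigma$-powers with monomials in the $\bar r_{i,\epsilon}$'s, and once $k$ is large enough these candidate targets vanish in the HFPSS for $E$ (because they involve classes supported on killed generators, or lie in groups that are zero after inverting the periodicity class $\Delta_1^4$ of \fullref{rem:periodicityD14}). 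Taking $k\ge 3$ handles $u_\lambda^8$ and taking $k\ge 1$ handles $u_{2\sigma}^2$; multiplicativity of the spectral sequence together with these two gives the mixed class $u_{2\sigma}u_\lambda^4$. The $C_2$ statement is the analogous application to $u_{2\sigma_2}^{2^k}$ with $k\ge 2$, so that $u_{8\sigma_2}=u_{2\sigma_2}^4$ is a permanent cycle in the $C_2$-HFPSS for $E$.

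With each $u_V$ now a permanent cycle of dimension $d=\dim V$, \fullref{lem:JGRorientable} gives that $V$ is $E$-orientable and hence $\JEC{4}(V)=d$ or $\JEC{2}(V)=d$ in the appropriate Picard group. Applied to $V=8\lambda,\ 4\sigma,\ 2\sigma+4\lambda$ for $C_4$ and $V=8\sigma_2$ for $C_2$, and subtracting the dimension, this produces the four virtual representations listed as having trivial image. The hard part is the middle paragraph: to rigorously check that every potential $d_r$ on $u_\lambda^8$, $u_{2\sigma}^2$, $u_{2\sigma}u_\lambda^4$, and $u_{8\sigma_2}$ has trivial target in the HFPSS for $E$, one needs the full description of the Mackey HFPSS developed in \fullref{sec:computations}. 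Once that description is in hand, the present proposition is essentially a corollary, reading off filtration-zero permanent cycles from the spectral-sequence chart.
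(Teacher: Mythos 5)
Your overall strategy for $u_{\lambda}^8$ and $u_{2\sigma}^2$ (push the slice differentials of \cite{HHRC4} through the comparison map of \fullref{rem:slicess} and observe that the relevant targets die in the HFPSS for $E$) is essentially what the paper does, except the paper simply cites \cite[Theorem 11.13]{HHRC4} rather than re-running the $u$-differential analysis; and your Picard deduction via \fullref{lem:JGRorientable} is the intended one. The genuine gap is the mixed class: you claim that ``multiplicativity of the spectral sequence together with these two gives the mixed class $u_{2\sigma}u_{\lambda}^4$.'' Multiplicativity only says that a \emph{product of permanent cycles} is a permanent cycle, and $u_{2\sigma}u_{\lambda}^4$ is not such a product: neither $u_{2\sigma}$ nor $u_{\lambda}^4$ is a permanent cycle in the HFPSS for $E$ (indeed $2-2\sigma$ has nonzero image under $\JEC4$, which is exactly how \fullref{prop:image} detects the extra $\Z/2$). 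What multiplicativity gives you is only that the \emph{square} $(u_{2\sigma}u_{\lambda}^4)^2=u_{2\sigma}^2u_{\lambda}^8$ is a permanent cycle, and a class whose square survives can still support a differential. So this case needs its own argument, and it is the one place where the proposition is not a formal consequence of \cite{HHRC4}.

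The paper's argument (\fullref{prop:pc}) is worth internalizing: $u_{2\sigma}$ and $u_{\lambda}^4$ are at least $d_5$-cycles, so the first possible differential on $u_{2\sigma}u_{\lambda}^4$ is a $d_7$, necessarily of the form $d_7(u_{2\sigma}u_{\lambda}^4)=\eta'u_{2\sigma}u_{\lambda}^2a_{\lambda}^3\dfrak$. If this were nonzero, then multiplying by permanent classes and using $\bar{\kappa}=\varpi^2\Delta_1=a_{\lambda}^2u_{2\sigma}^2\dfrak^6(u_{2\sigma}u_{\lambda}^4)$ would force $d_7(\varpi^2\Delta_1)=\varsigma\varpi^5\Delta_1^{-2}\neq 0$ on the $E_7$-page, contradicting the fact that $\bar{\kappa}$ is a permanent cycle; once $u_{2\sigma}u_{\lambda}^4$ is a $d_7$-cycle, sparseness (via the isomorphism $\underline{E}_2^{*,*}(C_4/C_4)\to\underline{E}_2^{*,10-2\sigma-4\lambda+*}(C_4/C_4)$ given by multiplication by this unit) finishes it. A smaller remark: your $C_2$ claim is argued by a separate induction on $u_{2\sigma_2}^{2^k}$, which can be made to work, but the paper gets it for free from $u_{8\sigma_2}=\res_2^4(u_{4\lambda})$ and the fact that $\bar{u}_{4\lambda}$ is a permanent cycle by the same HHR theorem; and your blanket assertion that every candidate differential on the $u$-classes in the HFPSS for $E$ is the image of a slice differential should be treated with care near the bottom of the spectral sequence, where the two differ (e.g.\ $d_5(u_{2\sigma})$ is nonzero in the slice spectral sequence for $k_{[2]}$ but vanishes in the HFPSS for $E$).
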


The second result we state is proved in \Cref{cor:notintegershit} and \Cref{table:finalfortrivialsigma}:
\begin{prop}\label{prop:oneminussigma}
There is no integer $d$ such that $E \smsh S^{\sigma-1} \simeq E \smsh S^d$ as $C_4$ equivariant $E$-modules. In particular, $\JEC4(\sigma-1)$ is not in the cyclic subgroup generated by $\Sigma E^{hC_4}$. 
\end{prop}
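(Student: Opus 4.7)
The strategy is to reduce the claim to the non-existence of certain unit classes in $RO(C_4)$-graded homotopy, and then to eliminate these possibilities using the Mackey HFPSS computation from Section 5.

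First, by \fullref{cor:MN}, an equivalence $E \smsh S^{\sigma-1} \simeq E \smsh S^d$ of $C_4$-equivariant $E$-modules is the same as the existence of a class $u \in \pi^{C_4}_{\sigma-1-d}(E)$ whose restriction $i_e^* u \in E_0$ is a unit. Restricting this hypothetical equivalence to $C_2$ and using that $\bar r_1^{C_2}\in\pi^{C_2}_{\rho_2}(E)$ is a unit (so that $[S^{\sigma_2-1}] = -2$ in $\Pic_{C_2}(E) \cong \Z/16$, the order $16$ being a special case of \cite{heardlishi}) forces $d \equiv -2 \pmod{16}$. Combined with the $32$-fold periodicity of $\pi^{C_4}_{\star}(E)$ provided by $\Delta_1^4$ (\fullref{rem:periodicityD14}), this reduces us to the two residues $d \equiv -2, 14 \pmod{32}$, i.e., to showing that neither $\pi^{C_4}_{\sigma+1}(E)$ nor $\pi^{C_4}_{\sigma-15}(E)$ contains a class restricting to a unit in $E_0$.

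Next, I would examine these groups through the Mackey HFPSS
\[
\underline{E}_2^{s,V} = \underline{H}^s(C_4, \pi_V E) \Longrightarrow \spi_{V-s}(E).
\]
A class of positive filtration restricts to zero under $i_e^*$, so any candidate unit must be detected in filtration $0$, i.e., in $E_\infty^{0,V}$ for $V = \sigma+1$ or $V = \sigma-15$. The relevant $H^0$ groups are computed from the sign-twisted $C_4$-action on $\pi_V(E) \cong E_{|V|}$ (the twist coming from $\sigma$ modifying the action of $\gamma$ by $-1$ on the suspension coordinate), using the explicit action on $E_* \cong \W[\![\mu_0]\!][r_{1,0}^{\pm 1}]$ recorded in \fullref{sec:mikescomment}. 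The conclusion that no surviving class in these columns has a unit image in $E_0$ is exactly the statement of \fullref{cor:notintegershit}, built on the tabulated HFPSS data of \fullref{table:finalfortrivialsigma}.

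The main obstacle is controlling these sign-twisted $H^0$ groups at the specified bidegrees: the obvious candidates such as powers of $r_{1,0}$ are no longer $\gamma$-invariant under the twisted action, so surviving classes are forced to involve either $\mu_0$, which lies in the maximal ideal of $E_0$, or the orientation classes $u_{2\sigma}, u_\lambda$, whose images in $E_0$ are likewise non-units. Carrying this bookkeeping out in both cases $V=\sigma+1$ and $V=\sigma-15$ requires the full $RO(C_4)$-graded HFPSS computation of Section 5, including identifying which classes are hit by differentials before reaching $E_\infty$.
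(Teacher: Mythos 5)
Your strategy (reduce to the existence of a filtration-zero unit class in a suitable $RO(C_4)$-graded stem, cut down $d$ by restricting to $C_2$ and using the $32$-periodicity, then inspect the HFPSS) is reasonable in outline, but the reduction step contains a genuine error that derails the rest. The restriction of the sign representation $\sigma$ of $C_4$ to $C_2=\langle\gamma^2\rangle$ is the \emph{trivial} representation, not $\sigma_2$: $\gamma$ acts by $-1$ on $\sigma$, so $\gamma^2$ acts by $+1$ (it is $\lambda$ whose restriction is $2\sigma_2$). Hence $i_{C_2}^*(\sigma-1)=0$, and restricting a hypothetical equivalence $E\smsh S^{\sigma-1}\simeq E\smsh S^{d}$ to $C_2$ forces $d\equiv 0\pmod{16}$, not $d\equiv -2$. (Your identity $[S^{\sigma_2-1}]=-2$ in $\Pic_{C_2}(E)$ is correct but irrelevant here; note also that in \fullref{thm:fullpicmackey} the restriction sends $7+\sigma$ to $8$, consistent with $i_{C_2}^*\sigma=1$.) As a result, the two stems you propose to analyze, $\sigma+1$ and $\sigma-15$, correspond exactly to the values of $d$ that the correct $C_2$-restriction already rules out, while the genuine candidates $d\equiv 0,16\pmod{32}$ --- which require showing that no class of $\spi_{\sigma-1}E(C_4/C_4)$ or $\spi_{\sigma+15}E(C_4/C_4)$ (equivalently, by \fullref{rem:allcompsintwo}, of the $17-\sigma$ and $1-\sigma$ stems) restricts to a unit of $E_0$ --- are never examined. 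This is where the actual content lies: for instance in stem $17-\sigma$ the filtration-zero $C_4/C_4$ classes are $\W[\![\mu]\!]$-multiples of $T_2\Delta_1^2\mathfrak{p}$, and one must observe that the underlying image of $T_2$ lies in the maximal ideal of $E_*$, so no unit appears. So the gap is not cosmetic.

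Two further points. First, your closing appeal to \fullref{cor:notintegershit} is circular: that corollary \emph{is} the paper's proof of the proposition you are asked to prove, so you would need to carry out the filtration-zero analysis (in the correct stems) yourself rather than cite it. Second, the paper's own argument is organized differently and avoids any preliminary reduction of $d$: it computes the full Mackey functor $\spi_{1-\sigma+*}E$ (\fullref{table:finalfortrivialsigma}) alongside $\spi_{*}E$ and simply observes that the former $32$-periodic family is not an integer shift of the latter, which excludes all $d$ at once. Your unit-class approach could be made to work, but only after fixing the restriction computation and redirecting the analysis to the stems $\sigma-1$ and $\sigma+15$.
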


\begin{prop}\label{prop:image}
The image of the map $\JEC4 \colon RO(C_4) \to \Pic_{C_4}(E)$ is isomorphic to 
\[\ZZ/{32} \{ 1 \} \oplus \ZZ/2\{7+\sigma\}.\]
The image of the map $\JEC2 \colon RO(C_2) \to \Pic_{C_2}(E)$ is isomorphic to 
\[\ZZ/{16}\{1\}.\]
\end{prop}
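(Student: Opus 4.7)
The plan is to obtain the image of $\JEC4$ as the quotient of $RO(C_4)$ by the relations furnished by the results preceding the proposition, and then to verify both that these relations are tight and that the resulting group is noncyclic.

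First, I would use $\rho_4 = 1+\sigma+\lambda \equiv 0$ in $\Pic_{C_4}(E)$ to eliminate $\lambda$ and reduce to the ambient $\Z$-module $\Z\{1, \sigma\}$. Substituting $\lambda = -(1+\sigma)$ into the three orientability relations from \fullref{prop:orientableus} yields
$$24 + 8\sigma \equiv 0, \qquad 4 - 4\sigma \equiv 0, \qquad 14 + 2\sigma \equiv 0$$
in the image. A short manipulation shows these three elements generate the same sublattice of $\Z\{1, \sigma\}$ as $\{32,\, 14+2\sigma\}$: indeed, $24 + 8\sigma = 4(14+2\sigma) - 32$ and $4 - 4\sigma = -2(14+2\sigma) + 32$. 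Changing basis from $(1, \sigma)$ to $(1, \epsilon)$ with $\epsilon := 7+\sigma$, the second relation becomes $2\epsilon \equiv 0$, so the image is a quotient of $\Z/32\{1\} \oplus \Z/2\{7+\sigma\}$.

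The main (and essentially only nontrivial) remaining step is to confirm that no further relations collapse this group. For the class $1$, I would invoke the minimal $32$-periodicity of $E^{hC_4}$ witnessed by $\Delta_1^4$ in \fullref{rem:periodicityD14}, which is well-documented in the literature cited in \S4.1; any strictly smaller integer period would contradict the known homotopy of $E^{hC_4}$. For the class $7+\sigma$, its nontriviality in the quotient $\Pic_{C_4}(E)/\langle 1 \rangle$ follows directly from \fullref{prop:oneminussigma}: a relation $7+\sigma \equiv k\cdot 1$ for some $k \in \Z$ would translate to $\sigma - 1 \equiv (k-8)\cdot 1$, contradicting the fact that $\sigma-1$ is not in the cyclic subgroup generated by $\Sigma E^{hC_4}$. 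Combining these two observations upgrades the upper bound to an isomorphism.

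Finally, the $C_2$ case is strictly analogous and simpler: $\rho_2 = 1+\sigma_2 \equiv 0$ gives $\sigma_2 \equiv -1$, so $RO(C_2)$ maps into $\Z\{1\}$; then $8 - 8\sigma_2 \equiv 0$ collapses to $16 \equiv 0$, showing the image is a quotient of $\Z/16$. Tightness follows from the $16$-periodicity of $E^{hC_2}$ being minimal, which is established in \cite{heardlishi}, yielding $\Z/16\{1\}$ as claimed.
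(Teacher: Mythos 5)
Your proposal is correct and follows essentially the same route as the paper: reduce modulo $\rho$, impose the orientability relations of \fullref{prop:orientableus} to get the quotient $\Z/32\{1\}\oplus\Z/2\{7+\sigma\}$, and then rule out further collapse using the minimal $32$- (resp.\ $16$-) periodicity together with \fullref{prop:oneminussigma}. The only difference is that you spell out the lattice change of basis explicitly, which the paper leaves as a one-line simplification.
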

\begin{proof}
As an abelian group, $RO(C_4)$ is isomorphic to $\Z\{1, \sigma, \lambda\}$. 
The map $\JEC4$ factors through the quotient
\begin{equation}\label{eq:start}
\Z\{1, \sigma, \lambda\}/(16-8\lambda, 4-4\sigma, 10-2\sigma-4\lambda, 1+\sigma+\lambda).\end{equation}
Simplifying the relations, we have that \eqref{eq:start} is isomorphic to
\begin{align*}
 \Z\{1, \sigma\}/(24+8\sigma, 4-4\sigma, 14+2\sigma ) & \cong  
   \Z/32\{1\} \oplus \Z/2\{7+\sigma\}.
\end{align*}
Since the periodicity of $E^{hC_4}$ is $32$, the image of $\JEC4$ contains the cyclic subgroup $\Z/32\{1\}$. Further, since 
$\JEC4(\sigma -1) = \JEC4(-8+(7+\sigma)) $ is not in this cyclic subgroup, then neither is $7+\sigma$. Therefore, $\JEC4$ does not factor through a smaller quotient.

For $C_2$, a similar computation shows that $\JEC4$ factors through $\Z/16$. Since the periodicity of $E^{hC_2}$ is $16$, there can be no further relations.
\end{proof}

\begin{rem}
\Cref{prop:image} provides a lower bound for the order of $\Pic_{C_4}(E)$. That is, this group has order at least $64$.
\end{rem}

\begin{rem}\label{rem:allcompsintwo}
\Cref{prop:orientableus} and \Cref{prop:image} together immediately imply that for any virtual representation $V \in RO(C_4)$, 
the homotopy fixed point spectral sequence computing $\spi_{*-V}E \cong \spi_* S^{V}\smsh E$ is a shift of either that computing $\spi_*E$ or that computing $\spi_{*+1-\sigma}E$. We explain this in two examples. 
Since 
\[2\sigma-2  \equiv 16 \ \ \text{in} \ \  \ZZ/{32} \{ 1 \} \oplus \ZZ/2\{7+\sigma\},\] 
there is an isomorphism of Mackey functor homotopy fixed point spectral sequences 
\[ \underline{E}_*^{*,*+2-2\sigma} \cong \underline{E}_*^{*,*+16}.\] 
Similarly, since 
\[ 15+\sigma \equiv 1-\sigma \ \ \text{in} \ \  \ZZ/{32} \{ 1 \} \oplus \ZZ/2\{7+\sigma\},\] 
there is an isomorphism of spectral sequences 
\[ \underline{E}_*^{*,*+15+\sigma} \cong \underline{E}_*^{*,*+1-\sigma}  .\] 
\end{rem}

% !TEX root = master.tex

\section{The $C_4$ Homotopy Fixed Point Spectral Sequence for $E$ Theory}\label{sec:computations}

The goal of this section is to compute the homotopy fixed point spectral sequence
\begin{equation}\label{eq:ssV}
\underline{E}_2^{s,\star+t} = \underline{H}^s(C_4, \pi_{\star+t}E) \Longrightarrow {\spi}_{\star+t-s}E^{h}
\end{equation}
with differentials $d_r \colon \underline{E}_r^{s,\star+t} \to \underline{E}_r^{s+r, \star+t+r-1}$.
As is noted in \Cref{rem:allcompsintwo}, the computation for any $ \star \in RO(C_4)$ is determined by the computation for $\star=0$ or for $\star = 1-\sigma$. So, throughout this section, we detail the computation of $\underline{E}_{*}^{*,*}$ and of $\underline{E}_{*}^{*,1-\sigma+*}$.

We first describe the structure of $\underline{H}^*(C_4, E_*)$ as a Mackey functor:
\begin{equation}\label{eq:mackey}
\xymatrix{H^*(C_4, E_*) \ar@/_/[d]_-{\res_2^{4}} \\ 
H^*(C_2, E_*) \ar@/_/[u]_-{\tr_2^4} \ar@/_/[d]_-{\res_1^{2}}  \\
H^*(\{e\}, E_*) \ar@/_/[u]_-{\tr_1^2} }
\end{equation}
This then determines the $\underline{E}_2$-term of \eqref{eq:ssV} when $\star=0$. The computation of the $\underline{E}_2$-term of \eqref{eq:ssV} when $\star=1-\sigma$, will easily follow from this computation. This is explained in \Cref{sec:Emodulesign}.

Next, we discuss
the differentials and extensions. They are imported from the slice spectral sequence for $K_{[2]}$ to the homotopy fixed point spectral sequence for $E^{hC_4}$ using the diagram \eqref{eq:bigdiagram}. Although all differentials are a direct consequence of those in the slice spectral sequence, there is a significant difference between this computation and that of \cite{HHRC4} in the range near the zero line. The $d_3$-differentials need particular attention and we give more details in this part of the computation. From the $\underline{E}_{4}$-term onwards, the computation in the range $0\leq s\leq 2$ is also different, but for $s\geq 3$, the computation is almost identical to that of the slice spectral sequence. 

Finally, we describe the $\underline{E}_{\infty}$-terms of $\spi_*E^h$ and $\spi_{(1-\sigma)+*}E^h$.

\subsection{Some important $C_4$-modules}\label{notn:A}\label{notn:reps}
Before giving a more detailed description of the structure of $E_*$ as a $C_4$-module, we need to set up some notation for a selection of $C_4$-modules that will be making multiple appearances in the computations below.

Denoting by $\gamma$ the generator of $C_4$, and by $e$ the trivial group element, we have the modules
\begin{align*}
\Z&= \Z[C_4/C_4] = \Z[C_4]/(\gamma-e)\, &&\text{corresponding to } 1 \text{ in } RO(C_4)\\
\Z_{-}&= \Z[C_4]/(\gamma+e)\, &&\text{corresponding to } \sigma \text{ in } RO(C_4)\\
\Z[{C_4/C_2}] &= \Z[C_4]/(\gamma^2-e)\, &&\text{corresponding to } 1+\sigma \text{ in } RO(C_4) \\
\Z[{C_4/C_2}]_- &= \Z[C_4]/(\gamma^2+e) \, &&\text{corresponding to } \lambda \text{ in } RO(C_4)
\end{align*}
where the correspondence with elements of $RO(C_4)$ occurs after base change to $\R$.
Moreover, we will write 
\begin{align*}
\Z[{C_4/C_2}_+] &= \Z[C_4/C_2]\oplus \Z &    \Z[{C_4/C_2}_-] &=\Z[C_4/C_2]\oplus \Z_{-} \ .
\end{align*}
Note that 
\begin{align*} 
\Z_{-} \otimes \Z_{-} &\cong \Z, &  \Z_{-} \otimes \Z &\cong \Z_{-}, \\
\Z_{-} \otimes \Z[{C_4/C_2}]  &\cong  \Z[{C_4/C_2}] ,  & \Z_{-} \otimes \Z[{C_4/C_2}]_-  &\cong  \Z[{C_4/C_2}]_- .
\end{align*}

Since the coefficients of $E$ are modules over the Witt vectors $\W = W(\fk)$, we will have to base change all of the above modules to $\W$; that amounts to just writing $\W$ instead of $\Z$.
 
For $\mu$ as in \eqref{eq:muafterrev}, we will let 
\begin{align*}
\wmu &= \W[\![ \mu ]\!] [C_4/C_2] =  \W[\![\mu ]\!] \otimes_{\W}\W[C_4/C_2] 
\end{align*}
and
\begin{align*}
\wmu_- &= \W[\![ \mu ]\!] [C_4/C_2]_- =  \W[\![\mu ]\!] \otimes_{\W} \W[C_4/C_2]_{-} \ .
\end{align*}
Let 
\begin{align*}
\Delta &= {e}+{\gamma} \in \W[C_4], & 
 \bar{\Delta}& ={e} -{\gamma}\in \W[C_4] \ .
\end{align*}
 We denote by the same name the images of $\Delta$ and $\bar{\Delta}$ in $ \W[{C_4/C_2}]$, in $\W[{C_4/C_2}]_-$, and in $ \W[{C_4/C_2}_{\pm}] $ via the inclusion $\W[C_4/C_2] \subseteq \W[{C_4/C_2}_{\pm}] $.

Let $\mu$ be as in \eqref{eq:muafterrev}.
For
\[\W[\![\mu ]\!][{C_4/C_2}_{\pm}]  = \W[\![\mu ]\!] \otimes_{\W} \W[{C_4/C_2}_{\pm}] ,\] 
and $\ast$ corresponding to a generator of the $C_4$-submodule $\W \subseteq \W[{C_4/C_2}_{+}]$, we define
\begin{align*}
\wmuP&= \frac{\W[\![ \mu ]\!][{C_4/C_2}_+]}{\mu \cdot \ast =\Delta}.
\end{align*}
Similarly, for $\ast$ a generator of $\W_{-}\subseteq \W[{C_4/C_2}_{-}]$, we define
\begin{align*}
 \wmuM&=  \frac{\W[\![\mu] \!][C_4/{C_2}_-]}{\mu\cdot \ast = \bar{\Delta}+2 \cdot \ast}.
\end{align*}

There are exact sequences
\begin{equation}\label{eq:exactA}
\xymatrix@R=0.7pc{
0 \ar[r] & A \ar[r] & A(+) \ar[r] & \W \ar[r] & 0\\
0 \ar[r] & A \ar[r] & A(-) \ar[r] & \W_{-} \ar[r] & 0.
}
\end{equation}
Further, note that
\begin{align*}
\wmuP/2 &\cong \wmuM/2 \cong \frac{\fk[\![ \mu ]\!][{C_4/C_2}_+]}{\mu \cdot \ast =\Delta}
\end{align*}
and that there is an exact sequence
\[ \xymatrix@R=0.7pc{
0 \ar[r] & A/2 \ar[r] & A(+)/2 \ar[r] & \fk \ar[r] & 0.}\]

\subsection{The Mackey functor cohomology of $C_4$ with coefficients in $E_*$}\label{sec:act}\label{rem:act}\label{sec:cohomology}
Recall that there is isomorphism $E_*  \cong \W[\![ \mu_0 ]\!][r_{1,0}^{\pm 1}]$
of $\W[C_4]$-modules, with $|r_{1,0}|=2$ and the action of $\gamma$ given by
\begin{align*}
\gamma(r_{1,0})&= r_{1,1} =r_{1,0}(1-\mu_0),  &  \gamma(\mu_0)  = \mu_1 = \frac{2-\mu_0}{1-\mu_0}.
\end{align*}
In \Cref{tab:elementsE}, we have named some of the elements in $E_*$, $E_*^{C_2}$ and $E_*^{C_4}$. We will use the notation and the information contained in \Cref{tab:elementsE} throughout the rest of the paper.

{
\begin{table}[H]
\begin{center}
\begin{tabular}{|C|C|C|C|}
\hline
 \text{Elements} & \text{Degree} & \text{Action of $\gamma$}  \\
\hline
\hline
\makecell{r_{1,0} \\
 r_{1,1} =  r_{1,0}(1-\mu_0) } & E_2 &  \makecell{\gamma(r_{1,0}) = r_{1,1}  \\   \gamma(r_{1,1}) = -r_{1,0}}  \\
 \hline
 \makecell{\mu_0 =\frac{ r_{1,0}- r_{1,1}}{r_{1,0}},  \\  \mu_1=\frac{r_{1,0}+r_{1,1}}{r_{1,1}} =\frac{2-\mu_0}{1-\mu_0} } &  E_0^{C_2} &\makecell{ \gamma(\mu_0) = \mu_1  \\  \gamma(\mu_1) = \mu_0  }  \\
 \hline
\makecell{\Sigma_{2,0} =r_{1,0}^2,   \\   \Sigma_{2,1} =-r_{1,1}^2 } &  E_4^{C_2} &\makecell{ \gamma( \Sigma_{2,0}) = -\Sigma_{2,1}  \\  \gamma( \Sigma_{2,1}) = -\Sigma_{2,0}  }  \\
\hline
 \delta_1= r_{1,0}r_{1,1}   &  E_4^{C_2}  & \gamma(\delta_1) = -\delta_1  \\
\hline
\mu =\mu_0 + \mu_1  =\frac{2-\mu_0^2}{1-\mu_0}
  & E_0^{C_4} &  \gamma(\mu)= \mu   \\
  \hline
T_2=r_{1,0}^2+r_{1,1}^2 & E_4^{C_4} & \gamma(T_2)= T_2    \\
\hline
\makecell{ \Delta_1 = \delta_1^2   }  & E_8^{C_4}   & \gamma(\Delta_1) = \Delta_1    \\
\hline
 \makecell{T_4=\Delta_1( \mu -2) }& E_8^{C_4} & \gamma(T_4)= T_4     \\
\hline
\end{tabular}
\end{center}
\caption{Some elements of $E_*$ as a $C_4$-module and their images under the action of $\gamma$. Names are chosen to reflect the notation of \cite[Table 3]{HHRC4}.}
\label{tab:elementsE}
\label{notn:elements}
\end{table}}

\begin{rem}
There are isomorphisms
\begin{align*}
\W\{r_{1,0},r_{1,1}\} &\cong \W[C_4/C_2]_{-} , &  \W\{\mu_0, \mu_1\} &\cong \W[C_4/C_2] ,\\
 \W\{\Sigma_{2,0},  \Sigma_{2,1}\} &\cong \W[C_4/C_2]  , & \W\{\delta_1\} &\cong \W_{-} .
\end{align*} 
\end{rem}

\begin{rem}
The norm element
\[\Delta_1= r_{1,0} \gamma(r_{1,0}) \gamma^2(r_{1,0})\gamma^3(r_{1,0})\]
is a periodicity generator of $E_*$ as a $C_4$-module. However, we will see that $\Delta_1^4$ is a permanent cycle and is a periodicity generator of $E^h$ as a $C_4$-spectrum. See \Cref{rem:periodicityD14}.
\end{rem}

\begin{rem}
The relations
\begin{align*}
\mu_0^2 &= \mu  \mu_0 -\mu  +2,\\
r_{1,0}^4&=\Delta_1 (1-\mu_0)^{-2}
\end{align*} imply that 
$\W [\![\mu_0]\!] $ is isomorphic to $\W[\![\mu ]\!] \{1,\mu_0\}$ as a $\W[\![\mu ]\!]$-module and, in fact, that
\[ E_* \cong \W[\![\mu ]\!][\Delta_1^{\pm 1}]\{r_{1,0}^k, \mu_0 r_{1,0}^k : 0\leq k \leq 3\} \]
as $\W[\![\mu]\!][\Delta_1^{\pm 1}]$-modules.
\end{rem}

\begin{lem}\label{lem:reps}
As a $\W[\![\mu]\!][C_4]$-module, $E_*$ is $8$ periodic, with periodicity generator $\Delta_1 \in E_8$. There are isomorphisms
\begin{align*}
E_{0} &\cong  \W[\![\mu]\!]\{1,\mu_0,\mu_1\}/(\mu\cdot 1=\mu_0+\mu_1), \\
E_{2} &\cong \W[\![\mu]\!]\{ r_{1,0}, r_{1,1} \}, \\ 
E_{4}& \cong  \W[\![\mu]\!]\{\delta_1,\Sigma_{2,0},\Sigma_{2,1}\}/(\mu \delta_1 =\Sigma_{2,0}+\Sigma_{2,1}+2\delta_1),  \\
E_{6} &\cong \W[\![\mu]\!] \{ \delta_1 r_{1,0},  \delta_1 r_{1,1} \}  .
\end{align*}
in particular,
\begin{align*}
E_{0} &\cong   \wmuP, &
E_{2} &\cong E_6 \cong \wmum, &
E_{4} &\cong    \wmuM.
\end{align*}
\end{lem}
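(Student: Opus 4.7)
The plan is to reduce the computation to the four residue degrees $n=0,2,4,6$ by exhibiting $\Delta_1$ as a $C_4$-fixed unit, and then to identify each $E_n$ directly from the underlying $C_4$-module structure $E_*\cong \W[\![\mu_0]\!][r_{1,0}^{\pm 1}]$ and the formulas for $\gamma$ recorded in \fullref{tab:elementsE}. For the periodicity claim, the key observation is that $\Delta_1 = r_{1,0}\gamma(r_{1,0})\gamma^2(r_{1,0})\gamma^3(r_{1,0})$ is a $C_4$-norm of a unit, hence itself a $C_4$-fixed unit in $E_*$; therefore multiplication by $\Delta_1^{\pm 1}$ is a $\W[\![\mu]\!][C_4]$-linear isomorphism $E_n\xrightarrow{\sim} E_{n\pm 8}$.

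For $E_0=\W[\![\mu_0]\!]$ I would use the quadratic identity $\mu_0^2 = \mu\mu_0 -\mu + 2$ from \fullref{tab:elementsE} to conclude that $\{1,\mu_0\}$ is a $\W[\![\mu]\!]$-basis; adding the redundant generator $\mu_1 = \mu - \mu_0$ then yields the presentation $\W[\![\mu]\!]\{1,\mu_0,\mu_1\}/(\mu\cdot 1 = \mu_0 + \mu_1)$. Since $\gamma$ fixes $1$ and swaps $\mu_0\leftrightarrow\mu_1$, this is exactly the presentation of $\wmuP$.

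For $E_2$, multiplication by the unit $r_{1,0}\in E_2$ gives a $\W[\![\mu]\!]$-linear isomorphism $E_0\xrightarrow{\sim} E_2$; since $r_{1,1} = r_{1,0}(1-\mu_0)$, the change of basis from $\{r_{1,0},\mu_0 r_{1,0}\}$ to $\{r_{1,0},r_{1,1}\}$ is unimodular, and the action $\gamma r_{1,0}=r_{1,1}$, $\gamma r_{1,1}=-r_{1,0}$ identifies $E_2$ with $\wmum=\W[\![\mu]\!][C_4/C_2]_-$. For $E_6$ the same strategy works via the unit $\delta_1\in E_4$: multiplication by $\delta_1$ yields a $\W[\![\mu]\!]$-linear isomorphism $E_2\xrightarrow{\sim}E_6$, and twists the action by $\gamma\delta_1=-\delta_1$; the isomorphism $\W_-\otimes_{\W}\W[C_4/C_2]_-\cong\W[C_4/C_2]_-$ noted in \fullref{notn:reps} then gives $E_6\cong \wmum$.

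The main obstacle will be $E_4$, where the three natural generators $\Sigma_{2,0}$, $\Sigma_{2,1}$, $\delta_1$ are not independent and the presentation requires a specific relation of non-obvious shape. The plan is to observe first that $\Sigma_{2,0}=r_{1,0}^2$ is a unit, so multiplication by $\Sigma_{2,0}$ gives a $\W[\![\mu]\!]$-linear isomorphism $E_0\xrightarrow{\sim}E_4$, making $E_4$ free of rank $2$. Writing $\delta_1=\Sigma_{2,0}(1-\mu_0)$ and $\Sigma_{2,1}=-\Sigma_{2,0}(1-\mu_0)^2$, and expanding $(1-\mu_0)^2$ by the $\mu_0$-quadratic, a short calculation yields the single relation $\mu\delta_1 = \Sigma_{2,0}+\Sigma_{2,1}+2\delta_1$. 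Finally, under the assignment $\ast\mapsto\delta_1$ together with the identification of $\{\Sigma_{2,0},-\Sigma_{2,1}\}$ with the basis $\{e,\gamma\}$ of $\W[C_4/C_2]\subseteq\W[C_4/C_2_-]$, the image of $\bar\Delta = e-\gamma$ is $\Sigma_{2,0}+\Sigma_{2,1}$, so the defining relation $\mu\ast = \bar\Delta + 2\ast$ of $\wmuM$ matches the relation above, producing the isomorphism $E_4\cong\wmuM$.
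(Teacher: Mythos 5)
Your proposal is correct and takes essentially the same route as the paper: the paper's proof likewise reduces everything to the quadratic relation $\mu_0^2=\mu\mu_0-\mu+2$ and then exhibits the explicit $C_4$-linear isomorphisms by the same generator assignments ($1\mapsto\ast$, $\mu_0\mapsto e$ for $E_0\cong\wmuP$; $r_{1,0}\mapsto e$ for $E_2$; $\delta_1\mapsto\ast$, $\Sigma_{2,0}\mapsto e$ for $E_4\cong\wmuM$; $\delta_1 r_{1,0}\mapsto e$ for $E_6$), with the $8$-periodicity via the fixed unit $\Delta_1=N(r_{1,0})$ recorded separately as a remark. Your verification of the relation $\mu\delta_1=\Sigma_{2,0}+\Sigma_{2,1}+2\delta_1$ and of the matching of $\bar\Delta$ with $\Sigma_{2,0}+\Sigma_{2,1}$ is exactly the ``straightforward computation'' the paper leaves implicit.
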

\begin{proof}
A straightforward computation gives the first set of isomorphisms. The second set of isomorphisms is given by the $C_4$-linear maps determined by:
\begin{align*}
E_{0} &\to\wmuP & E_{2} &\to\wmum &  E_{4} &\to\wmum & E_{6} &\to\wmuM \\
1 &\mapsto \ast  & r_{1,0} &\mapsto e  & \delta_1 &\mapsto \ast & \delta_1 r_{1,0} &\mapsto e \\
\mu_0 &\mapsto  e & & &     \Sigma_{2,0}&\mapsto e & & &\qedhere
\end{align*}
\end{proof}

\begin{rem}
\Cref{lem:reps} implies that
\begin{align*}
 E_*^{C_2} &\cong \W[\![\mu_0]\!][\Sigma_{2,0}^{\pm 1}] \\ E_*^{C_4} &\cong \W[\![\mu]\!][T_2,\Delta_1^{\pm 1}]/(T_2^2-\Delta_1((\mu-2)^2+4)).
\end{align*}
\end{rem}

From the facts that $\Delta_1 = (r_{1,0}r_{1,1})^2$ and that the ideal $(2,\mu)$ is equal to $(2,\mu_0^2)$, it follows that
\[ E_* \cong (\W[r_{1,0}, r_{1,1}] [\Delta_1^{-1}])^{\wedge}_{(2,\mu)} \]
as $\W[C_4]$-modules. An argument similar to that of Theorem 6 in \cite{GHM_V1} gives an isomorphism
\[ H^*(C_4, E_*) \cong \left(\W \otimes H^*(C_4, \Z[r_{1,0}, r_{1,1}])[\Delta_1^{-1}]\right)^{\wedge}_{(2,\mu)}.\]
Hence, in order to compute the cohomology of $C_4$ with coefficients in $E_*$, we first compute $H^*(C_4,\Z[r_{1,0}, r_{1,1}])$, noting that $\Z[r_{1,0}, r_{1,1}]$ is the symmetric algebra of the induced sign representation
\[\Z[r_{1,0}, r_{1,1}] \cong \Sym(\Z[C_4/C_2]_-).\]
Then we base change to $\W$,  invert $\Delta_1$ and complete the answer at the ideal $(2,\mu)$. The computation of the cohomology $H^*(C_4,\Z[r_{1,0}, r_{1,1}])$ is essentially the same as that performed in \cite{HHRC4} to compute the $E_2$-term of the slice spectral sequence of $k_{[2]}$. 
This approach to the computation also appears in work in progress of Henn.
 Finally, the answer is described in  \cite[Section 2.2]{BobkovaGoerss} and can be deduced from \cite{HHRC4}. Following these references, we describe the answer in \Cref{coh-c4}. We do not repeat the computation of the group cohomology, but rather focus our attention on describing the $\underline{E}_2^{*,V+*}$-terms as Mackey functors for $V=0$ and $V=1-\sigma$. 
 Since we will be deducing most of our results from \cite{HHRC4}, we have opted to choose notation that does not clash with theirs. We need the description of the $E_2$-page of the \(RO(C_2)\)-graded homotopy fixed point spectral sequence for Morava \(E\)-theory used in \cite{hahnshi}, which we restate using the notation of this paper:

\begin{prop} \label{coh-c2}
There is an isomorphism 
\[
H^{*}(C_{2},E_{\star})\cong \mathbb{W}[\![\mu_{0}]\!][\bar{r}_{1,0}^{\pm 1}, a_{\sigma}, u_{2\sigma}^{\pm 1}]/2a_{\sigma},
\]
where the $( *, \star)$-degree of the elements is given by \(|\mu_0|=(0,0)\), \(|\bar{r}_{1,0}|=(0,\rho_{2})\), \(|a_{\sigma}|=(1,-\sigma)\), and \(|u_{2\sigma}|=(0,2-2\sigma)\).
\end{prop}

\begin{notation}
For \(\bar{r}_{1,0}\) and \(\bar{r}_{1,1}\) as in \eqref{eq:r1is}, it follows from \eqref{eq:mus} that 
\[\bar{r}_{1,1}=\bar{r}_{1,0}(1-\mu_{0}).\] 
For \(a_{\sigma}\) as in \Cref{notn:aV}, let \(\eta_{i}=a_{\sigma}\bar{r}_{1,i}\), and note that $\eta_i \in H^1(C_2, \Z\{r_{1,i}\}) $ is a generator for $i=0,1$. Further, 
\[
\bar{r}_{1,0}\eta_1 =   \bar{r}_{1,1}\eta_0.
\]
\end{notation}

We also single out a particular \(RO(C_{4})\)-graded homotopy class.
\begin{defn}
Let 
\[
\dfrak = N_{2}^{4}(\bar{r}_{1,0}) \in \pi_{\rho_4}^{C_4}E.
\]
This is an actual homotopy class (and hence a permanent cycle in the homotopy fixed point spectral sequence).
\end{defn}

The class \(\dfrak\) is a unit. In the homotopy fixed point spectral sequence, the classes \(u_{V}\) for orientable \(V\) (introduced in \Cref{rem:uvsforE}) are also units. These give us several ways to rewrite classes. For example,
\[
\Delta_{1}=\dfrak^{2}u_{2\sigma}u_{\lambda}^{2}.
\]

The unit \(\dfrak\) also greatly simplifies the homotopy fixed point \(E_{2}\)-term analysis. The following result is immediate since $\dfrak$ and $u_{\lambda}$ are units.
\begin{prop}\label{prop:pMults}
If \(\mathfrak p_{1}=(\dfrak u_{\lambda})^{-1}\), then multiplication by \(\mathfrak p_{1}\) induces an isomorphism
\[
E_{2}^{\ast,\star}\to E_{2}^{\ast,\star-3-\sigma},
\]
and similarly for Mackey functors.
\end{prop}

This means we need only determine the integrally graded stems to have completely determined the \(RO(C_{4})\)-graded stems.

\begin{prop}\label{coh-c4}
There are classes
\[
\eta \in H^1(C_4,E_2)\qquad \nu \in H^1(C_4,E_4)
\]
and 
\[
\varsigma \in H^1(C_4,E_6)\qquad \varpi \in H^2(C_4,E_8)
\]
such that
\[
H^\ast (C_4,E_\star) \cong \mathbb{W}[\![\mu]\!][T_2,\Delta_1^{\pm 1},\eta,\nu,\varsigma, \varpi][\mathfrak p_{1}^{\pm 1}, u_{\lambda}^{\pm 1}]/\sim
\]
where $\sim$ is the ideal of relations given by
\begin{align*}
2\eta &= 2\nu= 2\varsigma = 4\varpi = 0 ;&  T_2^2 & =  \Delta_1((\mu-2)^2 + 4);  \\
\Delta_1 \eta^2  &= T_2\varpi = \varsigma^2;  & T_2 \varsigma  &= \mu\Delta_1\eta; \\
 \qquad T_2\eta &= \mu\varsigma ; & \varsigma\eta &= \mu\varpi; \\
\nu^2 &= 2\varpi; &   \mu\nu&=\eta\nu = T_2\nu=\varsigma\nu= 0.
\end{align*}
\end{prop}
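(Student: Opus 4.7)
The plan is to import the calculation from the slice $E_2$-term computation of $k_{[2]}$, via the reduction noted immediately before Proposition \ref{coh-c2}: after identifying
\[E_* \cong \bigl(\W[r_{1,0}, r_{1,1}][\Delta_1^{-1}]\bigr)^{\wedge}_{(2,\mu)}\]
as a $\W[C_4]$-module (with $\Z[r_{1,0}, r_{1,1}] \cong \Sym(\Z[C_4/C_2]_-)$), it suffices to compute $H^*(C_4, \Sym(\Z[C_4/C_2]_-))$, then base-change to $\W$, invert $\Delta_1$, and complete at $(2,\mu)$. The formation of cohomology commutes with these operations (as in \cite{GHM_V1}), so the graded ring structure is preserved.

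First I would decompose the graded pieces as $C_4$-modules using the same identifications that underlie \fullref{lem:reps}: after base-change and $(2,\mu)$-completion, the homogeneous piece in internal degree $0$ is $\wmuP$, the pieces in degrees $2$ and $6$ are each $\wmum$, and the piece in degree $4$ is $\wmuM$, repeating $8$-periodically via multiplication by $\Delta_1 \in E_8$. Next I would compute the cohomology of each basic module. Since $A = \W[\![\mu]\!][C_4/C_2]$ is coinduced from $C_2$, Shapiro's lemma gives
\[H^*(C_4, A) \cong H^*(C_2, \W[\![\mu]\!]) \cong \W[\![\mu]\!][x]/(2x),\]
with $|x|=2$. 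The cohomologies $H^*(C_4,\W)$ and $H^*(C_4,\W_-)$ are standard from the periodic resolution of $C_4$. The short exact sequences \eqref{eq:exactA} then give long exact sequences from which $H^*(C_4, \wmuP)$ and $H^*(C_4, \wmuM)$ are readable, and they determine the additive generators: $\eta$ and $\varsigma$ arise in degrees $(1,2)$ and $(1,6)$ from $H^1(C_4, \wmum)$; $\nu$ in degree $(1,4)$ arises from the connecting map out of $H^0(C_4, \W_-)$ applied to the sequence for $\wmuM$; and $\varpi$ in degree $(2,8)$ comes from the $\W/4$-summand of $H^2(C_4, \wmuP)$.

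The multiplicative structure is then checked in stages. The torsion relations $2\eta = 2\nu = 2\varsigma = 4\varpi = 0$ are immediate from the $2$- and $4$-torsion visible in the cohomology of $\W$ and $\W_-$. The ring relation $T_2^2 = \Delta_1((\mu-2)^2 + 4)$ is verified at the level of $E_*$ by expanding $(r_{1,0}^2 + r_{1,1}^2)^2$ using $r_{1,1} = r_{1,0}(1-\mu_0)$ and the identity $\mu_0^2 = \mu\mu_0 - \mu + 2$ from \fullref{sec:act}. The relations $T_2 \eta = \mu \varsigma$, $T_2 \varsigma = \mu \Delta_1 \eta$, $\Delta_1 \eta^2 = T_2 \varpi = \varsigma^2$, and $\varsigma\eta = \mu\varpi$ are verified by choosing explicit cocycle representatives in the periodic resolution of $C_4$ and computing cup products on the level of the $\W[\![\mu]\!]$-module generators identified in the decomposition. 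Finally, $\nu^2 = 2\varpi$ and the annihilator relations $\mu \nu = \eta\nu = T_2\nu = \varsigma\nu = 0$ encode that $\nu$ lies in the image of the boundary map from $H^0(C_4, \W_-)$ in the $\wmuM$ sequence: $\nu$ is represented by a cocycle whose image under multiplication by $\mu$ lands in the $\wmu$ part, where the corresponding cohomology vanishes in degree $1$.

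The main obstacle is verifying the multiplicative relations cleanly, and in particular the subtle $\nu^2 = 2\varpi$ relation (where $\nu$ is $2$-torsion but its square produces a nonzero $2$-divisible class in $H^2$) together with the annihilator relations for $\nu$. These calculations essentially already appear in \cite{HHRC4} while computing the slice $E_2$-term of $k_{[2]}$, and in \cite[Section 2.2]{BobkovaGoerss}; the proof is largely a matter of transcribing and repackaging those cocycle-level computations into the notation adopted here, and then verifying that completing at $(2,\mu)$ and inverting $\Delta_1$ does not introduce any new relations.
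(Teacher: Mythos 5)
Your outline follows essentially the same route as the paper: reduce via $E_*\cong(\W[r_{1,0},r_{1,1}][\Delta_1^{-1}])^{\wedge}_{(2,\mu)}$ to the cohomology of $\Sym(\Z[C_4/C_2]_-)$, compute the graded pieces through the module identifications of \fullref{lem:reps} and the sequences \eqref{eq:exactA}, and defer the cocycle-level verification of the multiplicative relations to \cite{HHRC4} and \cite[Section 2.2]{BobkovaGoerss} --- which is exactly what the paper does (it explicitly declines to repeat the group-cohomology computation and focuses instead on the Mackey-functor structure).

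One concrete slip: you locate $\nu$ (twice) as coming from ``the connecting map out of $H^0(C_4,\W_-)$'' in the sequence $0\to \wmu\to\wmuM\to\W_-\to 0$, but $H^0(C_4,\W_-)=0$, so no class can arise that way. The correct reading of that long exact sequence is that $H^1(C_4,\wmu)=0$ and the boundary $H^1(C_4,\W_-)\to H^2(C_4,\wmu)$ vanishes (it is a map of Mackey functors from one concentrated at $C_4/C_4$ with zero restriction into one whose restriction is injective), so $H^1(C_4,\wmuM)\cong H^1(C_4,\W_-)\cong\fk$, and $\nu$ is the class mapping to the generator; in the notation of \fullref{rem:HHRtranslation} it is $a_\sigma u_\lambda\dfrak$. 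Your argument that $\mu\nu$ lifts to $H^1(C_4,\wmu)=0$ is fine once phrased this way, and it also gives $\eta\nu=T_2\nu=\varsigma\nu=0$ by similar reasoning, but $\nu^2=2\varpi$ does not follow from this positioning: it is a genuinely multiplicative input (the ``gold relation'' $2u_{2\sigma}a_\lambda=a_\sigma^2u_\lambda$ in the language of \cite{HHRC4}), so there you really are leaning on the cited cocycle computations, as you acknowledge --- and as the paper itself does.
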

\begin{rem}
Note that $\eta^3$ is $\mu$ divisible since $   \eta^3=\eta\varsigma^2\Delta_1^{-1}=\mu\varsigma\varpi\Delta_1^{-1}$.
\end{rem}

\begin{rem}\label{rem:HHRtranslation}
The classes $a_V$ and $u_V$ were introduced in \Cref{notn:aV} and \Cref{rem:uvsforE}. There is also a class $\eta' \in \pi_{2-\sigma}^{C_4}E$
 detected in $H^1(C_4, \pi_{3-\sigma}E)$. See \cite[Table 3]{HHRC4}. It satisfies $\res_2^4(\eta') = u_{\sigma}(\eta_0+\eta_1)$.

A dictionary with \cite[Table 3]{HHRC4}, and some useful relations, are then given by
\begin{align*}
\varsigma &= \eta' u_{\lambda} \dfrak & \varpi &= a_{\lambda} u_{\lambda} u_{2 \sigma} \dfrak^2 \\
\Delta_1&= u_{2\sigma} u_{\lambda}^2 \dfrak^2 & \nu &= a_{\sigma} u_{\lambda} \dfrak.
\end{align*}
Note further that $\varpi\Delta_1^{-1} = a_{\lambda} u_{\lambda}^{-1}$.

Under some choice of isomorphism, we can identify our classes with those in \cite[Perspective 2]{BO}. Again, the elements $\eta$ and $\nu$ have the same name there as here. Further,
\begin{align*}
\Delta_1 & = \delta & T_2 &= b_2 \\
\varsigma &= \widetilde{\gamma} &  T_4 &= b_4 \\
 \varpi &= \xi &  \varpi\Delta_1^{-1}  &= \beta .
 \end{align*}
In \cite[Proposition 2.10]{BobkovaGoerss}, $\varsigma$ corresponds to $\gamma$ and our $\mu$ is congruent to their $z$ modulo $2$. 
\end{rem}

\begin{table}[H]
\begin{center}
\centering
\begin{tabular}{ |C|C|C|C|C|C| }
\hline
   \multicolumn{4}{|C|}{\makecell{{\nabla}({e})={\nabla}({\gamma})={\nabla}({\gamma^2})={\nabla}({\gamma^3})=1, \\ \ {\Delta}(1)=\Delta, \ {\Delta}(e)=e+\gamma^2, \  {\Delta}(\gamma)=\gamma+\gamma^3, }}  \\
 \hline
 \hline
  \mackeybox & \mackeyboxhat & \mackeyboxoverline & \mackeycirc     \\ 
 \hline
  \Mackey{\W}{\W}{\W}{2}{1}{2}{1} 
  & \Mackey{\W}{\W[C_4/{C_2}]}{\W[C_4/{C_2}]}{2}{1}{\nabla}{\Delta}
   &\Mackey{0}{\W_{-}}{\W_{-}}{2}{1}{}{}
  & \Mackey{\mathbb{W}/4}{\fk}{0}{}{}{2}{1} 
 \\
 \hline
   \mackeybullet & \mackeybulletoverline  & \mackeyblacktriangledown &  \mackeyblacktriangle   \\
 \hline
  \Mackey{\fk}{0}{0}{}{}{}{} 
 &   \Mackey{0}{\fk}{0}{}{}{}{}
 &  \Mackey{\fk}{\fk}{0}{}{}{1}{0} 
 &\Mackey{\fk}{\fk}{0}{}{}{0}{1} 
 \\
 \hline
\mackeyeinfsix  &  \mackeyboxleftslash & \mackeyboxhatoverline  &  \mackeyeinften     \\
 \hline
  \Mackey{\fk}{\fk}{\W[C_4/{C_2}]_-}{\nabla}{0}{0}{1}
  & \Mackey{\W}{\W}{\W}{2}{1}{1}{2} 
  &    \Mackey{0}{0}{\W[C_4/{C_2}]_{-}}{}{}{}{}
&  \Mackey{\fk}{\fk[C_4/{C_2}]}{\W[C_4/{C_2}]_-}{1}{0}{\nabla}{\Delta}
      \\
  \hline
   \mackeyboxleftslashoverline
  & \mackeyboxleftslashhat
  &\mackeyhalftopbox
& \mackeybullethat
   \\ 
  \hline
  \Mackey{0}{\W_{-}}{\W_{-}}{1}{2}{}{} 
 &   \Mackey{\W}{\W[C_4/{C_2}]}{\W[C_4/{C_2}]}{1}{2}{\nabla}{\Delta}
 &  \Mackey{\W}{\W}{\W}{1}{2}{2}{1}  
 &  \Mackey{\fk}{\fk[C_4/{C_2}]}{0}{}{}{\nabla}{\Delta}
 \\
 \hline
 \end{tabular}
\caption{Mackey functors similar to those of \cite[Table 2]{HHRC4}.}
\label{table:Mackey}
\end{center}
\end{table}

 \begin{rem}
Multiplication by the element $\Delta_1$ induces an isomorphism $\Sigma^8 E_{t} \to E_{t+8}$ and multiplication by $\varpi$ induces an isomorphism 
\[\underline{H}^s(C_4, E_t) \to \underline{H}^{s+2}(C_4, E_{t+8})\] 
for all $s\geq 1$, giving $\underline{H}^*(C_4, E_*)$ two periodicities.
\end{rem}
   
\Cref{coh-c2} and \Cref{coh-c4} identify the layers of \eqref{eq:mackey}. It still remains to identify the transfers and restrictions and this is the goal of the remainder of this section. To do this, we use \Cref{lem:reps}. The first step is the computation of the Mackey functors 
\begin{align*}
&\underline{H}^*(C_4, \W)  &  &\underline{H}^*(C_4, \W[C_4/C_2]) \\
&\underline{H}^*(C_4, \W_{-}) & & \underline{H}^*(C_4, \W[C_4/C_2]_{-}),
\end{align*}
which is straightforward from the definitions. The Mackey functors we need are listed in \Cref{table:Mackey}.

Whenever possible, we use notation reminiscent of that used in \cite{HHRC4}. 
\begin{lem}\label{prop:cohmackey}
For the Mackey functors listed in \Cref{table:Mackey}, there are isomorphisms
\begin{align*}
\underline{H}^*(C_4, \W)&\cong
\begin{cases}
\mackeybox & \text{if} *=0,\\
\mackeycirc & \text{if}*=2n,\\
0 & \text{if} *=2n-1,
\end{cases} 
\\
 \underline{H}^*(C_4, \W[C_4/C_2])&\cong
\begin{cases}
\mackeyboxhat & \text{if} *=0,\\
\mackeybullethat & \text{if} *=2n,\\
0 & \text{if} *=2n-1,
\end{cases} 
\\
 \underline{H}^*(C_4, \W_{-})&\cong
\begin{cases}
\mackeyboxoverline & \text{if} *=0,\\
\mackeybulletoverline & \text{if} *=2n,\\
\mackeybullet & \text{if} *=2n-1,
\end{cases} 
\\
\underline{H}^*(C_4, \W[C_4/C_2]_{-})&\cong
\begin{cases}
\mackeyboxhatoverline  & \text{if} *=0,\\
0 & \text{if} *=2n,\\
\mackeybullethat & \text{if} *=2n-1.
\end{cases}
\end{align*}
where $n>0$ in the above formulas. 
\end{lem}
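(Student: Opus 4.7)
The plan is to compute each of the four Mackey functors level by level, using Shapiro's lemma for the induced modules $\W[C_4/C_2]$ and $\W[C_4/C_2]_-$ and the standard periodic resolution of a cyclic group for the remaining direct computations. At the trivial subgroup the cohomology is concentrated in degree zero and takes the stated $\W$-module values.

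First I would handle the two induced modules. Writing $\W[C_4/C_2] \cong \mathrm{Ind}_{C_2}^{C_4}(\W_{\mathrm{triv}})$ and $\W[C_4/C_2]_- \cong \mathrm{Ind}_{C_2}^{C_4}(\W_{\mathrm{sgn}})$, where $\W_{\mathrm{triv}}$ and $\W_{\mathrm{sgn}}$ denote the trivial and sign $C_2$-modules respectively (a direct computation on the given generator $e\otimes 1$ verifies that $\gamma^2$ acts as $+1$ in the first case and as $-1$ in the second), Shapiro's lemma gives
\[
H^*(C_4, \W[C_4/C_2]) \cong H^*(C_2, \W_{\mathrm{triv}}), \qquad H^*(C_4, \W[C_4/C_2]_-) \cong H^*(C_2, \W_{\mathrm{sgn}}).
\]
Since $\W$ is torsion-free over $\Z_2$, the standard periodic resolution yields $H^*(C_2, \W_{\mathrm{triv}})$ equal to $\W$ in degree $0$, to $\W/2\W = \fk$ in positive even degrees, and to zero in odd degrees; while $H^*(C_2, \W_{\mathrm{sgn}})$ vanishes in even degrees and equals $\fk$ in odd degrees.

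Second, for $\W$ and $\W_-$ as $C_4$-modules the periodic resolution of $C_4$ yields $H^{2n}(C_4, \W) = \W/4\W$ and $H^{\mathrm{odd}}(C_4, \W) = 0$ (the norm is multiplication by $4$ and $\W$ is torsion-free), while for $\W_-$ the norm $1-\gamma+\gamma^2-\gamma^3$ is zero and $1-\gamma$ is multiplication by $2$, giving $H^{2n-1}(C_4, \W_-) = \fk$ and $H^{2n}(C_4, \W_-) = 0$ for $n > 0$. The same resolution for $C_2$ computes the middle level, using that $\W_-$ restricts to the trivial $C_2$-module (since $\gamma^2=1$ on $\W_-$) and that $\W[C_4/C_2]_\pm$ restrict to $\W_{\mathrm{triv}}^{\oplus 2}$ and $\W_{\mathrm{sgn}}^{\oplus 2}$ respectively.

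Finally, I would identify $\mathrm{res}_2^4$, $\mathrm{res}_1^2$, $\mathrm{tr}_2^4$, and $\mathrm{tr}_1^2$ via naturality of the periodic resolutions together with the double-coset formula, and match each row against \fullref{table:Mackey}. The main bookkeeping obstacle is tracking the effect of Shapiro's isomorphism on $\mathrm{res}_2^4$ for the induced modules (and, dually, on $\mathrm{tr}_2^4$ for $\W$ and $\W_-$): the double-coset formula shows that $\mathrm{res}_2^4\circ\mathrm{Ind}_{C_2}^{C_4}$ becomes the identity after Shapiro, while the transfer becomes the sum over $C_4/C_2$, producing the factors of $2$ visible in the Mackey symbols of the table.
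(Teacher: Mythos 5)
Your computation is correct and is essentially the argument the paper intends: the lemma is stated without proof (the paper calls this computation ``straightforward from the definitions''), and your route---periodic resolutions for $C_4$ and $C_2$, Shapiro's lemma for the induced modules $\W[C_4/C_2]$ and $\W[C_4/C_2]_-$, and the double coset formula for the restrictions and transfers---reproduces exactly the groups and structure maps recorded in \fullref{table:Mackey}. Two cosmetic points only: the norm element is $1+\gamma+\gamma^2+\gamma^3$, which acts on $\W_-$ by $1-1+1-1=0$ as you intend; and under Shapiro the Mackey restriction $\res_2^4$ on the induced modules is the diagonal $\Delta$ (identity onto each summand of $\res_2^4\,\mathrm{Ind}_{C_2}^{C_4}M\cong M\oplus{}^{\gamma}M$) while the transfer is the fold $\nabla$, which is what the table's symbols encode.
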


\begin{notation}
 In general, if $\sun$ is a Mackey functor in \Cref{table:Mackey}, then $\underline{\sun}$ is the Mackey functor defined as 
 \[
 \underline{\sun}(G/H)= \W [\![ \mu ]\!] \otimes_{\W}\sun(G/H),
 \]
  with restrictions and transfers extended to be $\W [\![ \mu ]\!]$-linear. 
\end{notation}

\begin{prop}\label{prop:cohmackeycomplete}
For the Mackey functors listed in \Cref{table:Mackey} and \Cref{table:Mackey-two}, there are isomorphisms
\begin{align*}
\underline{H}^*(C_4, A(+))&\cong
\begin{cases}
\mackeyunderlinedboxdothat & *=0,\\
\mackeyunderlinedodothat &*=2n,\\
0 & *=2n-1,
\end{cases} & \underline{H}^*(C_4, A)&\cong
\begin{cases}
\mackeyunderlinedboxhat & *=0,\\
\mackeyunderlinedbullethat & *=2n,\\
0 &*=2n-1,
\end{cases} 
\\
 \underline{H}^*(C_4, A(-))&\cong
\begin{cases}
\mackeyunderlinedboxdottilde & *=0,\\
\mackeyunderlinedbullettilde & *=2n,\\
\mackeybullet & *=2n-1,
\end{cases} 
&
\underline{H}^*(C_4, A_-)&\cong
\begin{cases}
\mackeyunderlinedboxhat  & *=0,\\
0 & *=2n,\\
\mackeyunderlinedbullethat & *=2n-1.
\end{cases}
\end{align*}
\end{prop}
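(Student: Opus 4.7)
The plan is to reduce everything to \fullref{prop:cohmackey}. For $A$ and $A_-$ I would argue directly by a flat base change, while for $A(+)$ and $A(-)$ I would run the long exact sequences of Mackey functors associated with the short exact sequences \eqref{eq:exactA}.

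For $A$ and $A_-$: by definition $A = \W[\![\mu]\!]\otimes_\W \W[C_4/C_2]$ and $A_- = \W[\![\mu]\!]\otimes_\W \W[C_4/C_2]_-$, and in both cases $\mu$ is $C_4$-fixed. Since $\W[\![\mu]\!]$ is flat over $\W$ and the underlying $\W[C_4]$-modules are finitely generated, the Mackey functor cohomology $\underline{H}^*(C_4,-)$ commutes with this extension of scalars. Applied to \fullref{prop:cohmackey}, this identifies $\underline{H}^*(C_4, A)$ and $\underline{H}^*(C_4, A_-)$ with the underlined (i.e.\ $\W[\![\mu]\!]$-linearized) versions of the Mackey functors appearing there.

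Now turn to $A(+)$ and $A(-)$. The sequences \eqref{eq:exactA} yield long exact sequences of Mackey functors
\[
\cdots\to\underline{H}^s(C_4, A)\to\underline{H}^s(C_4, A(\pm))\to\underline{H}^s(C_4, \W^{(\pm)})\xrightarrow{\partial}\underline{H}^{s+1}(C_4, A)\to\cdots,
\]
where $\W^{(+)} = \W$ and $\W^{(-)} = \W_-$. In the $(+)$-case, the cohomology of both $A$ and $\W$ is concentrated in even degrees, so the sequence collapses to short exact sequences in each nonnegative even degree. The resulting extensions are pinned down by directly computing fixed points at each subgroup $H\subseteq C_4$: the class $\ast\in A(+)^H$ generates $A(+)^H$ as a free $\W[\![\mu]\!]$-module of rank one with the relation $\mu\ast=\Delta$ tying the new generator to the norm. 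Multiplication by the periodicity class $\varpi$ propagates the degree-zero computation to all positive even degrees, yielding the Mackey functors claimed for $\underline{H}^*(C_4, A(+))$.

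For the $(-)$-case, $\underline{H}^*(C_4, \W_-)$ has nonzero odd part, and one must analyze the connecting map $\partial\colon \underline{H}^{2n-1}(C_4, \W_-)\to \underline{H}^{2n}(C_4, A)$. This is the main obstacle. I would argue for the vanishing of $\partial$ by reducing (via $\varpi$-periodicity) to the lowest nontrivial degree and exploiting the defining relation $(\mu-2)\ast = \bar{\Delta}$ in $A(-)$ together with naturality for restriction to $C_2$, where the corresponding sequence reduces to the computation of \fullref{coh-c2}. Once $\partial = 0$, the long exact sequence splits into short exact sequences and the Mackey structure---including transfers and restrictions---is read off by examining $A(-)^H$ at each subgroup, giving $\mackeybullet$ in odd degrees and the claimed Mackey functors in even degrees. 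The delicate point throughout is matching the Mackey functor structure across the connecting homomorphisms, which requires an explicit unwinding of the long exact sequences at each subgroup of $C_4$ and careful use of naturality for both restriction and transfer.
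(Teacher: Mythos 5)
Your proposal follows the paper's proof essentially verbatim: the paper likewise obtains $\underline{H}^*(C_4,A)$ and $\underline{H}^*(C_4,A_-)$ by base change along $\W\to\W[\![\mu]\!]$ (possible since $\mu$ is $C_4$-fixed) and then runs the long exact sequences coming from \eqref{eq:exactA}, identifying the middle terms with the Mackey functors of \fullref{table:Mackey-two} via the ring structure of $A(+)$ and $A(-)$. The only real difference is that the paper dispatches your ``main obstacle'' more cheaply: all connecting maps vanish for purely structural reasons --- for instance $\underline{H}^{2n-1}(C_4,\W_-)\cong\mackeybullet$ is concentrated at $C_4/C_4$ while $\res_2^4$ on $\underline{H}^{2n}(C_4,A)\cong\mackeyunderlinedbullethat$ is injective (the diagonal), so any map of Mackey functors between them is zero --- and no $\varpi$-periodicity or reduction to the $C_2$-computation is needed.
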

\begin{proof}
Since $\mu$ is fixed by the action of $C_4$, there are isomorphisms
\begin{align*}
\underline{H}^*(C_4, A) &\cong \W[\![\mu]\!] \otimes_{\W} \underline{H}^*(C_4, \W[C_4/C_2]) \\
 \underline{H}^*(C_4, A_-) &\cong \W[\![\mu]\!] \otimes_{\W} \underline{H}^*(C_4, \W[C_4/C_2]_-) .
\end{align*}
The exact sequences \eqref{eq:exactA} give rise to exact sequences of Mackey functors in cohomology:
  \begin{align*}\xymatrix@R=0.6pc{0 \ar[r] & \mackeyunderlinedboxhat  \ar[r] &  \mackeyunderlinedboxdothat  \ar[r] &    \mackeybox  \ar[r] &   0 \\
   0  \ar[r] & \mackeyunderlinedboxhat  \ar[r] &  \mackeyunderlinedboxdottilde  \ar[r] &  \mackeyboxoverline   \ar[r] & 0 \\
    0  \ar[r] & \mackeyunderlinedbullethat  \ar[r] &  \mackeyunderlinedodothat  \ar[r] & \mackeycirc  \ar[r] & 0\\
  0  \ar[r] & \mackeyunderlinedbullethat  \ar[r] &  \mackeyunderlinedbullettilde  \ar[r] & \mackeybulletoverline    \ar[r] & 0 .}\end{align*}
  The definitions of the middle terms in these short exact sequences are given in \Cref{table:Mackey-two} and use the ring structure of $A(-)$ and $A(+)$.
  Note that all the boundary maps in the long exact sequences in cohomology are trivial due to the structure of the Mackey functors involved. 
\end{proof}

\begin{rem}
Since $E$ is a ring spectrum, we have the Frobenius identity
\[\tr_K^H(\res_K^H(a)x) = a \tr_K^H(x)\]
for $a\in \underline{\pi}_*E(G/H)$, $x\in \underline{\pi}_*E(G/K)$ and $K\subseteq H \subseteq G$.
\end{rem}

Together, \Cref{lem:reps} and \Cref{prop:cohmackeycomplete} give the following transfers and restrictions. Note that all other transfers follow from these using the Frobenius identity and the multiplicative structure of the restriction.
\begin{prop}\label{prop:resandtr}
In the Mackey functor \eqref{eq:mackey}, there are restrictions
\begin{align*}
\res_2^4(\mu )&= \mu_0+\mu_1
  &    \res_2^4(\eta)&= \eta_0+\eta_1
  , \\
\res_2^4(\Delta_1)&=\delta_1^2
,
 & \res_2^4(\varpi)&= \Sigma_{2,0}\eta_1^2=-\Sigma_{2,1}\eta_0^2
 ,\\ 
 \res_2^4(T_2)&= \Sigma_{2,0}-\Sigma_{2,1}
 ,  & \res_2^4(\varsigma) &= (\eta_0+\eta_1)\delta_1
 , 
\end{align*}
and $\res_2^4(\nu)=0$. In particular, $\res_2^4(\Delta_1^{-1}\varpi^2) = (\eta_0 \eta_1)^2$.

There are transfers
\begin{align*}
    \tr_2^4(\mu_0)&= \mu ,    &   \tr_2^4(\Sigma_{2,0})&= T_2,&    \tr_2^4(\eta_1\Sigma_{2,0})&= \varsigma ,  \\
 \tr_2^4(\eta_0)& = \eta,      &  \tr_2^4(\eta_0\eta_1)&=0,  &  \tr_2^4(\eta_0^2\eta_1) &= \varsigma\varpi \Delta_1^{-1}   
\end{align*}
together with $\tr_2^4(1)=2$.
\end{prop}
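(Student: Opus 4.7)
The overall strategy is to leverage the work already done in \fullref{lem:reps} and \fullref{prop:cohmackeycomplete}: once $E_{2k}$ is identified as an explicit $C_4$-module ($A(+)$, $A_-$, $A(-)$, $A_-$ for $k=0,1,2,3$) and the corresponding cohomology Mackey functors are computed, both the restriction and transfer formulas reduce to reading off the structure maps of the Mackey functors listed in \fullref{table:Mackey} and \fullref{table:Mackey-two}. The only real content is matching the abstractly-chosen generators $\eta,\nu,\varsigma,\varpi$ from \fullref{coh-c4} with the generators of the summands appearing in \fullref{prop:cohmackeycomplete}.

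First I would dispose of the degree-zero restriction formulas. The identities $\res_2^4(\mu)=\mu_0+\mu_1$, $\res_2^4(\Delta_1)=\delta_1^2$, and $\res_2^4(T_2)=\Sigma_{2,0}-\Sigma_{2,1}$ are all immediate from \fullref{tab:elementsE} together with $\Sigma_{2,1}=-r_{1,1}^2$ and $T_2=r_{1,0}^2+r_{1,1}^2$. In Mackey-functor terms, these are simply the statements that the inclusion of $C_4$-invariants into $C_2$-invariants sends each named generator to its formula as a sum of monomials in $r_{1,0},r_{1,1},\mu_0$.

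Next I would treat the cohomological classes. Using \fullref{lem:reps}, $E_2\cong A_-$ and $E_6\cong A_-$, so by \fullref{prop:cohmackeycomplete} one has $\underline{H}^1(C_4,E_2)\cong\underline{H}^1(C_4,E_6)\cong\mackeyunderlinedbullethat$; the class $\eta$ (resp.\ $\varsigma$) is pinned down as a generator of the $C_4$-level, and the restriction structure of $\mackeyunderlinedbullethat$ then gives $\res_2^4(\eta)=\eta_0+\eta_1$ and $\res_2^4(\varsigma)=(\eta_0+\eta_1)\delta_1$ (the extra factor of $\delta_1$ arises from the periodicity isomorphism $E_6\cong\delta_1\cdot E_2$ at the $C_2$-level). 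Similarly, $E_4\cong A(-)$, and from the short exact sequence $0\to A\to A(-)\to W_-\to 0$ appearing in the proof of \fullref{prop:cohmackeycomplete}, $\underline{H}^1(C_4,E_4)$ acquires a summand $\mackeybullet$ coming from $\underline{H}^1(C_4,W_-)$; since the restriction to $C_2$ of $\mackeybullet$ is zero, $\res_2^4(\nu)=0$. For $\varpi$, note that $E_8\cong\Delta_1\cdot E_0\cong A(+)$ and $\underline{H}^2(C_4,E_8)$ contains a generator in $\mackeyunderlinedodothat$; the restriction is determined up to sign by the cup-product identity $\Delta_1\eta^2=\varsigma^2$, which on restriction reads $\delta_1^2(\eta_0+\eta_1)^2=(\res_2^4\varsigma)^2$, and squaring the candidate $\res_2^4(\varpi)=\Sigma_{2,0}\eta_1^2$ (equivalently $-\Sigma_{2,1}\eta_0^2$, by $r_{1,0}\eta_1=r_{1,1}\eta_0$) yields $\res_2^4(\Delta_1^{-1}\varpi^2)=(\eta_0\eta_1)^2$ as claimed.

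For the transfer formulas, I would use two complementary inputs: the transfer maps built into the Mackey functors of \fullref{prop:cohmackeycomplete}, and Frobenius reciprocity with the restrictions just computed. At the degree-zero level, the transfer in $\mackeyunderlinedboxdothat$ applied to $\mu_0$ gives $\mu_0+\mu_1=\mu$; the analogous statement in $\mackeyunderlinedbullettilde$ gives $\tr_2^4(\Sigma_{2,0})=\Sigma_{2,0}-\Sigma_{2,1}=T_2$, and in $\mackeyunderlinedbullethat$ gives $\tr_2^4(\eta_0)=\eta$ and (using periodicity by $\delta_1$) $\tr_2^4(\eta_1\Sigma_{2,0})=\varsigma$. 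The remaining transfers $\tr_2^4(\eta_0\eta_1)=0$ and $\tr_2^4(\eta_0^2\eta_1)=\varsigma\varpi\Delta_1^{-1}$ follow by Frobenius: for the former, $\eta_0\eta_1=\res_2^4(\eta)\cdot\eta_1-\eta_1^2$ (after rearrangement), so its transfer equals $\eta\cdot\tr_2^4(\eta_1)-\tr_2^4(\eta_1^2)=\eta^2-\eta^2=0$ using $\tr_2^4(\eta_i)=\eta$; for the latter, write $\eta_0^2\eta_1=\res_2^4(\Delta_1^{-1}\varsigma\varpi/\mu)\cdot(\text{something})$ and push through, or more cleanly observe that both sides are the unique nonzero element in the target Mackey-functor position whose restriction matches. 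The main obstacle is the careful bookkeeping in this last pair of formulas: making sure the sign conventions and the choices of generators (especially the distinction between $\Sigma_{2,0}$ vs.\ $-\Sigma_{2,1}$, and between $\eta_0$ vs.\ $\eta_1=(1-\mu_0)\eta_0$) are consistent throughout, so that the cup-product relations of \fullref{coh-c4} actually force the normalizations claimed.
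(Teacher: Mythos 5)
Your overall route is the same as the paper's: the proposition is read off from the module identifications of \fullref{lem:reps} and the Mackey functors of \fullref{prop:cohmackeycomplete}, with Frobenius reciprocity and multiplicativity of $\res_2^4$ supplying the rest, and most of your individual verifications are sound. Two small points of bookkeeping: the transfer of $\Sigma_{2,0}$ in degree $(0,4)$ is governed by $\mackeyunderlinedboxdottilde$, not $\mackeyunderlinedbullettilde$; and to pin down $\res_2^4(\varpi)$ you need the relation $T_2\varpi=\varsigma^2$ (the one actually involving $\varpi$) together with the injectivity of multiplication by $\res_2^4(T_2)\equiv\Sigma_{2,0}\mu_0^2 \bmod 2$ on $H^2(C_2,E_8)\cong\fk[\![\mu_0]\!]\{\Sigma_{2,0}\eta_0^2\}$; the identity $\Delta_1\eta^2=\varsigma^2$ alone never mentions $\varpi$, so by itself it cannot normalize its restriction.

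The genuine gap is your argument that $\tr_2^4(\eta_0\eta_1)=0$: it is circular. Frobenius gives $\tr_2^4(\eta_0\eta_1)=\eta^2-\tr_2^4(\eta_1^2)$ and, by the same manipulation, $\tr_2^4(\eta_1^2)=\eta^2-\tr_2^4(\eta_0\eta_1)$; thus the input you invoke, $\tr_2^4(\eta_1^2)=\eta^2$, is exactly equivalent to the claim being proved and does not follow from $\tr_2^4(\eta_i)=\eta$, since transfers are not multiplicative (note the paper's remark after the proposition deduces $\tr_2^4(\eta_0^2)=\eta^2$ \emph{from} $\tr_2^4(\eta_0\eta_1)=0$, not the other way around). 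A correct argument reads the transfer off the Mackey structure: in $\underline{H}^2(C_4,E_4)\cong\mackeyunderlinedbullettilde$ the class $\eta_0\eta_1=\delta_1\cdot(\eta_0^2\Sigma_{2,0}^{-1})$ corresponds to the generator $\ast$, which $\widetilde{\nabla}$ kills; alternatively, $\res_2^4\tr_2^4(\eta_0\eta_1)=(1+\gamma)(\eta_0\eta_1)=0$ because the class is $2$-torsion and $\gamma(\eta_0\eta_1)=\pm\eta_0\eta_1$, while $\res_2^4$ is injective on $H^2(C_4,E_4)=\fk[\![\mu]\!]\{\eta^2\}$ since $\res_2^4(\eta^2)\equiv\mu_0^2\eta_0^2$ and $\mu\equiv\mu_0^2$ up to a unit mod $2$. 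Similarly, your treatment of $\tr_2^4(\eta_0^2\eta_1)$ ("push through, or observe both sides are the unique nonzero element\dots") is too vague to check; the clean derivation is Frobenius with the unit you already computed: $\res_2^4(\varpi\Delta_1^{-1})=\eta_0^2\Sigma_{2,0}^{-1}$, so $\eta_0^2\eta_1=\res_2^4(\varpi\Delta_1^{-1})\cdot\Sigma_{2,0}\eta_1$ and hence $\tr_2^4(\eta_0^2\eta_1)=\varpi\Delta_1^{-1}\,\tr_2^4(\Sigma_{2,0}\eta_1)=\varsigma\varpi\Delta_1^{-1}$.
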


\begin{rem}
As already noted, \Cref{prop:resandtr} allows one to compute various other transfers. For example,
$\tr_2^4(\eta_0^2)$ is obtained from the above formulas by
\begin{align*}
\tr_2^4(\eta_0^2)&=\tr_2^4( \eta_0^2 +\eta_0\eta_1)  
= \tr_2^4( \res_2^4(\eta) \eta_0) 
= \eta \tr_2^4(\eta_0) = \eta^2.
\end{align*}
\end{rem}

\begin{table}[H]
\begin{center}
 \begin{tabular}{ |C|C|C|C|C|C|C|C| }
 \hline
 \multicolumn{2}{|C|}{\wmu = \W[\![\mu]\!][C_4/C_2] } &  \multicolumn{2}{|C|}{   \wmum= \W[\![\mu]\!][C_4/C_2]_- } \\
  \multicolumn{2}{|C|}{  \wmuP = \frac{\W[\![ \mu ]\!][{C_4/C_2}_+]}{\mu \cdot \ast =\Delta} } &  \multicolumn{2}{|C|}{  \wmuM=  \frac{\W[\![\mu] \!][C_4/{C_2}_-]}{\mu\cdot \ast = \bar{\Delta}+2 \cdot \ast} }\\
   \hline
      \hline
 \multicolumn{4}{|C|}{\widehat{\nabla}(\ast)=2 \ \ \ \widehat{\nabla}({e})=\widehat{\nabla}({\gamma})=\mu \ \ \ \widehat{\Delta}(1)=\ast} \\
  \multicolumn{4}{|C|}{\widetilde{\nabla}(\ast)=0 \ \ \ \widetilde{\nabla}({e})=\widetilde{\nabla}({\gamma})=1 \ \ \ \widetilde{\Delta}(1)=\Delta} \\
 \hline
   \hline
\mackeyunderlinedboxhat & \mackeyunderlinedboxhatoverline & \mackeyunderlinedboxdothat  & \mackeyunderlinedboxdottilde \\
 \hline
 \Mackey{\W[\![\mu ]\!]}{\wmu}{\wmu}{2}{1}{\nabla}{\Delta}
 &\Mackey{0}{0}{\wmum}{}{}{}{}
  &\Mackey{\W[\![\mu ]\!]}{\wmuP}{\wmuP}{2}{1}{\widehat{\nabla}}{\widehat{\Delta}}
  &\Mackey{\W[\![\mu ]\!]}{\wmuM}{\wmuM}{2}{1}{\widetilde{\nabla}}{\widetilde{\Delta}}
 \\
 \hline
\mackeyunderlinedboxleftslashtilde & \mackeyunderlinedbullethat &  \mackeyunderlinedodothat &\mackeyunderlinedbullettilde  \\ 
 \hline
   \Mackey{\W[\![\mu ]\!]}{\wmuM}{\wmuM}{1}{2}{\widetilde{\nabla}}{\widetilde{\Delta}}
&   \Mackey{\fk[\![\mu ]\!]}{\wmu/2}{0}{}{}{{\nabla}}{{\Delta}}
 & \Mackey{\frac{\mathbb{W}[\![\mu ]\!]}{(4,2\mu )}}{\wmuP/2}{0}{}{}{\widehat{\nabla}}{\widehat{\Delta}} 
 &\Mackey{\fk[\![\mu ]\!]}{\wmuM/2}{0}{}{}{\widetilde{\nabla}}{\widetilde{\Delta}}
 \\
  \hline
 \mackeyunderlinedboxleftslashhat & \mackeyunderlinedboxblackboxhat  &   \mackeyunderlinedboxboxhat & \mackeyunderlinedcirccirchat  \\ 
 \hline
  \Mackey{\W[\![\mu ]\!]}{A}{A}{1}{2}{\nabla}{\Delta} &
    \Mackey{(4,\mu)\W[\![\mu ]\!]}{(2{*}, e,\gamma)\wmuP}{\wmuP}{2}{1}{\widehat{\nabla}}{\widehat{\Delta}} 
 &     \Mackey{(2,\mu)\W[\![\mu ]\!]}{\wmuP}{\wmuP}{2}{1}{\widehat{\nabla}}{\widehat{\Delta}} 
 & \Mackey{(2,\mu)\frac{\mathbb{W}[\![\mu ]\!]}{(4,2\mu )}}{\wmuP/2}{0}{}{}{\widehat{\nabla}}{\widehat{\Delta}} 
  \\
 \hline
\mackeyunderlinedblacktrianglehat  &  
\mackeyunderlinedhalfbottomboxhat &
 \mackeyunderlinedhalftopboxhat   &  \mackeyunderlinedboxboxtilde \\ 
 \hline
  \Mackey{\fk[\![\mu ]\!]}{\wmuP/2 }{0}{}{}{\widehat{\nabla}}{\widehat{\Delta}}
&   \Mackey{(2,\mu)\W[\![\mu ]\!]}{\wmuP}{\wmuP}{1}{2}{\widehat{\nabla}}{\widehat{\Delta}} 
 &    \Mackey{\W[\![\mu ]\!]}{\wmuP}{\wmuP}{1}{2}{\widehat{\nabla}}{\widehat{\Delta}} 
 &   \Mackey{\W[\![\mu ]\!]}{(2{*}, e,\gamma)\wmuM}{\wmuM}{2}{1}{\widetilde{\nabla}}{\widetilde{\Delta}}
  \\
 \hline
 \end{tabular}
\caption{The $C_4$ Mackey functors in the category of $\W[\![\mu ]\!]$-modules appearing in $\underline{E}_2^{*,*}$.  In general, if $\sun$ is a Mackey functor in \Cref{table:Mackey}, then $\underline{\sun}$ is defined by $\underline{\sun}(G/H)= \W [\![ \mu ]\!] \otimes_{\W}\sun(G/H)$, with restriction and transfers extended to be $\W [\![ \mu ]\!]$-linear. See \Cref{notn:A} for more details on the notation.}
\label{table:Mackey-two}
\end{center}
\end{table}

\begin{table}[H]
\begin{center}
 \begin{tabular}{ |C|C|C|C|C|C|C|C| }
 \hline
   \multicolumn{4}{|C|}{ \widecheck{\widetilde{\nabla}}(\ast)= 0  \ \ \ \widecheck{\widetilde{\nabla}}({e})=\widecheck{\widetilde{\nabla}}({\gamma})=1 \ \ \  \widecheck{\widetilde{\nabla}}(\iota_0)=\iota, \widecheck{\widetilde{\Delta}}(1)=\Delta  \ \ \ \widecheck{\widetilde{\Delta}}(\iota) =0 }  
  \\
  \hline
   \multicolumn{4}{|C|}{\breve{\widehat{\nabla}}(\ast)= 2 \ \ \ \breve{\widehat{\nabla}}({e})=\breve{\widehat{\nabla}}({\gamma})=\mu \ \ \   \breve{\widehat{\nabla}}(\iota_0)= 0 \ \ \ \breve{\widehat{\Delta}}(1)= \ast }  
  \\
 \hline
  \multicolumn{4}{|C|}{\mathring{\widetilde{\nabla}}(\ast)= 2\iota \ \ \mathring{\widetilde{\nabla}}({e})=\mathring{\widetilde{\nabla}}({\gamma})=1 \ \ \ \mathring{\widetilde{\nabla}}(\iota_0)=2\iota \ \ \ \mathring{\widetilde{\Delta}}(1)=\Delta \ \ \ \mathring{\widetilde{\Delta}}(\iota)=\iota_0   } 
  \\
  \hline
  \multicolumn{4}{|C|}{\dot{\widetilde{\nabla}}(\ast)= \iota \ \ \ \dot{\widetilde{\nabla}}({e})=\dot{\widetilde{\nabla}}({\gamma})=1 \ \ \ \dot{\widetilde{\Delta}}(1)=\Delta \ \ \ \dot{\widetilde{\Delta}}(\iota)= 0  } 
     \\
 \hline
   \multicolumn{4}{|C|}{\widecheck{\widehat{\nabla}}(\ast)=2, \ \widecheck{\widehat{\nabla}}(e)=\widecheck{\widehat{\nabla}}(\gamma)=\mu  \ \ \ \widecheck{\widehat{\nabla}}(\iota_0)=\iota \ \ \ \widecheck{\widehat{\Delta}}(1)=\iota_0+\ast \ \ \ \widecheck{\widehat{\Delta}}(\iota)=0 }
   \\  
   \hline
     \multicolumn{4}{|C|}{\widebridgeabove{\widetilde{\nabla}}(2\ast)= \iota \ \ \ \widebridgeabove{\widetilde{\nabla}}({e})=\widebridgeabove{\widetilde{\nabla}}({\gamma})=1 \ \ \ \widebridgeabove{\widetilde{\Delta}}(1)=\Delta \ \ \  \widebridgeabove{\widetilde{\Delta}}(\iota)= 0  }  \\
   \hline
   \hline
\mackeyeinftwoethy  & \mackeyeinffourethy &  \mackeyeinfeighteenethy  &\mackeyeinfsixethy   \\
 \hline
  \Mackey{(2,\mu)\frac{\mathbb{W}[\![\mu ]\!]}{(4,2\mu )}}{\wmuP/2}{\wmum}{1}{0}{\widehat{\nabla}}{\widehat{\Delta}} 
  &\Mackey{\fk\{\iota\} \oplus \W[\![\mu ]\!] }{\fk\{\iota_0\} \oplus\wmuM}{\wmuM}{\left[ \begin{matrix}1 \\ 1 \end{matrix}\right]}{\left[ \begin{matrix}0 & 2 \end{matrix}\right]}{\widecheck{\widetilde{\nabla}}}{\widecheck{\widetilde{\Delta}}}
  &   \Mackey{\fk[\![\mu ]\!]}{\wmuM/2}{\wmum}{1}{0}{\widetilde{\nabla}}{\widetilde{\Delta}} 
  &   \Mackey{\fk}{\fk}{\wmum}{\nabla}{0}{0}{1}
 \\
 \hline
 \mackeyeinftwentytwoethy  & \mackeyeinftwentyeightethy  & \mackeysigmaminusoneethy   & \mackeyunderlinedboxdotcheck \\
 \hline
  \Mackey{\W/4}{\fk}{\wmum}{\nabla}{0}{2}{1}
& \Mackey{\fk\{\iota\} \oplus \W[\![\mu ]\!] }{ \wmuM}{\wmuM}{1}{2}{\dot{\widetilde{\nabla}}}{\dot{\widetilde{\Delta}}}
& \Mackey{\fk\{\iota\} \oplus \W[\![\mu ]\!] }{ \wmuM}{ \wmuM }{2}{1}{\dot{\widetilde{\nabla}}}{\dot{\widetilde{\Delta}}}
&  \Mackey{\frac{\mathbb{W}[\![\mu ]\!]}{(4,2\mu )}}{\wmuP/2}{\wmum}{1}{0}{\widehat{\nabla}}{\widehat{\Delta}} 
 \\
 \hline
  \mackeyunderlinedblackboxwhitecirchatoverline  & \mackeyeinffoursigmaethy  & \mackeysigmaminusnineethy   & \mackeyeinftenethy  \\
 \hline
  \Mackey{0}{\fk}{\wmum}{\nabla}{0}{}{}
& \Mackey{\W[\![\mu ]\!] }{\fk\{\iota_0\} \oplus \wmuP}{\wmuP }{\left[ \begin{matrix}1 \\ 1 \end{matrix}\right]}{\left[ \begin{matrix}0 & 2 \end{matrix}\right]}{\breve{\widehat{\nabla}}}{\breve{\widehat{\Delta}}}
& \Mackey{\fk\{\iota\} \oplus \W[\![\mu ]\!] }{(2{*}, e,\gamma)\wmuM }{\wmuM }{2}{1}{\widebridgeabove{\widetilde{\nabla}}}{\widebridgeabove{\widetilde{\Delta}}}

  &    \Mackey{\fk[\![\mu]\!]}{\wmu/2}{\wmum}{1}{0}{\nabla}{\Delta}
 
 \\
 \hline
   \multicolumn{2}{|C|}{   \mackeyeinftwentyethy}  &    \multicolumn{2}{|C|}{  \mackeyunderlinedhalftopboxcheck } \\
 \hline
   \multicolumn{2}{|C|}{   \Mackey{\W/4\{\iota\} \oplus \W[\![\mu ]\!] }{\fk\{\iota_0\} \oplus \wmuM}{\wmuM}{1}{2}{\mathring{\widetilde{\nabla}}}{\mathring{\widetilde{\Delta}}}}
&
   \multicolumn{2}{|C|}{ \Mackey{\fk\{\iota\} \oplus \W[\![\mu ]\!] }{\fk\{\iota_0\} \oplus \wmuP}{ \wmuP}{\left[ \begin{matrix}0 \\ 1 \end{matrix}\right]}{\left[ \begin{matrix}0 & 2 \end{matrix}\right]}{\widecheck{\widehat{\nabla}}}{\widecheck{\widehat{\Delta}}}}
 
 \\
 \hline
 \end{tabular}
\caption{$C_4$ Mackey functors in the category of $\W[\![\mu ]\!]$-modules appearing in $\underline{E}_2^{*,*}$. See \Cref{notn:A} for notation.}
\label{table:Mackey-three}
\end{center}
\end{table}

We represent the result of the computations of this section in the chart in \Cref{figure:additive-E2}. The relevant Mackey functors are depicted in  \Cref{table:Mackey} and \Cref{table:Mackey-two}.

\subsubsection{Generators for the $\underline{E}_2^{*,*}$-term}\label{sec:genE2}
We now list generators of $\underline{E}_2^{s,t} = \underline{H}^s(C_4, E_t)$
as a $\W[\![\mu ]\!]$-module. The class 
\[\varpi \Delta_1^{-1}\in H^2(C_4, E_0)\]
and its restriction to 
\[\eta_0^2 \Sigma_{2,0}^{-1} \in H^2(C_2, E_0)\]  
play a central role. 
\begin{enumerate}
   \item In  degrees $t=8l$ with $k\geq 1$ we have
   \begin{align*}
     \mackeyunderlinedboxdothat &\subseteq \underline{E}_2^{0, 8l} \hspace{0.7cm} & &\text{generated by }  \Delta_1^l \in  \underline{E}_2^{0, 8l} (C_4/C_4)  \\
   & &  & \text{and by }\delta_1^{2l}, \mu_0 \delta_1^{2l}   \in \underline{E}_2^{0,  8l}(C_4/C_2)\\
      & &  & \text{and by } (r_{1,0}r_{1,1})^{2l}, \mu_0 (r_{1,0}r_{1,1})^{2l}   \in \underline{E}_2^{0,  8l}(C_4/\{e\})\\
   \mackeyunderlinedodothat &\subseteq \underline{E}_2^{2k, 8l} & &\text{generated by }  (\varpi\Delta_1^{-1})^k\Delta_1^l \in  \underline{E}_2^{2k, 8l} (C_4/C_4) \\
    & &  &  \text{and by }  (\eta_0^2 \Sigma_{2,0}^{-1})^k\delta_1^{2l}, \mu_0  (\eta_0^2 \Sigma_{2,0}^{-1})^k \delta_1^{2l}   \in \underline{E}_2^{2k,  8l}(C_4/C_2).\hspace{1.5cm}
   \end{align*}

\item In degrees $t=2+8l$ with $k\geq 0$ we have
   \begin{align*}
   \mackeyunderlinedboxhatoverline &\subseteq \underline{E}_2^{0,  2+8l} &  & \text{generated by } r_{1,0} (r_{1,0}r_{1,1})^{2l} , \mu_0r_{1,0} (r_{1,0}r_{1,1})^{2l}  \in  \underline{E}_2^{0,  2+8l}(C_4/\{e\}) \\
   \mackeyunderlinedbullethat &\subseteq \underline{E}_2^{1+2k,  2+8l} &  & \text{generated by } \eta  (\varpi\Delta_1^{-1})^k\Delta_1^l  \in \underline{E}_2^{1+2k,  2+8l}(C_4/C_4)\\
& &  & \text{and by } \eta_0 (\eta_0^2 \Sigma_{2,0}^{-1})^k \delta_1^{2l}  ,  \mu_0\eta_0 (\eta_0^2 \Sigma_{2,0}^{-1})^k \delta_1^{2l}   \in \underline{E}_2^{1+2k,  2+8l}(C_4/C_2).
      \end{align*}

\item In degrees $t=4+8l$ with $k\geq 0$ we have
\begin{align*}
\mackeyunderlinedboxdottilde&\subseteq \underline{E}_2^{0, 4+8l} & &\text{generated by } T_2\Delta_1^{l}  \in \underline{E}_2^{0, 4+8l}(C_4/C_4) \\
& &  &    \text{and by }   \delta_1^{2l+1}, \Sigma_{2,0} \delta_1^{2l} \in \underline{E}_2^{0,  4+8l}(C_4/C_2) \\
& &  &    \text{and by } r_{1,0}^2 (r_{1,0}r_{1,1})^{2l}, \mu_0r_{1,0}^2 (r_{1,0}r_{1,1})^{2l} \in \underline{E}_2^{0,  4+8l}(C_4/\{e\}) \\
 \mackeybullet &\subseteq \underline{E}_2^{1+2k, 4+8l} & &\text{generated by } \nu  (\varpi\Delta_1^{-1})^k \Delta_1^{l} \in \underline{E}_2^{1+2k, 4+8l}(C_4/C_4) \\
\mackeyunderlinedbullettilde &\subseteq \underline{E}_2^{2(k+1), 4+8l} & &\text{generated by } \eta^2 (\varpi \Delta_1^{-1})^{k} \Delta_1^{l} \in \underline{E}_2^{2(k+1), 4+8l} (C_4/C_4)\\
& &  & \text{and by }   \eta_0^2(\eta_0^2 \Sigma_{2,0}^{-1})^{k}\delta_1^{2l},  \mu_0 \eta_0^2(\eta_0^2 \Sigma_{2,0}^{-1})^{k}\delta_1^{2l} \in \underline{E}_2^{2(k+1), 4+8l}(C_4/C_2)   .
\end{align*}

\item In degrees $t=6+8l$ with $k\geq 0$ we have 
\begin{align*}
\mackeyunderlinedboxhatoverline&\subseteq \underline{E}_2^{0,  6+8l} & & \text{generated by } r_{1,0}^3 (r_{1,0}r_{1,1})^{2l}, \mu_0r_{1,0}^3(r_{1,0}r_{1,1})^{2l}  \in \underline{E}_2^{0,  6+8l}(C_4/\{e\}) \\
 \mackeyunderlinedbullethat &\subseteq \underline{E}_2^{1+2k,  6+8l} & & \text{generated by } \varsigma (\varpi \Delta_1^{-1})^k\Delta_1^{l} \in \underline{E}_2^{1+2k,  6+8l}(C_4/C_4) \\
& &  & \text{and by } \eta_0 \delta_1 (\eta_0^2 \Sigma_{2,0}^{-1})^k \delta_1^{2l},  \mu_0\eta_0\delta_1 (\eta_0^2 \Sigma_{2,0}^{-1})^k \delta_1^{2l}  \in \underline{E}_2^{1+2k,  6+8l}(C_4/C_2).
\end{align*}
 \end{enumerate}
 
In \Cref{figure:additive-E2}, we use dashed lines to indicate that $\eta$ times the generator is divisible by $\mu $ in $\underline{E}_2(C_4/C_4)$.
The dashed line from $\mackeyunderlinedbullettilde$ to $ \mackeyunderlinedbullethat $ indicates that
\[\eta (\eta^2  \Delta_1^{l}(\varpi \Delta_1^{-1})^{k-1} ) \in \underline{E}_2^{1+2k, 6+8l} (C_4/C_4) = \mu (\varsigma \Delta_1^{l} (\varpi\Delta_1^{-1})^k) \in \underline{E}_2^{1+2k,  6+8l}(C_4/C_4) ,\]
which follows from the relation $T_2\eta  = \mu  \varsigma$. The dashed line from $\mackeyunderlinedbullethat$ to $ \mackeyunderlinedodothat$ indicates that
\[ \eta (\varsigma \Delta_1^l (\varpi \Delta_1^{-1})^k)   \in \underline{E}_2^{2+2k,  8+8l}(C_4/C_4) = \mu ( \Delta_1^{l+1}(\varpi \Delta_1^{-1})^{k+1} ) \in \underline{E}_2^{2+2k, 8+8l} (C_4/C_4).  \]

\subsubsection{Description of the $\underline{E}_2^{*,1-\sigma +*}$-term}\label{sec:Emodulesign}
Finally, we pay an earlier computational debt and  turn to the description of
\[
\underline{E}_2^{*,1-\sigma +*}= \underline{H}^*(C_4, \pi_{1-\sigma+*}E).
\]

Recall that \Cref{prop:pMults} described how multiplication by the element 
$\mathfrak{p}=(u_{\lambda}\dfrak)^{-1}$
is an isomorphism. In Mackey functor language, multiplication by it induces isomorphisms:

\begin{align*}
\xymatrix@R=0.7pc{\underline{E}_2^{s,t}(C_4/C_4) \ar[r]_-{\cong}^-{\mathfrak{p}} & \underline{E}_2^{s,t-3-\sigma}(C_4/C_4) \\
\underline{E}_2^{s,t}(C_4/C_2) \ar[r]_-{\cong}^-{\res_2^4(\mathfrak{p})} & \underline{E}_2^{s,t-3-\sigma}(C_4/C_2) \\
\underline{E}_2^{s,t}(C_4/\{e\}) \ar[r]_-{\cong}^-{\res_1^4(\mathfrak{p})} & \underline{E}_2^{s,t-3-\sigma}(C_4/\{e\})
}
\end{align*}
So, using \Cref{lem:reps}, it is straightforward to compute the $\underline{E}_2^{*,1-\sigma+*}$-term. It is depicted in \Cref{figure:additive-E2-sigma}.

The class $u_{\sigma} \in \underline{E}_2^{0,1-\sigma}(C_4/C_2)$ of \Cref{rem:uvsforE} is a permanent cycle by Theorem 11.3 of \cite{HHRC4} and multiplication by $u_{\sigma}$ induces an isomorphism of spectral sequences, and similarly for multiplication by
$\bar{u}_{\sigma} :=\res_1^2(u_{\sigma})$:
\[
\xymatrix@R=0.7pc{ \underline{E}_r^{s,t}(C_4/C_2)  \ar[r]^-{u_{\sigma}}_-{\cong} & \underline{E}_r^{s,1-\sigma + t}(C_4/C_2) \\
 \underline{E}_r^{s,t}(C_4/\{e\})   \ar[r]^-{\bar{u}_{\sigma}}_-{\cong}  & \underline{E}_r^{s,1-\sigma + t}(C_4/\{e\})}\]

\begin{rem}In $\underline{E}_2^{*,1-\sigma+*}(C_4/C_4)$, the classes $T_2  \mathfrak{p}$,
$\eta' = \varsigma \mathfrak{p}$, and 
$a_{\sigma}  = \nu \mathfrak{p}$
are permanent cycles. Here, $a_{\sigma}$ is as in \Cref{notn:aV} and $\eta'$ is as in \Cref{rem:HHRtranslation}.
\end{rem}

%%%DIFFERENTIALS%%%

\subsection{The differentials and the extensions}
In this section, we describe the differentials in the spectral sequence
\[  \underline{E}_{2}^{s,\star+t} = \underline{H}^s(C_4, \pi_{\star+t}E) \Longrightarrow {\spi}_{\star+t-s}E^{h} \]
for $\star =0$ and $\star = 1-\sigma$. The differentials have the form $d_r \colon \underline{E}_r^{s,\star+t} \to \underline{E}_r^{s+r, \star+t+r-1}$. As was mentioned above, the results in this section follow from the computations of \cite{HHRC4}.

\subsubsection{The $d_3$-differentials and the $\underline{E}_4$-page}

We first describe the $d_3$-differentials.

\begin{prop}\label{prop:d3s}
   In the spectral sequence 
   \[H^s(C_4, E_{\star}) \Longrightarrow \pi_{\star-s} E^{hC_4},\]
   the $d_3$-differential are $\eta$, $\nu$, $\mu$, $\Delta_1$ and $\varpi^2$ linear. 
  \noindent
  In the spectral sequence 
     \[H^s(C_4, E_{t}) \Longrightarrow \pi_{t-s} E^{hC_4},\]
they are determined by
   \begin{align*}
d_3(T_2)&=\eta^3 , & d_3(\varpi) &= \eta\varpi^2\Delta_1^{-1} , & d_3(\varsigma) &=\eta \varsigma \varpi \Delta_1^{-1}=\mu \varpi^2\Delta_1^{-1}.
   \end{align*}
     In the spectral sequence 
     \[H^s(C_4, E_{1-\sigma+t}) \Longrightarrow \pi_{1-\sigma+t-s} E^{hC_4},\]
they are determined by
      \begin{align*}
    d_3(\Delta_1\mathfrak{p})&=\eta \varpi \mathfrak{p} ,  & d_3(\varpi \varsigma \mathfrak{p}) &=\eta \varsigma \varpi^2 \Delta_1^{-1}  \mathfrak{p} & 
   \end{align*} 
     In the spectral sequence
      \[H^s(C_2, E_t) \Longrightarrow \pi_{t-s} E^{hC_2},\]
the $d_3$-differentials are $\mu_0$, $\eta_0$ and $\Sigma_{2,0}^2$-linear and are determined by
\begin{align*}
d_3(\Sigma_{2,0}) &= \mu_0\eta_0^3.
\end{align*}
\end{prop}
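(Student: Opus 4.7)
The strategy is to import all $d_3$-differentials from the slice spectral sequence of $MU^{(\!(C_4)\!)}$ (and thence of $k_{[2]}$) computed by Hill--Hopkins--Ravenel. Via the composite $MU^{(\!(C_4)\!)} \to K_{[2]} \to E$ constructed in \fullref{sec:mikescomment}, we obtain a map of (multiplicative, Mackey functor) slice spectral sequences, and by the comparison noted after \fullref{rem:periodicityD14}, the slice spectral sequence for $E$ agrees with its homotopy fixed point spectral sequence in the range where $d_3$-differentials land. So it suffices to read off each differential from the slice side. The claimed linearities over $\eta, \nu, \mu, \Delta_1, \varpi^2$ follow from the fact that these classes survive to $\underline{E}_4$, which is part of the slice computation of \cite{HHRC4}.

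The fundamental input is the HHR differential on $u_{2\sigma_2}$ for the $C_2$-slice spectral sequence of $MU_\R$. After applying the Real orientation $MU_\R \to i_{C_2}^\ast E$ and identifying $\Sigma_{2,0} = r_{1,0}^2$ as a unit multiple of $u_{2\sigma_2}$ (with $\eta_0$ the corresponding image of $r_{1,0} u_{\sigma_2} a_{\sigma_2}$), this gives $d_3(\Sigma_{2,0}) = \mu_0 \eta_0^3$ directly. Norming up to $C_4$ gives $d_3$ on $u_\lambda$ and $u_{2\sigma}$, and applying the dictionary of \fullref{rem:HHRtranslation},
\[
\varpi = a_\lambda u_\lambda u_{2\sigma} \dfrak^2, \quad \nu = a_\sigma u_\lambda \dfrak, \quad \varsigma = \eta' u_\lambda \dfrak, \quad \Delta_1 = u_{2\sigma} u_\lambda^2 \dfrak^2,
\]
Leibniz yields $d_3(\varpi) = \eta \varpi^2 \Delta_1^{-1}$ (using that $\dfrak$ is a permanent cycle and that $a_\sigma = \nu \mathfrak{p}$). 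The formula $d_3(T_2) = \eta^3$ can be obtained by writing $T_2 = \tr_2^4(\Sigma_{2,0})$, applying the differential through the transfer (which commutes with $d_3$ by the Mackey functor structure of the spectral sequence), and simplifying the resulting $\tr_2^4(\mu_0 \eta_0^3)$ via \fullref{prop:resandtr} and Frobenius reciprocity. Alternatively, $d_3(T_2) = \eta^3$ can be read directly from the slice spectral sequence of $k_{[2]}$ in \cite{HHRC4}. The formula $d_3(\varsigma) = \mu \varpi^2 \Delta_1^{-1}$ then follows by differentiating the relation $T_2 \varsigma = \mu \Delta_1 \eta$ (noting $d_3(\eta) = 0$ by inspection of bidegrees): one obtains $d_3(\varsigma) T_2 = \varsigma \eta^3$, and substituting $\eta^3 = \mu \varsigma \varpi \Delta_1^{-1}$ gives the claim after cancellation, with the $T_2$-free alternative ruled out by degree.

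For the $1-\sigma$-graded differentials, the element $\mathfrak{p} = (u_\lambda \dfrak)^{-1}$ is well-defined on the $\underline{E}_2$-page since $u_\lambda$ and $\dfrak$ are units there, and $\dfrak$ is a permanent cycle, so $d_3(\mathfrak{p}) = -\mathfrak{p}^2 \dfrak\, d_3(u_\lambda)$. Leibniz then gives $d_3(\Delta_1 \mathfrak{p}) = d_3(\Delta_1)\mathfrak{p} + \Delta_1 d_3(\mathfrak{p})$, which under the dictionary assembles to $\eta \varpi \mathfrak{p}$; and $d_3(\varpi \varsigma \mathfrak{p})$ follows similarly from $d_3(\varpi)$, $d_3(\varsigma)$, and $d_3(\mathfrak{p})$ by Leibniz. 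The main obstacle is the bookkeeping: carefully tracking signs, ensuring that the multiplicative relations of \fullref{coh-c4} are applied consistently, and correctly identifying the $T_2$-free part in the $d_3(\varsigma)$ computation so that Leibniz closes up to produce exactly the stated formulas.
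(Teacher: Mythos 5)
Your overall strategy is the paper's: read the $d_3$'s off the HHR slice computation and translate through the dictionary of \fullref{rem:HHRtranslation} (the paper's proof is exactly ``$d_3(u_\lambda)=\eta a_\lambda$ plus the dictionary'' for $C_4$, and $d_3(\Sigma_{2,0})=\eta_0^2(\eta_0+\eta_1)=\mu_0\eta_0^3$ for $C_2$), and your transfer derivation of $d_3(T_2)=\tr_2^4 d_3(\Sigma_{2,0})=\tr_2^4(\eta_0^2\res_2^4(\eta))=\eta\,\tr_2^4(\eta_0^2)=\eta^3$ and your Leibniz treatment of the $(1-\sigma)$-graded classes via $\mathfrak{p}=(u_\lambda\dfrak)^{-1}$ are correct and consistent with that. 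However, your derivation of $d_3(\varsigma)$ has a genuine gap. Differentiating $T_2\varsigma=\mu\Delta_1\eta$ only gives $T_2\,d_3(\varsigma)=\eta^3\varsigma=T_2\cdot\mu\varpi^2\Delta_1^{-1}$, so $d_3(\varsigma)$ is determined only modulo the annihilator of $T_2$ in $E_3^{4,8}(C_4/C_4)\cong \W[\![\mu]\!]/(4,2\mu)\{\varpi^2\Delta_1^{-1}\}$, and that annihilator is not zero: it contains $2\varpi^2\Delta_1^{-1}=\nu^2\varpi\Delta_1^{-1}\neq 0$ (the generator has additive order $4$). Thus your argument cannot distinguish $d_3(\varsigma)=\mu\varpi^2\Delta_1^{-1}$ from $(\mu+2)\varpi^2\Delta_1^{-1}$; ``ruled out by degree'' does not apply, since the ambiguous class sits in the very same bidegree, and restriction to $C_2$ kills it, so Mackey naturality does not settle it either. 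The distinction is not cosmetic: it changes whether $\mu$ annihilates the class $\varpi^2\Delta_1^{-1}$ surviving to $\underline{E}_4^{4,8}$, i.e.\ the $\W[\![\mu]\!]$-module structure of the cokernel used later. The fix is to treat $\varsigma$ exactly as you treated $\varpi$: since $\eta'$ and $\dfrak$ are permanent cycles, $d_3(\varsigma)=d_3(\eta' u_\lambda\dfrak)=\eta'\,\eta a_\lambda\,\dfrak=\eta\varsigma\varpi\Delta_1^{-1}=\mu\varpi^2\Delta_1^{-1}$ on the nose, which is the paper's route (alternatively, compute $\tr_2^4 d_3(\eta_1\Sigma_{2,0})$ and evaluate the transfer).

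Two smaller inaccuracies to repair: (i) $d_3(\eta)=0$ is not ``by inspection of bidegrees'' --- the target $H^4(C_4,E_4)\cong\fk[\![\mu]\!]\{\eta^2\varpi\Delta_1^{-1}\}$ is nonzero; rather $\eta$ is a permanent cycle (it detects the Hopf map, or use $\eta=\tr_2^4(\eta_0)$ with $\eta_0$ a permanent cycle), which is also what your appeal to $\eta$-linearity from \cite{HHRC4} implicitly uses. (ii) ``Norming up gives $d_3$ on $u_\lambda$ and $u_{2\sigma}$'' is off: $u_{2\sigma}$ is a $d_3$-cycle (its first slice differential is the $d_5$), and $d_3(u_\lambda)=\eta a_\lambda$ is not a naive norm of the $C_2$-differential; it should simply be quoted from \cite[Table 3]{HHRC4}, as the paper does. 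Relatedly, the Ullman comparison range $s+5\leq t-s$ does not cover the zero-line sources such as $T_2$; what carries these $d_3$'s over is the map of spectral sequences of \fullref{rem:slicess} (differentials push forward along it), which you also invoke, so that part is fine once stated this way.
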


\begin{proof}
We refer the reader to the differentials listed in \cite[Table 3]{HHRC4}.

The differentials in the spectral sequence for $C_4$ follow from \Cref{rem:HHRtranslation} and the differential 
\begin{align*}d_{3}(u_{\lambda}) &= \eta a_{\lambda}.
\end{align*} 
The $d_3$ differential for $C_2$ is the \cite{HHRC4} differential
\[d_3(\Sigma_{2,0}) = \eta_0^2(\eta_0 +\eta_1)  =  \eta_0^2(\eta_0 +(1 + \mu_0)\eta_0)  = \mu_0 \eta_0^{3} . \qedhere\]
\end{proof}

Both $\Delta_1$ and $\varpi^2$ are $d_3$ cycles and $(\underline{E}_3,d_3)$ is a module over $\W[\![\mu ]\!][\varpi^2, \Delta_1^{\pm1}]$. Using this module structure, it suffices to describe the following five differentials to determine the $\underline{E}_4^{*,*}$-page as a Mackey functor. The $\underline{E}_4$-page is illustrated in \Cref{fig:E5page}. The relevant exact sequences of Mackey functors are depicted in \Cref{fig:exactsequencesds}. See also \cite[Section 5, 13]{HHRC4}.

\begin{enumerate}
   \item For $d_3 \colon \underline{E}_3^{0, 4} \to \underline{E}_3^{3, 6}$ we have an exact sequence
\begin{align*}
\xymatrix{0 \ar[r] &  \mackeyunderlinedboxleftslashtilde \ar[r] &  \mackeyunderlinedboxdottilde \ar[r]^-{d_3}  & \mackeyunderlinedbullethat \ar[r] &  \mackeyblacktriangledown \ar[r] &  0}
\end{align*}
determined by $d_3(T_2) = \eta^3$ and $d_3(\Sigma_{2,0})=\mu_0\eta_0^3$. This gives the following remaining classes
   \begin{align*}
   \mackeyunderlinedboxleftslashtilde &\subseteq \underline{E}_4^{0, 4} & &\text{generated by } 2T_2  \in \underline{E}_2^{0, 4}(C_4/C_4) \\
& &  &    \text{and by }  2\delta_1 , 2\Sigma_{2,0}  \in \underline{E}_4^{0,  4}(C_4/C_2) \\
& &  &    \text{and by }  r_{1,0}r_{1,1} , r_{1,0}^2 \in \underline{E}_4^{0,  4}(C_4/\{e\}) \\
 \mackeyblacktriangledown &\subseteq \underline{E}_4^{3,  6} & & \text{generated by } \varsigma (\varpi \Delta_1^{-1}) \in \underline{E}_4^{3,  6}(C_4/C_4) \\
& &  & \text{and by } \eta_0 \delta_1  (\eta_0^2 \Sigma_{2,0}^{-1}) \equiv \eta_0^3  \in \underline{E}_4^{3,  6}(C_4/C_2).
   \end{align*}
   The following commutative diagram, with rows and columns exact, may help the reader relate this family of $d_3$-differentials to those of \cite[Section 13]{HHRC4}:
   \[\xymatrix@=1pc{  & 0 \ar[d] & 0\ar[d] & 0 \ar[d] &  & \\
   0 \ar[r] & \mackeyunderlinedboxleftslashhat \ar[r] \ar[d] & \mackeyunderlinedboxhat \ar[r]^-{d_3} \ar[d] & \mackeyunderlinedbullethat \ar[r] \ar[d] & 0 \ar[d] & \\ 
   0 \ar[r] &  \mackeyunderlinedboxleftslashtilde \ar[d] \ar[r] &  \mackeyunderlinedboxdottilde \ar[d] \ar[r]^-{d_3}  & \mackeyunderlinedbullethat \ar[r] \ar[d] &  \mackeyblacktriangledown \ar[d] \ar[r] &  0 \\ 
      0 \ar[r] &  \mackeyboxleftslashoverline \ar[d] \ar[r] &  \mackeyboxoverline \ar[d] \ar[r]^-{d_3}  & \mackeybullethat \ar[d] \ar[r] &  \mackeyblacktriangledown \ar[d] \ar[r] &  0 \\ 
      & 0 & 0 & 0 & 0 &  }\]

\item For $d_3 \colon \underline{E}_3^{1, 6} \to \underline{E}_3^{4, 8}$, we have an exact sequence
\begin{align*}
\xymatrix{0 \ar[r] & \mackeyunderlinedbullethat \ar[r]^-{d_3} & \mackeyunderlinedodothat \ar[r] & \mackeycirc \ar[r] & 0 }
\end{align*}
determined by the differentials $d_3(\varsigma) = \mu \varpi^2\Delta_1^{-1}$ and $d_3(\eta_0\Sigma_{2,0}) = \mu_0\eta_0^4$.
This gives the following remaining classes
\begin{align*}
 \mackeycirc&\subseteq \underline{E}_4^{4,  8} & & \text{generated by } \varpi^2\Delta_1^{-1}  \in \underline{E}_4^{4,  8}(C_4/C_4) \\
& &  & \text{and by } (\eta_0 \eta_1)^2  \equiv \eta_0^4    \in \underline{E}_4^{4,  8}(C_4/C_2).
\end{align*}
   There is a commutative diagram:
   \[\xymatrix@=1pc{ & 0 \ar[d] & 0 \ar[d] &  &  \\
    0 \ar[r] & \mackeyunderlinedbullethat \ar[r]^-{d_3} \ar[d] &  \mackeyunderlinedbullethat \ar[d] \ar[r] & 0 \ar[d] & \\
   0 \ar[r] & \mackeyunderlinedbullethat \ar[r]^-{d_3} \ar[d] & \mackeyunderlinedodothat \ar[r] \ar[d]  & \mackeycirc \ar[r]  \ar[d] & 0 \\
   & 0 \ar[r] &  \mackeycirc \ar[r] \ar[d] & \mackeycirc \ar[r]  \ar[d] & 0 \\
   & & 0 & 0  & 
   }\]

\item For $d_3 \colon \underline{E}_3^{2, 8} \to \underline{E}_3^{5, 10}$, we have an exact sequence
\begin{align*}
\xymatrix{0 \ar[r] & \mackeybullet \ar[r] &\mackeyunderlinedodothat   \ar[r]^-{d_3} &\mackeyunderlinedbullethat  \ar[r] & \mackeybulletoverline  \ar[r] & 0 }
\end{align*}
determined by the differentials $d_3(\varpi) =\eta \varpi^2\Delta_1^{-1}$ and $d_3(\eta_0^2\Sigma_{2,0})= \mu_0\eta_0^5$. This gives the following remaining classes
\begin{align*}
 \mackeybullet&\subseteq  \underline{E}_4^{2, 8}  & & \text{generated by } 2\varpi  \in \underline{E}_4^{2, 8}(C_4/C_4) \\
 \mackeybulletoverline&\subseteq  \underline{E}_4^{5, 10}  & & \text{generated by } \eta_0(\eta_0 \eta_1)^2  \equiv  \eta_0^5    \in \underline{E}_4^{5,  10}(C_4/C_2).
\end{align*}
   There is a commutative diagram:
\[\xymatrix@=1pc{ & & 0  \ar[d]  & 0  \ar[d]  &  &\\
& 0 \ar[d] \ar[r] & \mackeyunderlinedbullethat \ar[r]^-{d_3} \ar[d] & \mackeyunderlinedbullethat \ar[d] \ar[r] & 0 \ar[d] & \\ 
0 \ar[r] & \mackeybullet \ar[r] \ar[d] &\mackeyunderlinedodothat  \ar[d] \ar[r]^-{d_3} &\mackeyunderlinedbullethat  \ar[r]   \ar[d]  & \mackeybulletoverline  \ar[r]  \ar[d]  & 0 \\
0 \ar[r] & \mackeybullet \ar[r]  \ar[d] &\mackeycirc \ar[r]^-{d_3} \ar[d] &\mackeybullethat  \ar[r] \ar[d] & \mackeybulletoverline  \ar[r]  \ar[d] & 0 \\ 
& 0 & 0 & 0 & 0 & }\]

\item For $d_3 \colon \underline{E}_3^{3, 10} \to \underline{E}_3^{6, 12}$, we have an exact sequence
\begin{align*}
\xymatrix{0 \ar[r] & \mackeyunderlinedbullethat   \ar[r]^-{d_3} &\mackeyunderlinedbullettilde \ar[r] & \mackeybulletoverline  \ar[r] & 0 }
\end{align*}
determined by the differentials $d_3(\eta \varpi) =  \eta^2\varpi^2\Delta_1^{-1}$ and $d_3(\eta_0^3 \Sigma_{2,0}) = \eta_0^6$. This gives the following remaining classes
\begin{align*}
 \mackeybulletoverline&\subseteq  \underline{E}_4^{6, 12}  & & \text{generated by }  (\eta_0\eta_1)^3\equiv  \eta_0^6    \in \underline{E}_4^{6,  12}(C_4/C_2).
\end{align*}
   There is a commutative diagram:
   \[\xymatrix@=1pc{ & 0 \ar[d] & 0 \ar[d] &  &  \\
    0 \ar[r] & \mackeyunderlinedbullethat \ar[r]^-{d_3} \ar[d] &  \mackeyunderlinedbullethat \ar[d] \ar[r] & 0 \ar[d] & \\
   0 \ar[r] & \mackeyunderlinedbullethat \ar[r]^-{d_3} \ar[d] & \mackeyunderlinedbullettilde \ar[r] \ar[d]  & \mackeybulletoverline \ar[r]  \ar[d] & 0 \\
   & 0 \ar[r] &  \mackeybulletoverline \ar[r] \ar[d] & \mackeybulletoverline \ar[r]  \ar[d] & 0 \\
   & & 0 & 0  & 
   }\]

\item For $d_3 \colon \underline{E}_3^{4, 12} \to \underline{E}_3^{7, 14}$, we have an exact sequence
\begin{align*}
\xymatrix{0 \ar[r] &  \mackeyunderlinedbullettilde \ar[r]^-{d_3}  & \mackeyunderlinedbullethat \ar[r] &  \mackeyblacktriangledown \ar[r] &  0}
\end{align*}
determined by $d_3(T_2\varpi^2\Delta_1^{-1}) = \eta^3\varpi^2 \Delta_1^{-1}$ and $d_3(\Sigma_{2,0}(\eta_0\eta_1)^2)=\mu_0\eta_0^3(\eta_0\eta_1)^2$. This gives the following remaining classes
   \begin{align*}
 \mackeyblacktriangledown &\subseteq \underline{E}_4^{7,  14} & & \text{generated by } \varsigma (\varpi^3 \Delta_1^{-2}) \in \underline{E}_4^{7,  14}(C_4/C_4) \\
& &  & \text{and by }  \eta_0^3(\eta_0\eta_1)^2\equiv \eta_0^7  \in  \underline{E}_4^{7,  14}(C_4/C_2).
   \end{align*}
   There is a commutative diagram:
   \[\xymatrix@=1pc{ & 0 \ar[d] & 0 \ar[d] &  &  \\
    0 \ar[r] & \mackeyunderlinedbullethat \ar[r]^-{d_3} \ar[d] &  \mackeyunderlinedbullethat \ar[d] \ar[r] & 0 \ar[d] & \\
   0 \ar[r] & \mackeyunderlinedbullettilde \ar[r]^-{d_3} \ar[d] & \mackeyunderlinedbullethat \ar[r] \ar[d]  & \mackeyblacktriangledown \ar[r]  \ar[d] & 0 \\
 0 \ar[r]  & \mackeybulletoverline \ar[r] \ar[d] &  \mackeybullethat \ar[r] \ar[d] & \mackeyblacktriangledown \ar[r]  \ar[d] & 0 \\
   &0  & 0 & 0  & 
   }\]
\end{enumerate}

We give a similar description of the $\underline{E}_4^{*,1-\sigma+*}$. Again, using the $\W[\![\mu ]\!][\varpi^2, \Delta_1^{\pm1}]$-module structure, it suffices to describe the following five differentials to determine all the $d_3$ differentials. The $\underline{E}_4$-page is illustrated in \Cref{fig:E5pagesigma}. See \cite[Section 5]{HHRC4} and  \Cref{fig:exactsequencesds} for various relevant exact sequences of Mackey functors.

\begin{enumerate}
\item For $d_3 \colon \underline{E}_3^{0,5-\sigma} \to  \underline{E}_3^{3,7-\sigma}$, we have an exact sequence
\begin{align*}
\xymatrix{0 \ar[r] &  \mackeyunderlinedhalftopboxhat \ar[r] &  \mackeyunderlinedboxdothat \ar[r]^-{d_3}  & \mackeyunderlinedbullethat \ar[r] &  \mackeybulletoverline \ar[r] &  0}
\end{align*}
determined by $d_3(\Delta_1\mathfrak{p})=\eta \varpi \mathfrak{p} $ and $d_3(\Sigma_{2,0}u_{\sigma})=\mu_0\eta_0^3u_{\sigma}$. This gives the following remaining classes, where $\bar{u}_{\sigma}=\res_1^2(u_{\sigma})$:
   \begin{align*}
   \mackeyunderlinedhalftopboxhat &\subseteq \underline{E}_4^{0, 5-\sigma} & &\text{generated by } 2\Delta_1\mathfrak{p}  \in \underline{E}_2^{0, 5-\sigma}(C_4/C_4) \\
& &  &    \text{and by }  2\delta_1u_{\sigma} , 2\Sigma_{2,0}u_{\sigma}  \in \underline{E}_4^{0,  5-\sigma}(C_4/C_2) \\
& &  &    \text{and by }  r_{1,0}r_{1,1} \bar{u}_{\sigma}, r_{1,0}^2\bar{u}_{\sigma} \in \underline{E}_4^{0,  5-\sigma}(C_4/\{e\}) \\
 \mackeybulletoverline &\subseteq \underline{E}_4^{3,  7-\sigma} & & \text{generated by }  \eta_0 \delta_1  (\eta_0^2 \Sigma_{2,0}^{-1}) u_{\sigma}  \equiv \eta_0^3 u_{\sigma}  \in \underline{E}_4^{3,  7-\sigma}(C_4/C_2).
   \end{align*}
    The following commutative diagram has exact rows and columns:
   \[\xymatrix@=1pc{  & 0 \ar[d] & 0\ar[d] & 0 \ar[d] &  & \\
   0 \ar[r] & \mackeyunderlinedboxleftslashhat \ar[r] \ar[d] & \mackeyunderlinedboxhat \ar[r]^-{d_3} \ar[d] & \mackeyunderlinedbullethat \ar[r] \ar[d] & 0 \ar[d] & \\ 
   0 \ar[r] &  \mackeyunderlinedhalftopboxhat \ar[d] \ar[r] &  \mackeyunderlinedboxdothat \ar[d] \ar[r]^-{d_3}  & \mackeyunderlinedbullethat \ar[r] \ar[d] &  \mackeybulletoverline \ar[d] \ar[r] &  0 \\ 
      0 \ar[r] &  \mackeyhalftopbox \ar[d] \ar[r] &  \mackeybox\ar[d] \ar[r]^-{d_3}  & \mackeybullethat \ar[d] \ar[r] &  \mackeybulletoverline \ar[d] \ar[r] &  0 \\ 
      & 0 & 0 & 0 & 0 &  }\]

\item For $d_3 \colon \underline{E}_3^{1, 7-\sigma} \to \underline{E}_3^{4, 9-\sigma}$, we have an exact sequence
\begin{align*}
\xymatrix{0 \ar[r] & \mackeyunderlinedbullethat \ar[r]^-{d_3} & \mackeyunderlinedbullettilde \ar[r] & \mackeybulletoverline \ar[r] & 0 }
\end{align*}
determined by the differentials $d_3(\eta \Delta_1\mathfrak{p})=\eta^2 \varpi \mathfrak{p}$ and $d_3(\eta_0\Sigma_{2,0}u_{\sigma}) = \mu_0\eta_0^4u_{\sigma}$.
This gives the following remaining classes
\begin{align*}
 \mackeybulletoverline&\subseteq \underline{E}_4^{4,  9-\sigma} & & \text{generated by } (\eta_0 \eta_1)^2u_{\sigma}  \equiv \eta_0^4u_{\sigma}    \in \underline{E}_4^{4,  9-\sigma}(C_4/C_2).
\end{align*}

\item For $d_3 \colon \underline{E}_3^{2, 9-\sigma} \to \underline{E}_3^{5, 11-\sigma}$, we have an exact sequence
\begin{align*}
\xymatrix{0 \ar[r] &  \mackeyunderlinedbullettilde \ar[r]^-{d_3}  & \mackeyunderlinedbullethat \ar[r] &  \mackeyblacktriangledown \ar[r] &  0}
\end{align*}
determined by the differentials $d_3(\eta^2 \Delta_1\mathfrak{p})=\eta^3 \varpi \mathfrak{p}$ and $d_3(\eta_0^2\Sigma_{2,0}u_{\sigma})= \mu_0\eta_0^5u_{\sigma}$.
\begin{align*}
 \mackeyblacktriangledown&\subseteq  \underline{E}_4^{5, 11-\sigma}  & & \text{generated by } \varsigma \varpi^2\Delta_1^{-1}\mathfrak{p}   \in \underline{E}_4^{5,  11-\sigma}(C_4/C_4)\\
& & & \text{and by } \eta_0(\eta_0 \eta_1)^2u_{\sigma}  \equiv  \eta_0^5u_{\sigma}    \in \underline{E}_4^{5,  11-\sigma}(C_4/C_2).
\end{align*}

\item For $d_3 \colon \underline{E}_3^{3, 11-\sigma} \to \underline{E}_3^{6, 13-\sigma}$, we have an exact sequence
\begin{align*}
\xymatrix{0 \ar[r] & \mackeyunderlinedbullethat \ar[r]^-{d_3} & \mackeyunderlinedodothat \ar[r] & \mackeycirc \ar[r] & 0 }
\end{align*}
determined by the differentials $ d_3(\varpi \varsigma \mathfrak{p}) =\eta \varsigma \varpi^2 \Delta_1^{-1}  \mathfrak{p}$ and $d_3(\eta_0^3 \Sigma_{2,0}u_{\sigma}) = \eta_0^6u_{\sigma}$. This gives the following remaining classes
\begin{align*}
 \mackeycirc&\subseteq  \underline{E}_4^{6, 13-\sigma}  & & \text{generated by } \varpi^3\Delta_1^{-1}\mathfrak{p}   \in \underline{E}_4^{6,  13-\sigma}(C_4/C_4) \\
& & & \text{and by }  (\eta_0\eta_1)^3 u_{\sigma}\equiv  \eta_0^6u_{\sigma}    \in \underline{E}_4^{6,  13-\sigma}(C_4/C_2).
\end{align*}

\item For $d_3 \colon \underline{E}_3^{4, 13-\sigma} \to \underline{E}_3^{7, 15-\sigma}$, we have an exact sequence
\begin{align*}
\xymatrix{0 \ar[r] & \mackeybullet \ar[r] &\mackeyunderlinedodothat   \ar[r]^-{d_3} &\mackeyunderlinedbullethat  \ar[r] & \mackeybulletoverline  \ar[r] & 0 }\end{align*}
determined by $ d_3(\varpi^2\mathfrak{p})=\eta \varpi^3 \Delta^{-1} \mathfrak{p}$ and $d_3(\Sigma_{2,0}(\eta_0\eta_1)^2u_{\sigma})=\mu_0\eta_0^3(\eta_0\eta_1)^2u_{\sigma}$. This gives the following remaining classes
   \begin{align*}
   \mackeybullet &\subseteq \underline{E}_3^{4, 13-\sigma} & & \text{generated by } 2\varpi^2 \mathfrak{p} \in \underline{E}_3^{4, 13-\sigma}(C_4/C_4) \\
 \mackeybulletoverline &\subseteq \underline{E}_4^{7,  15-\sigma} & & \text{generated by } \eta_0^3(\eta_0\eta_1)^2 u_{\sigma} \equiv \eta_0^7u_{\sigma}  \in  \underline{E}_4^{7,  15-\sigma}(C_4/C_2).
   \end{align*}

\end{enumerate}

There are several exotic restrictions and transfers that follow from the $d_3$ differentials. Following \cite[Figure 10]{HHRC4} we indicate exotic transfers by solid blue lines and exotic restrictions by dashed green lines.

 \subsubsection{Higher differentials and the $E_{\infty}$-page}
The higher differentials for the homotopy fixed point spectral sequences $H^*(C_4,E_*)$ and $H^*(C_2, E_*)$ are listed below. From the $\underline{E}_4$-term onwards, the homotopy fixed point spectral sequence and the slice spectral sequence are isomorphic in the range $s>2$ and $t-s >s$. Because of the periodicity of the HFPSS, higher differentials are easily deduced from slice differentials. We refer the reader to \cite[Section 14]{HHRC4} for more details. The computation is also illustrated in \Cref{fig:E5page} and \Cref{fig:E5pagesigma}. The exact sequences of Mackey functors involving the differentials are depicted in \Cref{fig:exactsequencesds}.

\begin{rem}
The following classes are permanent cycles:
\begin{align*}
\bar{\kappa} &= \varpi^2\Delta_1 & \epsilon &= \varpi^4\Delta_1^{-2} & \kappa &= 2\varpi\Delta_1
\end{align*}
Therefore, all differentials are linear with respect to multiplication by these classes.
\end{rem}

\begin{rem}
One key difference between our computation and that of \cite{HHRC4} is the behavior of $u_{2\sigma}$. In the $\underline{E}_2(C_4/C_4)$-term of the slice spectral sequence for ${\spi}_{\star}k_{[2]}$, there is a relation $a_{\sigma}^3 u_{\lambda}=0$. However, in the $\underline{E}_2$-term of the homotopy fixed point spectral sequence for $\spi_{\star}E_2^{h}$, the image of $u_{V}$ for every representation $V$ becomes a unit. Therefore, $a_{\sigma}^3=0$ in ${H}^{3}(C_4, \pi_{\star}E)$. In fact, $a_{\sigma}^3u_{\lambda}^3 \dfrak^3=\nu^3$, which is zero on the $\underline{E}_2(C_4/C_4)$-page. This implies that the target of the slice differential $d_5(u_{2\sigma})=a_{\sigma}^3 a_{\lambda} \dfrak $ is trivial. So, in the spectral sequence 
\[{H}^{*}(C_4, \pi_{\star}E) \Rightarrow {\pi}_{\star-*}E^{hC_4}\]
we have
\[d_{5}(u_{2\sigma})=0.\]
We will show in \Cref{prop:pc} that $u_{2\sigma}u_{\lambda}^4$ is a permanent cycle. This, together with $d_7(u_{\lambda}^4)$ from \cite[Theorem 11.13]{HHRC4} implies that $u_{2\sigma}$  instead supports a $d_7$ differential.
\end{rem}

 \begin{prop}\label{thm:diffsaddd5}
The $d_5$ differentials are $\mu $, $\eta$, $\nu$, $\bar{\kappa}$ and $\Delta_1^{2}$-linear.
The differentials $d_5 \colon \underline{E}_5^{s,t}(C_4/C_4) \to \underline{E}_5^{s+5,t+4}(C_4/C_4) $ are determined by
\begin{align*}
d_5(\Delta_1)&=\nu\Delta_1^{-1} \varpi^2, &    d_5(\nu \varpi)&= 2\Delta_1^{-2}\varpi^4
\end{align*}
The differentials $d_5 \colon \underline{E}_5^{s,1-\sigma +t}(C_4/C_4) \to \underline{E}_5^{s+5,1-\sigma+t+4}(C_4/C_4) $
are determined by
\begin{align*}
d_5( \Delta_1  \nu\mathfrak{p}) &=2  \Delta_1^{-1}  \varpi^3\mathfrak{p}, & d_5(\Delta_1 \varpi  \mathfrak{p}) &= \nu \Delta_1^{-1} \varpi^3\mathfrak{p} .
\end{align*}
\end{prop}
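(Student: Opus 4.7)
The plan is to deduce all $d_5$-differentials and the linearity claims from the corresponding computations in the slice spectral sequence for $K_{[2]}$ (and hence for $E$) via the map of multiplicative spectral sequences described in \fullref{rem:slicess}. By Ullman's theorem (invoked in \fullref{sec:mikescomment}), this comparison map is an isomorphism in the range $s + 5 \leq t-s$, and periodicity by $\Delta_1^4$ and $\bar{\kappa}$ lets us transport the information to the rest of the plane where needed.

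First I would establish the linearity statement by checking that $\mu$, $\eta$, $\nu$, $\bar{\kappa}$ and $\Delta_1^2$ are all $d_5$-cycles. The classes $\mu \in \underline{E}_2^{0,0}$ and $\eta \in \underline{E}_2^{1,2}$ are permanent cycles because they detect elements already present in the underlying homotopy of $E$ (and for $\eta$, in $\pi_{\ast}\mathbb{S}$). The class $\bar{\kappa} = \varpi^2\Delta_1$ is declared a permanent cycle just before the statement. Granting the formula $d_5(\Delta_1) = \nu\Delta_1^{-1}\varpi^2$, the Leibniz rule gives $d_5(\Delta_1^2) = 2\Delta_1 \cdot d_5(\Delta_1) = 2\nu\varpi^2 = 0$ since $2\nu=0$. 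Finally, $\nu = a_{\sigma} u_{\lambda}\dfrak$ under the dictionary of \fullref{rem:HHRtranslation}, and one verifies $\nu$ is a $d_5$-cycle by using that $a_{\sigma}$ and $\dfrak$ are permanent cycles while $d_5(u_\lambda)$ lands in a group that, after multiplication by $a_\sigma$, is killed (here one uses the relation $a_\sigma^3 = 0$ in the HFPSS noted in the remark preceding the proposition).

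Next I would establish the two differentials $d_5(\Delta_1) = \nu\Delta_1^{-1}\varpi^2$ and $d_5(\nu\varpi) = 2\Delta_1^{-2}\varpi^4$. These are the HFPSS images of the standard HHRC4 slice $d_5$-differentials under the comparison map. Translating via \fullref{rem:HHRtranslation}, the identity $\Delta_1 = u_{2\sigma}u_\lambda^2\dfrak^2$ reduces the first formula to the classical slice differential $d_5(u_{2\sigma}) = a_\sigma^3 a_\lambda \dfrak$ of \cite{HHRC4}, applied via the Leibniz rule and the fact that $u_\lambda^2$ and $\dfrak$ are $d_5$-cycles in the relevant range. The second differential follows either by the same import, or by Leibniz: multiplying $d_5(\Delta_1)$ by $\nu\varpi\Delta_1^{-1}$ and using $\nu^2 = 2\varpi$ together with $2\nu = 0$ rearranges into $d_5(\nu\varpi) = 2\Delta_1^{-2}\varpi^4$.

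Finally, the two differentials in the $(1-\sigma)$-graded part are obtained by multiplying the previously established differentials by the unit $\mathfrak{p} = (u_\lambda\dfrak)^{-1}$ of \fullref{sec:Emodulesign}. Since $\mathfrak{p}$ is a $d_3$-cycle by construction (it is a unit in $\underline{E}_{\infty}^{\ast,\ast}$ after verifying $d_3$ on $u_\lambda$ and $\dfrak$), a Leibniz-rule computation produces the stated formulas for $d_5(\Delta_1\nu\mathfrak{p})$ and $d_5(\Delta_1\varpi\mathfrak{p})$ from $d_5(\Delta_1)$ and $d_5(\nu\varpi)$. The main obstacle in this entire argument is the bookkeeping around the classes $u_\lambda$ and $u_{2\sigma}$: they support nontrivial lower differentials in the slice spectral sequence yet become units in the HFPSS, so transporting the slice identity $d_5(u_{2\sigma}) = a_\sigma^3 a_\lambda \dfrak$ requires expressing everything in terms of classes that are genuinely defined in both spectral sequences (namely $\Delta_1$, $\nu$, $\varpi$, $\mathfrak{p}$) and using the vanishing $a_\sigma^3 = 0$ in the HFPSS to justify each manipulation.
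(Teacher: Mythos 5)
Your overall strategy---importing $d_5$'s from \cite[Table 3]{HHRC4} through the dictionary of \fullref{rem:HHRtranslation}---is the paper's route, but the specific input you feed it is wrong, and wrong in a way that kills exactly the differential you are trying to produce. You claim that $u_\lambda^2$ is a $d_5$-cycle and that $d_5(\Delta_1)$ reduces via Leibniz to the slice differential $d_5(u_{2\sigma})=a_\sigma^3 a_\lambda\dfrak$. This is backwards. The remark immediately preceding the proposition records that in the homotopy fixed point spectral sequence $a_\sigma^3=0$, so there $d_5(u_{2\sigma})=0$; the differentials the paper actually imports are $d_5(u_\lambda^2)=\bar{\nu}a_\lambda^2\dfrak$ and $d_5(u_{2\sigma}u_\lambda^2)=u_{2\sigma}\bar{\nu}a_\lambda^2\dfrak$, where $\bar{\nu}=a_\sigma u_\lambda=\nu\dfrak^{-1}$ is a permanent cycle --- i.e.\ it is $u_\lambda^2$, not $u_{2\sigma}$, that supports the $d_5$. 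If you run your version of the computation, writing $\Delta_1=u_{2\sigma}u_\lambda^2\dfrak^2$, you get $d_5(\Delta_1)=a_\sigma^3a_\lambda u_\lambda^2\dfrak^3$, which is zero in the HFPSS (by $a_\sigma^3=0$, or equivalently by the gold relation $a_\sigma^2u_\lambda=2a_\lambda u_{2\sigma}$ together with $2a_\sigma=0$); so your argument proves $d_5(\Delta_1)=0$ rather than the nonzero class $\nu\Delta_1^{-1}\varpi^2=a_\sigma a_\lambda^2 u_\lambda u_{2\sigma}\dfrak^3$. The correct derivation applies $d_5(u_{2\sigma}u_\lambda^2)=u_{2\sigma}\bar{\nu}a_\lambda^2\dfrak$ (consistent with $d_5(u_{2\sigma})=0$ and $d_5(u_\lambda^2)\neq 0$), and the other three formulas, including $d_5(\nu\varpi)=2\Delta_1^{-2}\varpi^4$, come out of the same translation using $\nu^2=2\varpi$ and the gold relation.

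A secondary problem is your treatment of the $(1-\sigma)$-graded part: $\mathfrak{p}=(u_\lambda\dfrak)^{-1}$ is \emph{not} a $d_3$-cycle. Since $d_3(u_\lambda)=\eta a_\lambda$, one has $d_3(\mathfrak{p})=\eta\varpi\Delta_1^{-1}\mathfrak{p}$, which is precisely what produces the listed differential $d_3(\Delta_1\mathfrak{p})=\eta\varpi\mathfrak{p}$; so you cannot obtain $d_5(\Delta_1\nu\mathfrak{p})$ and $d_5(\Delta_1\varpi\mathfrak{p})$ by simply multiplying the integrally graded differentials by $\mathfrak{p}$. The paper instead rewrites these classes through the dictionary (e.g.\ $\Delta_1\nu\mathfrak{p}=a_\sigma u_{2\sigma}u_\lambda^2\dfrak^2$) and applies the same two $u$-differentials; the permanent cycle $\varpi\Delta_1^2\mathfrak{p}$ of \fullref{cor:usefulpc} is the tool used later for the $d_7$ and higher differentials, not here. (Your argument that $\nu$ is a $d_5$-cycle is also garbled, since $u_\lambda$ does not survive past $E_3$; the clean statement is that $\bar{\nu}=a_\sigma u_\lambda$ is a permanent cycle, or that $\nu$ lies in the Hurewicz image. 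The linearity claims for $\mu$, $\eta$, $\bar{\kappa}$, and $\Delta_1^2$ are fine as you state them.)
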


\begin{figure}[H]
\begin{center}
\includegraphics[page=1, width=\textwidth]{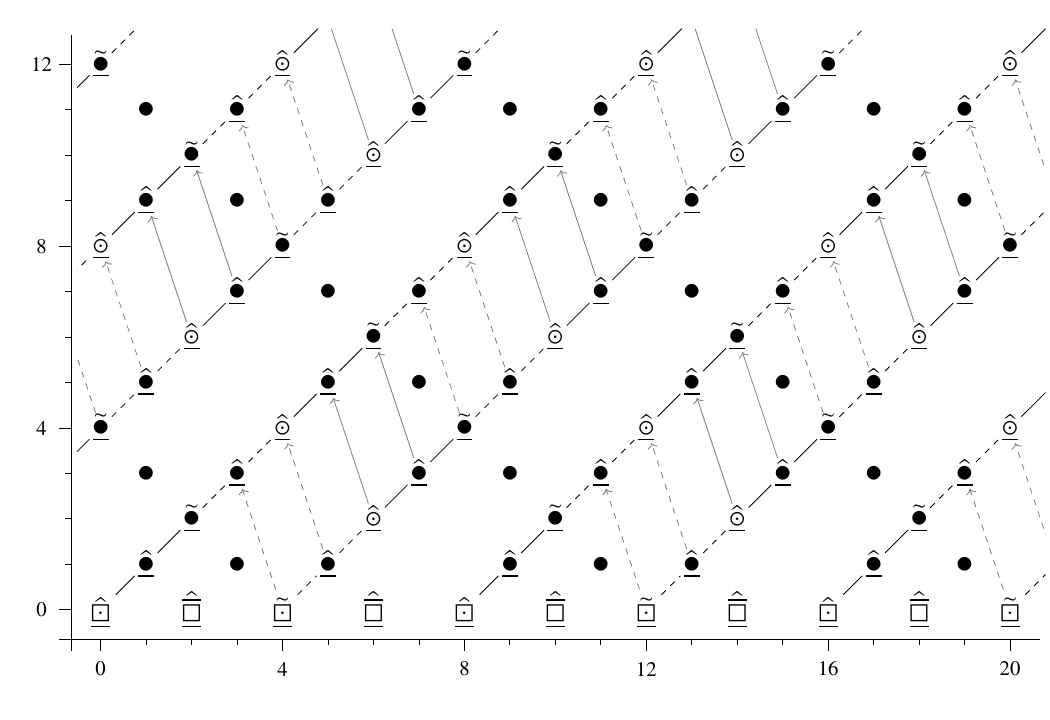}
  \includegraphics[page=1, width=\textwidth]{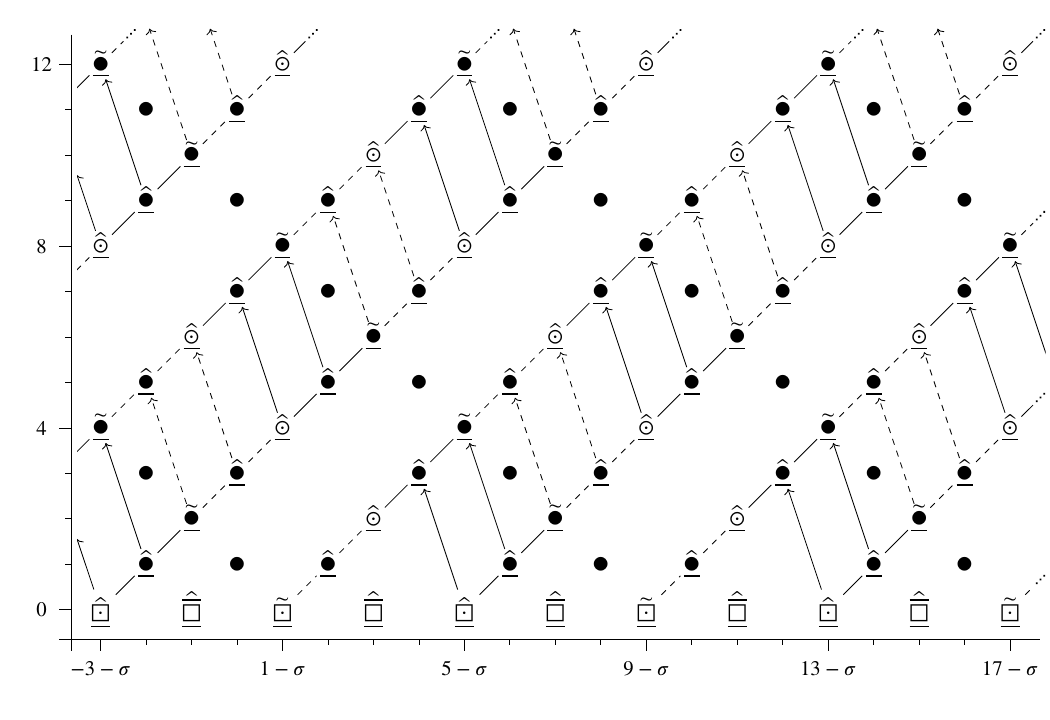}
 \end{center}
 \caption{The $\underline{E}_2^{*,*}$ (top) and $\underline{E}_2^{*,1-\sigma+*}$ (bottom) page of the homotopy fixed point spectral sequence given by $\underline{H}^*(C_4, E_\star)$. Lines of slope $(1,1)$ indicate $\eta$ multiplication on the $E_3^{*,\star}(C_4/C_4)$-term. They are dashed if the generator of the target is not a multiple of $\eta$. The $d_3$-differential patters are also drawn. Again, a $d_3$ is dashed if the target of the differential in $E_3^{*,\star}(C_4/C_4)$ is not the generator.}
 \label{figure:additive-E2} \label{figure:additive-E2-sigma}
 \end{figure}
 
\begin{figure}[H]
\centering
\begin{align*}
&\xymatrix@R=0.5pc{ 0 \ar[r] & \mackeyunderlinedbullethat \ar[r]^-{d_3} & \mackeyunderlinedodothat \ar[r] & \mackeycirc \ar[r] & 0 \\
0 \ar[r] & \mackeyunderlinedbullethat   \ar[r]^-{d_3} &\mackeyunderlinedbullettilde \ar[r] & \mackeybulletoverline  \ar[r] & 0\\
0 \ar[r] &  \mackeyunderlinedbullettilde \ar[r]^-{d_3}  & \mackeyunderlinedbullethat \ar[r] &  \mackeyblacktriangledown \ar[r] &  0  } &
 \xymatrix@R=0.5pc{ 
0 \ar[r] &  \mackeyunderlinedboxleftslashtilde \ar[r] &  \mackeyunderlinedboxdottilde \ar[r]^-{d_3}  & \mackeyunderlinedbullethat \ar[r] &  \mackeyblacktriangledown \ar[r] &  0\\
0 \ar[r] &  \mackeyunderlinedhalftopboxhat \ar[r] &  \mackeyunderlinedboxdothat \ar[r]^-{d_3}  & \mackeyunderlinedbullethat \ar[r] &  \mackeybulletoverline \ar[r] &  0 \\
0 \ar[r] & \mackeybullet \ar[r] &\mackeyunderlinedodothat   \ar[r]^-{d_3} &\mackeyunderlinedbullethat  \ar[r] & \mackeybulletoverline  \ar[r] & 0
}
\end{align*}
\begin{align*}
\xymatrix@R=0.5pc{ 
0 \ar[r] &\mackeyunderlinedboxboxhat  \ar[r] & \mackeyunderlinedboxdothat \ar[r]^-{d_5}  &\mackeybullet  \ar[r]  & 0 \\
0 \ar[r] & \mackeybullet \ar[r]^-{d_5} & \mackeycirc \ar[r] &\mackeyblacktriangle  \ar[r] & 0 \\
0 \ar[r] &\mackeyunderlinedcirccirchat  \ar[r] & \mackeyunderlinedodothat \ar[r]^-{d_5} & \mackeybullet \ar[r] & 0 \\
0 \ar[r] & \mackeyblacktriangledown  \ar[r] & \mackeycirc \ar[r]^-{d_5} &\mackeybullet  \ar[r]  & 0 
}
\end{align*}
\begin{align*}
\xymatrix@R=0.5pc{ 
0 \ar[r] & \mackeyunderlinedboxboxtilde \ar[r] &  \mackeyunderlinedboxdottilde \ar[r]^-{d_7} & \mackeybulletoverline \ar[r] & 0 \\
0 \ar[r] &\mackeybulletoverline \ar[r]^-{d_7} &\mackeyblacktriangle \ar[r] &\mackeybullet \ar[r] & 0 \\
0 \ar[r] & \mackeyunderlinedboxblackboxhat \ar[r] &\mackeyunderlinedboxboxhat  \ar[r]^-{d_7} &\mackeyblacktriangledown  \ar[r] & 0  \\
0 \ar[r] & \mackeyunderlinedblacktrianglehat \ar[r] & \mackeyunderlinedbullethat \ar[r]^-{d_7} & \mackeybulletoverline \ar[r] & 0 \\
0 \ar[r] &\mackeyunderlinedbullethat \ar[r] & \mackeyunderlinedbullettilde \ar[r]^-{d_7} &\mackeybulletoverline \ar[r] & 0 \\
0 \ar[r] &\mackeyunderlinedbullethat \ar[r] & \mackeyunderlinedcirccirchat \ar[r]^-{d_7} & \mackeyblacktriangledown\ar[r] & 0 \\
0 \ar[r] &\mackeybullet \ar[r] &\mackeyblacktriangledown \ar[r]^-{d_7} &\mackeybulletoverline \ar[r] & 0 
 }
\end{align*}
\begin{align*}
\xymatrix@R=0.5pc{ 
0 \ar[r] &\mackeyunderlinedblacktrianglehat  \ar[r] &\mackeyunderlinedbullethat \ar[r]^-{d_7} &\mackeyblacktriangle \ar[r] & \mackeybullet \ar[r] & 0 \\
0 \ar[r] &\mackeybulletoverline  \ar[r]^-{d_7} &\mackeyblacktriangle \ar[r]^-{d_7}  & \mackeyblacktriangledown \ar[r]^-{d_7} &\mackeybulletoverline \ar[r] & 0 \\
0 \ar[r] & \mackeyunderlinedboxboxhat \ar[r] &\mackeyunderlinedboxdothat \ar[r]^-{d_7} &\mackeyblacktriangledown \ar[r]^-{d_7} & \mackeybulletoverline \ar[r] & 0 \\
0 \ar[r] & \mackeyblacktriangledown \ar[r] &\mackeycirc \ar[r]^-{d_7} & \mackeyblacktriangledown \ar[r]^-{d_7} & \mackeybulletoverline \ar[r] & 0 }
\end{align*}
\begin{align*}
\xymatrix@R=0.5pc{ 
0 \ar[r] & \mackeyunderlinedblacktrianglehat \ar[r] & \mackeyunderlinedbullethat \ar[r]^-{d_7}& \mackeyblacktriangle \ar[r]^-{d_7} & \mackeyblacktriangledown \ar[r] & \mackeybulletoverline \ar[r] & 0 }
\end{align*}
\begin{align*}
\xymatrix@R=0.5pc{ 
0 \ar[r] &  \mackeyunderlinedbullettilde \ar[r] & \mackeyunderlinedblacktrianglehat \ar[r]^-{d_{11}} & \mackeybullet \ar[r] & 0 \\
0 \ar[r] & \mackeyunderlinedhalfbottomboxhat  \ar[r] & \mackeyunderlinedhalftopboxhat \ar[r]^-{d_{13}} & \mackeybullet \ar[r] & 0 }
\end{align*}
\caption{The different patterns of $d_3$, $d_5$, $d_7$, $d_{11}$ and $d_{13}$-differentials.}
\label{fig:exactsequencesds}
\end{figure}

\begin{proof}
Again, these come from differentials listed in \cite[Table 3]{HHRC4}. They are deduced using \Cref{rem:HHRtranslation} from 
\begin{align*}
d_5(u_{2\sigma}u_{\lambda}^2) &= u_{2\sigma}\bar{\nu} a_{\lambda}^2\dfrak ,  & 
d_5(u_{\lambda}^2) &=\bar{\nu} a_{\lambda}^2\dfrak. 
\end{align*}
Here, $\bar{\nu} = a_{\sigma} u_{\lambda} \in \underline{E}_2^{1, 3-\sigma-\lambda}(C_4/C_4)$ is a permanent cycle. In fact, $\bar{\nu}= \nu \dfrak^{-1}$. Here, we also use that $\nu^2 = 2\varpi$, which is related to the ``gold relation'' $2u_{2\sigma}a_{\lambda} = a_{\sigma}^2u_{\lambda}$.
\end{proof}

We take a moment here to prove the following result, which settles \Cref{prop:orientableus}.
   \begin{prop}\label{prop:pc}
The classes $u_{\lambda}^8, u_{2\sigma}^2$, $u_{2\sigma}u_{\lambda}^4$ and $\bar{u}_{4\lambda} =\res_2^4(u_{4\lambda})$ are permanent cycles. In particular, the class $u_{8\sigma_2}$ is a permanent cycle  in the spectral sequence for $E^{hC_2}$.
   \end{prop}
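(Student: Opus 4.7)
The plan is to check each claimed class survives every $d_{r}$ in the homotopy fixed point spectral sequence by repeated application of the Leibniz rule, exploiting in an essential way that $\eta$, $\nu$, $a_{\sigma}$, and $a_{\lambda}$ are all $2$-torsion, and that $\dfrak$ is a unit and a permanent cycle. The underlying input is the list of slice differentials on the generators $u_{\lambda}$ and $u_{2\sigma}$ from \cite[Section 9]{HHRC4}, transported into our HFPSS via the dictionary of \fullref{rem:HHRtranslation}, together with the differentials already recorded in \fullref{prop:d3s} and \fullref{thm:diffsaddd5}.

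First I would handle $u_{\lambda}^{8}$. The slice differential $d_{3}(u_{\lambda})=\eta a_{\lambda}$ gives, by Leibniz,
\[
d_{3}(u_{\lambda}^{2})=2u_{\lambda}\cdot\eta a_{\lambda}=0
\]
since $2\eta=0$; hence $u_{\lambda}^{8}=(u_{\lambda}^{2})^{4}$ is a $d_{3}$-cycle. Similarly, from $d_{5}(u_{\lambda}^{2})=\bar{\nu}a_{\lambda}^{2}\dfrak$ we get
\[
d_{5}(u_{\lambda}^{8})=4(u_{\lambda}^{2})^{3}\bar{\nu}a_{\lambda}^{2}\dfrak=0
\]
because $2\bar\nu=0$. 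For the higher slice differentials on $u_{\lambda}^{n}$ (which are $\bar{\kappa}$- and $\Delta_{1}^{2}$-linear and whose targets always lie in the ideal generated by $a_{\lambda}$ and either $\eta$ or $\nu$), the same Leibniz plus $2$-torsion argument, applied to the exponent $8$ in place of $n$, kills every such differential. The exact same strategy applies to $u_{2\sigma}^{2}$ and $u_{2\sigma}u_{\lambda}^{4}$: for $u_{2\sigma}$, the only slice differential $d_{5}(u_{2\sigma})=a_{\sigma}^{3}a_{\lambda}\dfrak$ is already zero in our HFPSS since $a_{\sigma}^{3}=\nu^{3}\mathfrak{p}^{3}=0$ (using $\nu^{2}=2\varpi$ together with $2\nu=0$), so $u_{2\sigma}$ is itself a $d_{5}$-cycle; higher slice differentials on $u_{2\sigma}^{n}$ again have targets that are $2$-torsion multiples of $u_{2\sigma}^{n-1}$ times classes involving $a_{\sigma}$ or $a_{\lambda}$, which the Leibniz rule then annihilates for $n=2$ and for $n=1$ multiplied by $u_{\lambda}^{4}$.

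For the class $\bar{u}_{4\lambda}=\res_{2}^{4}(u_{\lambda}^{4})$ I would work directly in the $C_{2}$-HFPSS. Since $\lambda\!\downarrow_{C_2}^{C_4}=2\sigma_{2}$, the restriction $\bar{u}_{\lambda}$ lies in $\underline{E}_{2}^{0,2-2\sigma_{2}}(C_{4}/C_{2})$ and differs from $u_{2\sigma_2}$ by a unit. The only $C_{2}$-differential to check is generated, via Leibniz and $\mu_{0}$-, $\eta_{0}$-, $\Sigma_{2,0}^{2}$-linearity, by $d_{3}(\Sigma_{2,0})=\mu_{0}\eta_{0}^{3}$ from \fullref{prop:d3s}; since $2\eta_{0}=0$, all $d_{r}$ on $\bar{u}_{\lambda}^{4}=u_{8\sigma_{2}}$ (up to a unit) vanish by the same 2-torsion argument, yielding both $\bar{u}_{4\lambda}$ permanent and the concluding statement for $u_{8\sigma_{2}}$.

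The main obstacle is bookkeeping rather than conceptual: one must enumerate the finitely many slice (equivalently, HFPSS) differentials that could a priori hit $u_{\lambda}^{8}$, $u_{2\sigma}^{2}$, $u_{2\sigma}u_{\lambda}^{4}$, and $\bar{u}_{4\lambda}$ in the relevant degrees, and verify in each case that the Leibniz expansion produces an even coefficient in front of a $2$-torsion class. The relations $2\eta=2\nu=2\varsigma=0$ and $4\varpi=0$ from \fullref{coh-c4}, combined with the fact that $u_{2\sigma}$ and $u_{\lambda}^{2}$ only acquire differentials whose targets are already $2$-torsion, make each such check mechanical; no additional equivariant input is required beyond what is recorded earlier in the paper.
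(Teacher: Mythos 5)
Your plan rests on the claim that every potential differential on $u_{\lambda}^8$, $u_{2\sigma}^2$, $u_{2\sigma}u_{\lambda}^4$, $\bar u_{4\lambda}$ is killed by ``Leibniz plus $2$-torsion,'' but this formal argument only controls $d_r$ on pages where some smaller power of the class still survives, and it breaks down exactly where the real content lies. After $d_5(u_{\lambda}^2)=\bar\nu a_{\lambda}^2\dfrak\neq 0$ the class $u_{\lambda}^2$ is gone, and after the (nonzero, forced by $d_7(\Delta_1^2)=\varsigma\varpi^3\Delta_1^{-1}$) $d_7$ on $u_{\lambda}^4$ that class is gone too; from $E_9$ onward there is no smaller power of $u_{\lambda}$ left to Leibniz from, so the vanishing of $d_{11}$ and $d_{13}$ on $u_{\lambda}^8$ is not formal. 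The same issue makes your treatment of $u_{2\sigma}^2$ circular: the argument ``the only slice differential on $u_{2\sigma}$ dies in the HFPSS, hence higher differentials on powers are killed by $2$-torsion'' would equally prove that $u_{2\sigma}$ itself is a permanent cycle, which is false --- by the paper's Picard computation $2\sigma-2\equiv 16\neq 0$, so $2\sigma$ is not $E$-orientable and $u_{2\sigma}$ must support some higher differential not visible from the slice-theoretic list. Another concrete counterexample to the blanket claim: $\Delta_1^2=u_{2\sigma}^2u_{\lambda}^4\dfrak^4$ supports a nonzero $d_7$, so products of these orientation classes are emphatically not all permanent by $2$-torsion bookkeeping.

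The case $u_{2\sigma}u_{\lambda}^4$ shows the gap most sharply: the Leibniz expansion $d_7(u_{2\sigma})u_{\lambda}^4+u_{2\sigma}d_7(u_{\lambda}^4)$ carries no factor of $2$, and there is in fact a nonzero $2$-torsion candidate target, $\eta' u_{2\sigma}u_{\lambda}^2a_{\lambda}^3\dfrak$, so ``the target is $2$-torsion'' rules out nothing. The paper's proof handles this by a genuinely different mechanism: multiplication by $u_{2\sigma}u_{\lambda}^4$ identifies the spectral sequence with a shift, sparseness reduces everything to the single $d_7$, and that $d_7$ is excluded because it would force $d_7(\bar\kappa)=d_7(\varpi^2\Delta_1)=\varsigma\varpi^5\Delta_1^{-2}\neq 0$, contradicting the permanence of $\bar\kappa$. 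For $u_{\lambda}^8$, $u_{2\sigma}^2$ and $\bar u_{4\lambda}=\res_2^4(u_{\lambda}^4)$ the paper does not reprove anything formally: it cites \cite[Theorem 11.13]{HHRC4}, a substantive equivariant result whose proof uses more than the multiplicative structure of the $E_2$-page. To repair your argument you would need either that citation or an explicit enumeration of the surviving classes in the relevant $RO(C_4)$-graded target degrees on each later page (and similarly a treatment of the $C_2$ $d_7$, $d_7(\Sigma_{2,0}^2)=\eta_0^7$, and the vanishing line at $s=8$ for the $u_{8\sigma_2}$ claim), not just the $2$-torsion relations of \fullref{coh-c4}.
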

   \begin{proof}
That $u_{\lambda}^8$, $u_{2\sigma}^2$ and $\bar{u}_{4\lambda}$ are permanent cycles is part of \cite[Theorem 11.13]{HHRC4}. The claim about $u_{8\sigma_2}$ follows from the fact that the restriction of $\lambda$ to the group $C_2$ is $2\sigma_2$.

We give a quick argument to justify that $u_{2\sigma}u_{\lambda}^4$ is a permanent cycle. Multiplication by $u_{2\sigma}u_{\lambda}^4$ induces an isomorphism 
\begin{equation}
\label{eq:isou2sigmau4lambda} \underline{E}_2^{*,*}(C_4/C_4) \to \underline{E}_2^{*, 10-2\sigma-4\lambda+*}(C_4/C_4).\end{equation}
that sends $1$ to $u_{2\sigma}u_{\lambda}^4$. Using this isomorphism, it is visible that if $u_{2\sigma}u_{\lambda}^4$ is a $d_7$-cycle, then it is a permanent cycle by sparseness.  
Both $u_{2\sigma}$ and $u_{\lambda}^4$ are $d_5$-cycles. 
The only possible non-trivial $d_7$ differential is $d_7(u_{2\sigma}u_{\lambda}^4)= \eta'u_{2\sigma}u_{\lambda}^2 a_{\lambda}^3 \dfrak$. Note that $\bar{\kappa}$ is detected by
   \[\bar{\kappa}= \varpi^2 \Delta_1 = a_{\lambda}^2u_{2\sigma}^2 \dfrak^6( u_{2\sigma}u_{\lambda}^4)\]
So, the differential would imply that 
\[d_7(\varpi^2 \Delta_1) = \eta' a_{\lambda}^5 u_{\lambda}^2 u_{2\sigma}^3 \dfrak^7 = \varsigma \varpi^{5}\Delta_1^{-2}  \]
which is non-trivial on the $E_7$-term, a contradiction. Therefore, $u_{2\sigma}u_{\lambda}^4$ is a $d_7$-cycle. 
\end{proof}
   
   \begin{cor}\label{cor:usefulpc}
The class $\varpi \Delta_1^2 \mathfrak{p}$ is a permanent cycle detecting a non-zero class 
\[\varpi \Delta_1^2 \mathfrak{p} \in {\spi}_{19-\sigma}E(C_4/C_4).\]
   \end{cor}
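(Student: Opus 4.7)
The plan is to realize $\varpi \Delta_1^2 \mathfrak{p}$ as a product of classes that are already known to be permanent cycles, and then invoke a pure filtration argument to deduce that the resulting permanent cycle cannot be a boundary on $E_\infty$. The key device is the dictionary of \fullref{rem:HHRtranslation}, which translates between the generators of $H^\ast(C_4, E_\ast)$ and monomials in the $a_V$, $u_V$, and $\dfrak$. Substituting $\varpi = a_\lambda u_\lambda u_{2\sigma} \dfrak^2$, $\Delta_1 = u_{2\sigma} u_\lambda^2 \dfrak^2$, and $\mathfrak{p} = (u_\lambda \dfrak)^{-1}$ into the monomial $\varpi \Delta_1^2 \mathfrak{p}$ and simplifying yields
\[
\varpi \Delta_1^2 \mathfrak{p} \;=\; a_\lambda \, \dfrak^{5} \, u_{2\sigma}^{3} \, u_\lambda^{4} \;=\; \bigl(a_\lambda \, \dfrak^{5}\bigr) \cdot \bigl(u_{2\sigma}^{2}\bigr) \cdot \bigl(u_{2\sigma} u_\lambda^{4}\bigr).
\]
Each factor is a permanent cycle: $a_\lambda$ is the Hurewicz image of the equivariant inclusion $S^0 \to S^\lambda$ (and hence comes from an honest equivariant map, so is a permanent cycle tautologically), $\dfrak = N_2^4(\bar r_{1,0})$ is the Hurewicz image of an element of $\pi_{\rho_4}^{C_4} E$, and $u_{2\sigma}^2$ together with $u_{2\sigma} u_\lambda^4$ are precisely the permanent cycles furnished by \fullref{prop:pc}. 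Since the homotopy fixed point spectral sequence is multiplicative, the product is a permanent cycle.

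Next I need to check that the associated element on $\underline{E}_\infty^{2, 21-\sigma}(C_4/C_4)$ is nonzero. I plan to do this by combining a triviality-of-targets argument with a direct non-vanishing on $\underline{E}_2$. Non-vanishing on $\underline{E}_2$ is immediate because $\mathfrak{p}$ is a unit in $\underline{E}_2^{0,\star}(C_4/C_4)$ (see the discussion in \fullref{sec:Emodulesign}), so multiplication by $\mathfrak{p}$ is an isomorphism of $\underline{E}_2$-pages that sends the nonzero class $\varpi \Delta_1^2 \in \underline{E}_2^{2,24}(C_4/C_4)$ to $\varpi \Delta_1^2 \mathfrak{p}$. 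For the persistence to $\underline{E}_\infty$, I only need to rule out that some $d_r$ differential hits the class: a $d_r$ into $\underline{E}_r^{2,21-\sigma}$ must emanate from bidegree $(2-r, 22-r-\sigma)$, and for $r \geq 3$ the filtration $2-r$ is negative. Hence no incoming differential can exist, and the class on $\underline{E}_2$ survives to a nonzero element of $\underline{E}_\infty^{2, 21-\sigma}(C_4/C_4)$, which in turn detects a nonzero element of $\spi_{19-\sigma} E(C_4/C_4)$.

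The only subtlety I anticipate is keeping the dictionary straight: one must verify that the stated factorization actually lands in the correct $RO(C_4)$-degree. This is a routine but careful bookkeeping check, summing $(s,t-s)$ bidegrees: $\varpi$ contributes $(2,6)$, $\Delta_1^2$ contributes $(0,16)$, and $\mathfrak{p}$ contributes $(0,-3-\sigma)$, giving the total $(2, 19-\sigma)$, consistent with the claimed target. Once this consistency is confirmed, the two steps above complete the proof.
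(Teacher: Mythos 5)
Your proposal is correct and follows essentially the same route as the paper: both reduce via the dictionary of \fullref{rem:HHRtranslation} to the factorization $\varpi\Delta_1^2\mathfrak{p}=a_\lambda u_{2\sigma}^2\dfrak^5(u_{2\sigma}u_\lambda^4)$ and invoke \fullref{prop:pc} together with multiplicativity, the paper leaving the nonvanishing on $\underline{E}_\infty$ implicit where you spell it out (your filtration argument is fine, noting that the omitted $r=2$ case is vacuous since the first differential here is $d_3$ by sparsity).
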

   \begin{proof}
   This follows from \Cref{prop:pc} since 
   \[\varpi \Delta_1^2 \mathfrak{p}= a_{\lambda} u_{2 \sigma}^2 \dfrak^5 (u_{2\sigma} u_{\lambda}^4)\]
is a product of permanent cycles.
   \end{proof}

 \begin{prop}\label{thm:diffsadd}
The $d_7$ differentials are $\mu $, $\eta$, $\nu$, $\bar{\kappa}$, $\epsilon$ and $\Delta_1^{4}$-linear. The $d_7$-differentials $d_7 \colon \underline{E}_7^{s,t}(C_4/C_4) \to \underline{E}_7^{s+7,t+6}(C_4/C_4) $ are determined by
\begin{align*}
&d_7(2\Delta_1)= \varsigma \varpi^3 \Delta_1^{-2}  & & d_7(\Delta_1^2)= \varsigma \varpi^3 \Delta_1^{-1} & & d_7(\mu\Delta_1  )=0 .
\end{align*}
Further, $\mu\Delta_1 $ is a permanent cycle.

The differentials $d_7 \colon \underline{E}_7^{s,1-\sigma +t}(C_4/C_4) \to \underline{E}_7^{s+7,1-\sigma + t+6}(C_4/C_4) $ are determined by
\begin{align*}
&d_7(2\Delta_1^3 \varpi\mathfrak{p})=  \varsigma \varpi^{4}\mathfrak{p}& & d_7( \varpi\mathfrak{p})=  \varsigma  \varpi^{4}\mathfrak{p}  \Delta_1^{-3}  & & d_7( \mu \varpi \mathfrak{p}\Delta_1^3)=0  .
\end{align*}

The differentials $d_7 \colon \underline{E}_7^{s,t}(C_4/C_2) \to \underline{E}_7^{s+7,t+6}(C_4/C_2) $
are $\eta_0$, $\mu_0$ and $\Sigma_{2,0}^4$-linear. They are determined by
\[d_{7}(\Sigma_{2,0}^2) =\eta_0^7.\]
The spectral sequence $\underline{E}_7^{*,*}(C_4/C_2)$ collapses at $E_7$.
\end{prop}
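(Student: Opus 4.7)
The plan is to import all the stated $d_7$-differentials from the slice spectral sequence of $K_{[2]}$ (see \cite[\S 14]{HHRC4}) via the comparison map of \fullref{rem:slicess}, translating to our names via the dictionary in \fullref{rem:HHRtranslation}. For $s \geq 3$ and $t-s > s$ the comparison is an isomorphism (using Ullman's results), and the $\Delta_1^4$-periodicity of the HFPSS from \fullref{rem:periodicityD14} propagates every differential outside this range to all degrees.

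Linearity over $\bar{\kappa} = \varpi^2\Delta_1$, $\epsilon = \varpi^4\Delta_1^{-2}$, and $\Delta_1^4$ is immediate since these are permanent cycles. Linearity over $\mu$, $\eta$, and $\nu$ follows from multiplicativity of the HFPSS together with the verification, by inspection of bidegrees in the Mackey functor cohomology of \fullref{coh-c4}, that each of $\mu, \eta, \nu$ is a $d_r$-cycle for every $r \leq 7$. The algebraic relations $\mu\nu = 0$ and $2\nu = 0$ from \fullref{coh-c4} will be used below.

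The formula $d_7(\Delta_1^2) = \varsigma \varpi^3 \Delta_1^{-1}$ translates the HHR slice differential on $\Delta_1^2 = u_{2\sigma}^2 u_{\lambda}^4 \dfrak^4$; the value $d_7(2\Delta_1) = \varsigma \varpi^3 \Delta_1^{-2}$ is derived by transfer from $C_2$, using $2\Delta_1 = \tr_2^4(\delta_1^2) = \tr_2^4\bigl(\Sigma_{2,0}^2(1-\mu_0)^2\bigr)$ together with the $C_2$-differential $d_7(\Sigma_{2,0}^2) = \eta_0^7$ and the transfer identifications of \fullref{prop:resandtr}. The $(s, 1-\sigma+t)$-differentials follow by multiplication with the permanent cycle $\varpi \Delta_1^2 \mathfrak{p}$ from \fullref{cor:usefulpc}, which induces an isomorphism of the two spectral sequences in the relevant range. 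The $C_2$-differential $d_7(\Sigma_{2,0}^2) = \eta_0^7$ is then obtained by restriction (or directly as a classical Real $K(2)$-type differential), and collapse of the $C_2$-spectral sequence at $E_7$ follows because, after accounting for this differential and the stated linearity, every class surviving to $E_8$ lies in a bidegree with no admissible $d_r$-target for $r \geq 9$, as can be read off from \fullref{coh-c2}.

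The main obstacle is the asymmetric claim that $d_7(\mu\Delta_1) = 0$ and $\mu\Delta_1$ is a permanent cycle, while $d_7(2\Delta_1) \neq 0$, despite both elements living in $\underline{E}_7^{0,8}(C_4/C_4)$. This requires treating $\mu\Delta_1$ and $2\Delta_1$ as genuinely distinct generators on $E_7$: the class $\Delta_1$ itself was killed on $E_6$ by $d_5(\Delta_1) = \nu\varpi^2\Delta_1^{-1}$, so the surviving classes in this bidegree form the submodule $(2,\mu)\cdot\Delta_1 \subseteq \W[\![\mu]\!]\cdot\Delta_1$, in which $2\Delta_1$ and $\mu\Delta_1$ are $\W[\![\mu]\!]$-independent generators. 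The key input is that $T_4 = \Delta_1(\mu - 2) \in \pi_8^{C_4} E$ from \fullref{tab:elementsE} is a genuine homotopy class—hence detected by a permanent cycle in $\underline{E}_\infty^{0,8}(C_4/C_4)$—and combining this with the computed $d_7(2\Delta_1)$ forces $\mu\Delta_1$ to be that permanent cycle, giving $d_7(\mu\Delta_1)=0$.
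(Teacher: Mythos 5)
Your treatment of the crux, $d_7(\mu\Delta_1)=0$ and the permanence of $\mu\Delta_1$, does not work. First, the premise is unsupported: \fullref{tab:elementsE} defines $T_4=\Delta_1(\mu-2)$ only as an element of $E_8^{C_4}=H^0(C_4,E_8)$, i.e.\ a filtration-zero class on the $E_2$-page of the HFPSS; nothing in that table (or elsewhere in the paper) asserts that $T_4$ lifts to $\pi_8^{C_4}E$ or is a permanent cycle. Second, even granting the premise, your deduction runs backwards: if $T_4=\mu\Delta_1-2\Delta_1$ were a $d_7$-cycle, then additivity together with your own computation $d_7(2\Delta_1)=\varsigma\varpi^3\Delta_1^{-2}\neq 0$ would force $d_7(\mu\Delta_1)=d_7(2\Delta_1)\neq 0$ --- the opposite of the claim. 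There is no slack here, because filtration $0$ receives no differentials, so $T_4$, $2\Delta_1$ and $\mu\Delta_1$ remain distinct elements of $\underline{E}_7^{0,8}(C_4/C_4)\cong(2,\mu)\W[\![\mu]\!]\{\Delta_1\}$ and ``being detected in filtration $0$'' means literally being a permanent cycle. (In fact $T_4$ is not one: the filtration-zero survivors in stem $8$ are $(4,\mu)\W[\![\mu]\!]\{\Delta_1\}$.) The paper's argument is the same transfer trick you use for $2\Delta_1$, applied to $\mu\Delta_1$ instead: by Frobenius $\mu\Delta_1=\tr_2^4(\mu_0\delta_1^2)$, and $d_7(\mu_0\delta_1^2)=d_7(\mu_0(1-\mu_0)^2\Sigma_{2,0}^2)=\mu_0(1+\mu_0^2)\eta_0^7$, which is zero in $\underline{E}_7^{7,14}(C_4/C_2)$ because $\mu_0\eta_0^7$ was already killed by the $d_3$-family $d_3(\Sigma_{2,0}(\eta_0\eta_1)^2)=\mu_0\eta_0^3(\eta_0\eta_1)^2$; hence $d_7(\mu\Delta_1)=\tr_2^4(0)=0$, and since $\mu_0\delta_1^2$ is a permanent cycle at the $C_2$-level (the $C_2$-spectral sequence collapses after $d_7$), so is its transfer $\mu\Delta_1$.

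A secondary problem is the sourcing of the $C_2$-differential. You use $d_7(\Sigma_{2,0}^2)=\eta_0^7$ as input to derive $d_7(2\Delta_1)$ by transfer, and then propose to obtain it ``by restriction'' from the $C_4$-level $d_7$'s; that is circular, and also yields nothing: $\res_2^4(\varsigma\varpi^3\Delta_1^{-1})$ and $\res_2^4(\varsigma\varpi^3\Delta_1^{-2})$ are $\mu_0$-divisible, hence vanish on $\underline{E}_7^{*,*}(C_4/C_2)$, while $d_7(\res_2^4(\Delta_1^2))=d_7(\delta_1^4)=0$ by Leibniz, so restriction is silent. This differential must be imported, as the paper does, from the $C_2$-level slice differential $d_7(\bar{u}_{\lambda}^2)=a_{\sigma_2}^7\bar{r}_{1,0}^3$ of HHR multiplied by the permanent cycle $\bar{r}_{1,0}^4$ (your parenthetical ``classical Real differential'' is the right idea but needs the actual citation, since everything else hinges on it). Apart from these two points, your route differs from the paper's only in deriving $d_7(2\Delta_1)$ by transfer from $C_2$ (the paper cites \cite[Theorem 14.3]{HHRC4} directly, and your transfer computation is correct, using that the transfer on $\underline{E}_7^{7,14}$ is the isomorphism encoded by $\mackeyblacktriangledown$), and your handling of the $1-\sigma$ column via the permanent cycle $\varpi\Delta_1^2\mathfrak{p}$ of \fullref{cor:usefulpc} and $\Delta_1^4$-linearity matches the paper.
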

 \begin{proof}
We use the proof of \cite[Theorem14.3]{HHRC4}. From that discussion, one deduces that
\[d_7(2\Delta_1)  = a_{\lambda}^3 u_{2\sigma}\eta' \dfrak^3 \ \ \ \text{and} \ \ \ d_7(\Delta_1^2)  = \Delta_1d_7(2\Delta_1). \]
 The first two differentials for $ \underline{E}_*^{*,*}(C_4/C_4)$ then follow using \Cref{rem:HHRtranslation}. 
 
The $d_7$ for  $ \underline{E}_*^{*,*}(C_4/C_2)$ is obtained by multiplying the differential from \cite{HHRC4}, \[d_7([\bar{u}_{\lambda}^2])= a_{\sigma_2}^7\bar{r}_{1,0}^3\] with the permanent cycle 
$\bar{r}_{1,0}^4 $.
There is a vanishing line at $s=8$ on the $E_8$-page, and so the spectral sequence collapses.

For $d_7(\Delta_1 \mu )=0 $, note that $\mu \Delta_1 = \tr_2^4(\mu_0 \delta_1^2)$. Since 
 \[d_7(\mu_0 \delta_1^2 ) = d_7(\mu_0(1-\mu_0)^2\Sigma_{2,0}^2) =  \mu_0(1+\mu_0^2)\eta_0^7\]
 and the latter is zero in $\underline{E}_7^{7,14}(C_4/C_2)$, the claim follows.
 
 Now, note that $\mu_0 \delta_1^2$ is a permanent cycle, hence so is $\mu \Delta_1 = \tr_2^4(\mu_0 \delta_1^2)$. Finally, the differentials in $\underline{E}_7^{*,1-\sigma+*}(C_4/C_4)$ then follow using \Cref{cor:usefulpc} and the fact that the $d_7$ differentials are $\Delta_1^4$-linear.
 \end{proof}

 \begin{prop}
 The $d_{11}$ and $d_{13}$ differentials are $\mu $, $\eta$, $\nu$, $\bar{\kappa}$, $\epsilon$ and $\Delta_1^{4}$-linear. The $d_{11}$-differentials $d_{11} \colon \underline{E}_{11}^{s,t}(C_4/C_4) \to \underline{E}_{11}^{s+11,t+10}(C_4/C_4) $ are determined by
 \begin{align*}
 d_{11}(\varsigma \varpi) =2 \Delta_1^{-4}\varpi^7 = \nu^2 \varpi^6\Delta_1^{-4}.
 \end{align*}
  The $d_{11}$-differentials $d_{11} \colon \underline{E}_{11}^{s,1-\sigma+t}(C_4/C_4) \to \underline{E}_{11}^{s+11,1-\sigma + t+10}(C_4/C_4) $ are determined by
 \begin{align*}
 d_{11}(\varsigma \varpi^2 \Delta_1^2 \mathfrak{p}) = \nu^2 \varpi^7\Delta_1^{-2}  \mathfrak{p}.
 \end{align*}
 The $d_{13}$-differentials $d_{13} \colon \underline{E}_{13}^{s,t}(C_4/C_4) \to \underline{E}_{13}^{s+13,t+12}(C_4/C_4) $ are determined by
 \begin{align*}
 & d_{13}(\Delta_1 \nu \varpi)= \Delta_1^{-4}\varpi^8 & & d_{13}(\Delta_1^3 \nu^2) = \Delta_1^{-2}\nu \varpi^7 
 \end{align*}
  The $d_{13}$-differentials $d_{13} \colon \underline{E}_{13}^{s,1-\sigma+t}(C_4/C_4) \to \underline{E}_{13}^{s+13,1-\sigma + t+12}(C_4/C_4) $ are determined by
 \begin{align*}
 & d_{13}(\Delta_1^3 \nu \varpi^2 \mathfrak{p})= \Delta_1^{-2}\varpi^9  \mathfrak{p}& & d_{13}(\Delta_1^5 \nu^2 \varpi \mathfrak{p} ) = \nu \varpi^8  \mathfrak{p}.
 \end{align*}
 \end{prop}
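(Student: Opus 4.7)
The plan is to mirror the strategy used for the $d_7$-case in the preceding proposition: import the higher slice differentials from HHR's analysis of $K_{[2]}$ via the comparison map from the slice spectral sequence to the HFPSS for $E$, translate them using the dictionary of Remark \ref{rem:HHRtranslation}, and then propagate from $\star = 0$ to $\star = 1-\sigma$ by multiplying with a well-chosen permanent cycle.

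First I would dispose of the linearity claims. Each of $\mu$, $\eta$, $\nu$, $\bar\kappa = \varpi^2 \Delta_1$, $\epsilon = \varpi^4 \Delta_1^{-2}$ is already a permanent cycle: $\mu = \tr_2^4(\mu_0)$ lifts the permanent cycle $\mu_0 \in E_0^{C_2}$; the classes $\eta$ and $\nu$ sit on the $1$-line with no possible targets; and $\bar\kappa, \epsilon$ are the standard height-$2$ Bott-style permanent cycles. The class $\Delta_1^4$ is a permanent cycle by Remark \ref{rem:periodicityD14}, as it generates the $32$-fold periodicity of $E^{hC_4}$. Every higher differential is thus automatically linear over the subalgebra these six classes generate.

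Next, for the $\star=0$ entries, use the identifications $\varsigma = \eta' u_\lambda \dfrak$, $\varpi = a_\lambda u_\lambda u_{2\sigma}\dfrak^2$, $\nu = a_\sigma u_\lambda \dfrak$, $\Delta_1 = u_{2\sigma} u_\lambda^2 \dfrak^2$ to rewrite each stated source as a slice monomial; the corresponding $d_{11}$ and $d_{13}$ differentials can then be read off from \cite[Section 14]{HHRC4} and mapped into the HFPSS. The gold relation $\nu^2 = 2\varpi$ matches the two forms of the $d_{11}$-target. For the $\star = 1-\sigma$ entries, I would exploit that $\varpi \Delta_1^2 \mathfrak{p}$ is a permanent cycle by Corollary \ref{cor:usefulpc}. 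Since multiplication by a permanent cycle commutes with every $d_r$, multiplying source and target of each $\star=0$ differential by $\varpi \Delta_1^2 \mathfrak{p}$ yields the corresponding $\star = 1-\sigma$ differential. For example, $\varsigma \varpi \cdot \varpi \Delta_1^2 \mathfrak{p} = \varsigma \varpi^2 \Delta_1^2 \mathfrak{p}$ and $2\Delta_1^{-4}\varpi^7 \cdot \varpi \Delta_1^2 \mathfrak{p} = 2\Delta_1^{-2}\varpi^8 \mathfrak{p}$, which equals $\nu^2 \varpi^7 \Delta_1^{-2}\mathfrak{p}$ via $\nu^2 = 2\varpi$, matching the first formula; the two $d_{13}$-cases are analogous.

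The main obstacle I expect is the bookkeeping inherent to the HHR dictionary: their differentials are recorded in $(a_V, u_V, \dfrak)$-notation and use the gold relation $2u_{2\sigma}a_\lambda = a_\sigma^2 u_\lambda$ implicitly, so converting each slice differential into our $(\eta, \nu, \varsigma, \varpi, \Delta_1)$-notation, and simultaneously invoking $\Delta_1^4$-periodicity to land on the precise source-target pair listed, requires careful tracking of multiplicative identities together with a sparseness argument ruling out spurious contributions to $d_{11}$ and $d_{13}$.
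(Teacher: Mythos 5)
Your proposal matches the paper's proof: the integer-graded $d_{11}$ and $d_{13}$ are imported from \cite[Theorems 14.2(iv) and 14.4]{HHRC4} (via the dictionary of \fullref{rem:HHRtranslation}), and the $1-\sigma$-shifted differentials are obtained exactly as you describe, by multiplying sources and targets by the permanent cycle $\Delta_1^2\varpi\mathfrak{p}$ of \fullref{cor:usefulpc}; your arithmetic checks against the stated formulas, including the use of $\nu^2=2\varpi$. The only cosmetic difference is that you spell out the linearity and translation bookkeeping that the paper leaves implicit.
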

 \begin{proof}
 The $d_{11}$ in $\underline{E}_{*}^{*,*}(C_4/C_4)$ follows from \cite[Theorem 14.2 (iv)]{HHRC4}. The $d_{13}$s come from Theorem \cite[Theorem 14.4]{HHRC4}. Finally, the differentials in $\underline{E}_{*}^{*,1-\sigma +*}(C_4/C_4)$ then follow from those in the integer graded spectral sequence by multiplying by the permanent cycle $\Delta_1^2 \varpi \mathfrak{p}$ of \Cref{cor:usefulpc}.
 \end{proof}

\begin{rem}
There is a typo in the statement of \cite[Proposition 2.3.8]{BO}. It should read $d_{11}(\gamma \xi) = 2\delta^{-4} \xi^7$.
\end{rem}
 
The $\underline{E}_{\infty}$ term is illustrated in \Cref{fig:E5page}. The exact sequences of Mackey functors required to compute it  are listed in \Cref{fig:exactsequencesds}. Finally, \Cref{table:Mackey}, \Cref{table:Mackey-two} and \Cref{table:Mackey-three} contain the definitions of the Mackey functors required to read   \Cref{fig:E5page}.

\begin{figure}[H]
\begin{center}
 \includegraphics[angle=90, page=1, width=\textwidth]{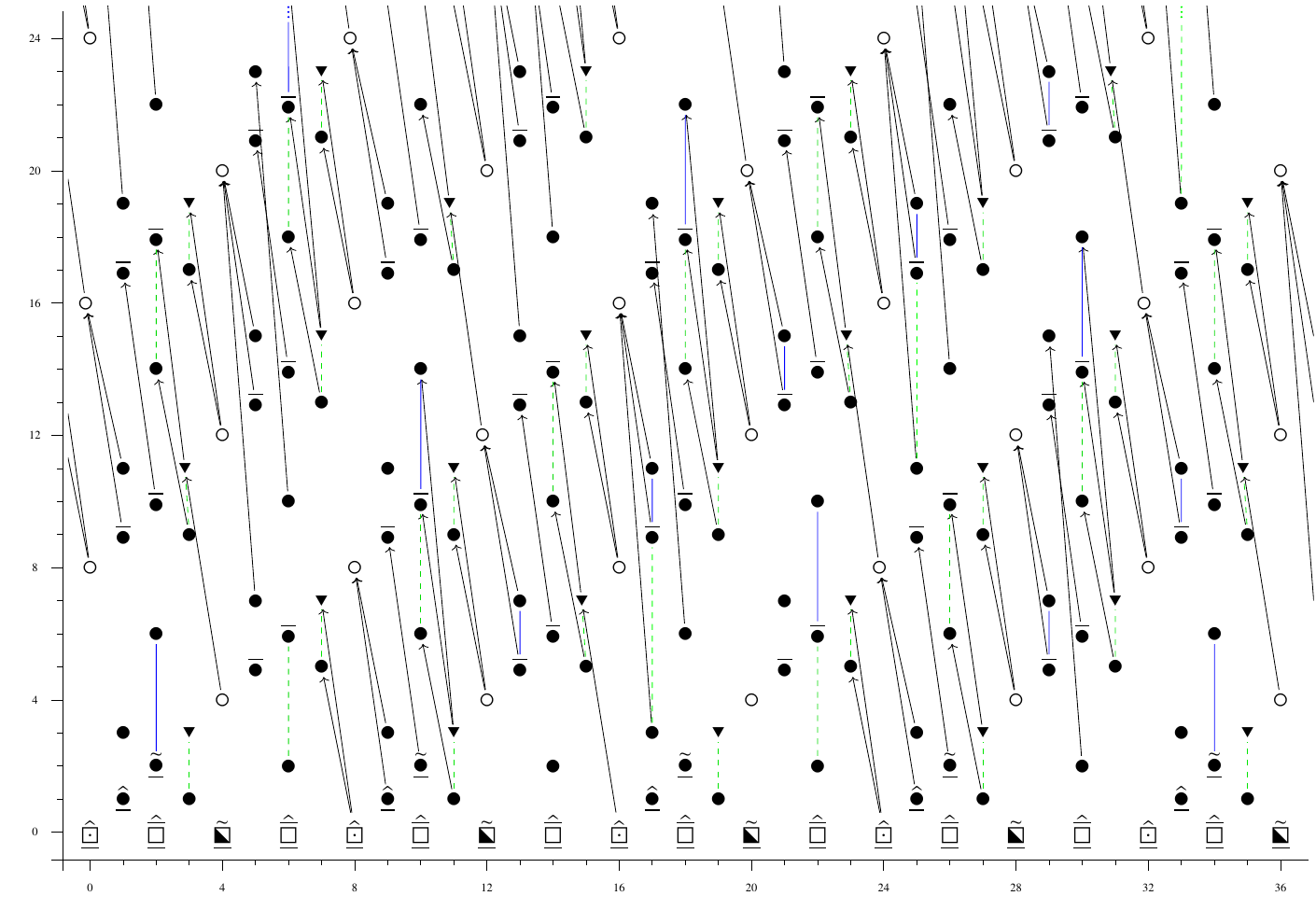}
 \end{center}
 \caption{The $\underline{E}_4^{*,*}$ page onwards of the HFPSS with $d_r$ for $r\geq 5.$}
 \label{fig:E5page}
 \end{figure}
 
 \begin{figure}[H]
\begin{center}
 \includegraphics[angle=90, page=2, width=\textwidth]{E_inf_C_4_Mackey.pdf}
 \end{center}
 \caption{The $\underline{E}_5^{*,*}$ page of the HFPSS with $d_5$ differentials.}
 \label{fig:E5page2}
 \end{figure}
 
  \begin{figure}[H]
\begin{center}
 \includegraphics[angle=90, page=3, width=\textwidth]{E_inf_C_4_Mackey.pdf}
 \end{center}
 \caption{The $\underline{E}_7^{*,*}$ page of the HFPSS with $d_7$ differentials.}
 \label{fig:E5page3}
 \end{figure}
 
   \begin{figure}[H]
\begin{center}
 \includegraphics[angle=90, page=4, width=\textwidth]{E_inf_C_4_Mackey.pdf}
 \end{center}
 \caption{The $\underline{E}_{11}^{*,*}$ page of the HFPSS with $d_{11}$ differentials.}
 \label{fig:E5page4}
 \end{figure}

    \begin{figure}[H]
\begin{center}
 \includegraphics[angle=90, page=5, width=\textwidth]{E_inf_C_4_Mackey.pdf}
 \end{center}
 \caption{The $\underline{E}_{13}^{*,*}$ page of the HFPSS with $d_{13}$ differentials.}
 \label{fig:E5page5}
 \end{figure}

 \begin{figure}[H]
\begin{center}
 \includegraphics[angle=90, page=6, width=\textwidth]{E_inf_C_4_Mackey.pdf}
 \end{center}
 \caption{The $\underline{E}_{\infty}^{*,*}$ page of the HFPSS with exotic extensions.}
 \label{fig:Einf}
 \end{figure}

\begin{figure}[H]
\begin{center}
 \includegraphics[angle=90, page=1, width=\textwidth]{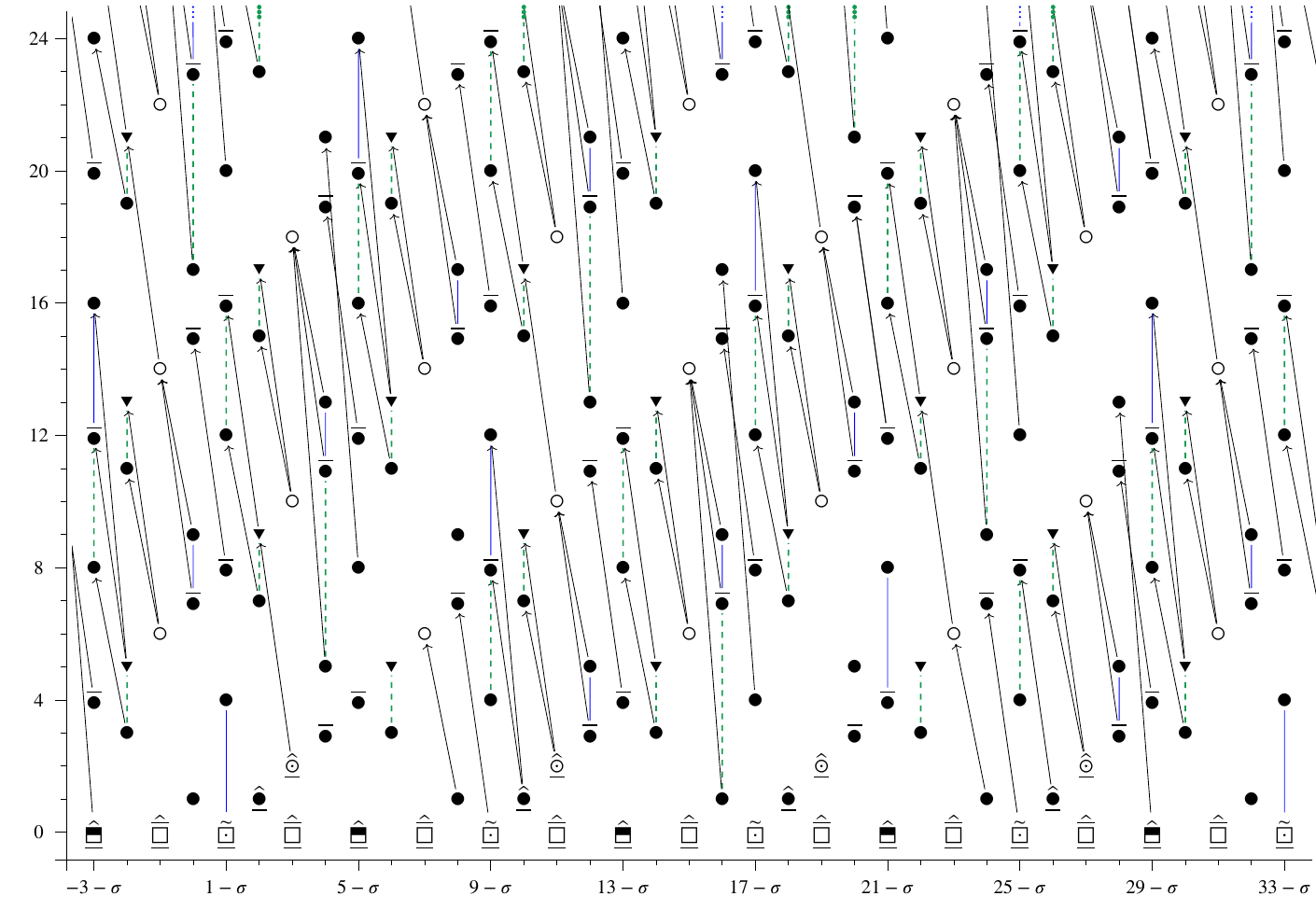}
 \end{center}
 \caption{The $\underline{E}_4^{*,(1-\sigma)+*}$ page onwards of the HFPSS with $d_r$ for $r\geq 5.$}
 \label{fig:E5pagesigma}
 \end{figure}

\begin{figure}[H]
\begin{center}
 \includegraphics[angle=90, page=2, width=\textwidth]{E_inf_C_4_Mackey_SHIFT_BY_Sigma_Correct.pdf}
 \end{center}
 \caption{The $\underline{E}_5^{*,(1-\sigma)+*}$ page of the HFPSS with $d_5$ differentials.}
 \label{fig:E5pagesigma2}
 \end{figure}
 
 \begin{figure}[H]
\begin{center}
 \includegraphics[angle=90, page=3, width=\textwidth]{E_inf_C_4_Mackey_SHIFT_BY_Sigma_Correct.pdf}
 \end{center}
 \caption{The $\underline{E}_7^{*,(1-\sigma)+*}$ page of the HFPSS with $d_7$ differentials.}
 \label{fig:E5pagesigma3}
 \end{figure}
 
  \begin{figure}[H]
\begin{center}
 \includegraphics[angle=90, page=4, width=\textwidth]{E_inf_C_4_Mackey_SHIFT_BY_Sigma_Correct.pdf}
 \end{center}
 \caption{The $\underline{E}_{11}^{*,(1-\sigma)+*}$ page of the HFPSS with $d_{11}$ differentials.}
 \label{fig:E5pagesigma4}
 \end{figure}

  \begin{figure}[H]
\begin{center}
 \includegraphics[angle=90, page=5, width=\textwidth]{E_inf_C_4_Mackey_SHIFT_BY_Sigma_Correct.pdf}
 \end{center}
 \caption{The $\underline{E}_{13}^{*,(1-\sigma)+*}$ page of the HFPSS with $d_{13}$ differentials.}
 \label{fig:E5pagesigma5}
 \end{figure}

\begin{figure}[H]
\begin{center}
 \includegraphics[angle=90, page=6, width=\textwidth]{E_inf_C_4_Mackey_SHIFT_BY_Sigma_Correct.pdf}
 \end{center}
 \caption{The $\underline{E}_{\infty}^{*,(1-\sigma)+*}$ page of the HFPSS with exotic extensions.}
 \label{fig:Einfsigma}
 \end{figure}

\newpage
\subsubsection{Exotic restrictions and transfers}
Exotic restrictions and transfers can mostly be deduced from \cite{HHRC4}. In general, they are solved using \cite[Lemma 4.2]{HHRC4} which states that, for a cyclic $2$-group $G$ with finite subgroup $G'$ of index two, if $\sigma$ denotes the sign representation for $G$, then
\begin{itemize}
\item $\im(\tr_{G'}^{G}) = \ker(a_{\sigma})$, and
\item $\ker(\res_{G'}^{G}) = \im(a_{\sigma})$.
\end{itemize}
We have explicitly computed the $\underline{E}_{\infty}^{*,*+\star}$-pages for $\star=0, 1-\sigma$. Further, as we noted in \Cref{rem:allcompsintwo},
 there are isomorphisms
\begin{align*}
\underline{E}_{\infty}^{*,*} &\cong  \underline{E}_{\infty}^{*,*+18-2\sigma} \\
\underline{E}_{\infty}^{*,1-\sigma+*} &\cong  \underline{E}_{\infty}^{*,*+15+\sigma}
\end{align*}
So, it is straightforward to study the image and the kernel of multiplication by $a_{\sigma}$ and $a_{\sigma_2}$ on $\underline{E}_{\infty}^{*,*}$ and $\underline{E}_{\infty}^{*,1-\sigma+*}$. This allows us to use the above observation to deduce exotic transfers and restrictions that are not stated in \cite{HHRC4}.

We first list the extensions for $\spi_*E$, and then turn to the shift by $1-\sigma$. The homotopy groups ${\spi}_{*}E$ and ${\spi}_{1-\sigma+*}E$ are listed in \Cref{table:finalfortrivial}. See also \Cref{fig:Einf} and \Cref{fig:Einfsigma}.
\begin{enumerate}
\item In ${\spi}_{t}$ for $t\equiv 2 \mod (32)$, the exotic transfers resolve in two steps as
\[\xymatrix@R=0.7pc{ 
0 \ar[r] & \mackeybullet \ar[r] & \mackeyunderlinedcirccirchat \ar[r] & \mackeyunderlinedbullettilde \ar[r]& 0  \ ,\\
0 \ar[r] &\mackeyunderlinedcirccirchat   \ar[r] & \mackeyeinftwoethy \ar[r] &  \mackeyunderlinedboxhatoverline   \ar[r] & 0   \ .
 }\] 
 Note that, in \cite[Figure 17]{HHRC4}, $\spi_2K_{[2]}$ is a direct sum of $\mackeypictriangledown$ and $ \mackeypiceinften$, where
 \[\xymatrix@R=0.7pc{ 
0 \ar[r] & \mackeypicbullet \ar[r] & \mackeypictriangledown \ar[r] & \mackeypicbulletoverline \ar[r]& 0  \ ,\\
0 \ar[r] &\mackeypicbullethat  \ar[r] & \mackeypiceinften \ar[r] &  \mackeypicboxhatoverline   \ar[r] & 0   \ .
 }\] 
 and the gray Mackey functors are defined as in \Cref{table:Mackey}, but with $\fk$ replaced by $\Z/2$ and $\W$ replaced by $\Z$. However, $ \spi_2E_{[2]}$ cannot be expressed as a direct sum because of the $\W[\![\mu]\!]$-module structure. This obstruction can be understood in terms of the diagram 
 \[\xymatrix{\mackeypictriangledown(C_4/C_2) \oplus \mackeypiceinften(C_4/C_2) \ar[r] \ar[d]_-{\cong} &    \mackeyeinftwoethy(C_4/C_2)  \ar[d]^-{=}\\
   \Z/2\{\ast\} \oplus  \Z/2[C_4/C_2]  \ar[r]^-{\subset} & \frac{\fk[\![ \mu]\!] [{C_4/C_2}_+]}{\mu\cdot * =\Delta }   }\]
\item In ${\spi}_{t}$ for $t\equiv 3, 19 \mod (32)$, the exotic restriction resolves as
\[\xymatrix{
0 \ar[r] & \mackeyblacktriangledown \ar[r] & \mackeycirc \ar[r] & \mackeybullet \ar[r] & 0 \ .
}\]
\item In ${\spi}_{t}$ for $t\equiv 4 \mod (32)$, the exotic transfer resolves as
\[\xymatrix{
0 \ar[r] &\mackeyblacktriangledown   \ar[r] & \mackeyeinffourethy \ar[r] &  \mackeyunderlinedboxleftslashtilde  \ar[r] & 0  \ .
}\]
\item In ${\spi}_{t}$ for $t\equiv 6 \mod (32)$, the exotic restriction and transfer resolves in two steps as
\[\xymatrix@R=0.7pc{ 
0 \ar[r] & \mackeybulletoverline \ar[r] & \mackeyblacktriangle \ar[r] & \mackeybullet \ar[r]& 0 \ , \\
0 \ar[r] &\mackeyblacktriangle   \ar[r] &  \mackeyeinfsixethy  \ar[r] &  \mackeyunderlinedboxhatoverline   \ar[r] & 0   \ .
 }\] 
\item In ${\spi}_{t}$ for $t\equiv 9 \mod (32)$, the exotic transfer resolves as
\[\xymatrix{
0 \ar[r] &\mackeybullet   \ar[r] & \mackeyunderlinedodothat \ar[r] &  \mackeyunderlinedblacktrianglehat  \ar[r] & 0  \ .
}\]
\item In ${\spi}_{t}$ for $t\equiv 10, 26 \mod (32)$, the exotic transfer resolves as
\[\xymatrix{
 0\ar[r] & \mackeyunderlinedbullethat \ar[r] & \mackeyeinftenethy  \ar[r] &  \mackeyunderlinedboxhatoverline   \ar[r] & 0   \ .
}\]
\item in ${\spi}_{t}$ for $t\equiv 18 \mod (32)$, the exotic transfer resolves as
\[\xymatrix{
0 \ar[r] & \mackeyunderlinedbullettilde \ar[r] &  \mackeyeinfeighteenethy \ar[r] & \mackeyunderlinedboxhatoverline \ar[r] & 0 \ .
} \]
\item In ${\spi}_{t}$ for $t\equiv 20 \mod (32)$, the exotic transfer resolves as
\[\xymatrix{
0 \ar[r] & \mackeycirc \ar[r] & \mackeyeinftwentyethy  \ar[r] & \mackeyunderlinedboxleftslashtilde \ar[r] & 0 \ .
 } \]
\item In ${\spi}_{t}$ for $t\equiv 21 \mod (32)$, the exotic transfer resolves as
\[\xymatrix{
0 \ar[r] & \mackeybullet \ar[r] & \mackeyblacktriangledown  \ar[r] & \mackeybulletoverline \ar[r] & 0 \ .
} \]
\item In ${\spi}_{t}$ for $t\equiv 22 \mod (32)$, the exotic restriction and transfer resolves in three steps as
\[\xymatrix@R=0.7pc{ 
0 \ar[r] & \mackeybullet \ar[r] & \mackeyblacktriangledown  \ar[r] & \mackeybulletoverline \ar[r] & 0 \ , \\
0 \ar[r] &\mackeyblacktriangledown   \ar[r] & \mackeycirc \ar[r] &  \mackeybullet   \ar[r] & 0 \ ,  \\
0 \ar[r] &\mackeycirc   \ar[r] & \mackeyeinftwentytwoethy \ar[r] &  \mackeyunderlinedboxhatoverline   \ar[r] & 0  \ .
 }\] 
 \item In ${\spi}_{t}$ for $t\equiv 28 \mod (32)$, the exotic transfer resolves as
\[\xymatrix{
0 \ar[r] & \mackeybullet \ar[r] & \mackeyeinftwentyeightethy  \ar[r] & \mackeyunderlinedboxleftslashtilde \ar[r] & 0 \ .
} \]
\end{enumerate}

Next, we list the extensions for $\spi_{*+1-\sigma}E$. Most of them can be read off the computation of the homotopy groups and we add a word for those that are not so clear.
\begin{enumerate}
\item In ${\spi}_{t}$ for $t\equiv 1-\sigma \mod (32)$, the exotic transfer resolves as
\[\xymatrix{
0 \ar[r] & \mackeybullet \ar[r] & \mackeysigmaminusoneethy \ar[r] &  \mackeyunderlinedboxdottilde  \ar[r] & 0 \ .
}\]
\item In ${\spi}_{t}$ for $t\equiv 3-\sigma \mod (32)$, the exotic transfer resolves as
\[\xymatrix{
0 \ar[r] &\mackeyunderlinedcirccirchat   \ar[r] & \mackeyeinftwoethy \ar[r] &  \mackeyunderlinedboxhatoverline   \ar[r] & 0   \ .
}\]
\item In ${\spi}_{t}$ for $t\equiv 5-\sigma \mod (32)$, the exotic restriction resolves as
\[\xymatrix{
0 \ar[r] & \mackeybulletoverline \ar[r] & \mackeyeinffoursigmaethy \ar[r] &  \mackeyunderlinedhalftopboxhat \ar[r] & 0 \ .
}\]
\item In ${\spi}_{t}$ for $t\equiv 6-\sigma, 22-\sigma \mod (32)$, the exotic restriction resolves as
\[\xymatrix{
0 \ar[r] & \mackeyblacktriangledown \ar[r] & \mackeycirc \ar[r] &  \mackeybullet \ar[r] & 0 \ .
}\]
\item In ${\spi}_{t}$ for $t\equiv 7-\sigma \mod (32)$, the exotic transfer resolves as
\[
\xymatrix{
0 \ar[r] &\mackeyblacktriangle   \ar[r] &  \mackeyeinfsixethy  \ar[r] &  \mackeyunderlinedboxhatoverline   \ar[r] & 0 \ .
}\]
\item In ${\spi}_{t}$ for $t\equiv 9-\sigma, 25-\sigma \mod (32)$, the exotic transfer resolves as
\[
\xymatrix{
0 \ar[r] &\mackeybullet   \ar[r] &  \mackeysigmaminusnineethy  \ar[r] &  \mackeyunderlinedboxboxtilde   \ar[r] & 0 \ .
}\]
There must be an exotic transfer in this degree since $\spi_{9-2\sigma}E(C_4/C_4)=0$ so the class in $(9-\sigma, 4)$ must be in the image of the transfer. Similarly,  $\spi_{25-2\sigma}E(C_4/C_4)=0$ implies the exotic transfer in stem $25-\sigma$.
\item In ${\spi}_{t}$ for $t\equiv 11-\sigma, 27-\sigma \mod (32)$, the exotic transfer resolves as
\[\xymatrix{
 0\ar[r] & \mackeyunderlinedbullethat \ar[r] & \mackeyeinftenethy  \ar[r] &  \mackeyunderlinedboxhatoverline   \ar[r] & 0   \ .
}\]
\item In ${\spi}_{t}$ for $t\equiv 19-\sigma \mod (32)$, the exotic transfer resolves as
\[\xymatrix{
 0\ar[r] & \mackeyunderlinedodothat \ar[r] & \mackeyunderlinedboxdotcheck  \ar[r] &  \mackeyunderlinedboxhatoverline   \ar[r] & 0   \ .
}\]
\item In ${\spi}_{t}$ for $t\equiv 21-\sigma \mod (32)$, the exotic restriction and transfer resolve in two steps as
\[\xymatrix@R=0.7pc{ 
0 \ar[r] & \mackeybullet \ar[r] & \mackeyblacktriangledown  \ar[r] & \mackeybulletoverline \ar[r] & 0 \ ,
 \\
0 \ar[r] &\mackeyblacktriangledown   \ar[r] & \mackeyunderlinedhalftopboxcheck \ar[r] &  \mackeyunderlinedhalftopboxhat   \ar[r] & 0  \ .
 }\] 
 \item In ${\spi}_{t}$ for $t\equiv 23-\sigma  \mod (32)$, the exotic transfer resolves as
\[\xymatrix{
 0\ar[r] & \mackeybulletoverline \ar[r] & \mackeyunderlinedblackboxwhitecirchatoverline  \ar[r] &  \mackeyunderlinedboxhatoverline   \ar[r] & 0   \ .
}\]
\end{enumerate}
\begin{rem}
There is no exotic transfer to the class detected in degree $(27-\sigma, 10)$ since this class is not in the kernel of multiplication by $a_{\sigma}$. Similarly, there is no exotic transfer in stem $20-\sigma$.
\end{rem}

\begin{cor}\label{cor:notintegershit}
The homotopy groups of $\spi_{*+1-\sigma}E$ are not an integer shift of $\spi_*E$. As was stated in \Cref{prop:oneminussigma}, this implies that there is no $d\in \Z$ such that $E \smsh S^{\sigma-1} \simeq E \smsh S^d$ as $C_4$ equivariant $E$-module spectra.
\end{cor}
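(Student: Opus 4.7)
The plan is to compare directly the $RO(C_4)$-graded Mackey functor $\spi_{*+1-\sigma}E$ with the integer shifts $\spi_{*+d}E$, leveraging the complete computations of \fullref{sec:computations}. If $E \smsh S^{\sigma-1} \simeq E \smsh S^d$ as $C_4$-equivariant $E$-modules for some $d \in \Z$, then we obtain an isomorphism of $RO(C_4)$-graded Mackey functors $\spi_{*+\sigma-1}E \cong \spi_{*+d}E$; in particular, the \emph{set} of Mackey functor isomorphism classes $\{\spi_{n+1-\sigma}E : n \in \Z\}$ must equal the set $\{\spi_m E : m \in \Z\}$, as the latter is manifestly invariant under an integer reindexing. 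The strategy is therefore to exhibit a specific Mackey functor that belongs to the first set but not the second.

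First I would invoke the $32$-periodicity of $E^{hC_4}$ coming from $\Delta_1^4$ (\fullref{rem:periodicityD14}) to reduce each set to the finite collection of Mackey functors indexed by stems modulo $32$. Next I would inspect the enumerations of the $\underline{E}_\infty$-pages and the exotic transfers and restrictions produced in \fullref{sec:computations}. A concrete distinguishing invariant is the Mackey functor $\mackeysigmaminusoneethy$: at stem $1 - \sigma$, the analysis of exotic extensions for $\spi_{*+1-\sigma}E$ produces
\[
0 \to \mackeybullet \to \mackeysigmaminusoneethy \to \mackeyunderlinedboxdottilde \to 0,
\]
so $\mackeysigmaminusoneethy$ appears as the value of $\spi_{1-\sigma}E$ at $C_4/C_4$. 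On the other hand, scanning through the explicit list of Mackey functors arising as values of $\spi_*E$ in \fullref{sec:computations} shows that $\mackeysigmaminusoneethy$ does not occur for any integer stem $m$; the analogous building blocks that occur there ($\mackeyunderlinedcirccirchat$, $\mackeyeinffourethy$, $\mackeyeinfsixethy$, $\mackeyeinftenethy$, $\mackeyeinfeighteenethy$, $\mackeyeinftwentyethy$, $\mackeyeinftwentytwoethy$, $\mackeyeinftwentyeightethy$) are all combinatorially distinct Mackey functors. This yields the first statement of the corollary, and the second statement (the non-equivalence of $E \smsh S^{\sigma-1}$ and $E \smsh S^d$ as $C_4$-equivariant $E$-modules) follows immediately, since any such equivalence would descend to an isomorphism of the underlying $RO(C_4)$-graded Mackey functors.

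The main obstacle is not conceptual but organizational: one must have complete confidence that the Mackey functor $\mackeysigmaminusoneethy$ really does not appear as any $\spi_m E$, which rests on the thoroughness of the exotic transfer/restriction analysis carried out above, and ultimately on the full homotopy fixed point spectral sequence computation with all its differentials through $d_{13}$. Once this bookkeeping is trusted, the corollary is essentially a direct comparison using the tables in \fullref{table:finalfortrivial} and the enumerated extensions preceding it.
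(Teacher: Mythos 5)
Your proposal is correct and matches the paper's (implicit) argument: the corollary is read off directly from the completed HFPSS computation by comparing the table of $\spi_{*+1-\sigma}E$ with that of $\spi_*E$, exactly the comparison you make concrete by noting that $\mackeysigmaminusoneethy$ (which differs from its closest integer-stem analogue $\mackeyeinftwentyeightethy$ by the swapped restriction/transfer between the $C_4/C_2$ and $C_4/e$ levels) occurs at stem $1-\sigma$ but at no integer stem, while any equivalence $E\smsh S^{\sigma-1}\simeq E\smsh S^d$ would force $\spi_{1-\sigma}E\cong\spi_{-d}E$.
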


\begin{table}[H]
\begin{center}
\begin{tabular}{ |C|C|C|C|C|C|C|C|C|C|C|C|C|C|C|C|C|C|C|C|C|C|C|C|C|C|C|C|C|C|C|C|C|    }
\hline
   \multicolumn{9}{|c|}{Homotopy groups ${\spi}_{*}E $ }\\
 \hline
t \mod 32 &  0 & 1 & 2 & 3 & 4 & 5 & 6 & 7   \\ 
\hline
{\spi}_{t}E & \mackeyunderlinedboxdothat   & \mackeyunderlinedbullethat \oplus \mackeybullet  & \mackeyeinftwoethy  & \mackeycirc & \mackeyeinffourethy  &  \mackeybulletoverline & \mackeyeinfsixethy  &  0  \\ 
 \hline 
\multicolumn{9}{|C|}{} \\
 \hline
t \mod 32 &  8 & 9 & 10 & 11 & 12 & 13 & 14 & 15   \\ 
\hline
{\spi}_{t}E & \mackeyunderlinedboxblackboxhat \oplus   \mackeybullet   & \mackeyunderlinedodothat & \mackeyeinftenethy  & 0   &  \mackeyunderlinedboxleftslashtilde  & 0  & \mackeyunderlinedboxhatoverline \oplus   \mackeybullet & 0 \\ 
 \hline
 \multicolumn{9}{|C|}{} \\
  \hline
t \mod 32 &  16 & 17 & 18 & 19 & 20 & 21 & 22 & 23   \\ 
\hline
{\spi}_{t}E & \mackeyunderlinedboxboxhat    & \mackeyunderlinedbullethat & \mackeyeinfeighteenethy  & \mackeycirc   &  \mackeyeinftwentyethy &\mackeyblacktriangledown  & \mackeyeinftwentytwoethy  & 0  \\ 
 \hline
 \multicolumn{9}{|C|}{} \\
   \hline
t \mod 32 &  24 & 25 & 26 & 27 & 28 & 29 & 30 & 31   \\ 
\hline
{\spi}_{t}E & \mackeyunderlinedboxblackboxhat    & \mackeyunderlinedblacktrianglehat & \mackeyeinftenethy  & \mackeybullet   &  \mackeyeinftwentyeightethy & 0  & \mackeyunderlinedboxhatoverline  & 0  \\ 
 \hline
 \multicolumn{9}{|C|}{} \\
   \hline
   \multicolumn{9}{|c|}{Homotopy groups ${\spi}_{1-\sigma+*}E $ }\\
 \hline
t \mod 32 &  1 & 2 & 3 & 4 & 5 & 6 & 7 & 8   \\ 
\hline
{\spi}_{t-\sigma}E &  \mackeysigmaminusoneethy  &  \mackeyunderlinedbullethat   &  \mackeyeinftwoethy & \mackeybulletoverline & \mackeyeinffoursigmaethy   & \mackeycirc  & \mackeyeinfsixethy  & \mackeybullet   \\ 
 \hline 
\multicolumn{9}{|C|}{} \\
 \hline
t \mod 32 &  9 & 10 & 11 & 12 & 13 & 14 & 15 & 16   \\ 
\hline
{\spi}_{t-\sigma}E & \mackeysigmaminusnineethy   & \mackeyunderlinedbullettilde    &  \mackeyeinftenethy  & 0  &  \mackeyunderlinedhalftopboxhat  &  \mackeybullet  & \mackeyunderlinedboxhatoverline  &  0   \\ 
 \hline
 \multicolumn{9}{|C|}{} \\
  \hline
t \mod 32 &  17 & 18 & 19 & 20 & 21 & 22 & 23 & 24   \\ 
\hline
{\spi}_{t-\sigma}E &  \mackeyunderlinedboxdottilde   &  \mackeyunderlinedbullethat   & \mackeyunderlinedboxdotcheck  & \mackeybulletoverline \oplus \mackeybullet  & \mackeyunderlinedhalftopboxcheck  & \mackeycirc  &  \mackeyunderlinedblackboxwhitecirchatoverline &  0   \\ 
 \hline
 \multicolumn{9}{|C|}{} \\
   \hline
t \mod 32 &  25 & 26 & 27 & 28 & 29 & 30 & 31 & 32   \\ 
\hline
{\spi}_{t-\sigma}E & \mackeysigmaminusnineethy   &   \mackeyunderlinedblacktrianglehat  &  \mackeyeinftenethy \oplus \mackeybullet   & 0  & \mackeyunderlinedhalfbottomboxhat & 0  &  \mackeyunderlinedboxhatoverline & \mackeybullet   \\ 
 \hline
\end{tabular}
\caption{The homotopy groups ${\spi}_{*}E $ (top) and ${\spi}_{1-\sigma+*}E $ (bottom).}
\label{table:finalfortrivial}
\label{table:finalfortrivialsigma}
\end{center}
\end{table}

% !TEX root = master.tex

\section{The Algebraic Picard Group}\label{sec:alg}
In this section, we compute the algebraic part of $\mPic(E)$. First, we recall a few facts from algebra. 

Let $R$ be a commutative ring with an action of a group $G$ by ring automorphisms. We consider the category of $G$-twisted $R$-modules where the objects are $R$-modules $M$ with an action of $G$ compatible with the action of $G$ on $R$. Namely, for $r\in R, m\in M$ and $g\in G$ 
\[g(rm) =g(r)g(m).\] 
This is a symmetric monoidal category and we let its Picard group be denoted by $\PicalgRG{R}{G}$. If $G$ is a finite group and $R$ is a local ring, then a $G$-twisted $R$-module $M$ is invertible if and only if it is free of rank one as an $R$-module. From this, one deduces that there is an isomorphism
\begin{equation}\label{eq:piccohiso}
\Pic_G(R) \cong H^1(G, R^{\times}). \end{equation}
The isomorphism is defined by choosing an $R$ module generator $m$ for $M$ and defining a function $\phi_M \colon G \to R$ by the formula
\[g(m) = \phi_M(g) m.\]

Note further that if we let $H$ vary over the subgroups of $G$, it is clear that the right hand side assembles as a Mackey functor. This corresponds on the left hand side to
the Mackey functor
\[\mPic(R)(G/H)  = \PicalgRG{R}{H}\]
with $\res_H^G(M)$ the module $M$ with $H$ action restricted along the inclusion of $H$ in $G$ and
\[\tr_H^G(M) = N_{H}^G(M),  \]
where $ N_{H}^G(M) = \bigotimes_{G/H}^R M$ is the indexed tensor product of $R$-modules. 

Now, let $R$ be a ring spectrum with an action of $G$ and $R_0 = \pi_0R$. In the Picard spectral sequence, which we introduce in more detail in the next section, we will have 
\[\underline{E}_{\subpic,2}^{1,1}(G/H) \cong H^1(H, R_0^{\times}) \cong \PicalgRG{R_0}{H}.  \] 
Therefore, the computation of $\mPic(R_0)$ is an input for that of $\mPic(R)$.

In this section, we prove the following statement.
\begin{prop}\label{prop:H14}
There are isomorphisms $\PicalgRG{E_0}{C_4} \cong \Z/4$ and $\PicalgRG{E_0}{C_2} \cong \Z/2$.
\end{prop}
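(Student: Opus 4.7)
The plan is to leverage the isomorphism $\PicalgRG{E_0}{G}\cong H^1(G,E_0^\times)$ from \eqref{eq:piccohiso} and compute the right-hand side directly from the structure of $E_0=\W[\![\mu_0]\!]$ with the action $\gamma(\mu_0)=\mu_1=(2-\mu_0)/(1-\mu_0)$.

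For $G=C_2$: Since $\gamma^2(\mu_0)=\gamma(\mu_1)=\mu_0$, the subgroup $C_2=\langle\gamma^2\rangle$ acts trivially on $E_0$. Hence $H^1(C_2,E_0^\times)=\Hom(C_2,E_0^\times)$ is the $2$-torsion subgroup of $E_0^\times$, which equals $\{\pm 1\}\cong\Z/2$ since $E_0$ is a domain.

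For $G=C_4$, the invertible module $E_2=E_0\cdot r_{1,0}$ has $C_4$-action determined by $\gamma(r_{1,0})=r_{1,1}=(1-\mu_0)r_{1,0}$, yielding a cocycle $c$ with $c(\gamma)=1-\mu_0$. From $\mu_0+\mu_1=\mu$ and $\mu_0\mu_1=\mu-2$ one obtains $(1-\mu_0)(1-\mu_1)=-1$; setting $v=(1-\mu_0)^{-2}$ then gives $\gamma(v)/v=(1-\mu_0)^4$, so $c^4$ is a coboundary. To show $c^2$ is not a coboundary, suppose $\gamma(v)/v=(1-\mu_0)^2$ for some $v=1+f\in E_0^\times$ and pass to the associated graded for $\mathfrak{m}=(2,\mu_0)$. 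Using $\gamma(\mu_0)\equiv 2+\mu_0\pmod{\mathfrak{m}^2}$, the equation modulo $\mathfrak{m}^2$ forces $f\equiv 2a\pmod{\mathfrak{m}^2}$ for some $a\in\fk$; then modulo $\mathfrak{m}^3$ the defect $(1-\mu_0)^2-1\equiv 2\mu_0+\mu_0^2\pmod{\mathfrak{m}^3}$ would have to lie in the image of $\gamma-1$ acting on $\mathfrak{m}^2/\mathfrak{m}^3$, but a direct calculation shows this image is spanned only by the class of $4$, giving a contradiction. Thus $c$ has order exactly $4$, producing $\Z/4\hookrightarrow H^1(C_4,E_0^\times)$.

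The main obstacle is the matching upper bound. The Teichmuller splitting $E_0^\times\cong\fk^\times\times(1+\mathfrak{m})$ is $C_4$-equivariant with trivial action on $\fk^\times$, and since $\fk$ is perfect of characteristic $2$ its unit group has no $2$- or $4$-torsion and the $4$th-power map is bijective, so $H^n(C_4,\fk^\times)=0$ for $n\geq 1$ and $H^1(C_4,E_0^\times)=H^1(C_4,1+\mathfrak{m})$. Applying inflation-restriction for the trivially-acting $C_2\subset C_4$ gives
\[
0\to H^1(C_4/C_2,\,1+\mathfrak{m};\gamma)\to H^1(C_4,\,1+\mathfrak{m})\to H^1(C_2,\,1+\mathfrak{m})^{C_4/C_2}=\Z/2,
\]
and $c$ maps onto the right-hand generator (since $c(\gamma^2)=-1$). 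It therefore suffices to prove $H^1(C_4/C_2,1+\mathfrak{m};\gamma)=\Z/2$, generated by $(1-\mu_0)^2$; this is accomplished by a careful filtration analysis on the graded pieces $\mathfrak{m}^n/\mathfrak{m}^{n+1}$, showing that the only persistent nontrivial cocycle class lies at $n=2$ and is represented by $(1-\mu_0)^2$, while candidate classes in higher graded pieces either fail to lift to $\ker(N_\gamma)$ or become coboundaries after lifting.
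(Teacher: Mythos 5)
Your $C_2$ computation is the same as the paper's, and your structural reductions for $C_4$ are sound: the Teichm\"uller splitting $E_0^{\times}\cong \fk^{\times}\times(1+\mathfrak{m})$ is $C_4$-equivariant with trivial action on $\fk^{\times}$, the groups $H^{n}(C_4,\fk^{\times})$ vanish for $n\geq 1$ because $\fk^{\times}$ is uniquely divisible by $2$, and inflation--restriction along the trivially acting $C_2\subset C_4$ correctly reduces the upper bound to showing $H^1(C_4/C_2,1+\mathfrak{m})\cong\Z/2$. Your lower bound is also a legitimate alternative to the paper's: where the paper exhibits $\Z/4$ via the modules $E_{2n}$ and the invariant units $\Delta_1$ and $\delta_1$, you check directly that the cocycle $c(\gamma)=1-\mu_0$ attached to $E_2$ satisfies that $c^4$ is a coboundary (using $(1-\mu_0)(1-\mu_1)=-1$) while $c^2$ is not; your mod-$\mathfrak{m}^3$ argument is correct, since the image of $\gamma-1$ on $\mathfrak{m}^2/\mathfrak{m}^3$ is indeed the $\fk$-line spanned by the class of $4$, which does not contain $2\mu_0+\mu_0^2$.

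The genuine gap is the upper bound, which is exactly the technical heart of the proposition. You assert that $H^1(C_4/C_2,1+\mathfrak{m})\cong\Z/2$ follows from a careful filtration analysis of the graded pieces $\mathfrak{m}^n/\mathfrak{m}^{n+1}$, but no such analysis is given, and it is not routine. Each graded piece is an $\fk$-vector space of dimension $n+1$ on which $\gamma$ acts by a nontrivial unipotent matrix (via $\mu_0\mapsto \mu_0+2$ plus higher corrections), so the groups $\ker(N_\gamma)/\im(\gamma-1)$ on the graded pieces are large, and the entire content of the statement is that almost all of these classes are killed when one passes from the associated graded to $1+\mathfrak{m}$ itself. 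Moreover the relevant Bockstein-type differentials are not linear: lifting a class $1+x$ through the multiplicative filtration produces quadratic correction terms of the shape $\alpha\mapsto\alpha+\alpha^2$, and over $\fk$ (algebraically closed of characteristic $2$) it is precisely these terms that cut the survivors down to $\F_2=\fk^{\Gal(\fk/\F_2)}$. This is where the paper does its real work, running two Bockstein spectral sequences --- for $U_1$ with its $2$-adic filtration in \fullref{prop:H1U1}, and for $E_0^{\times}/2$ with its $\mu_0$-adic filtration in the proposition that follows it --- and carefully computing those quadratic differentials. Until you carry out your filtration argument at that level of detail, including the verification that no class in filtration $n>2$ survives, the bound $|H^1(C_4,E_0^{\times})|\leq 4$, and hence the isomorphism $\PicalgRG{E_0}{C_4}\cong\Z/4$, is unproven. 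If you do complete it, your organization --- reducing via inflation--restriction to the cohomology of a group of order $2$ acting through $\gamma$ on $1+\mathfrak{m}$ --- would be an attractive repackaging of the paper's computation, since it replaces the $C_4$-cohomology of two different coefficient groups by a single $C_2$-style norm/coboundary analysis.
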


Let $G=C_2$ or $C_4$. Since a $G$-$E_0$-module is in $\PicalgRG{E_0}{G}$ if and only if it is free of rank one as an $E_0$-module, for each integer $n$, $E_{2n}$ is an element of $\PicalgRG{E_0}{G}$. Further, the multiplication
\begin{equation} 
E_{2n} \otimes_{E_0} E_{2m} \to  E_{2(n+m)}
\end{equation}
induces an isomorphism. This gives a group homomorphism
\[\varphi^G(-) \colon \Z \to  \PicalgRG{E_0}{G}\]
where $\varphi^G(n) = E_{2n}$.
\begin{lem}
Let $k=0,1,2$. The kernel of $\varphi^{C_{2^k}}$ is the ideal $(2^k)\Z$.
\end{lem}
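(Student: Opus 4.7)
The plan is as follows. The case $k = 0$ is immediate, because $E_0 = \W[\![\mu_0]\!]$ is a complete Noetherian local ring and thus has trivial Picard group, so $\ker(\varphi^{C_1}) = \Z = (2^0)\Z$. For $k = 1, 2$, I will first establish the containment $(2^k)\Z \subseteq \ker(\varphi^{C_{2^k}})$ by exhibiting $C_{2^k}$-invariant unit generators for $E_{2 \cdot 2^k}$: the element $\Sigma_{2,0} = r_{1,0}^2$ is $C_2$-fixed since $\gamma^2(r_{1,0}) = -r_{1,0}$, and $\Delta_1$ is $C_4$-fixed by \fullref{tab:elementsE}.

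For the reverse inclusion, I will use the isomorphism \eqref{eq:piccohiso}, under which $E_{2n}$ is represented by the cocycle $g \mapsto g(x)/x$ attached to any $E_0$-module generator $x$. It suffices to rule out $E_{2n} \cong E_0$ for $0 < n < 2^k$: concretely, $E_2 \not\cong E_0$ as $C_2$-twisted modules (covering $k=1$), and $E_2, E_4, E_6 \not\cong E_0$ as $C_4$-twisted modules (covering $k=2$). The class $E_2$ is detected by the cocycle $\gamma^2 \mapsto -1$; since $\gamma^2$ acts trivially on $E_0$, a coboundary would force $2\alpha = 0$ for some $\alpha \in E_0^\times$, which is impossible in the $2$-torsion-free ring $\W[\![\mu_0]\!]$. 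The case $E_6 \cong E_2 \otimes_{E_0} E_4$ as $C_4$-twisted modules will then follow by restricting to $C_2$, where $E_4$ is already trivial (via $\Sigma_{2,0}$), so $\res_2^4(E_6) = \res_2^4(E_2)$ is non-trivial.

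The main obstacle will be to show that $E_4 \not\cong E_0$ as $C_4$-twisted modules. Here $\gamma(\delta_1) = -\delta_1$, and a coboundary requires $\alpha \in E_0^\times$ with $\gamma(\alpha) = -\alpha$. Since the $C_4$-action on $E_0$ is genuinely non-trivial, a more delicate argument is needed. I will first reduce modulo $2$: because $-1 \equiv 1 \pmod{2}$, the reduction $\bar{\alpha}$ lies in $\bigl((E_0/2)^{C_4}\bigr)^\times$. Using the vanishing $H^1(C_4, E_0) = 0$, which follows from the identification $E_0 \cong \wmuP$ (\fullref{lem:reps}) combined with \fullref{prop:cohmackeycomplete}, the natural injection $E_0^{C_4}/2 \hookrightarrow (E_0/2)^{C_4}$ is an isomorphism, so $\bar{\alpha}$ lifts to an invariant unit $\alpha_0 \in (E_0^{C_4})^\times$. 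After replacing $\alpha$ by $\alpha \alpha_0^{-1}$, I may assume $\alpha = 1 + 2\beta$ for some $\beta \in E_0$, and the equation $\gamma(\alpha) = -\alpha$ simplifies (using that $E_0$ is torsion-free) to
\[
\beta + \gamma(\beta) = -1 \quad \text{in } E_0.
\]

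To conclude, I will reduce this equation modulo the maximal ideal $\mathfrak{m} = (2, \mu_0)$. The residue field $E_0/\mathfrak{m} = \fk$ has characteristic $2$, and $\gamma$ acts trivially on $\fk$ because it arises as an $\fk$-linear automorphism of the formal group law $\bar{\Gamma}_{[2]}$ (equivalently, $\gamma$ is $\W$-linear on $E_0$ and preserves $\mathfrak{m}$). Hence the equation reduces to $2\bar{\beta} = 1$ in $\fk$, which is impossible. The key structural input enabling the whole argument is the vanishing $H^1(C_4, E_0) = 0$; without it, one cannot cleanly separate the invariant part of $\alpha$ from the part detected by the mod-$2$ reduction, and the remaining computation in the residue field would be obscured.
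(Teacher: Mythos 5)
Your proof is correct, but the decisive step is handled differently from the paper, so a comparison is worth recording. The paper's argument is a direct inspection of invariants: an isomorphism $E_0 \cong E_{2n}$ of twisted modules is multiplication by an invariant unit of degree $2n$; $\Delta_1$ (resp.\ $\delta_1$, or your $\Sigma_{2,0}$) gives the containment $(2^k)\Z \subseteq \ker(\varphi^{C_{2^k}})$, and the reverse inclusion is the observation, read off from the $C_4$-module decomposition of \fullref{lem:reps}, that $E_2$, $E_4$, $E_6$ contain no invariant unit — the $C_4$-fixed points of $E_2$ and $E_6$ vanish, and those of $E_4$ are spanned over $\W[\![\mu]\!]$ by $T_2 = r_{1,0}^2(2-2\mu_0+\mu_0^2)$, which lies in $(2,\mu_0)E_4$ and so is not a generator. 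You instead work throughout with the cocycle description \eqref{eq:piccohiso}: for $E_2$ (and then $E_6$ by restriction to $C_2$) you use the sign of $\gamma^2$ on the generator together with torsion-freeness of $E_0$, and for the crucial degree-$4$ case you show that the cocycle $\gamma \mapsto -1$ is not a coboundary by taking a hypothetical unit $\alpha$ with $\gamma(\alpha)=-\alpha$, lifting its mod-$2$ reduction to an invariant unit via the surjectivity of $E_0^{C_4}\to (E_0/2)^{C_4}$ (which rests on $H^1(C_4,E_0)=0$, exactly the input used in \fullref{lem:ex}), and then reducing the resulting equation $\beta+\gamma(\beta)=-1$ to the residue field $\fk$, where $\W$-linearity of the action makes $\gamma$ trivial and yields $0=1$. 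Both routes are sound; the paper's is shorter because the module-theoretic description of $E_*$ is already available, whereas yours avoids computing fixed points in positive degrees at the cost of the lifting argument — in effect you re-derive by hand, from $H^1(C_4,E_0)=0$ and a residue-field computation, the fact that $E_4$ admits no invariant generator rather than quoting it from the structure of $E_4^{C_4}$.
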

\begin{proof}
First, let $G=C_4$.
If $f\colon E_0 \to E_{2n}$ is an isomorphism of $C_4$-twisted $E_0$-modules, then $f(1)$ is a unit of degree $2n$ which is invariant modulo the action of $C_4$. Conversely, any such isomorphism is given by multiplication by such an invariant unit. The element $\Delta_1$ defined in \Cref{tab:elementsE} has this property, so multiplication by $\Delta_1$ induces an equivariant isomorphism 
\[\Delta_1 \colon E_{2n} \to E_{2n+8}\]
so that $E_{2*}$ is at least $4$-periodic and $(4) \subseteq \ker(\varphi^{C_4})$. There is no such unit in $E_2$, $E_4$ or $E_6$, so this identifies the kernel.

The argument for $C_2$ is similar, replacing $\Delta_1$ by $\delta_1$ and that for the trivial group is obvious.
\end{proof}

As an immediate consequence, we have:
\begin{cor}\label{cor:incZ4}
Let $k=0,1,2$. There are inclusions $\Z/2^k \subseteq \PicalgRG{E_0}{C_{2^k}}$ where $\Z/2^k$ is generated by the isomorphism class of $E_2$.
\end{cor}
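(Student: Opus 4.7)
The plan is to deduce this corollary directly from the preceding lemma, which identifies the kernel of the group homomorphism $\varphi^{C_{2^k}}\colon \Z \to \PicalgRG{E_0}{C_{2^k}}$ as the ideal $(2^k)\Z$. By the first isomorphism theorem applied to $\varphi^{C_{2^k}}$, the image is a subgroup of $\PicalgRG{E_0}{C_{2^k}}$ isomorphic to $\Z/2^k$. Since $\varphi^{C_{2^k}}(1)$ is precisely the isomorphism class of $E_2$ by the definition of $\varphi^{C_{2^k}}$, this image subgroup is generated by the class of $E_2$, yielding the asserted inclusion.

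There is essentially no obstacle here; the content has already been bundled into the lemma. The only thing to verify is that the preceding discussion really does assemble $\varphi^{C_{2^k}}$ as a well-defined group homomorphism into $\PicalgRG{E_0}{C_{2^k}}$, which is clear from two facts: first, each $E_{2n}$ is a $C_{2^k}$-equivariant $E_0$-module that is free of rank one over $E_0$ (hence invertible, since $E_0$ is Noetherian local), so it represents an element of $\PicalgRG{E_0}{C_{2^k}}$; second, the equivariant multiplication $E_{2n}\otimes_{E_0} E_{2m}\to E_{2(n+m)}$ is a $C_{2^k}$-equivariant isomorphism, so $\varphi^{C_{2^k}}$ is additive in $n$. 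The corollary then follows in a single line.

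Writing the argument out: the lemma gives $\ker(\varphi^{C_{2^k}})=(2^k)\Z$, so $\varphi^{C_{2^k}}$ descends to an injective homomorphism $\bar{\varphi}^{C_{2^k}}\colon \Z/2^k\hookrightarrow \PicalgRG{E_0}{C_{2^k}}$, with image the cyclic subgroup generated by the class $[E_2]=\varphi^{C_{2^k}}(1)$. This is exactly the stated inclusion, completing the proof.
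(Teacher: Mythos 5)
Your argument is correct and is exactly the paper's (implicit) reasoning: the corollary is stated as an immediate consequence of the lemma identifying $\ker(\varphi^{C_{2^k}})=(2^k)\Z$, and the first isomorphism theorem gives the cyclic subgroup generated by $[E_2]=\varphi^{C_{2^k}}(1)$. Your extra check that $\varphi^{C_{2^k}}$ is a well-defined homomorphism matches the discussion the paper gives just before the lemma.
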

To finish the proof of \Cref{prop:H14}, we show that $H^1(C_{2^k},E_0^{\times})$ has order at most $2^{k}$. 
Once we have shown this, we can assemble the Mackey functor $\mPic(E_0) \cong \underline{H}^{1}(C_4, E_0^{\times})$. The effect of
\[\res_2^4 \colon \PicalgRG{E_0}{C_4} \to  \PicalgRG{E_0}{C_2}  \] 
is obvious since it sends the generator $E_2$ to itself. Since $\tr_2^{4} \circ \res_2^{4}=|C_4/C_2|$ (for example by \eqref{eq:piccohiso}), the transfer must be multiplication by $2$. Therefore, $\mPic(E_0)$ is the following Mackey functor:
\[\Mackey{ \PicalgRG{E_0}{C_4}  \cong \Z/4}{ \PicalgRG{E_0}{C_2} \cong \Z/2}{ \PicalgRG{E_0}{\{e\}} =0 }{0}{0}{2}{1}\]
We denote this Mackey functor by $\mackeypiccirc$.

%%%%%%
The remainder of the section is dedicated to proving that $H^1(C_{2^k}, E_0^{\times})$ has order at most $2^k$ for $k=1,2$. This is rather technical and the next section will only appeal to the statement of \Cref{prop:H14}, so the reader may safely skip the remainder of this section.
We treat $C_2$ and $C_4$ separately since proving this for $C_2$ is much easier. 
\begin{lem}
There is an isomorphism 
\[H^1(C_2, E_0^{\times})\cong \Z/2.\]
\end{lem}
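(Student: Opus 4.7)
The plan is to observe that the subgroup $C_2 \subseteq C_4$ acts \emph{trivially} on $E_0$, which collapses $H^1(C_2, E_0^\times)$ onto $\Hom(C_2, E_0^\times)$ and hence onto the $2$-torsion in $E_0^\times$. The matching $\Z/2$ lower bound is already in hand from \fullref{cor:incZ4}.

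First I would verify the triviality of the $C_2$-action on $E_0 \cong \W[\![\mu_0]\!]$. The action on $\W = W(\fk)$ is trivial because $C_4 \subseteq \Aut_{\fk}(\bar{\Gamma}_{[2]})$ acts $\W$-linearly; this is built into the identification of $E_\ast$ as a $\W[C_4]$-module in \fullref{sec:act}. On the generator $\mu_0$ a direct calculation using $\mu_1 = (2-\mu_0)/(1-\mu_0)$ gives
\[
\gamma^2(\mu_0) \;=\; \gamma(\mu_1) \;=\; \frac{2-\mu_1}{1-\mu_1} \;=\; \mu_0,
\]
so $\mu_0$ is $C_2$-fixed. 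Combined with triviality on $\W$, this shows that $C_2$ acts trivially on all of $E_0$.

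Consequently $H^1(C_2, E_0^\times) = \Hom(C_2, E_0^\times)$, which is the $2$-torsion subgroup of $E_0^\times$. Because $E_0 = \W[\![\mu_0]\!]$ is an integral domain, any unit with $u^2 = 1$ satisfies $(u-1)(u+1) = 0$ and must equal $\pm 1$. Hence the $2$-torsion is exactly $\{\pm 1\} \cong \Z/2$. Combining this upper bound with the lower bound from \fullref{cor:incZ4} (which contributes the class of $E_2$) gives $H^1(C_2, E_0^\times) \cong \Z/2$.

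The only substantive step is the verification that the $C_2$-action on $E_0$ is trivial; everything downstream is formal. This is precisely why the $C_2$ computation is much easier than its $C_4$ counterpart, where $\gamma$ genuinely permutes $\mu_0$ and $\mu_1$ and a non-trivial cocycle analysis will be required.
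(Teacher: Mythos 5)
Your proof is correct and follows essentially the same route as the paper: the $C_2$-action on $E_0$ is trivial, so $H^1(C_2,E_0^\times)$ is the kernel of squaring on units (the paper phrases this via the standard resolution, you via $\Hom(C_2,E_0^\times)$), which equals $\{\pm 1\}\cong \Z/2$ since $E_0$ is a domain. Your extra verification that $\gamma^2(\mu_0)=\mu_0$ just makes explicit what the paper cites from its description of the action, and the appeal to the lower bound from the earlier corollary is harmless but unnecessary.
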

\begin{proof}
The group $C_2$ is generated by $\gamma^2$ which acts trivially on $E_0^{\times}$; see \Cref{sec:act}.
Therefore, we have
\[H^1(C_2, E_0^{\times}) = \ker\left(E_0^{\times}  \xra{(-)^2} E_0^{\times}\right),\]
but, $x^2=1$ in $E_0$ if and only if $x=(\pm 1)$.
\end{proof}

To deal with $\PicalgRG{E_0}{C_4} $, we introduce some notation. Let $U_0 = E_0^{\times}$ and, for $n\geq 1$,
\[U_n = \left\{x \in E_0^{\times} : x \equiv 1 \mod (2^n) \right\}.  \]
There are isomorphisms 
\[U_n /U_{n+1} \cong \begin{cases}  E_0^{\times}/2 & n=0 \\
 E_0/2&n\geq 1 .
\end{cases}\]
We will use the long exact sequence on cohomology associated to the short exact sequence
\begin{equation}\label{eq:algkeyseq}
 \xymatrix{0 \ar[r] & U_1 \ar[r] & E_0^{\times} \ar[r]  & E_0^{\times}/2 \ar[r] & 0 .} \end{equation}

\begin{lem}\label{lem:ex}
The maps $E_0^{C_4} \to (E_0/2)^{C_4}$ and $(E_0^{\times})^{C_4} \to (E_0^{\times}/2)^{C_4}$ are surjective.
\end{lem}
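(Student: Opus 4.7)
The plan is to reduce both surjectivity claims to the vanishing $H^1(C_4, E_0) = 0$. For the first map, this is a direct application of the long exact sequence; for the second, the key observation is that an element of $E_0$ whose mod-$2$ reduction is a unit must itself be a unit, so lifts of invariant units remain units.

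First I would apply the long exact sequence in group cohomology associated to the short exact sequence of $\W[C_4]$-modules
\[
0 \to E_0 \xrightarrow{\;\cdot 2\;} E_0 \to E_0/2 \to 0,
\]
which yields $E_0^{C_4} \to (E_0/2)^{C_4} \to H^1(C_4, E_0)$. By \fullref{lem:reps}, there is an isomorphism $E_0 \cong \wmuP = A(+)$ of $\W[C_4]$-modules, and \fullref{prop:cohmackeycomplete} computes $\underline{H}^1(C_4, A(+)) = 0$. This gives $H^1(C_4, E_0)=0$ and establishes surjectivity of the first map.

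For the second map, I would use that reduction mod $2$ defines an isomorphism of $C_4$-modules $E_0^\times / U_1 \xrightarrow{\cong} (E_0/2)^\times$: surjectivity of $E_0^\times \to (E_0/2)^\times$ holds because any lift to $E_0$ of a unit in $E_0/2$ has nonzero image in the residue field $\fk$, hence is a unit in the local ring $E_0$ with maximal ideal $(2,\mu_0)$. Given an invariant $\bar{y} \in ((E_0/2)^\times)^{C_4}$, I would view $\bar{y}$ as an element of $(E_0/2)^{C_4}$ and use the first surjectivity to produce an invariant lift $y \in E_0^{C_4}$. The same local-ring argument shows $y \in (E_0^\times)^{C_4}$, which completes the proof.

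There is no serious obstacle: both surjectivities reduce to routine homological algebra once the $C_4$-module identification $E_0 \cong A(+)$ from \fullref{lem:reps} is in hand. The only point to verify with any care is that the reduction $E_0^\times \to (E_0/2)^\times$ is surjective as a map of $C_4$-modules, so that $(E_0^\times/U_1)^{C_4}$ really matches the source of the second map in the lemma statement; this is the unit-lifting property of local rings and is immediate once stated.
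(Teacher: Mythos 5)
Your proof is correct and follows essentially the same route as the paper: surjectivity of $E_0^{C_4}\to (E_0/2)^{C_4}$ from the long exact sequence for $0\to E_0\xrightarrow{2}E_0\to E_0/2\to 0$ together with $H^1(C_4,E_0)=0$, and the reduction of the unit statement to this via the fact that an element of the local ring $E_0$ is a unit if and only if it is a unit mod $2$. The only cosmetic difference is that you cite \fullref{lem:reps} and \fullref{prop:cohmackeycomplete} for the vanishing of $H^1(C_4,E_0)$, whereas the paper points to the displayed $\underline{E}_2$-chart; both record the same computation.
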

\begin{proof}
Since an element of $E_0$ is a unit if and only if it is a unit modulo $2$, it suffices to prove that the map $E_0^{C_4} \to  (E_0/2)^{C_4}$ is a surjection. Since ${H}^1(C_{4}, E_0) =0$ (see, for example, \Cref{figure:additive-E2}) this follows from the long exact sequence associated to the short exact sequence 
 \[\xymatrix{0 \ar[r] & E_0 \ar[r]^-{2}  & E_0 \ar[r] & E_0/2 \ar[r] & 0.}\qedhere\]
 \end{proof}
It follows from \Cref{lem:ex} that there is an exact sequence
 \[\xymatrix{ 0 \ar[r] & {H}^1(C_4, U_1) \ar[r] & {H}^1(C_4, E_0^{\times}) \ar[r]  & {H}^1(C_4,  E_0^{\times}/2) }  \]

 We will show that both $H^1(C_4, U_1)$ and $H^1(C_4,  E_0^{\times}/2) $ have order $2$, which will imply that the order of $ H^1(C_4, E_0^{\times}) $ is at most $4$. Together with \Cref{cor:incZ4}, these results prove \Cref{prop:H14}.
 
 \begin{rem}
One can also prove the claim by filtering $ E_0^{\times}$ by powers of its maximal ideal. That argument is slightly shorter but more technical. We have opted to take the slightly longer, but less steep trail.
 \end{rem}
 
 \begin{prop}\label{prop:H1U1}
There is an isomorphism $H^1(C_4, U_1) \cong \Z/2$.
 \end{prop}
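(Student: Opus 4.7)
The plan is to establish the lower bound using the explicit class of $-1 \in U_1$, then obtain a matching upper bound via a two-step filtration argument. For the lower bound: since $-1 \equiv 1 \pmod{2}$, the element $-1$ lies in $U_1$ and defines a class in $H^1(C_4, U_1)$. This class is nonzero because writing $-1 = \gamma(y)/y$ for some $y = 1 + 2c \in U_1$ is equivalent to solving $N_{C_2}(c) = -1$ in $E_0$; but $N_{C_2}(E_0)$ equals the maximal ideal $(2, \mu)\W[\![\mu]\!]$ of the fixed subring $E_0^{C_2} = \W[\![\mu]\!]$, which does not contain units.

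For the upper bound, I would filter $U_1 \supset U_2 \supset \cdots$ by $U_n = 1 + 2^n E_0$, and first handle $U_2$ via the $2$-adic exponential. At $p = 2$, the threshold $1/(p-1) = 1$ ensures that $\exp$ and $\log$ are convergent and give inverse $C_4$-equivariant isomorphisms between $4E_0$ and $U_2$, so $U_2 \cong E_0$ as $C_4$-modules. The computation $H^1(C_4, E_0) = 0$ from \fullref{prop:cohmackeycomplete} then gives $H^1(C_4, U_2) = 0$. The equivariant identification $U_1/U_2 \cong E_0/2 = \fk[\![\mu_0]\!]$ (via $1 + 2a \mapsto a \bmod 2$) and the long exact sequence for $0 \to U_2 \to U_1 \to U_1/U_2 \to 0$ together yield an injection $H^1(C_4, U_1) \hookrightarrow H^1(C_4, \fk[\![\mu_0]\!])$.

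Next, I would compute $H^1(C_4, \fk[\![\mu_0]\!])$. Since $\gamma^2$ acts trivially on $E_0$ and the coefficient module has characteristic $2$, the standard two-periodic resolution for the quotient $C_4/C_2 \cong C_2$ identifies $H^1(C_4, \fk[\![\mu_0]\!])$ with $\fk[\![\mu_0]\!]^{C_2}/N\fk[\![\mu_0]\!]$ for the norm $N = 1 + \gamma$. The fixed subring is $\fk[\![\mu]\!]$, and an explicit check using $N(\mu_0) = \mu$, $N$-linearity over the fixed ring, and evaluation at $\mu_0 = 0$ identifies $N\fk[\![\mu_0]\!] = \mu\fk[\![\mu]\!]$, so $H^1(C_4, \fk[\![\mu_0]\!]) \cong \fk$.

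The hard part will be showing that the image of the injection $H^1(C_4, U_1) \hookrightarrow \fk$ is exactly the $\Z/2$ generated by the class of $-1$, rather than a larger $\F_2$-subspace. I plan to carry out a direct cocycle analysis: any class is represented by $\phi(\gamma) = 1 + 2\tilde\lambda$ with $(\phi(\gamma)\gamma\phi(\gamma))^2 = 1$. The case $\phi(\gamma)\gamma\phi(\gamma) = -1$ is excluded by the absence of $1$ in $N\fk[\![\mu_0]\!] = \mu\fk[\![\mu]\!]$, so the cocycle condition reduces to the equation $\tilde\lambda + \gamma\tilde\lambda + 2\tilde\lambda\gamma\tilde\lambda = 0$ in $E_0$. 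Writing $\tilde\lambda = a + b\mu_0$ with $a, b \in \W[\![\mu]\!]$ and using the relation $\mu_0 \mu_1 = \mu - 2$ (which follows from $\mu = (2 - \mu_0^2)/(1-\mu_0)$), this equation upon evaluation at $\mu = 0$ reduces to $a_0(1 + a_0) = 2b_0^2$ on the constant terms. Reducing modulo $2$ then forces $\bar a_0 \in \F_2$, so the image of $\phi$ in $\fk$ lies in $\F_2 = \Z/2$, which combined with the lower bound yields $H^1(C_4, U_1) \cong \Z/2$.
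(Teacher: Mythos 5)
Your lower bound (the class of $-1$ is not a coboundary, since the additive trace $1+\gamma$ on $E_0$ has image $(2,\mu)\W[\![\mu]\!]$, which contains no units) is correct, and your identification $U_2\cong 4E_0\cong E_0$ via the $2$-adic exponential together with $H^1(C_4,E_0)=0$ is a genuinely nice shortcut compared with the paper, which instead runs a Bockstein spectral sequence over the whole filtration $\{U_n\}$. The gap is in your treatment of $U_1/U_2\cong E_0/2=\fk[\![\mu_0]\!]$. You claim $H^1(C_4,\fk[\![\mu_0]\!])\cong \fk[\![\mu_0]\!]^{\gamma}/N\fk[\![\mu_0]\!]\cong\fk$, but that is the cohomology of the quotient group $C_4/C_2$ (the inflation summand), not of $C_4$. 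Since $\gamma^2$ acts trivially on $E_0$ and the coefficients have characteristic $2$, the $C_4$-norm $1+\gamma+\gamma^2+\gamma^3=2(1+\gamma)$ vanishes, so the four-periodic resolution gives $H^1(C_4,\fk[\![\mu_0]\!])\cong \fk[\![\mu_0]\!]/(1+\gamma)\fk[\![\mu_0]\!]\cong \fk\oplus\fk[\![\mu]\!]$, with the second summand spanned by the classes $f(\mu)\mu_0$; this is exactly the odd-degree answer recorded in \eqref{eq:cohUn} of the paper. (A quick sanity check that your formula cannot be right: by Shapiro's lemma $H^1(C_4,\fk[C_4/C_2])\cong H^1(C_2,\fk)=\fk\neq 0$, whereas fixed points modulo the norm would give $0$.)

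Because the target of your injection is $\fk\oplus\fk[\![\mu]\!]$ rather than $\fk$, the concluding cocycle analysis does not close the argument: evaluating the equation $2a+b\mu+2a^2+2ab\mu+2b^2(\mu-2)=0$ only at $\mu=0$ controls the constant term $\bar a_0$, i.e.\ the component of the image in the $\fk$-summand, and says nothing about the component $\bar b\in\fk[\![\mu]\!]$ along $\mu_0$, so ``the image of $\phi$ in $\fk$ lies in $\F_2$'' does not bound $H^1(C_4,U_1)$. The gap is repairable with the equation you already derived: reducing it modulo $2$ in full (not just its constant term) gives $\bar b\,\mu=0$ in $\fk[\![\mu]\!]$, hence $\bar b=0$, so the image in $H^1(C_4,U_1/U_2)$ lies in the $\fk$-summand, and then your constant-term relation $\bar a_0(1+\bar a_0)=0$ pins it inside $\F_2$; combined with the class of $-1$ this yields $\Z/2$. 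As written, however, that step is missing and the computation of $H^1(C_4,E_0/2)$ on which the upper bound rests is incorrect. (A minor point: the fixed ring you call $E_0^{C_2}$ is really $E_0^{C_4}=\W[\![\mu]\!]$; the subgroup $C_2=\langle\gamma^2\rangle$ acts trivially on $E_0$, so $E_0^{C_2}=E_0$.)
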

 \begin{proof}
 To compute $H^1(C_4, U_1)$, we use the Bockstein spectral sequence
 \begin{align} \label{eq:bockU}
 E_1^{s,n} = H^s(C_4, U_{n}/U_{n+1}) \Longrightarrow H^{s}(C_4, U_1)
 \end{align}
 with differentials $d_r\colon E_r^{s,n} \to E_r^{s+1,n+r}$ and $n\geq 1$. See \Cref{fig:bockV}.

The differentials are described as follows. First, if $\gamma^2$ acts trivially on a $C_4$-module $M$,
\[\gamma^3(-)\gamma^2(-)\gamma(-)(-) =(\gamma(-)(-))^2  \]
on $M$.
The standard resolution for the group $C_4$ gives a cochain complex
\begin{align} \label{eq:cochain}
\xymatrix{ 0 \ar[r] & P^0(M) \ar[rr]^-{\gamma(-)(-)^{-1}}_-{d} & & P^1(M) \ar[rr]^-{(\gamma(-)(-))^2}_-d  & & P^2(M) \ar[rr]^-{\gamma(-)(-)^{-1}}_-d & & \ldots  } 
\end{align}
with $P^s(M) = M$ for all $s \geq 0$. The cohomology of this complex is $H^*(C_4, M)$.

For an element 
\[\alpha \in E_r^{s,n} \cong H^s(C_4, U_n/U_{n+1})\]
a differential $d_r(\alpha)$ in the spectral sequence \eqref{eq:bockU}
 is the Bockstein associated to the exact sequence
\[0 \to U_{n+1}/U_{n+r+1} \to U_{n}/U_{n+r+1} \to U_{n}/U_{n+1} \to 0.\]
This is computed by choosing a cocycle representative $a \in P^s(U_{n}/U_{n+1})$ for $\alpha$ and lifting it to an element $\widetilde{a} \in P^s(U_{n}/U_{n+r+1})$. Then $d(\widetilde{a})$ will necessarily be a cocycle in 
\[P^{s+1}\left(U_{n+r}/U_{n+r+1}\right) \subseteq  P^{s+1}\left(U_{n+1}/U_{n+r+1} \right)\] 
since $\alpha$ was a $d_{r-1}$-cycle. The cohomology class represented by 
\[d (\widetilde{a}) \in H^{s+1}(C_4, U_{n+r}/U_{n+r+1})\] 
is $d_r(\alpha)$.

Fix an isomorphism
\begin{align*} &   E_0/2 \cong \fk[\![\mu_0]\!] \to U_n/U_{n+1}, 
\\ &  f(\mu_0) \mapsto 1+2^nf(\mu_0) \mod 2^{n+1}\end{align*}
 From \Cref{sec:act}, we have that
 \[ U_n /U_{n+1}   \cong E_0/2 \cong A(+)/2    \]
as $C_4$-modules. Recall that 
\[\mu  = \mu_0 + \gamma(\mu_0)\]
is invariant for the action of $C_4$ (since $\gamma^2(\mu_0)=\mu_0$). Using the standard resolution, we get the following $\W [\![\mu]\!] $-modules
 \begin{align}\label{eq:cohUn}
  E_1^{s,n} \cong H^s(C_4, U_n /U_{n+1} )  \cong \begin{cases} \fk[\![\mu]\!] & s=2t \\
 \fk \oplus \fk[\![\mu]\!] & s=2t+1 .
 \end{cases}\end{align}
with cocycle representatives in $P^s(U_n/U_{n+1})$ given by 
\begin{align*}
f(\mu)  &\leftrightarrow    1+2^{n} f(\mu)  & &s=2t \\
  \alpha  &\leftrightarrow 1+2^n \alpha  & & s=2t+1 \\
f(\mu)  &\leftrightarrow    1+2^n f(\mu)\mu_0 & & s=2t+1 .
 \end{align*}
for $f(\mu) \in \fk[\![\mu]\!]$ and $\alpha \in \fk$.

For $f(\mymu) \in \fk[\![\mymu]\!] = (E_0/2)^{C_4}$, we let $\widetilde{f}(\mymu) \in \W[\![\mymu]\!] = E_0^{C_4}$ be any invariant lift, which exists by \Cref{lem:ex}. Further, given $\alpha \in \fk$, we let $\widetilde{\alpha} \in \W$ be zero if $\alpha=0$ or a Teichm\"uller lift if $\alpha \in \fk^{\times}$.

First, let $s=2t$. For $f(\mymu) \in \fk[\![\mu]\!] = E_1^{2t,n} $
the cocycle representative $1+2^n{f}(\mymu) \in P^{2k}(U_n/U_{n+1})$ lifts to
\[1+2^n\widetilde{f}(\mymu) \in P^{2t}(U_n/U_{n+r+1}). \]
Since $1+2^n\widetilde{f}(\mymu)$ is invariant for the action of $C_4$, we that have
\[d = \gamma(-)(-)^{-1} \colon P^{2t}(U_n/U_{n+r+1}) \to P^{2t+1}(U_n/U_{n+r+1})\]
 is given by $d(1+2^n\widetilde{f}(\mymu)) =1$, which reduces to zero in $U_{n+r}/U_{n+r+1}$ for any $r\geq 1$. So the differentials
 \[d_r \colon E_r^{2t, n} \to E_r^{2t+1, n+r}\]
 are all trivial.

Now let $s=2t+1$ for odd $t\geq 0$. A choice of representative for a class
\[(0, f(\mymu)) \in  \fk \oplus \fk[\![\mymu]\!] \cong E_1^{2t+1,n} \] 
is given by $1+2^n\widetilde{f}(\mymu)\mu_0$ in $P^{2t+1}(U_n/U_{n+1})$, which lifts to
\[1+2^n \widetilde{f}(\mymu)\mu_0 \in P^{2t+1}(U_n/U_{n+r+1}).\]
We compute
\[d = (\gamma(-)(-))^2\colon P^{2t+1}(U_n/U_{n+r+1}) \to P^{2t+2}(U_n/U_{n+r+1}). \]
Using that $\mymu= \gamma(\mu_0)+\mu_0 $, we have
\begin{align*}
d(1+2^n \widetilde{f}(\mymu)\mu_0) &=  \gamma(1+2^n \widetilde{f}(\mymu)\mu_0)^2(1+2^n \widetilde{f}(\mymu)\mu_0 )^{2}  \\
&=  ((1+2^n \widetilde{f}(\mymu)\gamma(\mu_0))(1+2^n \widetilde{f}(\mymu)\mu_0) )^{2}  \\
&\equiv  1+2^{n+1} \mymu\widetilde{f}(\mymu) + 2^{2n} \mymu^2\widetilde{f}(\mymu)^2 \mod 2^{n+2}  \\
\end{align*}
Therefore, on $\fk[\![\mymu]\!] \subseteq  \fk \oplus \fk[\![\mymu]\!] \cong E_1^{2t+1,n}$, we have differentials 
\[d_1 \colon  E_1^{2t+1,n} \to E_1^{2t+2,n} \cong \fk[\![\mymu]\!]\] 
given by
\[d_1(f(\mymu))=\begin{cases} \mu f(\mymu)+\mymu^2 f(\mymu)^2 & n=1 \\
\mu f(\mymu) & n>1
\end{cases}\]
which in both cases are isomorphisms onto $\mymu\fk[\![\mymu]\!]$. 

A choice of representative for a class
\[(\alpha, 0) \in  \fk \oplus \fk[\![\mymu]\!] \cong E_1^{2t+1,n} \] 
is given by $1+2^n\alpha$ in $P^{2t+1}(U_n/U_{n+1})$,
which lifts to
\[1+2^n\widetilde{\alpha} \in P^{2t+1}(U_n/U_{n+r+1}),\]
Since this class is invariant for the action of $C_4$, we have that
\begin{align*}
d(1+2^n\widetilde{\alpha} ) &=(1+2^n\widetilde{\alpha} )^4   \\
&\equiv 1+2^{n+2}\widetilde{\alpha} +2^{2n+1} \widetilde{\alpha}^2 \mod 2^{n+3} 
\end{align*}
Therefore, on $\fk \subseteq  \fk \oplus \fk[\![\mymu]\!] \cong E_1^{2t+1,n}$, we have differentials 
\[d_1 \colon  E_1^{2t+1,n} \to E_1^{2t+2,n} \cong \fk[\![\mymu]\!]\] 
given by
\[d_1(\alpha)=\begin{cases} \alpha+\alpha^2 & n=1 \\
\alpha & n>1.
\end{cases}\]
These are isomorphisms if $n\geq 2$, and if $n=1$, the kernel is $\F_2 = \fk^{\Gal(\fk/\F_2)}$. 

In particular, it follows that
\[E_{\infty}^{1,n}  = \begin{cases} \F_2 & n=1 \\
0 &\text{otherwise}
\end{cases}\]
which implies the claim.
\end{proof}

\begin{center}
\begin{figure}[H]
\center
\includegraphics[height=0.2\textheight]{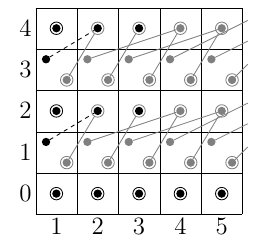}
\includegraphics[height=0.2\textheight]{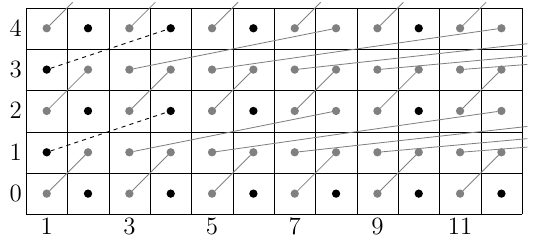}
\captionsetup{width=\textwidth}
\caption{The spectral sequence \eqref{eq:bockU} (top) and the spectral sequence \eqref{eq:bockV} (bottom), drawn in the $(n,s)$-plane. A $\mdsmblkcircle$ denotes a copy of $\fk$.  
A $\circledbullet$ denotes a copy of $\fk[\![\mymu]\!]$, where the inner dot stands for $\fk\{1\}$ and the outer rim stands for $\mymu\fk[\![\mymu]\!]$ (this allows us to draw the differentials more precisely). The dashed line indicates that $\alpha \mapsto \alpha +\alpha^2$ for $\alpha \in \fk$.}
\label{fig:bockV}
\label{fig:bockU}
\end{figure}
\end{center}

\begin{prop}
There is an isomorphism $H^1(C_4, E_0^{\times}/2) \cong \Z/2$.
\end{prop}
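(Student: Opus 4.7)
The plan is a two-step reduction followed by an explicit computation of $H^1(C_2, V_1)$, paralleling the structure used in \fullref{prop:H1U1} for $U_1$. First, I would identify $E_0^{\times}/U_1 \cong (E_0/2)^{\times} \cong \fk[\![\mu_0]\!]^{\times}$ and use the split exact sequence
\[1 \to V_1 \to \fk[\![\mu_0]\!]^{\times} \to \fk^{\times} \to 1, \qquad V_1 := 1+\mu_0\fk[\![\mu_0]\!],\]
split by the inclusion of constants. Since the $C_4$-action on $\fk$ is trivial and $\fk^{\times}$ has no $2$-torsion in characteristic $2$, one gets $H^1(C_4, \fk^{\times}) = \Hom(C_4, \fk^{\times}) = 0$, reducing the claim to $H^1(C_4, V_1) \cong \Z/2$.

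Second, the formula $\gamma^2(\mu_0) = \mu_0$ from \fullref{sec:act} implies that $\langle \gamma^2\rangle$ acts trivially on $V_1$. In the Lyndon--Hochschild--Serre spectral sequence for $\langle \gamma^2\rangle \triangleleft C_4$, the term $E_2^{0,1} = H^1(\langle \gamma^2\rangle, V_1) = V_1[2]$ vanishes because the integral domain $\fk[\![\mu_0]\!]$ has characteristic $2$ and thus only the trivial square root of $1$. Hence $H^1(C_4, V_1) \cong H^1(C_4/\langle \gamma^2\rangle, V_1)$, with the quotient $C_2$ acting on $V_1$ via $\gamma$.

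To complete the calculation I would exhibit $x_0 := 1+\mu_0$ as a generator of order $2$ in $H^1(C_2, V_1) = \ker(N_{C_2})/\im(\gamma - 1)$. The identities $\mu_0 + \gamma(\mu_0) = \mu_0\gamma(\mu_0) = \mu$ (both equal to $\mu_0^2/(1+\mu_0)$ in characteristic $2$, from $\gamma(\mu_0) = \mu_0/(1+\mu_0)$) give $x_0\gamma(x_0) = 1 + \mu + \mu = 1$, so $x_0$ is a cocycle. The class $[x_0]$ is nontrivial: a hypothetical $y = 1 + y_1\mu_0 + O(\mu_0^2)$ with $\gamma(y)/y = 1+\mu_0$ would force $y_1 = 1+y_1$ upon comparing $\mu_0$-coefficients. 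It has order $2$ because $\gamma(x_0) = x_0^{-1}$, so $x_0^2 = \gamma(x_0^{-1})/x_0^{-1}$ is visibly a coboundary.

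The main obstacle is showing that $[x_0]$ exhausts $H^1(C_2, V_1)$. For this I plan to argue by $\mu_0$-adic induction: given $z \in \ker(N_{C_2}) \cap V_2$ with leading term $1 + a\mu_0^n$ and $n \geq 2$, extracting the coefficient of $\mu_0^{n+1}$ in $z\gamma(z) = 1$ yields the constraint $na = 0$ in $\fk$, forcing $n$ even. The coboundary $\gamma(1+a\mu_0^{n-1})/(1+a\mu_0^{n-1})$ then has leading term $1+a\mu_0^n$ (since $n-1$ is odd, the coefficient $(n-1)$ is nonzero in characteristic $2$), so $z$ may be multiplied by its inverse to lie in $V_{n+1} \cap \ker(N_{C_2})$. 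Reapplying the parity constraint at odd valuations $\geq 3$ shows the result lies in $V_{n+2}$, and iterating produces a Cauchy sequence of partial products in $V_1$ converging $\mu_0$-adically to $Y \in V_1$ with $\gamma(Y)/Y = z$.
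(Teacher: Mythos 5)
Your route is genuinely different from the paper's and is essentially viable. The paper makes the same first reduction to $H^1(C_4,V_1)$, but then runs a Bockstein spectral sequence for the filtration $\{V_n\}$ over the full group $C_4$, computing differentials from the standard $4$-periodic resolution; the answer $\Z/2$ appears as the kernel of $\alpha\mapsto\alpha+\alpha^2$ on the $n=1$ filtration quotient. You instead observe that $\gamma^2$ acts trivially on $V_1$ and that $V_1$ has no $2$-torsion (a characteristic-$2$ domain has only the trivial square root of $1$), so inflation gives $H^1(C_4,V_1)\cong H^1(C_2,V_1)$ with $C_2$ acting through $\gamma$; you then work with explicit cocycles and a $\mu_0$-adic successive-approximation argument in place of the spectral sequence. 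Your key computations check out: $\gamma(\mu_0)\equiv\mu_0/(1+\mu_0)$ mod $2$, so $\mu_0+\gamma(\mu_0)=\mu_0\gamma(\mu_0)=\mu_0^2/(1+\mu_0)$ and $x_0=1+\mu_0$ is a norm-kernel element with $\gamma(x_0)=x_0^{-1}$; the nontriviality argument is correct (indeed every coboundary lies in $V_2$, so the $\mu_0$-coefficient is a class invariant); the coefficient extraction giving $na=0$ at $\mu_0^{n+1}$ is valid for $n\ge 2$ (the cross term $a^2\mu_0^{2n}$ does not interfere there), and the correction by $\gamma(1+a\mu_0^{n-1})/(1+a\mu_0^{n-1})$ together with completeness of $\fk[\![\mu_0]\!]$ makes the infinite-product convergence argument work. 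What your approach buys is an elementary, resolution-free proof that makes the source of the $\Z/2$ visible; what the paper's buys is a uniform treatment parallel to its $U_1$ computation in \fullref{prop:H1U1} without needing the trivial-action/torsion-free observation.

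There is, however, one unaddressed step, and it is exactly where the answer $\Z/2$ is created: your induction starts with cocycles in $V_2$, but you never reduce an arbitrary cocycle $z=1+a_1\mu_0+\cdots$ to that case. Since all coboundaries lie in $V_2$, the coefficient $a_1\in\fk$ cannot be removed by coboundaries, so without a constraint on $a_1$ your argument would only show that the classes represented in $V_2$ vanish, not that $[x_0]$ exhausts $H^1(C_2,V_1)$ (a priori one could fear a contribution as large as $\fk$ in valuation $1$). The fix is the $n=1$ instance of your own coefficient extraction, where the cross term does contribute: the $\mu_0^2$-coefficient of $z\gamma(z)=1$ is $a_1+a_1^2$, forcing $a_1\in\F_2$; then $z x_0^{a_1}\in V_2\cap\ker(N_{C_2})$ and your induction applies, giving $[z]\in\{0,[x_0]\}$. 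Note that your stated formula ``$na=0$'' is only correct for $n\ge2$, so this valuation-$1$ case must be treated separately, just as the paper's $d_3$ at $n=1$ is $\alpha\mapsto\alpha+\alpha^2$ rather than $\alpha\mapsto\alpha$. With that one paragraph added, your proof is complete.
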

\begin{proof}
To compute this, we will use another Bockstein spectral sequence associated to the following filtration. Let
\[V_n=\{x \in E_0^{\times}/2 : x\equiv 1 \mod \mu_0^n\} \]
There is an exact sequence
 \[ 1 \to V_1 \to E_0^{\times}/2 \to \fk^{\times} \to 1. \]
 Since $\fk^{\times}$ has order prime to $2$ and $H^0(C_4, (E_0/2)^{\times}) \to H^0(C_4, \fk^{\times})$ is surjective, 
  there is an isomorphism
 \[ H^1(C_4, E_0^{\times}/2)  \cong H^1(C_4, V_1 ).\]
 To compute  $H^1(C_4, V_1)$, we use the Bockstein spectral sequence
 \begin{equation}\label{eq:bockV}
 E_1^{s,n} = H^s(C_4, V_n/V_{n+1} ) \Longrightarrow  H^s(C_4, V_1 ) \end{equation}
 with differentials $d_r \colon E_r^{s,n}  \to E_r^{s+1,n+r} $. See \Cref{fig:bockV}. The maps
 \begin{align*} 
 \phi_n &\colon \fk \to  V_n/V_{n+1}, &  \phi_n(\alpha) = 1+\mu_0^n\alpha
   \end{align*}
are isomorphisms for $n\geq 1$. We proceed as in the proof of \Cref{prop:H1U1}, using the resolution \eqref{eq:cochain} to compute the differentials.
We have isomorphisms
\[ H^s(C_4, V_n/V_{n+1}) \cong  \fk  \]
where a representative for $\alpha \in \fk$ in $P^s(V_n/V_{n+1})$ is given by the residue class of $1+\mu_0^n \alpha$, which is also a choice of lift in $P^s(V_n/V_{n+r+1})$.

Let $s=2t$ and $n=2^km$ for $m$ odd and $k\geq 0$. For $d \colon P^{2t}(V_n) \to P^{2t+1}(V_n) $, we have
\begin{align*}
d( 1+\mu_0^n  \alpha)  &=  (1+\gamma(\mu_0)^n \alpha)( 1+\mu_0^n  \alpha)^{-1}  \\
&= \left( 1+\frac{\mu_0^n}{(\mu_0+1)^n}\alpha\right)\left(\frac{1}{1+\mu_0^n\alpha}\right) \\
&= \left( 1+\alpha \mu_0^n\frac{1}{(\mu_0^{2^k}+1)^m}\right)\left(\frac{1}{1+\mu_0^n\alpha}\right) \\
&\equiv \left( 1+\alpha \mu_0^n+ \alpha \mu_0^{n+2^k} \right)\left(1+\alpha\mu_0^n + \alpha^2\mu_0^{2n} \right) \\
&\equiv  1+\alpha \mu_0^{n+2^k} \mod \mu_0^{n+2^k+1}.
\end{align*}
Therefore, for $n=2^km$, the first possible non-zero differential $d_r \colon E_r^{2t, n} \to E_r^{2t+1, n+1}$ is $d_{2^k}$. Further, the differentials
\[d_1 \colon E_1^{2t, n} \to E_1^{2t+1, n+1}\]
are isomorphisms for $n$ odd.

Now let $s=2t+1$ and $n=2^km$ where $m$ is odd and $k\geq 0$. 
For $d \colon P^{2t+1}(V_n) \to P^{2t+2}(V_n) $,  
we have
\begin{align*}
d( 1+\mu_0^n  \alpha) &= ( 1+\gamma(\mu_0)^n  \alpha)^2 ( 1+\mu_0^n  \alpha)^2  \\
&=  \left( 1+\alpha^2 \mu_0^{2n} \frac{1}{(\mu_0^{2^{k+1}}+1)^m}\right) ( 1+ \alpha^2 \mu_0^{2n}  ) \\
&=  \left( 1+\alpha^2 \mu_0^{2n}( 1+\mu_0^{2^{k+1}}) \right) ( 1+ \alpha^2 \mu_0^{2n}  )  \mod \mu_0^{2n+2^{k+1}+1}\\
&= \begin{cases} 1+ (\alpha+\alpha^2) \mu_0^{2^{k+2}} &n=2^k \\
1+ \alpha^2 \mu_0^{ 2n + 2^{k+1}} &n=2^km, \ m\neq 1 . 
\end{cases}
\end{align*}
In particular, if $n$ is odd, we get differentials $d_{n+2} \colon E_{n+2}^{2t+1, n} \to E_{n+2}^{2t+1, 2n+2} $ given by
\[d_{n+2}(\alpha) = \begin{cases} \alpha+\alpha^2 & n=1 \\
\alpha^2 & n>1.
\end{cases}\]
These are isomorphisms if $n\neq 1$. If $n=1$, then $\F_2 = \fk^{\Gal} \subseteq E_{3}^{2t+1, 1}$ is the kernel. 

Combining these results, we conclude that
\[E_{\infty}^{1,n}  = \begin{cases} \F_2 & n=1 \\
0 &\text{otherwise}
\end{cases}\]
which implies the claim.
\end{proof}

% !TEX root = master.tex

\section{The Picard Spectral Sequence}\label{sec:pic}

In this section, we first establish notation and a few results about the Picard spectral sequence for $R$ an even periodic ring spectrum in the category of genuine $G$ spectra. Then we turn to the analysis we need to prove \Cref{thm:fullpicmackey} in the case when $R=E$ and $G=C_4$.

\subsection{Generalities on the Picard Spectral Sequence}\label{sec:picss}
Let $R$ be an even periodic cofree commutative ring spectrum in the category of genuine $G$-spectra. Suppose that $R^{hG} \to R$ is a faithful $G$-Galois extension. We recall the tools provided by \cite{HeMaSt} and \cite{MathewStojanoska} to compute $\Pic(R^{hG})$, the Picard group of $R^{hG}$-module spectra. Let $\pic(R)$ denote the Picard spectrum of the ring spectrum $R$. Note that $\pic(R)$ is a spectrum with a $G$-action, and that $\pic(R^{hG}) = \pic(R)^{hG}_{ \geq 0}$. In particular 
\[\pi_0 \pic(R)^{hG} \cong \Pic(R^{hG}).\]

It follows that the group $\Pic(R^{hH})$ for any subgroup $H$ of $G$ can be computed by studying the spectral sequence 
\begin{equation}\label{eq:picss}
 \underline{E}_{\subpic,2}^{s,t}= \underline{H}^s(G, \pi_{t} \pic(R)) \Longrightarrow {\spi}_{t-s} (\pic(R))^{h}
\end{equation}
with differentials $d_{\subpic,r}^{s,t}\colon \underline{E}_{\subpic,r}^{s,t} \to \underline{E}_{\subpic,r}^{s+r, t+r-1}$; to obtain a Mackey-valued spectral sequence, we have taken the genuine cofree $G$-spectrum corresponding to $\pic(R)$.
We will be comparing this to the analogous homotopy fixed point spectral sequences $\underline{E}_\times$ (for the units of the ring $R$,  see \eqref{eq:MultSS}) and $\underline{E}_+$ (for the ring $R$ itself, see \eqref{eq:AddSS}).

Note that $\pic(R)$ is a connective spectrum with the property that $\left(\Omega \pic(R)\right)_{ \geq 0} \simeq \mathrm{gl}_1(R)$. Further, as spaces, $\Omega^{\infty}\mathrm{gl}_1(R) \simeq GL_1(R)$ and there is a map
\[ GL_1(R) \to \Omega^{\infty}R\]
which is an inclusion of those components lying over $(\pi_0R)^{\times}$. These equivalences respect the $G$ action, so both of these spaces inherit a $G$ action from $R$ and
\[ \pi_t \pic(R) \cong \begin{cases} \Z/2 & t=0 \\
 R_0^{\times} & t=1 \\
 R_{t-1} & t\geq 2 
 \end{cases}\]
as $G$-modules. It follows that
\[ \underline{E}_{\subpic,2}^{s,t} \cong \begin{cases}  \underline{H}^s(G, \Z/2) & t=0 \\
\underline{H}^s(G,  R_0^{\times}) & t=1 \\
\underline{E}_{+,2}^{s,t-1}  & t\geq 2
 \end{cases}\]
 as Mackey functors. Here, $\underline{E}_{+,*}^{*,*}$ denotes the Mackey functor homotopy fixed point spectral sequence
 \begin{align}\label{eq:AddSS}
 \underline{E}_2^{s,t} = \underline{E}_{+,2}^{s,t}  =\underline{H}^s(G,\pi_{t} R) \Rightarrow {\spi}_{t-s}R^{h}
 \end{align}
with
\[ d_{+,r}^{s,t} \colon \underline{E}_{r,+}^{s,t} \to \underline{E}_{r,+}^{s+r,t+r-1}. \] 
  We also let 
 \begin{align}\label{eq:MultSS}
 \underline{E}_{\times,2}^{s,t} =  \underline{H}^s(G, \pi_{t}  \mathrm{gl}_1(R)) \Rightarrow {\spi}_{t-s}(\mathrm{gl}_1(R))^{h}
 \end{align}
 with differentials $d_{\times,r}^{s,t}\colon \underline{E}_{\times,r}^{s,t} \to \underline{E}_{\times,r}^{s+r, t+r-1}$ and note that 
 \[ \underline{E}_{\times,2}^{s,t} \cong \begin{cases} \underline{H}^s(G, R_0^{\times})  & t=0 \\
\underline{E}_{+,2}^{s,t}  & t\geq 1.
 \end{cases}\]
 
 In  \cite[5.2.4]{MathewStojanoska}, Mathew-Stojanoska identify a range where the differentials $d_{\subpic}$ and $d_+$ are related.  Given a class $x\in \underline{E}_{+,r}^{s,t-1}(G/H)$ where $t\geq 2$, we let $x^{\times}$ and $x^{\subpic}$
 be the corresponding elements in $ \underline{E}_{\times,r}^{s,t-1}(G/H)$ and $ \underline{E}_{\subpic,r}^{s,t}(G/H)$ respectively. 
\begin{thm}[Mathew-Stojanoska]\label{thm:diffpic}
Let $x\in \underline{E}_{+, r}^{s,t-1}(G/H)$ and let ${y} \in \underline{E}_{+, r}^{s+r,t+r-2}(G/H)$.
Let ${x}^{\subpic} \in \underline{E}_{\subpic, r}^{s,t}(G/H)$ and ${y}^{\subpic} \in \underline{E}_{\subpic, r}^{s+r,t+r-1}(G/H)$ be the corresponding classes.
\begin{enumerate}
\item\label{diffpic1} If both $x^{\subpic}$ and ${y^{\subpic}}$ lie in the region of the $(t-s,s)$-plane where $2\leq t$ and $0\leq t-s$, then 
$d_{+,r}(x) = y$ if and only if $d_{\subpic, r}(x^{\subpic}) =y^{\subpic}$.
\item\label{diffpic2} If $2\leq t$ and $2\leq r \leq t-1$, then $d_{+,r}(x) = y$ if and only if $d_{\subpic,r}(x^{\subpic}) = y^{\subpic}$.
\item\label{diffpic3} If $s =t=r$ and $d_{+,r}(x) = y$, then
\[ d_{\subpic, t}(x^{\subpic}) = ({d_{+,t}(x) + x^2})^{\subpic}.\]
\end{enumerate}
\end{thm}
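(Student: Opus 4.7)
The plan is to exploit the fact that in a suitable range of bidegrees, the Picard spectrum $\pic(R)$, the units spectrum $\mathrm{gl}_1(R)$, and the ring spectrum $R$ are essentially indistinguishable (as $G$-spectra), so the three associated homotopy fixed point spectral sequences have the same differentials in that range. The comparison is made via two maps: the inclusion of unit components $\mathrm{gl}_1(R) \to R$, which induces an equivalence of Postnikov sections $\tau_{[n,2n-1]}\mathrm{gl}_1(R) \simeq \tau_{[n,2n-1]} R$ for $n \geq 1$ (since the two $G$-equivariant infinite loop space structures on $GL_1(R)$ and on the corresponding components of $\Omega^\infty R$ agree through a logarithm-type equivalence in this range); and the delooping $\Omega \tau_{\geq 1}\pic(R) \simeq \tau_{\geq 0}\mathrm{gl}_1(R)$, which yields $\tau_{[n+1,2n]}\pic(R) \simeq \tau_{[n,2n-1]}\Sigma\mathrm{gl}_1(R)$ for $n \geq 1$.

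Given these identifications of Postnikov sections, the Mackey functor skeletal filtration spectral sequences of $F(EG_+,-)$ applied to $\pic(R)$, $\mathrm{gl}_1(R)$, and $R$ have differentials controlled by the same $k$-invariants whenever source and target both live in the stable range. This will give parts \eqref{diffpic1} and \eqref{diffpic2}: the hypothesis $2 \leq t$ and $0 \leq t-s$ in \eqref{diffpic1}, respectively $2 \leq r \leq t-1$ in \eqref{diffpic2}, is precisely what is needed to ensure that both $(s,t)$ and $(s+r,t+r-1)$ fall inside the range where $\pic$, $\mathrm{gl}_1$, and (a shift of) $R$ have matching Postnikov data, so that $d_{\subpic,r}(x^\subpic)=y^\subpic$ is equivalent to $d_{+,r}(x)=y$.

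For \eqref{diffpic3}, the condition $s=t=r$ places the source of the differential on the boundary of the stable range: the target lands in bidegree $(2t,2t-1)$, where the Postnikov section of $\pic(R)$ differs from that of $\Sigma\mathrm{gl}_1(R)$ by the first non-trivial $k$-invariant. This $k$-invariant is classically known to be the squaring operation, reflecting the identity $(1+x)^2 = 1 + 2x + x^2$ in units: modulo $2$, the multiplicative structure on $\pi_t\pic(R)$ introduces an extra quadratic contribution on top of the additive one recorded by $d_{+,t}(x)$. Tracking this through the Postnikov tower yields the correction term $x^2$, giving $d_{\subpic,t}(x^\subpic) = (d_{+,t}(x) + x^2)^\subpic$.

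The main obstacle is showing that all of the above comparisons upgrade to the level of Mackey functor spectral sequences rather than just ordinary homotopy fixed point spectral sequences. The key point is that both the Postnikov tower and the $k$-invariants it records are functorial for arbitrary maps of $G$-spectra, and the cofree functor $F(EG_+,-)$ together with its filtration by $F(EG_+^{(\bullet)},-)$ commutes with the formation of Postnikov sections in the relevant range. Consequently, the Mathew--Stojanoska comparison of spectral sequences is natural in the subgroup $H \subseteq G$ and compatible with restriction and transfer, so the arguments outlined above produce the required differential identifications equivariantly.
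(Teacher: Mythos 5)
You are proposing a proof of a statement the paper itself does not reprove: it is quoted from Mathew--Stojanoska (\cite{MathewStojanoska}, Comparison Tool 5.2.4 together with the ``universal formula'' of their Section 5.4), and your outline follows exactly the strategy of that source (compare the homotopy fixed point spectral sequences of $\pic(R)$, $\mathrm{gl}_1(R)$ and $R$ through truncations that agree in a range, then correct at the edge of the range). Judged as a proof, however, it has two genuine gaps. First, the comparison is not induced by ``the inclusion of unit components $\mathrm{gl}_1(R)\to R$'': there is no map of spectra extending the space-level inclusion $GL_1(R)\subseteq \Omega^{\infty}R$, because the multiplicative and additive infinite loop structures differ, so you cannot obtain the equivalence $\tau_{[n,2n-1]}\mathrm{gl}_1(R)\simeq\tau_{[n,2n-1]}R$ by truncating a map that does not exist. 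That natural equivalence is true, but constructing it (and its naturality in the $E_\infty$-ring, which is what lets you apply $F(EG_+,-)$ and compare filtrations) is itself one of the substantive theorems of \cite{MathewStojanoska}; your parenthetical ``logarithm-type equivalence'' is naming the missing input, not supplying it. Second, and more seriously, part \eqref{diffpic3} is precisely where the content lies, and your justification --- that the relevant $k$-invariant ``is classically known to be the squaring operation'' --- is an assertion rather than an argument. What has to be proved is that the first differential leaving the stable range, and only in the bidegree $s=t=r$, is corrected by exactly $x^2$ and by nothing else; in Mathew--Stojanoska this is done by reducing to a universal truncated (square-zero) example and computing there. The heuristic $(1+x)^2=1+2x+x^2$ explains why a quadratic term should appear, but it does not pin down the filtration in which it appears, nor rule out further correction terms, so as written \eqref{diffpic3} is assumed rather than established.

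A smaller point: the equivariant upgrade you worry about in the last paragraph is not actually needed for this statement. Each clause concerns classes in $\underline{E}_{\subpic,r}^{*,*}(G/H)$ and $\underline{E}_{+,r}^{*,*}(G/H)$ for a fixed subgroup $H$, and the Mackey functor spectral sequence evaluated at $G/H$ is just the ordinary homotopy fixed point spectral sequence for the $H$-action on $\pic(R)$, respectively on $R$; so the non-equivariant theorem of Mathew--Stojanoska applies verbatim levelwise, with no compatibility of restrictions and transfers required. The genuinely equivariant refinement --- compatibility of such differentials with transfers --- is not part of this theorem at all; the paper isolates it separately in \fullref{prop:GLtransfers}, proved using \fullref{boringboundary}, which is where an argument of the kind you sketch would actually be needed.
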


As in \cite{MathewStojanoska}, we will call the first two families of differentials \emph{stable} and the third family \emph{unstable}.

For our arguments below, we also need to know how the transfers and restrictions of these spectral sequences are related. We will need the following result but postpone its proof to the end of the section.

\begin{lem}\label{boringboundary}
Suppose $X$ is a spectrum with a $G$-action, and consider its Postnikov decomposition
\[ X_{\geq t} \to X \to X_{<t}\]
for some $t$. Let $\delta$ be the connecting map $X_{<t} \to \Sigma X_{\geq t}$. 

If $\alpha$ is a permanent cycle in the homotopy fixed point spectral sequence for $X_{<t}$, then 
\[\delta(\alpha) = \beta \in \pi_* (X_{\geq t})^{hG}\]
if and only if there is a differential $d_r(\alpha)=\beta$ of suitable length in the homotopy fixed point spectral sequence for $X$.
\end{lem}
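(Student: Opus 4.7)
The plan is to exploit the fiber sequence of $G$-spectra
\[ X_{\geq t} \to X \to X_{<t} \]
together with the morphisms of Mackey functor homotopy fixed point spectral sequences induced by the two maps. Applying $F(EG_+,-)$ gives a fiber sequence
\[ (X_{\geq t})^h \to X^h \to (X_{<t})^h \xrightarrow{\delta} \Sigma (X_{\geq t})^h, \]
which in particular yields the boundary map appearing in the statement.

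First, I would set up the comparison of $E_2$-pages. The map $X\to X_{<t}$ induces an isomorphism $\underline{E}_2^{s,t'}(X)\to \underline{E}_2^{s,t'}(X_{<t})$ for $t'<t$, and $X_{\geq t}\to X$ induces an isomorphism $\underline{E}_2^{s,t'}(X_{\geq t})\to \underline{E}_2^{s,t'}(X)$ for $t'\geq t$. Consequently a class $\alpha\in \underline{E}_2^{s,t'}(X_{<t})$ with $t'<t$ lifts uniquely to $\widetilde\alpha\in \underline{E}_2^{s,t'}(X)$. Since $\alpha$ is a permanent cycle, $\widetilde\alpha$ must be a $d_r$-cycle for every $r$ with $t'+r-1<t$: the source and target of such a differential both lie in the range where the map of spectral sequences is an isomorphism. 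The smallest $r$ for which $d_r(\widetilde\alpha)$ could be nonzero is therefore the unique $r$ with $t'+r-1=t$ (or larger, if earlier hits are zero for degree reasons); this is the meaning of ``suitable length.''

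Next, I would identify $d_r(\widetilde\alpha)$ with $\delta(\alpha)$. The HFPSS of each term here comes from the skeletal filtration of $EG$, and the fiber sequence above is compatible with this filtration, so the three spectral sequences fit into a short exact sequence of filtered spectra whose associated long exact sequence in the abutment is the long exact sequence of the fiber. In the range where $t'+r-1\geq t$, the comparison $X_{\geq t}\to X$ identifies $\underline{E}_r^{s+r,t'+r-1}(X_{\geq t})$ with $\underline{E}_r^{s+r,t'+r-1}(X)$, so the target $\beta$ of $d_r(\widetilde\alpha)$ naturally represents a class in $\pi_*(X_{\geq t})^h$. Unwinding the definition of the $d_r$ differential via the skeletal filtration, the class $d_r(\widetilde\alpha)$ is precisely the obstruction to extending a representing map $S^{t'-s}\to (X_{<t})^h$ (which exists since $\alpha$ is a permanent cycle in the HFPSS of $X_{<t}$) to a map into $X^h$. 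By the long exact sequence, that obstruction is exactly $\delta(\alpha)\in \pi_{t'-s-1}(X_{\geq t})^h$.

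The main technical obstacle is the bookkeeping: showing that $d_r(\widetilde\alpha)$ and $\delta(\alpha)$ agree on the nose rather than merely modulo higher filtration. This reduces to the standard observation that in the range $t'+r-1\geq t$ the skeletal filtration on $(X_{\geq t})^h$ coincides with the quotient filtration on $X^h/(X_{<t})^h$ coming from the fiber sequence. Once this identification is in place, the equivalence of the two descriptions of the obstruction is immediate, giving the claimed ``if and only if.''
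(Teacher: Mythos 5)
Your overall strategy---compare the three skeletal-filtration homotopy fixed point spectral sequences induced by the fiber sequence $X_{\geq t}\to X\to X_{<t}$ and identify the ``crossing'' differential with the connecting map---is viable, and it is genuinely different from the paper's argument. The paper instead invokes the Greenlees--May identification of the HFPSS with the spectral sequence of the Postnikov tower of $X$; in that model a differential on $\alpha$ literally \emph{is} the obstruction to lifting the equivariant map $\alpha\colon EG_+\to X_{<t}$ one more stage up the Postnikov tower, so comparing it with $\delta(\alpha)$, the obstruction to lifting all the way to $X$, becomes a check that certain Postnikov-stage diagrams commute. Your route avoids that comparison theorem but must then pay for it with an honest filtered diagram chase, and that is where your write-up falls short.

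The gap is at your central sentence: in the skeletal filtration, $d_r(\widetilde\alpha)$ is by definition computed from the tower $F(EG_+/EG^{(p-1)}_+,X)$ by the usual exact-couple zig-zag (lift through the associated graded of $X^h$, apply the boundary, project); it is \emph{not}, by definition, the obstruction to factoring a representative $S^{n}\to (X_{<t})^h$ through $X^h$. The assertion that these two obstructions coincide, in the stated filtration and on the nose, is exactly the content of the lemma, so calling it an ``unwinding of the definition'' begs the question. Moreover, the reduction you offer does not supply the missing argument: the relevant cofiber is that of $X^h\to (X_{<t})^h$, namely $\Sigma(X_{\geq t})^h$ (there is no map $(X_{<t})^h\to X^h$ to form ``$X^h/(X_{<t})^h$''), and the coincidence of its induced filtration with the skeletal filtration of $\Sigma(X_{\geq t})^h$ is automatic in all degrees from the fact that $F(EG_+,-)$ preserves the levelwise fiber sequences---it is not a range-dependent fact and by itself says nothing about differentials. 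What is actually needed is a geometric-boundary-theorem style chase: pick a lift $\tilde a_s\in\pi_n F^s(X_{<t})^h$ of the element $a$ detected by $\alpha$ in filtration $s$; since $\delta$ is a map of filtered spectra, $\delta(\tilde a_s)\in\pi_{n-1}F^s(X_{\geq t})^h$ maps to $\delta(a)$; then, using that the associated graded of $(X_{\geq t})^h$ vanishes in internal degrees below $t$, show that the leading term of $\delta(\tilde a_s)$ in filtration $s+r$ coincides with $d_r(\widetilde\alpha)$ computed in the spectral sequence for $X$, and run the argument in reverse for the converse. That chase (or the paper's Postnikov-tower comparison) is the whole proof; without it the ``if and only if'' has not been established.
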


In the following result and its proof,
we adopt the convention that, for $a<b$,
\[\underline{E}^{s,t}_{\times, [a,b], 2}(G/H)
= \underline{H}^s(G, \pi_t\mathrm{gl}_1R_{[a,b]})
\Longrightarrow \spi_{t-s} (\mathrm{gl}_1R_{[a,b]})^h  \]
and
\[\underline{E}^{s,t}_{\subpic, [a,b], 2}(G/H)
= \underline{H}^s(G, \pi_t\pic(R)_{[a,b]})
\Longrightarrow  \spi_{t-s} (\pic(R)_{[a,b]})^h  .\]

\begin{prop}\label{prop:GLtransfers}
Suppose $0<a<b<c$ are integers such that $c\leq 2b-1$, and let $H$ be a subgroup of $G$. Assume we are given classes $x \in \underline{E}^{s,m+s}_{+,r}(G/H)$ and $y \in \underline{E}^{s+p,m+s+p}_{+,r}(G/G)$, such that the following conditions are satisfied:
\begin{enumerate}
\item\label{firstass} The integers $m+s, m+s+p$ are in the interval $[b,c]$; the classes $x$ and $y$ are permanent cycles in the spectral sequence for $R_{[b,c]}$, and, in the Mackey functor $\spi_m (R_{[b,c]})$, we have $tr_H^G(x) = y$.
\item\label{secondasspic} The class $x^\subpic$ corresponding to $x$ is an $r$-cycle in $\underline{E}^{s,m+s+1}_{\subpic, [a+1,c+1], r}(G/H)$, which is hit by a differential, say $d_{r'}(z^\subpic) = x^\subpic$, with $r' \geq r$ and $r' > m+s+1-b$.
\end{enumerate}
Then in the spectral sequence $\underline{E}_{\subpic, [a+1,c+1],*}^{*,*}(G/G)$, there is a differential of suitable length 
\[d_{r'+p}(tr_H^G(z^\subpic)) = y^\subpic.\] 
\end{prop}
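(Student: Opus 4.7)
The plan is to realize both the assumed and the sought differential as connecting maps in a Postnikov decomposition of $\pic(R)_{[a+1,c+1]}$, and then conclude using naturality of the transfer. First I would leverage the bound $c \leq 2b-1$: any cup product in $\pic(R)$ of classes in Postnikov degrees $\geq b+1$ lands in degree $\geq 2b+2 > c+1$, so is killed in the truncation. Consequently $\mathrm{gl}_1(R)_{[b,c]} \simeq R_{[b,c]}$ as $G$-equivariant spectra, and hence $\pic(R)_{[b+1,c+1]} \simeq \Sigma R_{[b,c]}$ equivariantly. In particular, the Picard HFPSS of this ``top'' piece is a shift of the HFPSS for $R_{[b,c]}$; the classes $x^{\subpic}$ and $y^{\subpic}$ correspond to $x$ and $y$ there, and the homotopy-level identity $\tr_H^G(\bar{x}) = \bar{y}$ in $\spi_m R_{[b,c]}$ lifts directly to the top HFPSS.

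Next I would analyze the Postnikov fiber sequence
\[
\pic(R)_{[b+1,c+1]} \to \pic(R)_{[a+1,c+1]} \to \pic(R)_{[a+1,b]}
\]
with connecting map $\delta$. The numerical hypothesis $r' > m+s+1-b$ forces the internal degree $m+s+2-r'$ of $z^{\subpic}$ to be at most $b$, so $z^{\subpic}$ naturally lifts to the HFPSS for the quotient $\pic(R)_{[a+1,b]}$. Applying \fullref{boringboundary}, the assumed differential $d_{r'}(z^{\subpic}) = x^{\subpic}$ is equivalent to the assertion that $z^{\subpic}$ is a permanent cycle in $\underline{E}^{*,*}_{\subpic,[a+1,b]}(G/H)$ detecting some homotopy class $\tilde{z} \in \spi_{m+1}\pic(R)_{[a+1,b]}(G/H)$, whose image $\delta(\tilde{z})$ is detected by $x^{\subpic}$ in the HFPSS for $\pic(R)_{[b+1,c+1]}$; by the first step this homotopy class equals $\bar{x}$ under the identification $\pic(R)_{[b+1,c+1]} \simeq \Sigma R_{[b,c]}$.

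The heart of the argument is naturality of $\delta$ under the transfer. Since $\delta$ comes from a fiber sequence of $G$-spectra, it commutes with $\tr_H^G$. Because transfers preserve the cohomological filtration in the Mackey HFPSS, $\tr_H^G(z^{\subpic})$ is a permanent cycle in $\underline{E}^{s-r',\,m+s+2-r'}_{\subpic,[a+1,b]}(G/G)$ detecting $\tr_H^G(\tilde{z})$. One computes
\[
\delta\bigl(\tr_H^G(\tilde{z})\bigr) = \tr_H^G\bigl(\delta(\tilde{z})\bigr) = \tr_H^G(\bar{x}) = \bar{y},
\]
and by hypothesis $\bar{y}$ is detected by $y^{\subpic}$ in the top HFPSS. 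Applying \fullref{boringboundary} in reverse translates this connecting-map equation into a differential $d_{r'+p}(\tr_H^G(z^{\subpic})) = y^{\subpic}$ in $\underline{E}_{\subpic,[a+1,c+1]}(G/G)$; the length $r'+p = (s+p) - (s-r')$ is dictated by the filtration difference between source and target.

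The principal obstacle is making step one fully rigorous in the equivariant setting: one must verify that the equivalence $\pic(R)_{[b+1,c+1]} \simeq \Sigma R_{[b,c]}$ arising from vanishing cup products respects the full Mackey functor structure — in particular the transfer maps — and not merely the underlying $G$-spectra. This should follow from the Postnikov analysis of $k$-invariants for equivariant $E_\infty$-ring spectra, but the compatibility with transfers is where care is most needed.
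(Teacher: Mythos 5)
Your argument is correct and follows essentially the same route as the paper: truncate, use $c\leq 2b-1$ to identify the top piece of the units with $R_{[b,c]}$ equivariantly, run \fullref{boringboundary} in both directions across the Postnikov fiber sequence, and use that the connecting map is a $G$-map (hence commutes with transfers). The only cosmetic difference is that the paper first loops once, via $\Omega(\pic(R)_{[a+1,c+1]})\simeq \mathrm{gl}_1(R)_{[a,c]}$, and works with the fiber sequence $\mathrm{gl}_1R_{[b,c]}\to\mathrm{gl}_1R_{[a,c]}\to\mathrm{gl}_1R_{[a,b-1]}$, whereas you work directly with its delooping at the level of $\pic(R)$ truncations; your flagged concern about the equivalence respecting transfers is handled in the paper by the same (unelaborated) assertion that the equivalence $\mathrm{gl}_1R_{[b,c]}\simeq R_{[b,c]}$ is one of $G$-spectra, which already determines the Mackey structure.
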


\begin{proof}
Note that 
\begin{equation}\label{eq:eqfortransferpic}\Omega (\pic(R)_{[a+1,c+1]}) \simeq \mathrm{gl}_1(R)_{[a,c]},\end{equation}
so the claims are going to follow from their (suitably shifted) counterparts for $\mathrm{gl}_1(R)$ and the spectral sequence $\underline{E}^{*,*}_\times$.

The assumption \eqref{secondasspic} implies
\begin{enumerate}[(2')]
\item\label{secondass} The class $x^\times$ corresponding to $x$ is an $r$-cycle in $\underline{E}^{s,m+s}_{\times, [a,c], r}(G/H)$, which is hit by a differential, say $d_{r'}(z^\times) = x^\times$, with $r' \geq r$ and $r' > m+s+1-b$.
\end{enumerate}
From this, we will deduce that, in the spectral sequence $\underline{E}_{\times, [a,c],*}^{*,*}(G/G)$, there is a differential of suitable length 
\[d_{r'+p}(tr_H^G(z^\times)) = y^\times.\] 
The result for $\underline{E}_{\subpic,[a+1,c+1]}^{*,*}$ then follows from \eqref{eq:eqfortransferpic}.

The assumption $c\leq 2b-1$ gives an equivalence $\mathrm{gl}_1R_{[b,c]} \simeq R_{[b,c]}$ which respects the $G$-action \cite[Corollary 5.2.3]{MathewStojanoska}. This equivalence gives an isomorphism of the respective spectral sequences and isomorphisms $\spi_*\mathrm{gl}_1R_{[b,c]} \cong \spi_* R_{[b,c]} $ of  Mackey functors. In particular, \eqref{firstass} gives that the classes $x^\times$ and $y^\times$ corresponding to $x$ and $y$ are permanent cycles in the spectral sequence for $\mathrm{gl}_1R_{[b,c]}$, and we have
\[ tr_H^G(x^\times_{[b,c]}) = y^\times_{[b,c]}\] 
in $\spi_m \mathrm{gl}_1R_{[b,c]}.$

Now consider the fiber sequence
\[ \mathrm{gl}_1 R_{[b,c]} \to \mathrm{gl}_1 R_{[a,c]} \to  \mathrm{gl}_1 R_{[a,b-1]},\]
which is of the form needed in \Cref{boringboundary} with $X= \mathrm{gl}_1R_{[a,c]}$ and $t=b$. 

The class $z^\times $ is in $\underline{E}_{\times, [a,c], r'}^{s-r', m+s-r'+1}$. The condition on $r'$ ensures that there is a corresponding class $z^\times_{[a,b-1]}$ in $\underline{E}_{\times, [a,b-1], r'}^{s-r', m+s-r'+1}$, which must be a permanent cycle.

First we apply \Cref{boringboundary} to $\alpha = z^\times_{[a,b-1]}$ to conclude that $\delta(z^\times_{[a,b-1]})= x^\times_{[b,c]}$. But $\delta$ is a $G$-map of spectra, so it commutes with transfers, giving that
\[\delta(tr_H^G(z^\times_{[a,b-1]})) = tr_H^G x^\times_{[b,c]} = y^\times_{[b,c]}. \]

Using \Cref{boringboundary} again in the other direction, we conclude there is a differential of suitable length in the HFPSS for $\mathrm{gl}_1R_{[a,c]}$
taking the class corresponding to $tr_H^G(z^\times_{[a,b-1]})$ to the class corresponding to $y^\times$. This differential must be the one we claim.
\end{proof}

\begin{proof}[Proof of \Cref{boringboundary}.]
The key point is that the homotopy fixed point spectral sequence, usually obtained by filtering $EG$ by skeleta, is isomorphic (at the $E_2$-page and beyond) to the spectral sequence obtained from the Postnikov tower of $X$. For a reference, see \cite[Theorem 10.6]{GreenleesMay}. So, in the argument that follows, we are using the model of the HFPSS obtained from the Postnikov tower of $X$, i.e. the Bousfield-Kan spectral sequence for $\holim_q F_G(EG_+, X_{\leq q}) \simeq X^{hG}$.

Suppose indeed that $\alpha$ is a permanent cycle in the HFPSS for $X_{<t}$; by suitably suspending if necessary, this means that $\alpha$ defines an equivariant map 
\[\alpha \colon EG_+ \to X_{<t}.\] 
The diagram
\[ \xymatrix{ & EG_+\ar[d]_\alpha \\
X \ar[r] & X_{<t} \ar[r]^{\delta} &  \Sigma X_{\geq t} }\]
makes it clear that $\delta(\alpha)$ is the obstruction to lifting $\alpha$ to $X$. 

All that is needed is to be more specific about where the obstruction occurs when it is non-zero. So, suppose $\alpha$ can be lifted to a map $EG_+ \to X_{<t+m}$  (which we also denote by $\alpha$), but not further, so that the differential $d(\alpha)=:\beta$ is the composite
\[ EG_+ \xrightarrow{\alpha} X_{<t+m} \to \Sigma^{t+m+1} H\pi_{t+m}X. \]
To show that $\delta(\alpha)$ is detected by $\beta$, it suffices to check that the diagram 
\[ \xymatrix{
EG_+ \ar[r]^-{\beta} \ar[d]_{\alpha} & \Sigma^{t+m+1} H\pi_{t+m} X \ar[d]^c\\
X_{<t} \ar[r]_-b & \Sigma X_{[t,t+m+1)}
}\]
commutes, where the bottom horizontal map is the boundary associated to a Postnikov stage of $X_{<t+m+1}$, and the right-hand vertical map is the connective cover. Now we will use the Postnikov towers for the sequence 
\[ \xymatrix{ 
X \ar[r] & X_{<t} \ar[r]^{\delta} &  \Sigma X_{\geq t} }\]
to obtain the desired conclusion.

Since $\beta$ factors as $\alpha \colon EG_+ \to X_{<t+m}$ composed with the top horizontal arrow below, it suffices to show that the diagram
\[ \xymatrix{
X_{<t+m} \ar[r] \ar[d]_-{\tau_{<t}}& \Sigma^{t+m+1} H\pi_{t+m} X \ar[d]^-c\\
 X_{<t} \ar[r]_-b & \Sigma X_{[t,t+m+1)}
}\]
commutes.
Note that the vertical maps are connective covers, and the horizontal maps are boundary maps in some Postnikov decompositions. So this diagram commutes because if we back up these Postnikov decomposition sequences, the diagrams involved will commute:
\[ \xymatrix{
\Sigma^{t+m}H\pi_{t+m}X \ar[r]^{\tau_{\geq t+m}} \ar[d]_c& X_{<t+m+1} \ar[r]^{\tau_{<t+m}} \ar@{=}[d] &X_{<t+m} \ar[r] \ar[d]_{\tau_{<t}}& \Sigma^{t+m+1} H\pi_{t+m} X \ar[d]^-{c}\\
X_{[t,t+m+1)} \ar[r]^{\tau_{\geq t}} & X_{< t+m+1} \ar[r]^{\tau_{<t}} &X_{<t} \ar[r]_-b & \Sigma X_{[t,t+m+1)} \ .
}\]

For the converse, the same ingredients go in the argument, just in the opposite order. 
\end{proof}

\subsection{The Picard Spectral Sequence for $E$ and $C_4$}
In this section, we study the Picard spectral sequence 
\[ \underline{E}_{\subpic,2}^{s,t}= \underline{H}^s(C_4, \pi_{t} \pic(E)) \Longrightarrow {\spi}_{t-s} (\pic(E))^{h}\]
with differentials $d_{\subpic,r} \colon \underline{E}_{\subpic,r}^{s,t} \to \underline{E}_{\subpic,r}^{s+r, t+r-1}$. 
In this section, we let
\[ \underline{E}_{+,2}^{s,t}  =\underline{H}^s(C_4,\pi_{t} E_0) \Rightarrow {\spi}_{t-s}E^{h}\]
with
$d_{+,r} \colon \underline{E}_{r,+}^{s,t} \to \underline{E}_{r,+}^{s+r,t+r-1}.$
From \Cref{sec:picss}, we have that
\[ \underline{E}_{\subpic,2}^{s,t} \cong \begin{cases}  \underline{H}^s(C_4 ,  \Z/2) & t=0 \\
\underline{H}^s(C_4 , E_0^{\times}) & t=1 \\
\underline{E}_{+,2}^{s,t-1}  & t\geq 2.
 \end{cases}\]

We prove the following result, which is illustrated in \Cref{PicE2C4}. 
\begin{prop}\label{prop:orderbound}
The order of $\Pic(E^{hC_4})$ is at most $64$ and that of $\Pic(E^{hC_2})$ is at most $16$. 
\end{prop}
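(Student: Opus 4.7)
The plan is to analyze the Picard spectral sequence \eqref{eq:picss} for $R=E$ and determine its $E_\infty$-term along the diagonal $\underline{E}_{\subpic,\infty}^{s,s}$, which filters $\pi_0 \pic(E)^{hG}$. Using the identification in \fullref{sec:picss}, the diagonal decomposes into three regimes: $\underline{E}_{\subpic,2}^{0,0} = H^0(G,\Z/2) = \Z/2$; $\underline{E}_{\subpic,2}^{1,1} = H^1(G, E_0^\times) = \PicalgRG{E_0}{G}$, which by \fullref{prop:H14} equals $\Z/4$ for $G=C_4$ and $\Z/2$ for $G=C_2$; and for $s \geq 2$, $\underline{E}_{\subpic,2}^{s,s} \cong \underline{E}_{+,2}^{s,s-1}$, contributed by the $(-1)$-stem of the HFPSS for $E$. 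Since $\pi_t E = 0$ for odd $t$, only odd $s$ contribute in this third regime.

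The main work is to bound the contribution from the $s \geq 2$ regime. I would apply \fullref{thm:diffpic} to transport differentials from the HFPSS for $E$ into the Picard spectral sequence: parts \eqref{diffpic1} and \eqref{diffpic2} handle the stable range directly, while part \eqref{diffpic3} handles the unstable diagonal $s=t=r$. On that diagonal one must verify that the correction $x^2$ is harmless; for classes $x$ in the $(-1)$-stem of $\underline{E}_{+}$, the square $x^2$ lies in the $(-2)$-stem, which is small enough to be checked directly from the computations of \fullref{sec:computations}. Together with the Mackey-functor transport of differentials provided by \fullref{prop:GLtransfers} — which propagates Picard differentials from $C_2$ to $C_4$ along transfers — this enumerates the surviving permanent cycles in $\underline{E}_{\subpic,\infty}^{s,s}$ for $s \geq 2$.

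By the periodicity \fullref{rem:allcompsintwo} and a sparseness argument (the HFPSS for $E$ has a horizontal vanishing line), only finitely many diagonal entries need be inspected explicitly. I expect that, after importing all differentials from the HFPSS of $E$, the $s \geq 2$ contribution is of order $8$ for $G=C_4$ and $4$ for $G=C_2$. Multiplicatively combining the three regimes gives upper bounds of $2 \cdot 4 \cdot 8 = 64$ and $2 \cdot 2 \cdot 4 = 16$, matching the lower bounds of \fullref{prop:image}.

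The main obstacle is the careful verification of the unstable differentials: one must simultaneously check that each $(-1)$-stem class surviving to $\underline{E}_{+,\infty}$ is either hit by (or supports) an imported Picard differential, and that the $x^2$ correction in \fullref{thm:diffpic}\eqref{diffpic3} never reintroduces a non-trivial class. Because the algebraic Picard $\underline{E}_{\subpic,2}^{1,1}$ is generated by classes coming from $E_2$ (as in the proof of \fullref{prop:H14}), the natural candidates for differentials on $\underline{E}_{\subpic,2}^{1,1}$ come from differentials on the class represented by $E_2$ in the HFPSS; comparing these to the known $d_r$-behavior of $u_\lambda$, $\eta$, $\nu$, and $\varsigma$ in \fullref{sec:computations} will supply the needed nontrivial Picard differentials.
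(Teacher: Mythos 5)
Your skeleton is the paper's: the same three-regime decomposition of the diagonal of \eqref{eq:picss}, the same inputs (\fullref{prop:H14} at $(1,1)$, \fullref{thm:diffpic} in the stable and unstable ranges, \fullref{prop:GLtransfers} for moving differentials from $C_2$ to $C_4$), and the same count $2\cdot 4\cdot 8$ and $2\cdot 2\cdot 4$. But the decisive step is exactly the one you defer, and your guiding principle for it is backwards. You propose to verify that the correction $x^2$ in \fullref{thm:diffpic}\eqref{diffpic3} is ``harmless'' and ``never reintroduces a non-trivial class.'' In fact $\spi_{-1}E=0$, and on the relevant diagonal groups the uncorrected differentials are \emph{injective}: for instance $d_{+,3}\bigl(f(\mu)\,\eta\varpi\Delta_1^{-1}\bigr)=f(\mu)\,\eta^2\varpi^2\Delta_1^{-2}$ on $\underline{E}_{+,3}^{3,2}(C_4/C_4)\cong\fk[\![\mu]\!]$, and similarly in filtrations $5$ and $7$. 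So a naive import of the HFPSS differentials would kill the entire filtration-$\geq 2$ diagonal and give upper bounds $8$ and $4$, contradicting \fullref{prop:image}. The factor $8$ (resp.\ $4$) you ``expect'' is produced precisely by the quadratic correction: the corrected maps are of Artin--Schreier type, $f\mapsto f+f^2$ at $C_4/C_4$ and $g\mapsto \mu_0 g+g^2$ at $C_4/C_2$ in filtration $3$, and $\alpha\mapsto\alpha+\alpha^2$ in filtrations $5$ and $7$, whose kernels are the surviving copies of $\F_2$. Computing these kernels is the content of the proof, and it cannot be replaced by a check that the correction does nothing. There is also one place where \fullref{thm:diffpic} does not apply at all: at $C_4/C_4$ in filtration $7$ the relevant differential is a $d_{\subpic,11}$ on a class in bidegree $(7,7)$, outside the range of all three parts of the theorem; the paper obtains it by applying \fullref{prop:GLtransfers} to transfer the corrected $C_2$-level $d_{\subpic,7}$ on $\eta_0^7\Sigma_{2,0}^{-2}$ along $\tr_2^4$. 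Your sketch cites this proposition but does not identify this as the place it is needed.

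Your closing suggestion — to find ``the needed nontrivial Picard differentials'' among differentials supported on $\underline{E}_{\subpic,2}^{1,1}$, i.e.\ on the algebraic Picard classes represented by $E_2$ — is misdirected for an upper bound. Differentials out of $(1,1)$ could only shrink the answer, and given the order-$64$ image of $\JEC4$ from \fullref{prop:image} they must all vanish; in the actual argument the full $\underline{H}^1(C_4,E_0^\times)\cong\Z/4$ (resp.\ $\Z/2$ for $C_2$) is simply carried to the bound with no analysis of differentials on it. The nontrivial differentials that matter are the unstable ones on the diagonal in filtrations $3$, $5$, $7$ and the transferred $d_{\subpic,11}$ described above.
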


\begin{figure}[t]
\includegraphics[width=0.8\textwidth]{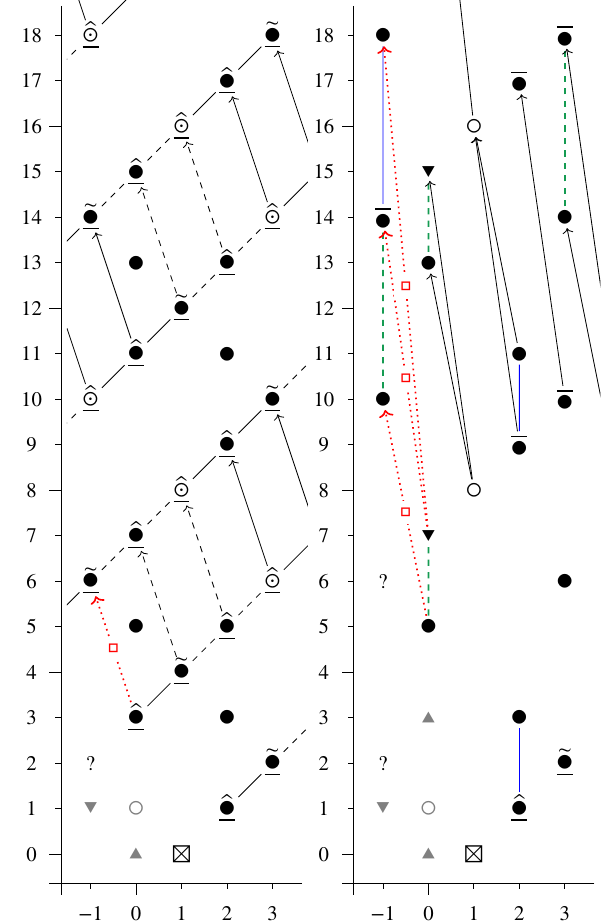} 
\caption{The homotopy fixed point spectral sequence computing $\spi_{*}\pic(E)^{h}$. See \Cref{table:Mackey-pic} and \Cref{table:Mackey-two} for the definitions of the various Mackey functors.
The dotted red $d_3$, $d_5$ and $d_7$ differentials come from \eqref{diffpic3} of \Cref{thm:diffpic}. The dotted red $d_{11}$ is a consequence of the $d_7$ and \Cref{prop:GLtransfers}. Dashed green lines are exotic restrictions and solid blue line exotic transfers.}
\label{PicE2C4}
\end{figure}

\begin{proof}
We prove this by giving an upper bound on classes which survive in stem $t-s=0$ in the spectral sequence $ \underline{E}_{\subpic,2}^{s,t}$. The range of interest is $-2 \leq t-s \leq 1$. The Mackey functors we use here are defined either in \Cref{table:Mackey-pic}  or in \Cref{table:Mackey}, \Cref{table:Mackey-two}, and \Cref{table:Mackey-three}.

From \eqref{diffpic1} and \eqref{diffpic2} of \Cref{thm:diffpic}, it follows that many differentials are forced by those in $\underline{E}_{*,+}^{*,*} $. We do not discuss these and focus on those differentials that follow from \eqref{diffpic3} and \Cref{prop:GLtransfers}.

The first interesting differential is $d_{\subpic,3}: \underline{E}_{\subpic, 3}^{3,3} \to \underline{E}_{\subpic, 3}^{6,5}$, whose data is represented as the short exact sequence
\[\xymatrix{0 \ar[r] & \mackeypictriangle \ar[r] &  \mackeyunderlinedbullethat \ar[r]^-{d_{\subpic,3}} & \mackeyunderlinedbullettilde  }. \]
To justify this, note that the source of this differential is exactly that for which \eqref{diffpic3} of \Cref{thm:diffpic} applies so that
\[d_{\subpic,3}(x^{\subpic}) = (d_{+,3}(x) +x^2)^{\subpic}. \]
We have
\begin{align*}
\xymatrix@R=0.7pc{\underline{E}_{+,3}^{3,2}(C_4/C_4) = \fk[\![\mu]\!]\{ \eta \varpi \Delta_1^{-1}\}  \ar[r]^-{d_{+,3}} & \underline{E}_{+,3}^{6,4}(C_4/C_4) = \fk[\![\mu]\!]\{ \eta^2 \varpi^2 \Delta_1^{-2}\}   \\
\underline{E}_{+,3}^{3,2}(C_4/C_2) = \fk[\![\mu_0]\!]\{ \eta_0^3 \Sigma_{2,0}^{-1}\} \ar[r]^-{d_{+,3}} & \underline{E}_{+,3}^{6,4}(C_4/C_2)   =\fk[\![\mu_0]\!]\{ \eta_0^6 \Sigma_{2,0}^{-2}\}  .}
\end{align*}
See \Cref{sec:genE2}. From \Cref{prop:d3s}, we have
\begin{align*}
d_{+,3}(f(\mu) \eta \varpi \Delta_1^{-1}) &= f(\mu) \eta^2 \varpi^2 \Delta_1^{-2} \\
d_{+,3}(f(\mu_0) \eta_0^3 \Sigma_{2,0}^{-1}) &= \mu_0f(\mu_0) \eta_0^6 \Sigma_{2,0}^{-2}  .
\end{align*}
So, for $f(\mu) \in \fk [\![\mu]\!]$ and $g(\mu_0) \in \fk [\![\mu_0]\!]$
\begin{align*}
d_{\subpic,3}((f(\mu) \eta \varpi \Delta_1^{-1})^{\subpic}) &= ((f(\mu)+f(\mu)^2) \eta^2 \varpi^2 \Delta_1^{-2}   )^{\subpic} \\
d_{\subpic,3}((g(\mu_0) \eta_0^3 \Sigma_{2,0}^{-1})^{\subpic}) &=( (\mu_0g(\mu_0)+g(\mu_0)^2) \eta_0^6 \Sigma_{2,0}^{-2} )^{\subpic}  .
\end{align*}
The first differential is zero if and only if $f(\mu) \in \F_2$. The second is zero if and only if $g(\mu_0) \in \F_2\{\mu_0\}$. In both cases, the kernel is isomorphic to $\Z/2$. It assembles into the Mackey functor $\mackeypictriangle$ depicted in \Cref{table:Mackey-pic}.

The $d_5$ differential of interest is $d_{\subpic, 5}: \underline{E}_{\subpic, 5}^{5,5} \to \underline{E}_{\subpic, 5}^{10,9}$
\[
\xymatrix{ 0  \ar[r] & \mackeypicbullet \ar[r] & \mackeybullet  \ar[r]^-{d_{\subpic,5}} & \mackeybullet  }   \]
These Mackey functors are only non-zero when evaluated at $C_4/C_4$.
From \Cref{prop:d3s}, we get
\begin{align*}
\xymatrix@R=0.7pc{\underline{E}_{+,5}^{5,4}(C_4/C_4) =\fk\{\nu \varpi^2\Delta_1^{-2}\}  \ar[r]^-{d_{+,5}} & \underline{E}_{+,5}^{10,8}(C_4/C_4)=\fk\{\nu^2 \varpi^4\Delta_1^{-4}\}    }
\end{align*}
given by
\begin{align*}
d_{+,5}(\alpha \nu \varpi^2\Delta_1^{-2} ) &= \alpha \nu^2 \varpi^4\Delta_1^{-4}
\end{align*}
 for $\alpha \in \fk$. So, 
\begin{align*}
d_{\subpic,5}((\alpha \nu \varpi^2\Delta_1^{-2})^{\subpic}) &= ( (\alpha +\alpha^2)  \nu^2 \varpi^4\Delta_1^{-4} )^{\subpic}.
\end{align*}
This is zero if and only if $\alpha \in \F_2$ and so the kernel is isomorphic to $\Z/2$. 

Next, we turn to the $d_7$ and closely related $d_{11}$ differentials.
We have
\begin{align*}
\xymatrix@R=0.7pc{
\underline{E}_{+,7}^{7,6}(C_4/C_2) =\fk\{ \eta_0^7\Sigma_{2,0}^{-2} \}  \ar[r]^-{d_{+,7}}  & \underline{E}_{+,7}^{14,12}(C_4/C_2)=\fk\{ \eta_0^{14}\Sigma_{2,0}^{-4}\}   \\
\underline{E}_{+,11}^{7,6}(C_4/C_4) =\fk\{ \varsigma \varpi^3 \Delta_1^{-3}\}  \ar[r]^-{d_{+,11}}  & \underline{E}_{+,11}^{18,16}(C_4/C_4)= \fk\{\nu^2\varpi^8 \Delta_1^{-7}\}   
  }
\end{align*}
given by
\begin{align*}
d_{+,7}(\alpha \eta_0^7\Sigma_{2,0}^{-2}) &= \alpha \eta_0^{14}\Sigma_{2,0}^{-4} \\
d_{+,11}(\alpha\varsigma \varpi^3 \Delta_1^{-3}) &= \alpha \nu^2\varpi^8 \Delta_1^{-7}
\end{align*}
for $\alpha \in \fk$. (The $d_{+,11}$ follows from $d_{+,11}(\varsigma \varpi) = \nu^2\varpi^6 \Delta_1^{-4}$ by linearity with respect to $\Delta_1^{-4}$ and $\bar{\kappa}  = \varpi^2 \Delta_1$.)
This gives 
\begin{align*}
d_{\subpic,7}((\alpha \eta_0^7\Sigma_{2,0}^{-2})^{\subpic}) &= ( (\alpha +\alpha^2) \eta_0^{14}\Sigma_{2,0}^{-4} )^{\subpic}
\end{align*}
whose kernel is isomorphic to $\Z/2$.

Next, we apply \Cref{prop:GLtransfers} to the following setup. There are 
transfers
\[ \tr_2^{4}(  \alpha  \eta_0^7\Sigma_{2,0}^{-2}) = \alpha  \varsigma \varpi^3 \Delta_1^{-3} \]
and
 exotic transfers
\[ \tr_2^{4}(  \alpha \eta_0^{14}\Sigma_{2,0}^{-4}) =\alpha\nu^2\varpi^8 \Delta_1^{-7} \]
for $\alpha\in \fk$ which raise filtration by $s=4$. These ``combine'' 
$\underline{E}_{+, 6}^{14,12}(C_4/C_2)$ and $ \underline{E}_{+,6}^{18,16}(C_4/C_4)$ (for example, in the homotopy groups of $\spi_*E_{[10,18]}$)
as:
\begin{align*}
\xymatrix{0 \ar[r] & \mackeybullet \ar[r] & \mackeyblacktriangledown \ar[r] & \mackeybulletoverline \ar[r] & 0 }.
\end{align*}
In \Cref{prop:GLtransfers}, let $s=14$, $m=-2$, $p=4$, $a=4$, $b=12$, $c=16$, $H=C_2$ and $G=C_4$. Let $r=6$ and $r'=7$. Let $x=  \alpha \eta_0^{14}\Sigma_{2,0}^{-4} \in \underline{E}_{+,6}^{14,12}(C_4/C_2)$, $y=\alpha\nu^2\varpi^8 \Delta_1^{-7} \in  \underline{E}_{+,6}^{18,16}(C_4/C_4)$ and $z= \alpha  \eta_0^7\Sigma_{2,0}^{-2} \in \underline{E}_{+,7}^{7,6}(C_4/C_2)$. The conditions of \Cref{prop:GLtransfers} are then satisfied and it follows that
in the spectral sequence $\underline{E}_{\subpic, [5,17], *}^{*,*}(C_4/C_4)$ 
we have a differential
\begin{align*}
 d_{\subpic,11}( (\alpha  \varsigma \varpi^3 \Delta_1^{-3})^{\subpic})  &= d_{\subpic,11}( \tr_2^{4}((\alpha \eta_0^7\Sigma_{2,0}^{-2})^{\subpic})) \\
&=  \tr_2^{4}(d_{\subpic,7}((\alpha \eta_0^7\Sigma_{2,0}^{-2})^{\subpic}) )\\
&= \tr_2^{4}( ( (\alpha +\alpha^2) \eta_0^{14}\Sigma_{2,0}^{-4} )^{\subpic} ) \\
&= ((\alpha+\alpha^2)\nu^2\varpi^8 \Delta_1^{-7})^{\subpic} .
\end{align*}
From this, we deduce the same differentials in the spectral sequence $\underline{E}_{\subpic,*}^{*,*}$.
Again, such a differential is zero if and only if $\alpha \in \F_2$. So the kernel is isomorphic to $\Z/2$. 
Combining the $d_{\subpic, 7}$ and $d_{\subpic, 11}$ differentials gives an exact sequence
\[
\xymatrix{ 0  \ar[r] &\mackeypictriangledown \ar[r] & \mackeyblacktriangledown  \ar[rr]^-{d_{\subpic,7}/d_{\subpic,11}} & & \mackeyblacktriangledown \ .  }   \]

For $t \geq 8$, $\underline{E}_{+, \infty}^{t,t-1}=0$ and all differentials necessary to make this true are in the range where
 \eqref{diffpic1} and \eqref{diffpic2} of \Cref{thm:diffpic} apply. Therefore, $\underline{E}_{\subpic, \infty}^{t,t}=0$ if $t\geq 8$. So, the order of $\mPic(E)$ at $C_4/C_4$ and $C_4/C_2$ is bounded by the order of the direct sum of the following Mackey functors:
\begin{align*}
 \underline{E}_{\subpic, 2}^{0,0} &=\mackeypictriangle  &
 \underline{E}_{\subpic, 2}^{1,1} &=\mackeypiccirc &
  \underline{E}_{\subpic, 4}^{3,3} &=  \mackeypictriangle &
   \underline{E}_{\subpic, 6}^{5,5} &= \mackeypicbullet &
      \underline{E}_{\subpic, 12}^{7,7} &= \mackeypictriangledown .
\end{align*}
For $C_2$, this bounds the order of $\PicRG{E}{C_2}$ by $16$ and for $C_4$, it bounds the order of $\PicRG{E}{C_4}$ by $64$. 
\end{proof}

\begin{table}[H]
\begin{center}
{\footnotesize
\centering
\begin{tabular}{ |C|C|C|C|C|C|C|C| }
\hline
  \mackeypiccirc & \mackeypicalgfixpoints  & \mackeypictriangledown  &  \mackeypictriangle &  \mackeypicbullet &  \mackeypicbulletoverline    \\ 
 \hline
  \Mackey{\mathbb{Z}/4}{\Z/2}{0}{}{}{2}{1} 
  &\ \ \  \ \ \ \Mackey{\W[\![\mu ]\!]^{\times}}{\W[\![\mu_0 ]\!]^{\times}}{\W[\![\mu_0 ]\!]^{\times}}{(-)^2}{1}{\gamma(-)(-)}{1}
 &  \Mackey{\Z/2}{\Z/2}{0}{}{}{1}{0} 
 &\Mackey{\Z/2}{\Z/2}{0}{}{}{0}{1} 
 &    \Mackey{\Z/2}{0}{0}{}{}{}{} 
 &    \Mackey{0}{\Z/2}{0}{}{}{}{} 
 \\
 \hline
 \end{tabular}}
\end{center}
\caption{Mackey functors in the Picard Spectral Sequence.}
\label{table:Mackey-pic}
\end{table}

Combining \Cref{prop:orderbound} and \Cref{prop:image} gives \Cref{thm:fullpicmackey}. The transfers and restrictions in $\mPic(E)$ are computed using the formula
\[
{}^{E}N_{H}^{C_4}(i_{H}^{\ast}E\smsh S^{W}) \cong
E\smsh S^{\Ind_{H}^{C_4}W},
\]
where $H=e$ or $H=C_2$ and $W \in RO(H)$.

%\bibliographystyle{alpha}
%\bibliography{C4-bib}

\end{document}